\newcommand\reallywidehat[1]{%
\savestack{\tmpbox}{\stretchto{%
  \scaleto{%
    \scalerel*[\widthof{\ensuremath{#1}}]{\kern-.6pt\bigwedge\kern-.6pt}%
    {\rule[-\textheight/2]{1ex}{\textheight}}%
  }{\textheight}%
}{0.5ex}}%
\stackon[1pt]{#1}{\tmpbox}%
}
\newcounter{braid}
\newcounter{strands}
\def\cross{%
  \@ifnextchar^{\message{Got sup}\cross@sup}{\cross@sub}}
\def\cross@sup^#1_#2{\render@cross{#2}{-#1}}
\def\cross@sub_#1{\@ifnextchar^{\cross@@sub{#1}}{\render@cross{#1}{-1}}}
\def\cross@@sub#1^#2{\render@cross{#1}{-#2}}
\def\render@cross#1#2{
  \def\strand{#1}
  \def\crossing{#2}
  \pgfmathsetmacro{\cross@y}{-\value{braid}*\braid@h}
  \pgfmathtruncatemacro{\nextstrand}{#1+1}
  \foreach \thread in {1,...,\value{strands}}
  {
    \pgfmathsetmacro{\strand@x}{\thread * \braid@w}
    \ifnum\thread=\strand
    \pgfmathsetmacro{\over@x}{\strand * \braid@w + .5*(1 - \crossing) * \braid@w}
    \pgfmathsetmacro{\under@x}{\strand * \braid@w + .5*(1 + \crossing) * \braid@w}
    \draw[braid] \pgfkeysvalueof{/tikz/braid start} +(\under@x pt,\cross@y pt) to[out=-90,in=90] +(\over@x pt,\cross@y pt -\braid@h);
    \draw[braid] \pgfkeysvalueof{/tikz/braid start} +(\over@x pt,\cross@y pt) to[out=-90,in=90] +(\under@x pt,\cross@y pt -\braid@h);
    \else
    \ifnum\thread=\nextstrand
    \else
     \draw[braid] \pgfkeysvalueof{/tikz/braid start} ++(\strand@x pt,\cross@y pt) -- ++(0,-\braid@h);
    \fi
   \fi
  }
  \stepcounter{braid}
}
\tikzset{braid/.style={double=black,double distance=0.5pt,line width=2pt,white}}
\newcommand{\braid}[2][]{%
  \begingroup
  \pgfkeys{/tikz/strands=2}
  \tikzset{#1}
  \pgfkeysgetvalue{/tikz/braid width}{\braid@w}
  \pgfkeysgetvalue{/tikz/braid height}{\braid@h}
  \setcounter{braid}{0}
  \let\sigma=\cross
  #2
  \endgroup
}
\tikzset{every path/.style={black,thin}, every node/.style={transform shape, knot crossing, inner sep=2.5pt}}
\newcommand{\mytrefoil}
{\begin{scope}[rotate=  0] \node (a) at (0,-1) {}; \end{scope}
   \begin{scope}[rotate=120] \node (b) at (0,-1) {}; \end{scope}
   \begin{scope}[rotate=240] \node (c) at (0,-1) {}; \end{scope}
   \draw (a) .. controls (a.4 north west) and (c.4 north east) .. (c.center);
   \draw (b) .. controls (b.4 north west) and (a.4 north east) .. (a.center);
   \draw (c) .. controls (c.4 north west) and (b.4 north east) .. (b.center);
   \draw (a.center) .. controls (a.16 south west) and (c.16 south east) .. (c);
   \draw (b.center) .. controls (b.16 south west) and (a.16 south east) .. (a);
   \draw (c.center) .. controls (c.16 south west) and (b.16 south east) .. (b);}
\newcommand*\@KP@Large@frame[2]{%
    \setlength\unitlength{\fontdimen 22 #1\tw@}%
    \vrule \@width\z@ \@height 4\unitlength \@depth\tw@\unitlength
    \begin{picture}(6,2)(-3,-1)%
        \def\@KP@Radius     {3}%
        \def\@KP@Hole@radius{.5}%
        \def\@KP@Diameter   {6}%
        #2%
    \end{picture}%
}
\newcommand*\@KP@Small@frame[2]{%
    \setlength\unitlength{\fontdimen 22 #1\tw@}%
    \vrule \@width\z@ \@height \thr@@\unitlength \@depth\@ne\unitlength
    \begin{picture}(4,2)(-2,-1)%
        \def\@KP@Radius     {2}%
        \def\@KP@Hole@radius{.5}
        \def\@KP@Diameter   {4}%
        #2%
    \end{picture}%
}
\newcommand*\@KP@Radius     {}
\newcommand*\@KP@Hole@radius{}
\newcommand*\@KP@Diameter   {}
\newcommand*\@KP@Shape@A{%
    \put(0,0){\circle{\@KP@Diameter}}%
}
\newcommand*\@KP@Shape@B{%
    \Line(-\@KP@Radius,\@KP@Radius )(\@KP@Radius,-\@KP@Radius)%
    \Line(-\@KP@Radius,-\@KP@Radius)(-\@KP@Hole@radius,-\@KP@Hole@radius)%
    \Line(\@KP@Radius ,\@KP@Radius )(\@KP@Hole@radius ,\@KP@Hole@radius )%
}
\newcommand*\@KP@Shape@C{%
    \cbezier(-\@KP@Radius,\@KP@Radius )(0,0)(0,0)(\@KP@Radius,\@KP@Radius )%
    \cbezier(-\@KP@Radius,-\@KP@Radius)(0,0)(0,0)(\@KP@Radius,-\@KP@Radius)%
}
\newcommand*\@KP@Shape@D{%
    \cbezier(-\@KP@Radius,-\@KP@Radius)(0,0)(0,0)(-\@KP@Radius,\@KP@Radius)%
    \cbezier(\@KP@Radius ,-\@KP@Radius)(0,0)(0,0)(\@KP@Radius ,\@KP@Radius)%
}
\newcommand*\@KP@Shape@E{%
    \Line(-\@KP@Radius,-\@KP@Radius )(\@KP@Radius,\@KP@Radius)%
    \Line(-\@KP@Radius,\@KP@Radius)(-\@KP@Hole@radius,\@KP@Hole@radius)%
    \Line(\@KP@Radius ,-\@KP@Radius )(\@KP@Hole@radius ,-\@KP@Hole@radius )%
}
\newcommand*\@KP@Atomic@mathpalette[1]{%
    \mathinner{%
        \mathchoice{%
            \linethickness{.6\p@}
            \@KP@Large@frame \textfont {#1}%
        }{%
            \linethickness{.4\p@}
            \@KP@Small@frame \textfont {#1}%
        }{%
            \linethickness{.3\p@}
            \@KP@Small@frame \scriptfont {#1}%
        }{%
            \linethickness{.2\p@}
            \@KP@Small@frame \scriptscriptfont {#1}%
        }%
    }%
}
\newcommand*\KPA{\@KP@Atomic@mathpalette \@KP@Shape@A}
\newcommand*\KPB{\@KP@Atomic@mathpalette \@KP@Shape@B}
\newcommand*\KPC{\@KP@Atomic@mathpalette \@KP@Shape@C}
\newcommand*\KPD{\@KP@Atomic@mathpalette \@KP@Shape@D}
\newcommand*\KPE{\@KP@Atomic@mathpalette \@KP@Shape@E}
\newtheorem{theorem}{Theorem}
\numberwithin{theorem}{section}
\newtheorem{proposition}[theorem]{Proposition}
\newtheorem{lemma}[theorem]{Lemma}
\newtheorem{corollary}[theorem]{Corollary}
\newtheorem{conjecture}[theorem]{Conjecture}
\theoremstyle{definition}
\newtheorem{define}[theorem]{Definition}
\newtheorem{exmpl}[theorem]{Example}
\theoremstyle{remark}
\newtheorem*{remark}{Remark}
\newenvironment{definition}
 {\MakeFramed{\advance \hsize -\width \FrameRestore}\begin{define}}
 {\end{define}\endMakeFramed}
\newenvironment{example}
 {\vspace{0.3cm}\begin{mdframed}\begin{exmpl}}
 {\end{exmpl}\end{mdframed}\vspace{0.3cm}}
\DeclareMathOperator{\tto}{\Rightarrow}
\newcommand{\tangle}[1]{[\![#1]\!]}
\def\id{\text{id}}
\def\BB{\mathbb{B}}
\def\CC{\mathbb{C}}
\def\FF{\mathbb{F}}
\def\SS{\mathbb{S}}
\def\RR{\mathbb{R}}
\def\ZZ{\mathbb{Z}}
\def\Bcal{\mathcal{B}}
\def\Ecal{\mathcal{E}}
\def\Hcal{\mathcal{H}}
\def\TL{\mathcal{TL}}
\def\incl{\text{incl}}
\def\lfincl{\text{lfincl}}
\def\tr{\text{tr}}
\def\RO{\text{RO}}
\def\RI{\text{RI}}
\def\RII{\text{RII}}
\def\RIII{\text{RIII}}
\def\MI{\text{M1}}
\def\MII{\text{M2}}
\newcommand\longmapsfrom{\mathrel{\reflectbox{\ensuremath{\longmapsto}}}}
\newcommand{\knotsinmath}[1]{%
 \tikzset{every path/.style={}}
 \begin{tikzpicture}[baseline=-\dimexpr\fontdimen22\textfont2\relax]
  #1
 \end{tikzpicture}%
}
\newcommand{\signpositive}{%
  \knotsinmath{\begin{scope}[scale=0.4]\draw[->,thin] (-1,-1)--(1,1);
   \draw[ultra thick,double distance=0.5pt,double=black,white] (-1,1)--(1,-1);
   \draw[->] (-1,1)--(1,-1);\end{scope}}%
}
\newcommand{\signnegative}{%
 \knotsinmath{\begin{scope}[scale=0.4]\draw[->] (-1,1)--(1,-1);
   \draw[ultra thick,double distance=0.5pt,double=black,white] (-1,-1)--(1,1);
   \draw[->] (-1,-1)--(1,1);\end{scope}}%
}
\newcommand{\alonecurve}{%
 \knotsinmath{\begin{scope}[scale=0.2]\draw[thin] (0,0) .. controls (1,-0.6) and (2.4,-0.3) .. (3,0);\end{scope}}
}
\newcommand{\alonecurvewithcircle}{%
 \knotsinmath{\begin{scope}[scale=0.005,yshift=-40cm]
   \clip (0,0)--(140,0)--(140,70)--(0,70);
   \draw[thin]    (12,63) .. controls (38.03,43.47) and (103.45,55.93) .. (111,71) ;
   \draw[thin]   (50,26.5) .. controls (50,18.49) and (56.49,12) .. (64.5,12) .. controls (72.51,12) and (79,18.49) .. (79,26.5) .. controls (79,34.51) and (72.51,41) .. (64.5,41) .. controls (56.49,41) and (50,34.51) .. (50,26.5) -- cycle ;
  \end{scope}}%
}
\newcommand{\rimove}{%
 \knotsinmath{\begin{scope}[scale=0.005,yshift=-35cm]
   \draw[thin]    (15,37) .. controls (41.03,17.47) and (95.87,29.72) .. (103.42,44.79) .. controls (110.97,59.85) and (58,68) .. (76,39) ;
   \draw[thin]    (93,24) .. controls (127,9) and (142,18) .. (151,27) ;
  \end{scope}}%
}
\newcommand{\riimove}{%
 \knotsinmath{\begin{scope}[scale=0.005,rotate=180,yshift=-45cm]
   \draw[thin]    (16,36) .. controls (45,67) and (92,67) .. (114,37) ;
   \draw[thin]    (42,44) .. controls (55,29) and (72,30) .. (83,48) ;
   \draw[thin]    (30,57) -- (20,68) ;
   \draw[thin]    (90,63) -- (99,77) ;
  \end{scope}}%
}
\newcommand{\resolveriimove}{%
 \knotsinmath{\begin{scope}[scale=0.005,yshift=-45cm]
   \draw[thin]    (11,63) .. controls (42,48) and (95,61) .. (110,71) ;
   \draw[thin]    (14,29) .. controls (47,43) and (101,41) .. (113,37) ;
  \end{scope}}%
}
\newcommand{\trefoil}{%
 \knotsinmath{\begin{scope}[scale=0.005,yshift=-70cm]
  \clip (0,0)--(160,0)--(160,160)--(0,160)--cycle;
  \draw[thin]    (98.5,132) .. controls (48.07,199.48) and (-22.7,112.44) .. (13.82,70.28) .. controls (26.55,55.59) and (52.31,46.35) .. (96.5,51) ;
  \draw[thin]    (44.5,39) .. controls (78.5,-45) and (130.5,45) .. (114.5,102) ;
  \draw[thin]   (127.5,58) .. controls (195.69,97.4) and (107.74,133.35) .. (64.59,109.76) .. controls (50.78,102.21) and (41.56,88.57) .. (43.5,67) ;
 \end{scope}}%
}
\newcommand{\twopositivetwists}{%
 \knotsinmath{\begin{scope}[scale=0.005,yshift=-80cm]
    \clip (0,0)--(160,0)--(160,160)--(0,160)--cycle;
    \draw[thin]    (98.5,132) .. controls (39.99,210.29) and (-37.73,63.24) .. (36.5,53) .. controls (110.73,42.76) and (63.5,-21) .. (44.5,39) ;
    \draw[thin]    (103.5,114) .. controls (129.5,53) and (172.5,109) .. (143.5,114) .. controls (114.5,119) and (36.41,145.92) .. (43.5,67) ;
   \end{scope}}%
}
\newcommand{\hopflink}{%
 \knotsinmath{\begin{scope}[scale=0.005,yshift=-80cm]
    \clip (0,0)--(155,0)--(155,170)--(0,170);
    \draw[thin]    (78.5,134) .. controls (-15.5,251) and (-34.5,-67) .. (78.5,114) ;
    \draw[thin]    (22.5,47) .. controls (35.5,-33) and (121.5,9) .. (105.5,66) ;
    \draw[thin]    (105.5,66) .. controls (195.5,118) and (13.5,164) .. (21.5,75) ;
   \end{scope}}%
}
\newcommand{\negativetwist}{%
 \knotsinmath{\begin{scope}[scale=0.005,yshift=-40cm]
    \clip (0,0)--(135,0)--(135,80)--(0,80);
    \draw[thin]    (77,56.67) .. controls (123.67,132) and (173.31,-14.5) .. (69.67,45.33) .. controls (-33.98,105.17) and (5.67,-55.33) .. (62.33,36.67) ;
   \end{scope}}%
}
\newcommand{\negativetwistwithcircle}{%
 \knotsinmath{\begin{scope}[scale=0.005,yshift=-70cm]
    \clip (0,0)--(140,0)--(140,130)--(0,130);
    \draw[thin]    (77,56.67) .. controls (123.67,132) and (173.31,-14.5) .. (69.67,45.33) .. controls (-33.98,105.17) and (5.67,-55.33) .. (62.33,36.67) ;
    \draw[thin]   (7,102.5) .. controls (7,87.86) and (19.42,76) .. (34.75,76) .. controls (50.08,76) and (62.5,87.86) .. (62.5,102.5) .. controls (62.5,117.14) and (50.08,129) .. (34.75,129) .. controls (19.42,129) and (7,117.14) .. (7,102.5) -- cycle ;
   \end{scope}}%
}
\newcommand{\negativetwistwithalonecurve}{%
 \knotsinmath{\begin{scope}[scale=0.005,yshift=-45cm]
   \clip (0,0)--(110,0)--(110,80)--(0,80);
   \draw[thin]    (70,30) .. controls (87.08,48.26) and (68.58,74.2) .. (54,76) .. controls (35,80) and (47,42) .. (67,55) ;
   \draw[thin]    (25,32) .. controls (33,40) and (38,62) .. (25,76) ;
   \draw[thin]    (83,63) -- (99,71) ;
  \end{scope}}%
}
\newcommand{\positivetwistinward}{%
 \knotsinmath{\begin{scope}[scale=0.005,yshift=-60cm]
    \clip (0,0)--(150,0)--(150,130)--(0,130);
    \draw[thin]    (89,70) .. controls (138,123) and (67,108) .. (65,88) .. controls (63,68) and (80.97,55.69) .. (108.19,49.03) .. controls (135.41,42.37) and (172.79,111.98) .. (116,122) .. controls (59.21,132.02) and (31,120) .. (16,87) .. controls (1,54) and (10.93,-50.23) .. (76,54) ;
   \end{scope}}%
}
\newcommand{\crossnegpos}{%
 \knotsinmath{\begin{scope}[scale=0.3,yshift=-0.5cm]
  \draw[thin] (0,0)--(1,1);
  \draw[ultra thick,double distance=0.3pt,double=black,white] (1,0)--(0,1);
 \end{scope}}%
}
\newcommand{\lookslikelol}{%
 \knotsinmath{\begin{scope}[scale=0.005,yshift=-40cm]
   \clip (0,0)--(90,0)--(90,80)--(0,80); 
   \draw[thin]    (19.17,16.04) .. controls (33.11,32.02) and (26.82,63.21) .. (20.12,72.68) ;
   \draw[thin]    (68.88,16.32) .. controls (60.37,36.34) and (65.84,66.59) .. (69.83,72.96) ;
   \draw[thin]   (35,45) .. controls (35,38.37) and (39.92,33) .. (46,33) .. controls (52.08,33) and (57,38.37) .. (57,45) .. controls (57,51.63) and (52.08,57) .. (46,57) .. controls (39.92,57) and (35,51.63) .. (35,45) -- cycle ;
  \end{scope}}
}
\newcommand{\tldiagram}[1]{%
 \knotsinmath{\begin{scope}[scale=0.75,yshift=-0.5cm]
  \draw (0.5,0.5) node[scale=1.33] {#1};
  \draw (0,0) rectangle (1,1);
  \draw (0.25,1.5)--(0.25,1);
  \draw (0.25,0)--(0.25,-0.5);
  \draw (0.75,0)--(0.75,-0.5);
  \draw (0.75,1.5)--(0.75,1);
  \draw (0.53,1.25) node {$\cdots$};
  \draw (0.53,-0.25) node {$\cdots$};
 \end{scope}}
}
\newcommand{\tlinclusion}[1]{%
 \knotsinmath{\begin{scope}[scale=0.75,yshift=-0.5cm]
  \draw (0.5,0.5) node[scale=1.33] {#1};
  \draw (0,0) rectangle (1,1);
  \draw (0.25,1.5)--(0.25,1);
  \draw (0.25,0)--(0.25,-0.5);
  \draw (0.75,0)--(0.75,-0.5);
  \draw (0.75,1.5)--(0.75,1);
  \draw (0.53,1.25) node {$\cdots$};
  \draw (0.53,-0.25) node {$\cdots$};
  \draw (1.25,1.5)--(1.25,-0.5);
 \end{scope}}\;
}
\newcommand{\braidinclusion}[1]{%
 \knotsinmath{\begin{scope}[scale=0.75,yshift=-0.75cm]
  \draw (0.75,0.75) node[scale=1.33] {#1};
  \draw (0,0) rectangle (1.5,1.5);
  \draw (0.25,1.5)--(0.25,2);
  \draw (0.25,0)--(0.25,-0.5);
  \draw (1.25,0)--(1.25,-0.5);
  \draw (1.25,1.5)--(1.25,2);
  \draw (0.78,1.75) node {$\cdots$};
  \draw (0.78,-0.25) node {$\cdots$};
  \draw (1.75,2)--(1.75,-0.5);
 \end{scope}}
}
\newcommand{\tlconditional}[1]{
 \knotsinmath{\begin{scope}[scale=0.75,yshift=-0.5cm]
  \draw (0.5,0.5) node[scale=1.33] {#1};
  \draw (0,0) rectangle (1,1);
  \draw (0.25,1.5)--(0.25,1);
  \draw (0.25,0)--(0.25,-0.5);
  \draw (0.75,0)--(0.75,-0.5);
  \draw (0.75,1.5)--(0.75,1);
  \draw (0.9,1)--(0.9,1.5);
  \draw (0.9,-0.5)--(0.9,0);
  \draw[red] (0.9,1.5)--(1.2,1.5)--(1.2,-0.5)--(0.9,-0.5);
  \draw (0.53,1.25) node {$\cdots$};
  \draw (0.53,-0.25) node {$\cdots$};
 \end{scope}}\;
}
\newcommand{\tlconditionalinclusion}[1]{%
 \knotsinmath{\begin{scope}[scale=0.75,yshift=-0.5cm]
  \draw (0.5,0.5) node[scale=1.33] {#1};
  \draw (0,0) rectangle (1,1);
  \draw (0.25,1.5)--(0.25,1);
  \draw (0.25,0)--(0.25,-0.5);
  \draw (0.75,0)--(0.75,-0.5);
  \draw (0.75,1.5)--(0.75,1);
  \draw (0.53,1.25) node {$\cdots$};
  \draw (0.53,-0.25) node {$\cdots$};
  \draw (1.25,1.5)--(1.25,-0.5);
  \draw[red] (1.25,1.5)--(1.5,1.5)--(1.5,-0.5)--(1.25,-0.5);
 \end{scope}}
}
\newcommand{\tltraceplusone}[1]{%
 \knotsinmath{\begin{scope}[scale=0.75,yshift=-0.5cm]
  \draw (0.5,0.5) node[scale=1.33] {#1};
  \draw (0,0) rectangle (1,1);
  \draw (0.25,1.5)--(0.25,1);
  \draw (0.25,0)--(0.25,-0.5);
  \draw (0.75,0)--(0.75,-0.5);
  \draw (0.75,1.5)--(0.75,1);
  \draw[red] (0.75,1.5)--(1.5,1.5)--(1.5,-0.5)--(0.75,-0.5);
  \draw[red] (0.25,1.5)--(0.25,1.75)--(1.75,1.75)--(1.75,-0.75)--(0.25,-0.75)--(0.25,-0.5);
  \draw (0.9,1)--(0.9,1.25);
  \draw (0.9,0)--(0.9,-0.25);
  \draw[red] (0.9,1.25)--(1.25,1.25)--(1.25,-0.25)--(0.9,-0.25);
  \draw (0.53,1.25) node {$\cdots$};
  \draw (0.53,-0.25) node {$\cdots$};
 \end{scope}}
}
\tikzset{
  pt/.style={insert path={node[scale=2]{.}}},
  dnup/.style={insert path={ [pt] .. controls +(0,1) and +(0,-1) .. +(#1,2) [pt]}},
  dndn/.style={insert path={ [pt] .. controls +(0,1) and +(0,1) .. +(#1,0) [pt]}},
  upup/.style={insert path={ [pt] .. controls +(0,-1) and +(0,-1) .. +(#1,0) [pt]}},
}
\renewcommand*\l@subsection{\@dottedtocline{1}{1.5em}{3.5em}}
\renewcommand*\l@figure{\@dottedtocline{1}{1.5em}{3.5em}}
\begin{document}

\title{\hspace{0pt}
\vskip 1cm
\bfseries On the Jones Polynomial\\
 {\normalsize \textit{constructing link invariants via braids \& von Neumann algebras}}
}
\author{\bfseries A Monica Queen Thesis}
\date{Summer 2021}

\maketitle

\begin{abstract}
 This expository essay is aimed at introducing the Jones polynomial.
 We will see the encapsulation of the Jones polynomial, which will involve topics in functional analysis and geometrical topology; making this essay an interdisciplinary area of mathematics.
 The presentation is based on a lot of different sources of material (check references), but we will mainly be giving an account on Jones' papers (\cite{jones1983},\cite{jones1985},\cite{jones1987},\cite{jonesbook},\cite{jonespoly}) and Kauffman's papers (\cite{kauffman1986},\cite{kauffman1988},\cite{kauffman_stat_mech}).
 A background in undergraduate \textit{Linear Algebra}, \textit{Linear Analysis}, \textit{Abstract Algebra}, \textit{Commutative Algebra}, \textit{\&} \textit{Topology} is essential. It would also be useful if the reader has a background in \textit{Knot Theory} \textit{\&} \textit{Operator Algebras}.
\end{abstract}
\hspace{0pt}
\thispagestyle{empty}

\newpage
\thispagestyle{empty}
\hspace{0pt}
\vskip 5cm
\hspace*{\fill} {\Large \textit{Ars longa, vita brevis.}}
\vskip 5cm
{\large This essay is dedicated to my grandmother \textbf{\textit{Grace}}, who passed away during the start of writing this.}
\vfill
\hspace{0pt}
\newpage%

\newgeometry{bottom=20mm} 
 \tableofcontents
\restoregeometry

\listoffigures

\renewcommand{\thesection}{\Alph{section}}
\renewcommand{\thesubsection}{\thesection.\Roman{subsection}}

\numberwithin{figure}{section}
\renewcommand{\thefigure}{\thesection-\roman{figure}}

\newpage%
\section{Introduction}\label{chapter:intro}

The objective of this M.Sc. thesis, as are most, is to prove to the university of satisfactory scholastic work. However, the author accepts that such egotistical objectives will not keep the writer, not to mention the peruser, engaged. Accordingly, the genuine motivation behind this thesis is to introduce the writer's discoveries and thoughts insightfully and to impart them to other people. Subsequently, ideally, this will add to the assemblage of information regarding the matter.

Thus the aims of this essay are two-fold. The first is to give a solid introduction on knot theory and give the combinatorial method of the Jones polynomial (via Kauffman's bracket \cite{kauffman1986} and skein relation \cite{conway_skein,homfly,jones1985}). This gives us a good base on what the Jones polynomial is, and the importance of the polynomial invariant.
For instance, it was until the Jones polynomial was created that the trefoil knot was proven to be distinct from its reflection.
The second aim is to give a brief explanation on how Vaughan Jones originally came to the realisation that there was a link between the von Neumann algebras he was working on and the construction of a most interesting link invariant.
This is given by first realising that all links can be represented by closed braids; and by defining a trace on the presentation from the braid group $\Bcal_n$ to the quotient algebra of $\Bcal_n$ called the Temperley-Lieb algebras $\TL_n$.
This method is based on Jones' original construction in 1984 \cite{jones1983,jones1985}.

Purely going on the notion of physical space, the ether, and atoms as vortex knots, the mathematical theory of knots was formulated by physicists Peter Tait, William Thomson, and James Maxwell in the late 1800s \cite{cgknott,tait}.
While their motivation for studying knots and links (i.e., atoms being knotted in ether) was widely discredited in 1887 by Michelson and Morley \cite{michealson}, where they proved that the ether does not exist, the amount of contributions made by them was vast enough.
It was until after the inexistence of the ether was proven that mathematicians started studying knots. And in the time of writing, knot theory has major applications linked to chemistry and biology, such as knots in DNA and proteins. For a nice read of introductory knot theory; see Adams' book \cite{adams}.

To put it simply, knot theory\index{knot theory}, is the study of distinguishing links (disjoint union of closed smooth curves in three-dimensional space) up to ambient isotopy (intuitively, this means that we can move the link around without allowing it to pass by itself or cutting it). Distinguishing links is usually done using link invariants (algebraic objects that do not change under ambient isotopy). There are many link invariants such as the coloring invariant and the crossing number invariant - these are easy to visualise, however they are not good enough. The crossing number invariant, for instance, is difficult to determine since its definition states that it has to be over the minimal link diagram (so this is summed over the minimum diagram of all possible diagrams - i.e., infinity). The \textbf{Jones polynomial}, however, is the most \textit{interesting} and one of the most \textit{powerful} knot invariants that are linked to many areas of mathematics and science.
Vaughan Jones constructed his polynomial invariant in 1984 \cite{jones1985}. For that, he won the Fields medal in 1990 and gave his speech wearing his home country's (New Zealand) rugby jersey: a historical day any knot theorist can tell you about.

Jones, however, did not make the connection on his own.
In fact, while Jones was giving a lecture on von Neumann algebras in Geneva in 1982, Didier Hatt-Arnold, a graduate student, was the one who raised the similarities between the algebra he was working on and the braid group \cite{cromwell}.
This then motivated Jones to explore possible representations of the braid group into his algebra he was working on. Finally leading him to the construction of the marvellous link invariant two years later \cite{jones1985}.

The fascination of this link invariant, from a knot theory point of view, is that it could detect mirror links. Since 1928, the only known polynomial link invariant was the Alexander polynomial \cite{alexander}, and while profound in its own way, detecting a link's mirror was not one of its strengths. However, the main fascination with this link invariant is the fact that it came from a completely different field of mathematics. This lead to other link invariants that arose from areas such as category theory.

While, at first, it may have seemed as a surprising result from the subject of von Neumann algebras, it became apparent that this subject has many amazing facts that are linked with other subjects such as Topological Quantum Field Theory.
However, recalling that the mathematical theory of knots was formulated by physicsts, it should not be surprising when physics saves the day again. 
As Jones said,
\begin{quote}
 ``God may or may not play dice, but She sure loves a von Neumann algebra.'' \cite{tqft}
\end{quote}

Despite the ease of defining the Jones polynomial in Chapter \ref{chapter:kauffman} via the Kauffman bracket and the skein relation, we should mention that it is generally known that the Jones polynomial of a link is $NP$-hard \cite{jaeger}. However, when calculating the Jones polynomial of a braid representation of a link, it can be computed in polynomial time \cite{morton}. We will not discuss computation in this paper, but we see in the final chapter that it is in fact a lot easier to calculate the Jones polynomial via its original construction than the more visual and diagrammatic ones. This is not to say that the Kauffman bracket is not useful, in fact the reason it is liked more so today than the original construction is because of its astounding correlation into category and representation theory; see Khovanov's paper \cite{khovanov}.

{\textbf{Outline}}

In Chapter \ref{chapter:intro}, we will start with a brief overview on knot theory which includes all the essential information we need for the remainder of the essay.

In Chapter \ref{chapter:kauffman}, we will give two important definitions of the Jones polynomial: via the Kauffman bracket \cite{kauffman1986} and the Skein relation version, which John Conway conjured in the 1960s for the Alexander polynomial \cite{conway_skein}.

The next chapter (Chapter \ref{chapter:braids}) is on braid theory. We start with the diagrammatic definition of braids and see how it correlates with the Artin braid group, which is the algebraic definition. We also see a correlation between the symmetric group and the braid group.

In Chapter \ref{chapter:braids-knots}, we relate the previous two chapters. We will associate to every knot a braid, and prove Alexander's theorem, which provides a pseudo-converse by showing that every braid closure is equivalent to a link.
We conclude with an overview of Markov's moves and Markov's theorem. This gives an equivalence relation that will help us in making a one-to-one correspondence with braids and knots. However, two braids do not have to be braid equivalent in order to be Markov equivalent.
Markov's theorem states that the braid closures are Reidemeister equivalent if and only if the braids are Markov equivalent.
Thus the one-to-one correspondence will quotient the braid group with the Markov equivalence class.

The second part of the thesis focuses on more of Jones' work: we begin by defining the necessary analytical preliminaries in Chapter \ref{chapter:analysis} and discuss the Temperley-Lieb algebras in algebraic and diagrammatic settings in Chapter \ref{chapter:tl}.

With the foundations set, we now have the necessary background in order to conclude the essay (Chapter \ref{chapter:jones}), i.e., constructing the Jones polynomial via the original method.

\newpage 
\section{Knots \& links}\label{chapter:knots}
 We begin with a brief overview of knot theory. We discuss all preliminary definitions and results needed to define the Jones polynomial and motivate our chosen thesis in the coming chapters.

This chapter is based on the presentations given in \cite{knotpaper}, \cite{enc}, \cite{jones_quantum}, \cite{BurdeZieschang}, \cite{conway_skein}, \cite{cromwell}, \cite{homfly}, \cite{jones1985}, \cite{jonesbook}, \cite{jonespoly}, \cite{kauffman1986}, \cite{kauffman1988}, \cite{kauffman_stat_mech}, \cite{kauffman_lomonaco}, \cite{kawauchi}, \cite{Likorish}, \cite{livingston}, \cite{manturov}, and \cite{murasagibook}.

Most of our discussion is based on that in the author's exposition on knot theory \cite[Ch.~1]{knotpaper}.

\subsection{Defining a knot and link}

\begin{definition}[knot]\label{definition:knot}
 A \textit{knot}\index{knot} is a smooth embedding of a circle $\SS^1$ into $\RR^3$ or $\SS^3$.\footnotemark
\end{definition}
\footnotetext{$\SS^3$ is essentially $\RR^3$ with its one point compactification. We will mostly use $\RR^3$.}
Here, a smooth \textit{embedding}\index{embedding} means a smooth map which is injective and whose differential is nowhere zero.
Pictorially, Kauffman \& Lomonaco Jr. \cite[p.~2]{kauffman_lomonaco} drew the embedded circles in $\RR^3$ as the following.
\begin{figure}[H]
 \centering
 \begin{tikzpicture}[x=0.75pt,y=0.75pt,yscale=-1,xscale=1]
  \draw   (11,138) .. controls (11,124.19) and (22.19,113) .. (36,113) .. controls (49.81,113) and (61,124.19) .. (61,138) .. controls (61,151.81) and (49.81,163) .. (36,163) .. controls (22.19,163) and (11,151.81) .. (11,138) -- cycle ;
  \draw    (69,122) -- (134.04,109.38) ;
  \draw [shift={(136,109)}, rotate = 529.02] [color={rgb, 255:red, 0; green, 0; blue, 0 }  ][line width=0.75]    (10.93,-3.29) .. controls (6.95,-1.4) and (3.31,-0.3) .. (0,0) .. controls (3.31,0.3) and (6.95,1.4) .. (10.93,3.29)   ;
  \draw    (67,151) -- (140.12,177.32) ;
  \draw [shift={(142,178)}, rotate = 199.8] [color={rgb, 255:red, 0; green, 0; blue, 0 }  ][line width=0.75]    (10.93,-3.29) .. controls (6.95,-1.4) and (3.31,-0.3) .. (0,0) .. controls (3.31,0.3) and (6.95,1.4) .. (10.93,3.29)   ;
  \draw   (217.64,66.09) .. controls (231.68,65.52) and (243.53,76.44) .. (244.1,90.47) -- (249.03,211.22) .. controls (249.03,211.22) and (249.03,211.22) .. (249.03,211.22) -- (147.36,215.37) .. controls (133.32,215.95) and (121.48,205.03) .. (120.9,191) -- (115.97,70.25) .. controls (115.97,70.25) and (115.97,70.25) .. (115.97,70.25) -- cycle ;
  \draw    (153,101) .. controls (227,51) and (217,116) .. (181,94) ;
  \draw    (189,103) .. controls (135,148) and (138,52) .. (168,84) ;
  \draw    (145,107) .. controls (117.05,126.47) and (166.16,147.54) .. (189.08,125.77) .. controls (212,104) and (263,62) .. (211,81) ;
  \draw    (202,88) -- (196,93) ;
  \draw    (185,160) .. controls (245,147) and (256,179) .. (214,179) ;
  \draw    (166,162) .. controls (118,170) and (223,225) .. (209,164) ;
  \draw    (209,150) .. controls (207,134) and (188.53,144.78) .. (179,153) .. controls (169.47,161.22) and (174.09,179.7) .. (205,175) ;
 \end{tikzpicture}
 \caption{knot definition with embedded circles in $\RR^3$}
 \label{figure:kauffman_knot_theory}
\end{figure}
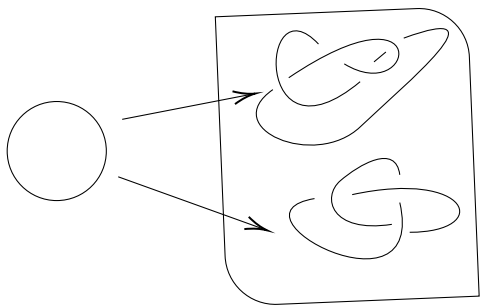
In the above figure, the two embeddings are knots. It is easy to see that they are not the \textit{same}. However, we will need to define what knots being ``the same'' means before claiming such (see next section).

\begin{proposition}\label{proposition:knot_def_eq}
 The definition of a knot is equivalent to the following:
 \begin{itemize}
  \item A \textit{knot} is a subset of $\RR^3$ that consists of piecewise linear simple closed curves.
  \item A \textit{knot} is a subset of $\RR^3$, which is diffeomorphic to $\SS^1$.
 \end{itemize}
\end{proposition}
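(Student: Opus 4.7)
The plan is to prove each of the two alternative descriptions equivalent to Definition \ref{definition:knot} separately, since there is no direct route between ``piecewise linear'' and ``diffeomorphic to $\SS^1$'' that is shorter than going through the smooth embedding definition.

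First I would handle the equivalence with ``subset of $\RR^3$ diffeomorphic to $\SS^1$'' because it is essentially a tautological consequence of what a smooth embedding is. Given a smooth embedding $f\colon \SS^1 \to \RR^3$, the image $K = f(\SS^1)$ is compact, so $f$ is a homeomorphism onto its image; combined with the nowhere vanishing differential, the inverse is smooth in local coordinates, so $f$ is a diffeomorphism $\SS^1 \to K$. Conversely, if $K \subset \RR^3$ is a subset diffeomorphic to $\SS^1$ via some $g\colon \SS^1 \to K$, then $\incl \circ g \colon \SS^1 \to \RR^3$ is an injective smooth map whose differential is nowhere zero (because $g$ is a diffeomorphism and $K$ is a smooth $1$-submanifold), hence a smooth embedding.

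Next I would attack the equivalence with ``piecewise linear simple closed curve,'' which really asserts that the smooth and PL categories produce the same class of (tame) knots. One direction takes a smooth embedding $f\colon \SS^1 \to \RR^3$ and produces a PL approximation: choose a sufficiently fine partition $0 = t_0 < t_1 < \cdots < t_n = 2\pi$ of the parameter circle, replace each arc $f([t_{i-1},t_i])$ by the chord joining $f(t_{i-1})$ to $f(t_i)$, and observe that by uniform continuity of $f'$ and compactness of $\SS^1$, for a fine enough partition the resulting polygonal loop is simple and ambient isotopic to $f(\SS^1)$ inside a tubular neighbourhood of the image. The other direction smooths a PL simple closed curve by rounding each vertex with a small $C^\infty$ arc (using a bump function in a neighbourhood of the vertex that matches the two incident edges up to first order), again staying inside a tubular neighbourhood so that simplicity and the knot type are preserved.

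The main obstacle is precisely this PL equivalence: the sets $f(\SS^1)$ and the polygonal loop are not literally equal, so one has to be careful that ``equivalent'' is interpreted at the level of ambient isotopy classes (tame knots), not strict set equality. This also means one must avoid the pathology of wild knots, which is implicitly excluded by the smoothness hypothesis but would break any naive PL approximation argument; the tubular neighbourhood and the uniform bounds on $f'$ are what rule this out. Once this is set up cleanly, the combinatorial approximation and smoothing arguments above go through.
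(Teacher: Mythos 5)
The paper states this proposition without giving any proof at all, so there is no argument of the author's to compare yours against; what follows is an assessment of your sketch on its own terms. Your plan is the standard one and is essentially sound: the equivalence with ``subset of $\RR^3$ diffeomorphic to $\SS^1$'' is, as you say, nearly tautological once one uses that a smooth injective immersion of a compact manifold is an embedding and hence a diffeomorphism onto its image (a smooth $1$-submanifold); and the equivalence with the piecewise linear description is the classical smooth--PL comparison, proved by chordal approximation in one direction and corner-rounding in the other, both carried out inside a tubular neighbourhood so that the knot type is preserved. You correctly identify the one point that genuinely needs care: the PL curve and the smooth curve are never literally the same subset, so ``equivalent'' must be read as inducing a bijection between equivalence classes (smooth knots up to ambient isotopy versus tame PL knots up to $\Delta$-moves), which is exactly the correspondence the paper later invokes when passing between triangle-moves and Reidemeister moves. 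If you were to write this out in full, the only step demanding real work is the claim that a sufficiently fine chordal approximation is simple and ambient isotopic to the original curve; this follows from compactness and the uniform lower bound on the distance between non-adjacent arcs of an embedded smooth circle, and you would want to state that bound explicitly rather than leave it at ``fine enough partition.''
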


Here, \textit{piecewise linear}\index{piecewise linear} means that the curve can be written as a finite union of line segments such that the segments are disjoint from one another, except when consecutive, i.e.,
\[K=[a_0,a_1]\cup[a_1,a_2]\cup\cdots\cup[a_n,a_0]\]
Here, \textit{diffeomorphism}\index{diffeomorphism} means that the map and its inverse are both smooth. \textit{Homeomorphism}\index{homeomorphism}, on the other hand, is when the map and its inverse are both continuous.\footnote{So this means that all diffeomorphisms are homeomorphisms, but not the other way around; because, differentiability implies continuity, but continuity does not imply differentiability.}

We can define a knot via homeomorphism (i.e., a subset of $\RR^3$ which is homeomorphic to a circle).
However, this requires us to define \textit{tame knots}\index{tame knots} which are knots that can be defined smoothly or piecewise-linearly; and \textit{wild knots}\index{wild knots} which are knots that are not tame.

To avoid such complications, we will always assume smoothness.




\begin{example}
 The most basic knot is the unknot, written as $0_1$, which is drawn below.
 We also have the trefoil $3_1$ and the trefoil's reflection $\overline{3_1}$. We call $3_1$ the left-handed trefoil, whereas we call its reflection $\overline{3_1}$, the right-handed trefoil (fact: the right-handed trefoil and its reflection are two distinct knots; see Proposition \ref{proposition:two_3_1_knots}).
 \begin{figure}[H]
  \centering
  \begin{center}
   \begin{tabular}{>{\centering\arraybackslash}m{35mm}>{\centering\arraybackslash}m{35mm}>{\centering\arraybackslash}m{35mm}}
    \begin{center}\begin{tikzpicture}[scale=1]
     \draw (0,0) circle (0.8);
    \end{tikzpicture}\end{center}
    &\begin{center}
    \reflectbox{\begin{tikzpicture}[scale=0.5]
     \mytrefoil;
    \end{tikzpicture}}\end{center}
    &\begin{center}
    \begin{tikzpicture}[scale=0.5]
     \mytrefoil;
    \end{tikzpicture}
    \end{center}\\
    $0_1$&$3_1$&$\overline{3_1}$
   \end{tabular}
  \end{center}
  \caption{examples of knots}
  \label{figure:knot_example}
 \end{figure}
\end{example}

\begin{example}
 The following is not a knot, since the differential at $(*)$ is undefined. However, for the sake of completion, we mention that since this is a subset of $\RR^3$ and is homeomorphic to a circle, we can see that it is an example of a \textit{wild knot}.
 \begin{figure}[H]
  \centering
  \begin{tikzpicture}[scale=0.5,x=0.75pt,y=0.75pt,yscale=-1,xscale=1]
   \draw    (26,92) .. controls (3,64) and (-5,37) .. (27,18) .. controls (59,-1) and (85,38) .. (72,52) ;
   \draw    (385,62) .. controls (457,72) and (448,137) .. (417,149) .. controls (386,161) and (329,157) .. (219,157) .. controls (109,157) and (36,167) .. (23,147) .. controls (10,127) and (38.37,80.97) .. (65,61) ;
   \draw    (45,102) .. controls (57.39,110.98) and (102,107) .. (116,95) .. controls (130,83) and (130,84) .. (138,47) ;
   \draw    (57,24) .. controls (33,29) and (38,45) .. (46,51) .. controls (54,57) and (104,60) .. (130,58) ;
   \draw    (132,17) .. controls (108.97,17.49) and (108,27) .. (114,33) .. controls (120,39) and (154,40) .. (197,33) ;
   \draw    (75,17) .. controls (126,-3) and (147,6) .. (140,32) ;
   \draw    (150,14) .. controls (178.67,4.85) and (209,20) .. (210,32) .. controls (211,44) and (188,55) .. (146,57) ;
   \draw    (215,28) .. controls (261,25) and (265,44) .. (317,54) ;
   \draw  [fill={rgb, 255:red, 0; green, 0; blue, 0 }  ,fill opacity=1 ] (397.5,63.25) .. controls (397.5,61.73) and (398.73,60.5) .. (400.25,60.5) .. controls (401.77,60.5) and (403,61.73) .. (403,63.25) .. controls (403,64.77) and (401.77,66) .. (400.25,66) .. controls (398.73,66) and (397.5,64.77) .. (397.5,63.25) -- cycle ;
   \draw (336,39) node [anchor=north west,scale=2][inner sep=0.75pt]   [align=left] {$\displaystyle \dotsc $};
   \draw (395,39) node [anchor=north west,scale=2][inner sep=0.75pt]   [align=left] {$\displaystyle *$};
  \end{tikzpicture}
  \caption{wild knot}
  \label{figure:wild_knot}
 \end{figure}
 Such knots can have pathological behaviour, hence why we mentioned that we will always assume smoothness throughout this paper.
\end{example}

\begin{definition}[link]
 A \textit{link}\index{link} of $m$ components is a smooth embedding of $n$ disjoint circles into $\RR^3$.
 Thus, a knot is just a link with one component.
\end{definition}

\begin{example}\label{example:links}
 An unlink is a finite number of disjointed unknots. We usually write an $m$-unlink as $\KPA^m$ (where $m$ denotes the number of disjointed unknots). The figure to the right is an example of an unlink $\KPA^2$.
 \begin{figure}[H]
  \centering
  \begin{minipage}{0.3\textwidth}
   \begin{center}
    \begin{tikzpicture}[scale=0.01,yscale=-1]
     \draw  [draw opacity=0] (90.01,115.72) .. controls (84.07,117.84) and (77.67,119) .. (71,119) .. controls (40.07,119) and (15,94.15) .. (15,63.5) .. controls (15,32.85) and (40.07,8) .. (71,8) .. controls (101.93,8) and (127,32.85) .. (127,63.5) .. controls (127,79.02) and (120.57,93.06) .. (110.2,103.13) -- (71,63.5) -- cycle ; 
     \draw   (90.01,115.72) .. controls (84.07,117.84) and (77.67,119) .. (71,119) .. controls (40.07,119) and (15,94.15) .. (15,63.5) .. controls (15,32.85) and (40.07,8) .. (71,8) .. controls (101.93,8) and (127,32.85) .. (127,63.5) .. controls (127,79.02) and (120.57,93.06) .. (110.2,103.13) ;
     \draw  [draw opacity=0] (109.49,11.85) .. controls (115.36,9.56) and (121.72,8.22) .. (128.39,8.02) .. controls (159.31,7.13) and (185.09,31.24) .. (185.98,61.88) .. controls (186.86,92.52) and (162.52,118.08) .. (131.61,118.98) .. controls (100.69,119.87) and (74.91,95.76) .. (74.02,65.12) .. controls (73.57,49.61) and (79.6,35.39) .. (89.66,25.02) -- (130,63.5) -- cycle ;
     \draw   (109.49,11.85) .. controls (115.36,9.56) and (121.72,8.22) .. (128.39,8.02) .. controls (159.31,7.13) and (185.09,31.24) .. (185.98,61.88) .. controls (186.86,92.52) and (162.52,118.08) .. (131.61,118.98) .. controls (100.69,119.87) and (74.91,95.76) .. (74.02,65.12) .. controls (73.57,49.61) and (79.6,35.39) .. (89.66,25.02) ;
    \end{tikzpicture}
   \end{center}
  \end{minipage}%
  \begin{minipage}{0.4\textwidth}
   \begin{center}
    \begin{tikzpicture}[scale=0.8]
     \draw (0,0) circle(0.4);
     \draw (0.9,0) circle(0.4);
    \end{tikzpicture}
   \end{center}
  \end{minipage}
  \par{\;\;\;Hopf Link\qquad\qquad\qquad{Unlink of two components}}
  \caption{examples of links}
  \label{figure:link_examples}
 \end{figure}
\end{example}

\begin{remark}
 Note that it is not always the case that a theorem for knots works for links as well. However, for the purposes of this paper, we will use them interchangeably, unless stated otherwise.
\end{remark}

\subsection{Link equivalency}

We now discuss link equivalency. Essentially, we need two links to be equivalent if we can go from one link to the other without forming any self-intersections or breaking the definition of a link. So, we begin by explaining why homotopy equivalency is too weak for links.

Any two knots are \textit{homotopy equivalent}\index{homotopy}, i.e., for knots $K_1\colon{S^1\to\RR^3}$ and $K_2\colon{S^1\to\RR^3}$, there exists a continuous function $H\colon\RR^1\times[0,1]\to\RR^3$ such that $H(x,0)=K_1(x)$ and $H(x,1)=K_2(x)$. So, under homotopy, all knots would be trivial by pulling on the ends until the ``knotted'' part disappears (also known as the gentleman's knot - see Figure \ref{figure:gentleman_knot}).

\begin{figure}[H]
 \centering
 \begin{tikzpicture}[x=0.75pt,y=0.75pt,yscale=-1,xscale=1]
  \draw    (66.74,72.24) .. controls (39.92,85.15) and (19.62,74.23) .. (14.55,57.34) .. controls (9.47,40.45) and (21.21,30.35) .. (37.02,44.92) ;
  \draw    (29.04,67.77) .. controls (39.8,47.18) and (49.34,31.51) .. (67.47,33.5) .. controls (85.59,35.49) and (89.94,55.35) .. (78.34,63.3) ;
  \draw    (45.72,50.88) .. controls (90.66,71.25) and (90.66,99.06) .. (80.51,109.99) .. controls (70.37,120.91) and (53.69,123.89) .. (38.47,117.93) .. controls (23.25,111.97) and (19.62,88.13) .. (25.42,79.19) ;
  \draw    (172.89,32.81) .. controls (167.76,39.55) and (158.35,40.39) .. (152.36,34.49) .. controls (146.38,28.6) and (148.94,21.03) .. (154.93,22.71) ;
  \draw    (154.93,29.44) .. controls (160.92,19.34) and (165.2,18.5) .. (170.33,18.5) .. controls (175.46,18.5) and (181.45,22.71) .. (178.03,26.92) ;
  \draw    (163.48,24.81) .. controls (217.16,45.55) and (205.19,93.53) .. (196.05,109.29) .. controls (186.91,125.04) and (165.84,138.99) .. (141.88,125.52) .. controls (117.93,112.05) and (118.79,74.17) .. (148.09,36.18) ;
  \draw    (281.65,16.41) .. controls (280.44,18.04) and (278.21,18.25) .. (276.8,16.82) .. controls (275.38,15.39) and (275.99,13.55) .. (277.4,13.96) ;
  \draw    (277.4,15.59) .. controls (278.82,13.14) and (279.83,12.93) .. (281.05,12.93) .. controls (282.26,12.93) and (283.68,13.96) .. (282.87,14.98) ;
  \draw    (280.44,14.06) .. controls (287.03,20.82) and (311.42,29.38) .. (324.21,39.22) .. controls (336.99,49.05) and (345.94,68.72) .. (342.1,98.22) .. controls (338.27,127.72) and (317.81,134.74) .. (307.58,137.55) .. controls (297.36,140.36) and (267.95,138.96) .. (255.17,127.72) .. controls (242.38,116.48) and (237.3,103.62) .. (237.27,79.96) .. controls (237.23,56.3) and (257.06,39.42) .. (275.16,18) ;
  \draw   (380.42,75.85) .. controls (380.42,42.74) and (397.01,15.91) .. (417.47,15.91) .. controls (437.93,15.91) and (454.51,42.74) .. (454.51,75.85) .. controls (454.51,108.96) and (437.93,135.8) .. (417.47,135.8) .. controls (397.01,135.8) and (380.42,108.96) .. (380.42,75.85) -- cycle ;
  \draw[fill=black]   (417.47,15.91) .. controls (417.47,15.13) and (417.85,14.51) .. (418.33,14.51) .. controls (418.81,14.51) and (419.19,15.13) .. (419.19,15.91) .. controls (419.19,16.68) and (418.81,17.3) .. (418.33,17.3) .. controls (417.85,17.3) and (417.47,16.68) .. (417.47,15.91) -- cycle ;
  \draw   (495.91,73.16) .. controls (495.91,40.22) and (512.49,13.52) .. (532.95,13.52) .. controls (553.41,13.52) and (570,40.22) .. (570,73.16) .. controls (570,106.1) and (553.41,132.8) .. (532.95,132.8) .. controls (512.49,132.8) and (495.91,106.1) .. (495.91,73.16) -- cycle ;
  \draw (96.7,73) node [anchor=north west][inner sep=0.75pt]    {$\rightarrow$};
  \draw (210,73) node [anchor=north west][inner sep=0.75pt]    {$\rightarrow$};
  \draw (350.7,73) node [anchor=north west][inner sep=0.75pt]    {$\rightarrow$};
  \draw (463.7,73) node [anchor=north west][inner sep=0.75pt]    {$\rightarrow$};
 \end{tikzpicture}
 \caption{the gentleman's knot}
 \label{figure:gentleman_knot}
\end{figure}

So, our definition of link equivalence needs to be stronger than that of homotopy.

\begin{definition}[ambient isotopy]\label{definition:isotopy}
 Two maps $A,B\subseteq\RR^3$ are said to be \textit{ambient isotopic}\index{ambient isotopy} if there exists a continuous map $H\colon\RR^3\times[0,1]\to\RR^3$ such that the map $h_t\colon\RR^3\to\RR^3$ defined by $h_t(x)=H(x,t)$ is a diffeomorphism for all $t\in[0,1]$ with $h_0$ equal to the identity map, and $h_1(A)=B$. We call the map $H$, the \textit{ambient isotopy} map between $A$ and $B$.
\end{definition}

\begin{remark}
 In Definition \ref{definition:isotopy}, ambient isotopy is essentially homotopy where it is a smooth family of diffeomorphisms.
\end{remark}

\begin{definition}[link equivalence] \label{definition:link_equivalence}
 Two links are \textit{equivalent}\index{equivalent!links} if they are
 ambient isotopic.
 When two links $L_1$ and $L_2$ are equivalent, we write $L_1\sim{L_2}$.
\end{definition}

\begin{remark}
 Fisher \cite{fisher} proved that any orientation-preserving\footnotemark diffeomorphism $f$ of $\RR^n$ is smoothly ambient isotopic to the identity. This allows us to characterise ambient isotopies as orientation-preserving diffeomorphisms; in particular, links are equivalent if there exists an orientation $-$ preserving piecewise linear diffeomorphism $h\colon\RR^3\to\RR^3$ such that $h(L_1)=L_2$, where $L_1$ and $L_2$ are links.
\end{remark}
\footnotetext{For those with a background in differential geometry, a diffeomorphism between two oriented manifolds $M$ and $N$ is said to be \textit{orientation-preserving} if the pullback of the orientation on $N$ is the same, up to equivalence, of the orientation on $M$ \cite[Ch.~6]{cam:diffgeo}.}

\begin{definition}[link type]
 Any two links which are equivalent have the same \textit{link type}\index{link type}: the equivalence class of links.
 Similarly, we call the equivalence class of knots \textit{knot type}\index{knot type}.
\end{definition}

\begin{proposition}\label{proposition:ambient_isotopy_eq}
 Ambient isotopy is an equivalence relation. (So, link equivalence is an equivalence relation.)
\end{proposition}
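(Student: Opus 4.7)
The plan is to verify the three defining properties of an equivalence relation (reflexivity, symmetry, transitivity) directly from Definition \ref{definition:isotopy}, constructing in each case an explicit ambient isotopy $H$ and checking the required conditions.

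For reflexivity, given a link $L \subseteq \RR^3$, I would take the constant-in-$t$ isotopy $H(x,t) = x$. Then $h_t = \id$ for every $t \in [0,1]$, which is trivially a diffeomorphism, $h_0 = \id$, and $h_1(L) = L$, so $L \sim L$.

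For symmetry, suppose $A \sim B$ witnessed by $H$ with slices $h_t$, so $h_0 = \id$ and $h_1(A) = B$. I would define $H'(x,t) = h_{1-t}\bigl(h_1^{-1}(x)\bigr)$, giving slices $h'_t = h_{1-t} \circ h_1^{-1}$. Then $h'_0 = h_1 \circ h_1^{-1} = \id$, each $h'_t$ is a diffeomorphism (composition of diffeomorphisms, using that $h_1^{-1}$ exists since $h_1$ is a diffeomorphism), and $h'_1(B) = h_1^{-1}(B) = A$, so $B \sim A$. The continuity of $H'$ in $(x,t)$ follows from the continuity of $H$ and of $h_1^{-1}$.

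For transitivity, suppose $A \sim B$ via $H^1$ and $B \sim C$ via $H^2$, with respective slices $h^1_t, h^2_t$. I would concatenate by rescaling:
\[
H''(x,t) = \begin{cases} H^1(x, 2t), & 0 \le t \le \tfrac{1}{2},\\ H^2\bigl(h^1_1(x)^{-1\text{-slot}},\, 2t-1\bigr), & \tfrac{1}{2} \le t \le 1, \end{cases}
\]
or more cleanly, define the slices by
\[
h''_t = \begin{cases} h^1_{2t}, & 0 \le t \le \tfrac{1}{2},\\ h^2_{2t-1} \circ h^1_1, & \tfrac{1}{2} \le t \le 1. \end{cases}
\]
At $t = \tfrac{1}{2}$ both pieces equal $h^1_1$, giving a well-defined continuous $H''$; each slice is a diffeomorphism; $h''_0 = \id$; and $h''_1(A) = h^2_1(h^1_1(A)) = h^2_1(B) = C$. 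Hence $A \sim C$.

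The only mild obstacle is checking that the transitivity construction yields a continuous $H''$ at the gluing time $t = \tfrac{1}{2}$, which amounts to the agreement $h^1_1 = h^2_0 \circ h^1_1$ coming from $h^2_0 = \id$. Note that Definition \ref{definition:isotopy} requires only continuity of $H$ (not joint smoothness in $(x,t)$), so no smoothing of the concatenation in the $t$-variable is needed; the argument is essentially the same as for ordinary homotopy, with the added bookkeeping that each slice remains a diffeomorphism. Link equivalence then inherits the equivalence-relation property immediately from Definition \ref{definition:link_equivalence}.
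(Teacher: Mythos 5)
Your proof is correct and follows essentially the same route as the paper: a direct verification of reflexivity, symmetry, and transitivity by exhibiting explicit ambient isotopies. The only difference is cosmetic --- the paper uses the pointwise inverse family $h_t^{-1}$ for symmetry and the pointwise composite $f_t\circ h_t$ (at the same time parameter) for transitivity, whereas you use time-reversal and concatenation with rescaling; both satisfy Definition \ref{definition:isotopy}.
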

\begin{proof}
 Let $L_1$, $L_2$, and $L_3$ be any links.

 For reflexivity, this is obvious.
 
 For symmetry, we suppose $L_1\sim{L_2}$. Then, we have a function $h_t$ such that $h_0(L_1)=L_1$ and $h_1(L_1)=L_2$ where the map and its inverse are smooth. It follows that $h^{-1}_0(L_2)=L_2$ and $h^{-1}_1(L_2)=L_1$, which means $L_2\sim{L_1}$.
 
 Finally, for transitivity, suppose $L_1\sim{L_2}$ and $L_2\sim{L_3}$. Then, we have two functions $h_t$ and $f_t$ such that $h_t$ is the same as $h_t$ from the symmetry above; and $f_0(L_2)=L_2$ and $f_1(L_2)=L_3$. So, it then follows that $f_t\circ{h_t}(x)=f_t(h_t(x))$ is also a diffeomorphism (map compositions are smooth if both maps are smooth), so then $f_0(h_0(L_1))=f_0(L_1)=L_1$ and $f_1(h_1(L_1))=f_1(L_2)=L_3$. Thus, $L_1\sim{L_3}$.
\end{proof}

Thus we have the following definition of an unlink and unknot.
\begin{definition}[unlink]
 The \textit{unknot}\index{unknot} $0_1$ is a knot that is equivalent to a circle.
 So, the $m$-\textit{unlink}\index{unlink} $\KPA^m$ is a link that is equivalent to $m$ disjointed unknots.
\end{definition}

\subsection{Link diagrams and Reidemeister's theorem}

In this section, we discuss elementary methods of projecting a link onto a two-dimensional plane. The reason for doing so is because it would not be reasonable for us to work in $\RR^3$, as it is harder to imagine three-dimensional objects in our head. Before projecting the link, we first need to define the most elementary method of finding an equivalent link. This elementary method is called the triangle move. Here is a precise definition.

\begin{definition}[triangle-move]\label{definition:triangle-move}
 Consider a link $L$ in $\RR^3$, we find a planar triangle in $\RR^3$ that intersects $L$ in exactly one of the edges of the triangle. We then delete that edge from $L$, and replace it by the other two edges of the triangle. The inverse process of this move is done by replacing the two edges of the triangle with one (literally the opposite). We call this the \textit{triangle-move}\index{triangle-move} and denote it by $\Delta$ and its inverse process by $\Delta^{-1}$, see the figure below.
 \begin{figure}[H]
    \centering
    \begin{tikzpicture}[scale=0.3]
     \begin{scope}
      \draw[ultra thick] (0,0)--(2,0);
      \draw[dashed,thin] (2,0)--(1,2)--(0,0);
     \end{scope}
     \draw (4.5,2) node[scale=3.33] {$\overset{\Delta}{\longmapsto}$};
     \draw (4.5,0) node[scale=3.33] {$\underset{\Delta^{-1}}{\longmapsfrom}$};
     \begin{scope}[xshift=7cm]
      \draw[dashed,thin] (0,0)--(2,0);
      \draw[ultra thick] (2,0)--(1,2)--(0,0);
     \end{scope}
    \end{tikzpicture}
    \caption{triangle-move}
    \label{figure:triangle_move}
 \end{figure}
\end{definition}

Two links are equivalent if and only if they differ by a finite sequence of the triangle-moves or the inverse of the moves. A detailed proof can be found in \cite[pp.~6--8,~Proposition~1.10]{BurdeZieschang}.

\begin{remark}
 It is easy to show that the $\Delta$-move induces an equivalence relation. (It is closely analogous to the proof from Proposition \ref{proposition:ambient_isotopy_eq}.)
\end{remark}

Now we can define the \textit{link projection map}\index{link projection} $\rho\colon\RR^3\to\RR^2$. As explained in \cite[pp.~2--4]{Likorish}, this is done by:
\begin{itemize}
 \item small triangle-moves such that each line segment of the link projects to a line in $\RR^2$,
 \item that the projections of two such segments intersect in at most one point which for disjoint segments is not an end point, and
 \item that no point belongs to the projections of three segments.
\end{itemize}
Given such a situation, the image of the link in $\RR^2$ together with ``over/under'' information (this refers to the relative heights above $\RR^2$ of the two inverse images of a crossing, which is always indicated in diagrams by breaks in the under-passing segments) at the crossings is called a \textit{link diagram} of the given link. So, the only types of crossings in a link diagram that are allowed are essentially shown in Figure \ref{figure:allowed_diagram}. And the crossings that are not allowed are triple points, tangents, and vertical edges (shown in Figure \ref{figure:not_allowed}).


\begin{figure}[H]
    \centering
    \begin{tikzpicture}[scale=0.8]
     \begin{scope}
      \draw (1,-1)--(1,1);
      \draw[ultra thick,draw=white,double=black,double distance=0.5pt] (0,0)--(2,0);
     \end{scope}
     \begin{scope}[xshift=3cm]
      \draw (0,0)--(2,0);
      \draw[ultra thick,draw=white,double=black,double distance=0.5pt] (1,-1)--(1,1);
     \end{scope}
    \end{tikzpicture}
    \caption{crossings that are allowed}
    \label{figure:allowed_diagram}
\end{figure}
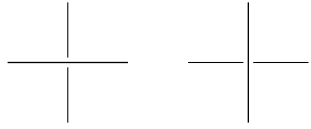
\begin{figure}[H]
    \centering
    \begin{tikzpicture}[scale=0.8]
     \begin{scope}
      \draw (-1,-1) .. controls (0,0) .. (0,1) .. controls (0,0) .. (1,-1);
     \end{scope}
     \begin{scope}[xshift=3cm]
      \draw (-1,-1)--(1,1);
      \draw[ultra thick,draw=white,double=black,double distance=0.5pt] (1,-1)--(-1,1);
      \draw[ultra thick,draw=white,double=black,double distance=0.5pt] (-1,0)--(1,0);
     \end{scope}
     \begin{scope}[xshift=6cm,rotate=30]
      \draw (-1,0) .. controls (0,0.5) .. (1,0);
      \draw (-1,0.4)--(1,0.4);
     \end{scope}
    \end{tikzpicture}
    \caption{vertical edge; triple point; tangent}
    \label{figure:not_allowed}
    \par{crossings that are not allowed}
\end{figure}

\begin{definition}[link universe]
 A \textit{link universe}\index{link universe} is a union of closed piece-wise linear curves in the plane such that there are finitely many self-intersections and that all self-intersections are transverse (i.e., at each crossing, there are exactly four half-curves with that crossing as an end point, so that none of them are tangent to another).
\end{definition}

A \textit{link diagram}\index{link diagram} is essentially a link universe with a choice, i.e., the points of self-intersection are equipped with height information with the lower strand being indicated with a break in the curve.
If the link diagram has one component (i.e., a projection of a knot), then we call it a \textit{knot diagram}\index{knot diagram}.

\begin{example}
 A crossing in a link universe would then have two crossing choices in a link diagram.
 \begin{center}
  \begin{tikzpicture}[scale=0.75]
   \begin{scope}
    \draw (-1,-1)--(1,1);
    \draw (1,-1)--(-1,1);
    \draw (0,-1.4) node[scale=1.33] {link universe};
   \end{scope}
   \begin{scope}[xshift=4cm]
    \draw (-1,-1)--(1,1);
    \draw[white,double=black,ultra thick,double distance=0.5pt] (1,-1)--(-1,1);
   \end{scope}
   \begin{scope}[xshift=7cm]
    \draw (1,-1)--(-1,1);
    \draw[white,double=black,ultra thick,double distance=0.5pt] (-1,-1)--(1,1);
   \end{scope}
   \draw (5.5,-1.4) node[scale=1.33] {crossing choices};
   \draw (5.5,-1.9) node[scale=1.33] {link diagrams};
  \end{tikzpicture}
 \end{center}
\end{example}

If links $L_1$ and $L_2$ are equivalent, then they are related by a sequence of triangle moves, as previously mentioned. After moving all the vertices of all the triangles by a very small amount, it can be assumed that the projections of no three of the vertices lie on the line in $\RR^2$ and the projections of no three edges pass through a single point. So then each triangle projects to a triangle.

The following example shows that using triangle moves is dreary and time consuming. This motivates the usefulness of the upcoming theorem.

\begin{example}
 Consider the following piecewise linear link.
 \begin{center}
  \begin{tikzpicture}
   \draw (0,0)--(1,1);
   \draw[double=black,white,ultra thick,double distance=0.5pt] (2,1)--(-1,0.5);
   \draw (2,1)--(1,1);
   \draw (-1,0.5)--(0,0);
  \end{tikzpicture}
 \end{center}
 It is easy to see that this is equivalent to the unknot by a twist. However, with the triangle moves, we first find a planar triangle that intersects our link in one of the edges of the triangle. The intersected edge is thickened for clarity.
 \begin{center}
  \begin{tikzpicture}
   \draw (0,0)--(1,1);
   \draw[double=black,white,ultra thick,double distance=1pt] (2,1)--(-1,0.5);
   \draw (2,1)--(1,1);
   \draw (-1,0.5)--(0,0);
   \draw[dashed] (2,1)--(0.5,1.5)--(-1,0.5);
  \end{tikzpicture}
 \end{center}
 And we can now replace the thick edge with the two dashed edges on our triangle.
 \begin{center}
  \begin{tikzpicture}
   \draw (0,0)--(1,1);
   \draw[double=white,white,ultra thick,double distance=0.5pt] (2,1)--(-1,0.5);
   \draw[dashed] (2,1)--(-1,0.5);
   \draw (2,1)--(1,1);
   \draw (-1,0.5)--(0,0);
   \draw[thick] (2,1)--(0.5,1.5)--(-1,0.5);
  \end{tikzpicture}
 \end{center}
 Thus, we see that this is equivalent to the unknot.
\end{example}

Thankfully, there is an easier method of showing link equivalency with Reidemeister's theorem. The theorem says that two link diagrams are equivalent if and only if they are related by a finite sequence of three moves (see Figure \ref{figure:reidemeiser_moves}). The threorem also says that every link has a link diagram.
This statement is quite fascinating as it simplifies the triangle moves.
Here is a precise statement of the theorem.

\begin{theorem}[Reidemeister's theorem\index{Reidemeister's theorem} {\cite{reidemeister}}]\label{theorem:reidemeisters} {\ }
 \begin{enumerate}
    \item Any link $L$ is equivalent to a link which has a link diagram.
    \item Two links $L_1,L_2$ with diagrams $D_1,D_2$ are equivalent if and only if their diagrams are related by a finite sequence of \textit{Reidemeister moves}\index{Reidemeister moves} and \textit{planar isotopies}\index{planar isotopies}.
 \end{enumerate}
 \begin{figure}[H]
    \centering
    \begin{minipage}{0.3\textwidth}
    \begin{center}
     \begin{tikzpicture}[scale=0.3]
      \begin{scope}
       \draw[black] (0,2) -- (0,-2);
       \draw[white,ultra thick] (0,0) circle(2);
       \draw[black,dashed] (0,0) circle(2);
      \end{scope}
      \begin{scope}[xshift=7cm]
       \node[black,scale=4] at (-3.5,0) {$\bm{\sim}$};
       \node (a) at (0.2,0) {};
       \draw[black] 
        (0,2) .. controls (0,1) and (a.2 north west) ..
        (a.center) .. controls (a.16 south east) and (a.16 north east) ..
        (a) .. controls (a.2 south west) and (0,-1) ..
        (0,-2);
        \draw[white,ultra thick] (0,0) circle(2);
        \draw[black,dashed] (0,0) circle(2);
      \end{scope}
     \end{tikzpicture}
     \par{Type I - $(\RI)$}
    \end{center}
    \end{minipage}%
    \begin{minipage}{0.3\textwidth}
    \begin{center}
        \begin{tikzpicture}[scale=0.3]
            \begin{scope}
            \draw[black] (100:2) -- (260:2);
            \draw[black] ( 80:2) -- (280:2);
            \draw[white,ultra thick] (0,0) circle(2);
            \draw[black,dashed] (0,0) circle(2);
            \end{scope}
            \begin{scope}[xshift=7cm]
            \node[black,scale=4] at (-3.5,0) {$\bm{\sim}$};
            \draw[black] (100:2) .. controls ( 0.6,0) .. (260:2); 
            \draw[draw=white,double=black,ultra thick,,double distance=0.5pt] ( 80:2) .. controls (-0.6,0) .. (280:2);
            \draw[white,ultra thick] (0,0) circle(2);
            \draw[black,dashed] (0,0) circle(2);
            \end{scope}
        \end{tikzpicture}
    \par{Type II - $(\RII)$}
    \end{center}
    \end{minipage}%
    \begin{minipage}{0.3\textwidth}
        \begin{center}
        \begin{tikzpicture}[scale=0.3]
        \begin{scope}
        \draw[draw=white,double=black ,ultra thick,double distance=0.5pt] (  0:2) .. controls (270:0.7) .. (180:2);
        \draw[draw=white,double=black  ,ultra thick,double distance=0.5pt] ( 60:2) .. controls (150:0.7) .. (240:2);
        \draw[draw=white,double=black,ultra thick,double distance=0.5pt] (120:2) .. controls ( 30:0.7) .. (300:2);
        \draw[white,ultra thick] (0,0) circle(2);
        \draw[black,dashed] (0,0) circle(2);
        \end{scope}
        \begin{scope}[xshift=7cm]
        \node[black,scale=4] at (-3.5,0) {$\bm{\sim}$};
        \draw[draw=white,double=black ,ultra thick,double distance=0.5pt] (  0:2) .. controls ( 90:0.7) .. (180:2);
        \draw[draw=white,double=black  ,ultra thick,double distance=0.5pt] ( 60:2) .. controls (330:0.7) .. (240:2);
        \draw[draw=white,double=black,ultra thick,double distance=0.5pt] (120:2) .. controls (210:0.7) .. (300:2);
        \draw[white,ultra thick] (0,0) circle(2);
        \draw[black,dashed] (0,0) circle(2);
        \end{scope}
        \end{tikzpicture}
    \par{Type III - $(\RIII)$}
    \end{center}
    \end{minipage}
 \caption{Reidemeister moves}
 \label{figure:reidemeiser_moves}
 \end{figure}
\end{theorem}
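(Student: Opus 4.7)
The plan is to prove the two parts separately and to treat the second part by induction on the number of triangle moves needed to realize the equivalence.

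For part (1), I would start with an arbitrary smooth link $L \subset \RR^3$ and approximate it by a piecewise linear representative (this is standard for tame links, which is our standing assumption). Fixing the projection $\rho\colon\RR^3\to\RR^2$ to the $xy$-plane, I would then invoke a transversality/general position argument: the set of configurations of vertices for which (i) every segment projects injectively, (ii) no two non-adjacent projected segments share an endpoint, and (iii) no three projected segments meet in a common point is open and dense in the space of vertex placements. Therefore arbitrarily small triangle-moves on $L$ produce an equivalent link whose projection is a link universe, and recording the over/under information at each double point gives the required link diagram.

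For part (2), the reverse implication ($\Leftarrow$) is the easy direction: each Reidemeister move $\RI$, $\RII$, $\RIII$ and each planar isotopy of a diagram is visibly realized by a short sequence of triangle-moves in $\RR^3$, so by the characterization recalled from \cite[Prop.~1.10]{BurdeZieschang} the two links are equivalent. For the forward implication ($\Rightarrow$), by that same characterization there is a finite sequence of triangle-moves connecting $L_1$ and $L_2$, and by induction it suffices to analyze a single triangle-move $\Delta$ that replaces an edge $e$ with a pair of edges $e',e''$ spanning a triangle $T \subset \RR^3$. The plan is to subdivide $T$ into a fine triangulation by successive midpoint subdivisions until each closed sub-triangle $T_i$ is in \textit{elementary position}, meaning its projection $\rho(T_i)$ meets $\rho(L \setminus \partial T)$ in at most one of a small catalogue of local configurations: empty interior, one transverse arc through the interior, one crossing of the ambient diagram in the interior, or one vertex of the ambient diagram on the boundary. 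Since each sub-triangle-move is itself a legal triangle-move, chaining the induced diagram changes across the subdivision recovers the effect of $\Delta$.

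The core of the argument, and also the main obstacle, is the case analysis that matches each elementary configuration on the projected sub-triangle with a Reidemeister move or a planar isotopy. The empty case is a planar isotopy; a single transverse arc crossing the replaced edge and the new pair of edges corresponds to a $\RII$ move; a crossing of the diagram sitting inside the sub-triangle corresponds to a $\RIII$ move (three strands are rearranged past the crossing); and a strand whose projection tangents the edge at the changed side and forms a small loop gives an $\RI$ move. The delicate part is bookkeeping the \emph{height} information: because $T$ need not be horizontal, a strand of $L$ whose projection enters $\rho(T_i)$ may pass either above or below the triangle in $\RR^3$, and these two choices produce genuinely different over/under patterns on the resulting diagrams. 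I would handle this by orienting $T_i$ via its ambient normal, classifying each interior strand by its sign relative to $T_i$, and verifying that each sign pattern yields one of $\RI, \RII, \RIII$ (or its inverse, which is the same set of moves). Refining the triangulation until every sub-triangle falls into this finite list, and concatenating the resulting Reidemeister sequences, produces a finite sequence of moves taking $D_1$ to $D_2$ and completes the proof.
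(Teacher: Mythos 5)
Your proposal follows essentially the same route as the paper: both reduce the statement to showing that a single triangle-move, after suitable subdivision of the triangle, induces only planar isotopies and the three Reidemeister moves on the projected diagrams. The paper only gestures at this with a two-case sketch and a figure, whereas you supply the general-position argument for part (1), the easy reverse implication, the subdivision into elementary sub-triangles, and the over/under bookkeeping --- so your write-up is a correct and more complete version of the argument the paper intends.
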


\begin{remark}
 The figures above show regions of the diagram, when the move is made the rest of the diagram is left unchanged (hence why we put dashed lines: to remind ourselves that we should only think of the enclosed region for the moment).
\end{remark}

We note that some texts include planar isotopy as a Type O - $(\RO)$ move. Intuitively, this means moving the knot around by stretching or compressing.

The diagrams are said to be \textit{regularly isotopic}\index{regularly isotopic} if a Type I move is not used. So the theorem shows that there is a bijective correspondence between the set of equivalence classes, i.e.,
(piecewise linear links in $\RR^3$ with $\Delta$-move equivalence) $\leftrightarrow$ (piecewise linear link diagrams in $\RR^2$ with R-move equivalence).

\begin{proof}[Proof (sketch) of Theorem \ref{theorem:reidemeisters}]
 We show that any triangle-move can be reduced to a sequence of Reidemeister moves and planar isotopies. There are two cases: first being a planar isotopy - a regular triangle-move (Figure \ref{figure:triangle_move}); the second is when the strands of the link come in contact with our chosen triangle which gives the three Reidemeister moves as shown in Figure \ref{figure:triangle-move-types}.
 \begin{figure}[H]
    \centering
    \begin{tikzpicture}[scale=0.5]
     \begin{scope}
      \draw[thin,dashed] (0,0)--(2,0)--(1,2)--(0,0);
      \draw[draw=white,double=black,ultra thick,,double distance=1.2pt] (2,0)--(-0.5,2);
      \draw[ultra thick] (0,0)--(2,0);
      \draw[thin,dashed] (2,0)--(1,2);
      \draw (2.8,1.5) node[scale=2] {$\overset{\Delta}{\longmapsto}$};
      \draw (2.8,0.5) node[scale=2] {$\underset{\Delta^{-1}}{\longmapsfrom}$};
      \draw (3,-1) node[scale=2] {$\RI$};
     \end{scope}
     \begin{scope}[xshift=4cm]
      \draw[ultra thick] (0,0)--(1,2);
      \draw[draw=white,double=black,ultra thick,,double distance=1.2pt] (2,0)--(-0.5,2);
      \draw[thin,dashed] (0,0)--(2,0);
      \draw[ultra thick] (1,2)--(2,0);
     \end{scope}
     \begin{scope}[xshift=9cm]
      \draw[thin,dashed] (0,0)--(2,0)--(1,2)--(0,0);
      \draw[ultra thick] (0,0)--(1,2);
      \draw[ultra thick,draw=white,double=black,double distance=1.2pt] (1.6,2.1)--(1.2,-0.3);
      \draw (3,1.5) node[scale=2] {$\overset{\Delta}{\longmapsto}$};
      \draw (3,0.5) node[scale=2] {$\underset{\Delta^{-1}}{\longmapsfrom}$};
      \draw (3,-1) node[scale=2] {$\RII$};
     \end{scope}
     \begin{scope}[xshift=13cm]
      \draw[thin,dashed] (0,0)--(2,0)--(1,2)--(0,0);
      \draw[ultra thick] (1,2)--(2,0);
      \draw[ultra thick] (2,0)--(0,0);
      \draw[ultra thick,draw=white,double=black,double distance=1.2pt] (1.6,2.1)--(1.2,-0.3);
     \end{scope}
     \begin{scope}[xshift=18cm]
      \draw[thin,dashed] (0,0)--(2,0)--(1,2)--(0,0);
      \draw[ultra thick] (0,0)--(1,2);
      \draw[ultra thick,draw=white,double=black,double distance=1.2pt] (1.4,2.1)--(1,-0.3);
      \draw[ultra thick,draw=white,double=black,double distance=1.2pt] (2.1,2.1)--(0.4,-0.3);
      \draw (3,1.5) node[scale=2] {$\overset{\Delta}{\longmapsto}$};
      \draw (3,0.5) node[scale=2] {$\underset{\Delta^{-1}}{\longmapsfrom}$};
      \draw (3,-1) node[scale=2] {$\RIII$};
     \end{scope}
     \begin{scope}[xshift=22cm]
      \draw[thin,dashed] (0,0)--(2,0)--(1,2)--(0,0);
      \draw[ultra thick] (1,2)--(2,0)--(0,0);
      \draw[ultra thick,draw=white,double=black,double distance=1.2pt] (1.4,2.1)--(1,-0.3);
      \draw[ultra thick,draw=white,double=black,double distance=1.2pt] (2.1,2.1)--(0.4,-0.3);
     \end{scope}
    \end{tikzpicture}
    \caption{Triangle moves $\leftrightarrow$ Reidemeister moves}
    \label{figure:triangle-move-types}
 \end{figure}
\end{proof}

Our strategy for proving that a function is a link invariant (see \ref{section:invariants}), is to show that it preserves the Reidemeister moves.

\subsection{Orientation}

We now discuss orientations, and see how the figure eight knot is equivalent to its reflection.

An \textit{orientation}\index{orientation} for a link is a choice of direction along each of the components. This is indicated on the corresponding link diagram by adding an arrow to each component. The components of an $n$-component link can be oriented in $2^n$ ways.
\begin{definition}[sign]
 The \textit{sign}\index{sign} $\epsilon(C)=\pm1$ of an oriented crossing $C$ is given by the figure below.
 \begin{figure}[H]
    \centering
    \begin{tikzpicture}[scale=0.5]
     \begin{scope}
      \draw[thin, ->] (0,0)--(2,2);
      \draw[ultra thick,draw=white,double=black,double distance=0.5pt] (0,2)--(2,0);
      \draw[ultra thin,->] (0,2)--(2,0);
      \draw[thin,->,dashed] (0.2,0) .. controls (1,0.8) .. (1.7,0);
      \draw (1,0.2) node[scale=1.7] {$+$};
      \draw (1,-0.5) node[scale=2] {positive crossing};
     \end{scope}
     \begin{scope}[xshift=7cm]
      \draw[thin,->] (0,2)--(2,0);
      \draw[ultra thick,draw=white,double=black,double distance=0.5pt] (0,0)--(2,2);
      \draw[ultra thin,->] (0,0)--(2,2);
      \draw[thin,->,dashed] (0.2,2) .. controls (1,1.2) .. (1.7,2);
      \draw (1,1.8) node[scale=1.7] {$-$};
      \draw (1,-0.5) node[scale=2] {negative crossing};
     \end{scope}
    \end{tikzpicture}
    \caption{sign of crossing}
    \label{figure:crossings}
 \end{figure}
\end{definition}

\begin{remark}
 Figure \ref{figure:crossings} can be thought of as moving along the orientation of the under-strand until we reach a crossing where we need to ``jump'' to the over strand; if we go to the right at the ``jump'' then the sign is positive, and if we go to the left then it is negative.
\end{remark}

\begin{definition}
 The \textit{writhe}\index{writhe} $\omega(D)$ of a link diagram $D$ is the sum of all the crossing signs, i.e. $\omega(D)=\sum_{C\textnormal{ a crossing in }D}\epsilon(C)$.
\end{definition}

\begin{example}
 The following shows an example of finding the writhe of the left-handed trefoil, Hopf link, and the unlink (with a twist).
 \begin{figure}[H]
    \centering
    \begin{tikzpicture}[scale=0.5]
     \begin{scope}
      \begin{scope}[rotate=  0] \node (a) at (0,-1) {}; \end{scope}
      \begin{scope}[rotate=120] \node (b) at (0,-1) {}; \end{scope}
      \begin{scope}[rotate=240] \node (c) at (0,-1) {}; \end{scope}
      \draw (a) .. controls (a.4 north west) and (c.4 north east) .. (c.center);
      \draw (b) .. controls (b.4 north west) and (a.4 north east) .. (a.center);
      \draw (c) .. controls (c.4 north west) and (b.4 north east) .. (b.center);
      \draw (a.center) .. controls (a.16 south west) and (c.16 south east) .. (c);
      \draw (b.center) .. controls (b.16 south west) and (a.16 south east) .. (a);
      \draw (c.center) .. controls (c.16 south west) and (b.16 south east) .. (b);
      \draw[->] ( 90:2.035) -- +(  0:0.01);
      \draw (0,-2.5) node[scale=1.8] {$\omega(D_1)=-3$};
      \draw (c.east) node[anchor=south east] {$-1$};
      \draw (a.west) node[anchor=north] {$-1$};
      \draw (b.west) node[anchor=south west] {$-1$};
     \end{scope}
     \begin{scope}[xshift=7cm,scale=3]
      \draw (-0.25,0) +( 70: 0.5) arc( 70:410: 0.5);
      \draw ( 0.25,0) +( 70:-0.5) arc( 70:410:-0.5);
      \draw[->] (-0.25, 0.50) -- +(-0.01,0);
      \draw[->] ( 0.25, 0.50) -- +( 0.01,0);
      \draw (0,-.85) node[scale=1.8/3] {$\omega(D_2)=2$};
      \draw (0,0.55) node[scale=1/3] {$+1$};
      \draw (0,-0.55) node[scale=1/3] {$+1$};
     \end{scope}
     \begin{scope}[xshift=12cm]
      \draw (0,0) +(60: 0.8) arc(60:300: 0.8);
      \draw (2,0) +(60:-0.8) arc(60:300:-0.8);
      \draw (2,0) ++( 60:-0.8) -- ( 60:0.8);
      \draw[draw=white,double=black,ultra thick,double distance=0.4pt] (2,0) ++(300:-0.8) -- (300:0.8);
      \draw[->] (0,0.8) -- +(-0.01,0);
      \draw (1,-2.5) node[scale=1.8] {$\omega(D_3)=1$};
      \draw (1,0.5) node {$+1$};
     \end{scope}
    \end{tikzpicture}
    \caption{writhe example}
    \label{example:writhe-example}
\end{figure}
\end{example}

\begin{definition}[reverse]
 The \textit{reverse}\index{reverse} of a link is done by reversing the orientations of all the components in its corresponding diagram. For a link $L$, we denote the reverse by $rL$.
\end{definition}

\begin{remark}
 It is easy to see that the writhe of the diagrams in Example \ref{example:writhe-example} remains the same when reversing the orientation.
\end{remark}

We can generalise this remark. By looking at a single crossing, we can see that changing all orientations of all the components in a link does not change the sign. So the signs of the following are the same.
\[
 \epsilon\left(\signpositive\right)
 = +1 =
 \epsilon\left(\reflectbox{\signnegative}\right)
\]
Similarly,
\[
 \epsilon\left(\signnegative\right) = -1 =
 \epsilon\left(\reflectbox{\signpositive}\right)
\]
Thus, we have $\omega(L)=\omega(rL)$. And so this means the writhe of an unoriented knot diagram is well defined.

\begin{definition}[reflection]
 The \textit{reflection}\index{reflection} of a link is done by replacing all over-strands with under-strands at each crossing and vice versa of the original link's link diagram. For link $L$, we say $\overline{L}$ is the reflection of $L$.
\end{definition}

By the end of the next chapter, we will show that the trefoil and its reflection are in fact two distinct knots.
For now, however, we will show that both the figure eight knot and its reflection are equivalent.

\begin{proposition}
 The figure eight knot $4_1$ is equivalent to its reflection $\overline{4_1}$.
\end{proposition}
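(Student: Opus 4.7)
The plan is to invoke Reidemeister's theorem (Theorem~\ref{theorem:reidemeisters}): it suffices to exhibit a diagram $D$ of $4_1$ and to transform it, by a finite sequence of Reidemeister moves and planar isotopies, into a diagram of its mirror $\overline{4_1}$. The strategy I would pursue exploits a particularly symmetric presentation of $4_1$, so that an ambient rotation of $\RR^3$ automatically carries the diagram to its reflection.

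Specifically, I would draw $4_1$ as a $4$-crossing alternating diagram $D$ whose underlying link universe (the projection with the over/under data suppressed) is invariant under reflection across some axis $\ell$ lying in the projection plane. The standard pictorial depiction of the figure eight knot admits such a presentation after a mild planar isotopy; this can be verified by explicit inspection of a hand-drawn diagram that is symmetric about a horizontal line through its center.

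The key step is then a geometric observation. Let $R \colon \RR^3 \to \RR^3$ be rotation by $180^\circ$ about $\ell$. Because $\ell$ lies in the projection plane, $R$ reverses the coordinate normal to that plane, so it interchanges over-strands and under-strands at every crossing of the projected diagram. The underlying link universe, being symmetric under the restriction of $R$ to the projection plane, is carried to itself setwise by $R$. Hence the rotated knot $R(4_1)$ has a diagram that coincides with $\overline{D}$. Since $R$ is smoothly isotopic to the identity through the one-parameter family of rotations about $\ell$ by angle $t\pi$, $t\in[0,1]$, it is an ambient isotopy in the sense of Definition~\ref{definition:isotopy}, and therefore $4_1 \sim \overline{4_1}$.

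The main obstacle is the first step: confirming that $4_1$ admits a diagram whose underlying link universe has an in-plane reflection symmetry. If such a symmetric presentation could not be exhibited directly, the alternative would be to construct an explicit sequence of Reidemeister moves converting $D$ to $\overline{D}$, which is doable but requires careful diagrammatic bookkeeping, probably via two $\RII$ moves sandwiched between $\RIII$ moves. With the symmetric diagram in hand, however, the argument reduces to the geometric fact that the $180^\circ$ rotation in $\RR^3$ about an in-plane axis is an ambient isotopy and simultaneously swaps the over/under information at every crossing.
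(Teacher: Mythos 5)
There is a genuine gap at the step ``the underlying link universe \ldots is carried to itself setwise by $R$, hence $R(4_1)$ has a diagram that coincides with $\overline{D}$.'' Symmetry of the \emph{universe} alone cannot possibly suffice: the standard $3$-crossing universe of the trefoil is invariant under reflection across an in-plane axis, yet the paper itself shows $3_1\not\sim\overline{3_1}$, so your argument, applied verbatim to the trefoil, would prove a false statement. The concrete error is in what $R$ does to the decorated diagram. Writing $\rho$ for the restriction of $R$ to the projection plane, the diagram of $R(K)$ at a crossing $q$ is the local picture of $D$ at $\rho(q)$, reflected by $\rho$, with over/under swapped. This equals $\overline{D}$ (same universe, all crossings switched in place) only if the \emph{decorated} diagram --- universe together with its over/under data --- is itself $\rho$-invariant, which is a much stronger hypothesis than the one you impose and is exactly the part that distinguishes $4_1$ from $3_1$. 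Put differently: $R$ is orientation-preserving, so $R(K)\sim K$ for every knot $K$; the argument can only produce the nontrivial conclusion $K\sim\overline{K}$ if you actually verify the diagram identity $R(D)=\overline{D}$, and that verification requires inspecting the crossings, not just the shadow.

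The fix is either (a) to exhibit an explicit diagram of $4_1$ and check, crossing by crossing, that some rigid motion isotopic to the identity carries the decorated diagram to its mirror (the classical demonstration uses a quarter-turn about the axis perpendicular to the plane applied to the standard alternating diagram), or (b) to fall back on an explicit sequence of Reidemeister moves from $D$ to $\overline{D}$. The paper takes route (b), producing a chain of $\RI$, $\RII$ and $\RIII$ moves together with planar isotopies; your ``alternative'' paragraph correctly identifies this as the safe option, but as written the main argument does not go through.
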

\begin{proof}
 We show this using the Reidemeister moves.
 \begin{center}\begin{longtable}{c>{\centering\arraybackslash}m{25mm}c>{\centering\arraybackslash}m{25mm}c>{\centering\arraybackslash}m{28mm}}
  &
  \begin{center}\begin{tikzpicture}[scale=0.6,x=0.75pt,y=0.75pt,yscale=-1,xscale=1,baseline=-\dimexpr\fontdimen22\textfont2\relax]
   \draw    (100.7,131) .. controls (132.22,133) and (134.84,33) .. (91.08,20) .. controls (47.31,7) and (54.31,47) .. (70.94,70) ;
   \draw    (56.06,123) .. controls (37.68,92) and (117.34,85) .. (70.07,35) ;
   \draw    (51.69,25) .. controls (24.55,9) and (-15.71,132) .. (82.32,132) ;
   \draw    (85.82,86) .. controls (119.96,142) and (60.44,155) .. (56.94,137) ;
  \end{tikzpicture}\end{center}
  &$\overset{\RII}{\sim}$&
  \begin{center}\begin{tikzpicture}[scale=0.6,x=0.75pt,y=0.75pt,yscale=-1,xscale=1,baseline=-\dimexpr\fontdimen22\textfont2\relax]
   \draw    (90.7,125.8) .. controls (122.22,127.8) and (124.84,27.8) .. (81.08,14.8) .. controls (37.31,1.8) and (44.31,41.8) .. (60.94,64.8) ;
   \draw    (63,75.9) .. controls (82,52.9) and (65,31.9) .. (55,25.9) ;
   \draw    (41.69,18.8) .. controls (35.17,14.96) and (23.91,29.46) .. (30,47.9) .. controls (36.09,66.34) and (74,84.9) .. (68,95.9) .. controls (62,106.9) and (13.07,59.9) .. (10.04,71.9) .. controls (7,83.9) and (17,128.9) .. (72.32,125.8) ;
   \draw    (71.57,75.35) .. controls (116.57,125.35) and (52.57,153.35) .. (44.5,130) ;
   \draw    (57,82.9) -- (52,89.9) ;
   \draw    (47,96.8) .. controls (43.5,102) and (41.5,108) .. (42.5,116) ;
  \end{tikzpicture}\end{center}
  &$\overset{\RII}{\sim}$&
  \begin{center}\begin{tikzpicture}[scale=0.6,x=0.75pt,y=0.75pt,yscale=-1,xscale=1,baseline=-\dimexpr\fontdimen22\textfont2\relax]
   \draw    (88.7,124.8) .. controls (120.22,126.8) and (109.59,27.15) .. (79.08,13.8) .. controls (48.57,0.45) and (48.57,21.45) .. (46.57,27.45) ;
   \draw    (61.57,72.45) .. controls (80.57,49.45) and (73.57,17.45) .. (57.57,16.45) ;
   \draw    (42.57,17.45) .. controls (5.57,21.45) and (56.57,30.45) .. (57.57,41.45) .. controls (58.57,52.45) and (21.91,28.46) .. (28,46.9) .. controls (34.09,65.34) and (72,83.9) .. (66,94.9) .. controls (60,105.9) and (11.07,58.9) .. (8.04,70.9) .. controls (5,82.9) and (15,127.9) .. (70.32,124.8) ;
   \draw    (72.57,68.72) .. controls (124.57,112.72) and (49.57,155) .. (42.5,129) ;
   \draw    (55,81.9) -- (50,88.9) ;
   \draw    (45,95.8) .. controls (41.5,101) and (39.5,107) .. (40.5,115) ;
   \draw    (45.57,34.45) -- (44.57,40.45) ;
   \draw    (44.57,46.45) .. controls (45.57,54.45) and (54.57,52.45) .. (63.57,59.45) ;
  \end{tikzpicture}\end{center}\\
  $\overset{\RIII}{\sim}$&
  \begin{center}\begin{tikzpicture}[scale=0.6,x=0.75pt,y=0.75pt,yscale=-1,xscale=1,baseline=-\dimexpr\fontdimen22\textfont2\relax]
   \draw    (90.2,124.8) .. controls (121.72,126.8) and (111.09,27.15) .. (80.58,13.8) .. controls (50.07,0.45) and (50.07,21.45) .. (48.07,27.45) ;
   \draw    (73.95,37.32) .. controls (77.95,23.32) and (75.07,17.45) .. (59.07,16.45) ;
   \draw    (44.07,17.45) .. controls (7.07,21.45) and (80.58,36.85) .. (86.95,58.32) .. controls (93.32,79.78) and (78.95,98.32) .. (64.95,99.32) .. controls (50.95,100.32) and (19.95,62.32) .. (9.54,70.9) .. controls (-0.88,79.48) and (16.5,127.9) .. (71.82,124.8) ;
   \draw    (86.95,88.32) .. controls (108.95,120.32) and (51.07,155) .. (44,129) ;
   \draw    (46.5,95.8) .. controls (43,101) and (41,107) .. (42,115) ;
   \draw    (45.95,35.32) .. controls (43.95,46.32) and (56.07,52.45) .. (65.07,59.45) ;
   \draw    (72.95,66.32) .. controls (78.95,70.32) and (76.95,75.32) .. (80.95,80.32) ;
   \draw    (52.35,86.52) .. controls (55.35,80.52) and (69.95,73.32) .. (72.07,48.45) ;
  \end{tikzpicture}\end{center}
  &$\overset{\RII}{\sim}$&
  \begin{center}\begin{tikzpicture}[scale=0.6,x=0.75pt,y=0.75pt,yscale=-1,xscale=1,baseline=-\dimexpr\fontdimen22\textfont2\relax]
   \draw    (65.2,126.8) .. controls (96.72,128.8) and (86.09,29.15) .. (55.58,15.8) .. controls (25.07,2.45) and (25.07,23.45) .. (23.07,29.45) ;
   \draw    (48.95,39.32) .. controls (52.95,25.32) and (50.07,19.45) .. (34.07,18.45) ;
   \draw    (19.07,19.45) .. controls (-17.93,23.45) and (55.58,38.85) .. (61.95,60.32) .. controls (68.32,81.78) and (52.4,96.13) .. (39.95,101.32) .. controls (27.5,106.5) and (33.5,117.5) .. (52.5,122.5) ;
   \draw    (61.95,90.32) .. controls (83.95,122.32) and (23.5,156.5) .. (19,131) ;
   \draw    (27.35,88.52) .. controls (16.5,102.5) and (17.5,123.5) .. (19,131) ;
   \draw    (20.95,37.32) .. controls (18.95,48.32) and (31.07,54.45) .. (40.07,61.45) ;
   \draw    (47.95,68.32) .. controls (53.95,72.32) and (51.95,77.32) .. (55.95,82.32) ;
   \draw    (27.35,88.52) .. controls (30.35,82.52) and (44.95,75.32) .. (47.07,50.45) ;
  \end{tikzpicture}\end{center}
  &$\overset{\RIII}{\sim}$&
  \begin{center}\begin{tikzpicture}[scale=0.6,x=0.75pt,y=0.75pt,yscale=-1,xscale=1,baseline=-\dimexpr\fontdimen22\textfont2\relax]
   \draw    (78.2,132.8) .. controls (109.72,134.8) and (105.5,71.32) .. (102.5,61.32) .. controls (99.5,51.32) and (91.5,37.32) .. (76.5,25.32) ;
   \draw    (64.5,47.72) .. controls (66.5,41.72) and (69.5,37.72) .. (65.5,34.72) ;
   \draw    (23.5,15.32) .. controls (3.5,10.32) and (6.5,23.32) .. (15.5,26) .. controls (24.5,28.68) and (26.42,18.12) .. (35.46,11.92) .. controls (44.5,5.72) and (89.5,4.72) .. (83.5,13) .. controls (77.5,21.28) and (65.5,24) .. (56.5,37) .. controls (47.5,50) and (69.86,49.15) .. (74.95,66.32) .. controls (80.04,83.49) and (65.4,102.13) .. (52.95,107.32) .. controls (40.5,112.5) and (46.5,123.5) .. (65.5,128.5) ;
   \draw    (74.95,96.32) .. controls (96.95,128.32) and (36.5,162.5) .. (32,137) ;
   \draw    (40.35,94.52) .. controls (29.5,108.5) and (30.5,129.5) .. (32,137) ;
   \draw    (69.5,19.32) .. controls (56.5,7.32) and (40.5,19.32) .. (40.5,35.32) .. controls (40.5,51.32) and (44.5,62.32) .. (53.07,67.45) ;
   \draw    (60.95,74.32) .. controls (66.95,78.32) and (64.95,83.32) .. (68.95,88.32) ;
   \draw    (40.35,94.52) .. controls (43.35,88.52) and (57.5,79.72) .. (62.5,56.72) ;
   \draw    (58.5,27.72) .. controls (55.5,24.72) and (53.5,23.72) .. (49.5,22.72) ;
   \draw    (33.5,18.32) -- (39.5,20.32) ;
  \end{tikzpicture}\end{center}\\
  $\overset{\RI}{\sim}$&
  \begin{center}\begin{tikzpicture}[scale=0.6,x=0.75pt,y=0.75pt,yscale=-1,xscale=1,baseline=-\dimexpr\fontdimen22\textfont2\relax]
   \draw    (55.7,139.8) .. controls (87.22,141.8) and (83,78.32) .. (80,68.32) .. controls (77,58.32) and (69,44.32) .. (54,32.32) ;
   \draw    (42,54.72) .. controls (44,48.72) and (47,44.72) .. (43,41.72) ;
   \draw    (19.9,24.78) .. controls (5.8,17.25) and (23.9,5.78) .. (35.9,4.78) .. controls (47.9,3.78) and (67,11.72) .. (61,20) .. controls (55,28.28) and (43,31) .. (34,44) .. controls (25,57) and (47.36,56.15) .. (52.45,73.32) .. controls (57.54,90.49) and (42.9,109.13) .. (30.45,114.32) .. controls (18,119.5) and (24,130.5) .. (43,135.5) ;
   \draw    (52.45,103.32) .. controls (74.45,135.32) and (14,169.5) .. (9.5,144) ;
   \draw    (17.85,101.52) .. controls (7,115.5) and (8,136.5) .. (9.5,144) ;
   \draw    (47,26.32) .. controls (34,14.32) and (18,26.32) .. (18,42.32) .. controls (18,58.32) and (22,69.32) .. (30.57,74.45) ;
   \draw    (38.45,81.32) .. controls (44.45,85.32) and (42.45,90.32) .. (46.45,95.32) ;
   \draw    (17.85,101.52) .. controls (20.85,95.52) and (35,86.72) .. (40,63.72) ;
   \draw    (36,34.72) .. controls (33,31.72) and (31,30.72) .. (27,29.72) ;
  \end{tikzpicture}\end{center}
  &$\overset{\RII}{\sim}$&
  \begin{center}\begin{tikzpicture}[scale=0.6,x=0.75pt,y=0.75pt,yscale=-1,xscale=1,baseline=-\dimexpr\fontdimen22\textfont2\relax]
   \draw    (62.7,143.8) .. controls (94.22,145.8) and (90,82.32) .. (87,72.32) .. controls (84,62.32) and (76,48.32) .. (61,36.32) ;
   \draw    (26.9,28.78) .. controls (12.8,21.25) and (30.9,9.78) .. (42.9,8.78) .. controls (54.9,7.78) and (74.45,15.45) .. (63.45,24.45) .. controls (52.45,33.45) and (54.45,55.45) .. (66.45,70.45) .. controls (78.45,85.45) and (49.9,113.13) .. (37.45,118.32) .. controls (25,123.5) and (31,134.5) .. (50,139.5) ;
   \draw    (59.45,107.32) .. controls (81.45,139.32) and (21,173.5) .. (16.5,148) ;
   \draw    (24.85,105.52) .. controls (14,119.5) and (15,140.5) .. (16.5,148) ;
   \draw    (54,30.32) .. controls (41,18.32) and (25,30.32) .. (25,46.32) .. controls (25,62.32) and (29,73.32) .. (37.57,78.45) ;
   \draw    (45.45,85.32) .. controls (51.45,89.32) and (49.45,94.32) .. (53.45,99.32) ;
   \draw    (24.85,105.52) .. controls (27.85,99.52) and (67.45,64.45) .. (33.45,35.45) ;
  \end{tikzpicture}\end{center}
  &$\overset{\RII}{\sim}$&
  \begin{center}\begin{tikzpicture}[scale=0.6,x=0.75pt,y=0.75pt,yscale=-1,xscale=1,baseline=-\dimexpr\fontdimen22\textfont2\relax]
   \draw    (69.37,143) .. controls (99.37,143) and (94,82.85) .. (91,72.85) .. controls (88,62.85) and (80,48.85) .. (65,36.85) ;
   \draw    (30.9,29.32) .. controls (16.8,21.78) and (34.9,10.32) .. (46.9,9.32) .. controls (58.9,8.32) and (78.45,15.98) .. (67.45,24.98) .. controls (56.45,33.98) and (58.45,55.98) .. (70.45,70.98) .. controls (82.45,85.98) and (53.37,114.13) .. (30.37,123) ;
   \draw    (63.45,107.85) .. controls (75.37,121) and (60.37,153) .. (50.37,155) .. controls (40.37,157) and (33.37,147) .. (28.37,138) .. controls (23.37,129) and (23.37,114) .. (28.85,106.05) ;
   \draw    (18.37,127) .. controls (3.37,133) and (12.37,154) .. (29.37,149) ;
   \draw    (58,30.85) .. controls (45,18.85) and (29,30.85) .. (29,46.85) .. controls (29,62.85) and (33,73.85) .. (41.57,78.98) ;
   \draw    (49.45,85.85) .. controls (55.45,89.85) and (53.45,94.85) .. (57.45,99.85) ;
   \draw    (28.85,106.05) .. controls (32.37,96) and (71.45,64.98) .. (37.45,35.98) ;
   \draw    (40.37,146) .. controls (48.37,145) and (48.37,144) .. (56.37,143) ;
  \end{tikzpicture}\end{center}\\
  $\overset{\RIII}{\sim}$&
  \begin{center}\begin{tikzpicture}[scale=0.6,x=0.75pt,y=0.75pt,yscale=-1,xscale=1,baseline=-\dimexpr\fontdimen22\textfont2\relax]
   \draw    (52.13,142.82) .. controls (91.23,154.75) and (84.23,84.67) .. (80,71.65) .. controls (75.77,58.63) and (69,47.65) .. (54,35.65) ;
   \draw    (17.9,28.12) .. controls (3.8,20.58) and (21.9,9.12) .. (33.9,8.12) .. controls (45.9,7.12) and (65.45,14.78) .. (54.45,23.78) .. controls (43.45,32.78) and (51.27,63.98) .. (57.45,69.78) .. controls (63.63,75.58) and (60.63,78.58) .. (56.63,86.58) ;
   \draw    (51.23,131.02) .. controls (53.23,146.02) and (17.13,169) .. (19.13,146) ;
   \draw    (36.63,75.42) .. controls (11.23,105.02) and (70.63,79.42) .. (72.63,95.42) .. controls (74.63,111.42) and (58.23,120.82) .. (49.23,125.82) .. controls (40.23,130.82) and (20.13,133) .. (19.13,146) ;
   \draw    (45,29.65) .. controls (32,17.65) and (16,29.65) .. (16,45.65) .. controls (16,61.65) and (23.63,67.42) .. (31.63,76.42) ;
   \draw    (36.63,75.42) .. controls (45.23,66.02) and (44.23,49.02) .. (24.45,34.78) ;
   \draw    (55.23,94.62) .. controls (48.23,110.62) and (32.63,112.58) .. (36.73,127.02) ;
   \draw    (39.73,134.02) .. controls (42.73,138.02) and (41.73,137.02) .. (43.73,139.02) ;
   \draw    (35.43,81.78) .. controls (38.43,84.78) and (45.63,82.42) .. (45.43,87.78) ;
   \draw    (46.23,94.62) -- (47.63,101.42) ;
   \draw    (48.23,108.02) .. controls (49.23,114.02) and (48.23,114.02) .. (49.23,120.02) ;
  \end{tikzpicture}\end{center}
  &$\overset{\RII}{\sim}$&
  \begin{center}\begin{tikzpicture}[scale=0.6,x=0.75pt,y=0.75pt,yscale=-1,xscale=1,baseline=-\dimexpr\fontdimen22\textfont2\relax]
   \draw    (60.23,92.65) .. controls (62.23,105.65) and (7.23,121) .. (14.23,157) .. controls (21.23,193) and (82.13,147.13) .. (86.67,133.81) .. controls (91.2,120.49) and (83.31,75.94) .. (81,68.83) .. controls (78.69,61.72) and (70,44.83) .. (55,32.83) ;
   \draw    (21.9,27.3) .. controls (7.8,19.77) and (25.9,8.3) .. (37.9,7.3) .. controls (49.9,6.3) and (69.45,13.97) .. (58.45,22.97) .. controls (47.45,31.97) and (53.23,55.65) .. (58.23,61.65) .. controls (63.23,67.65) and (60.23,76.65) .. (60.63,85.77) ;
   \draw    (55.23,130.2) .. controls (57.23,145.2) and (21.13,168.18) .. (23.13,145.18) ;
   \draw    (40.63,74.6) .. controls (15.23,104.2) and (74.63,78.6) .. (76.63,94.6) .. controls (78.63,110.6) and (62.23,120) .. (53.23,125) .. controls (44.23,130) and (24.13,132.18) .. (23.13,145.18) ;
   \draw    (49,28.83) .. controls (36,16.83) and (20,28.83) .. (20,44.83) .. controls (20,60.83) and (27.63,66.6) .. (35.63,75.6) ;
   \draw    (40.63,74.6) .. controls (49.23,65.2) and (48.23,48.2) .. (28.45,33.97) ;
   \draw    (39.43,80.97) .. controls (42.43,83.97) and (49.63,81.6) .. (49.43,86.97) ;
   \draw    (50.23,93.8) -- (51.63,100.6) ;
   \draw    (52.23,107.2) .. controls (53.23,113.2) and (52.23,113.2) .. (53.23,119.2) ;
  \end{tikzpicture}\end{center}
  &$\overset{\RII}{\sim}$&
  \begin{center}\begin{tikzpicture}[scale=0.6,x=0.75pt,y=0.75pt,yscale=-1,xscale=1,baseline=-\dimexpr\fontdimen22\textfont2\relax]
   \draw    (59.23,93.65) .. controls (61.23,106.65) and (6.23,122) .. (13.23,158) .. controls (20.23,194) and (77.45,146.78) .. (85.67,134.81) .. controls (93.88,122.83) and (82.31,76.94) .. (80,69.83) .. controls (77.69,62.72) and (69,45.83) .. (54,33.83) ;
   \draw    (20.9,28.3) .. controls (6.8,20.77) and (24.9,9.3) .. (36.9,8.3) .. controls (48.9,7.3) and (68.45,14.97) .. (57.45,23.97) .. controls (46.45,32.97) and (52.23,56.65) .. (57.23,62.65) .. controls (62.23,68.65) and (59.23,77.65) .. (59.63,86.77) ;
   \draw    (54.23,131.2) .. controls (56.23,146.2) and (20.13,169.18) .. (22.13,146.18) ;
   \draw    (27.7,35.83) .. controls (26.7,105.83) and (74.7,84.83) .. (75.63,95.6) .. controls (76.57,106.37) and (61.23,121) .. (52.23,126) .. controls (43.23,131) and (23.13,133.18) .. (22.13,146.18) ;
   \draw    (48,27.83) .. controls (35,15.83) and (24.3,26.83) .. (19,43.83) .. controls (13.7,60.83) and (17.26,66.05) .. (20.91,74.75) .. controls (24.56,83.44) and (31.7,89.83) .. (48.7,97.83) ;
   \draw    (51.23,108.2) .. controls (52.23,114.2) and (51.23,114.2) .. (52.23,120.2) ;
  \end{tikzpicture}\end{center}\\
  $\overset{\RI}{\sim}$&
  \begin{center}\begin{tikzpicture}[scale=0.6,x=0.75pt,y=0.75pt,yscale=-1,xscale=1,baseline=-\dimexpr\fontdimen22\textfont2\relax]
   \draw    (57.23,94.65) .. controls (59.23,107.65) and (4.23,123) .. (11.23,159) .. controls (18.23,195) and (75.45,147.78) .. (83.67,135.81) .. controls (91.88,123.83) and (80.31,77.94) .. (78,70.83) .. controls (75.69,63.72) and (67,46.83) .. (52,34.83) ;
   \draw    (18.9,29.3) .. controls (8.72,14.65) and (22.9,10.3) .. (34.9,9.3) .. controls (46.9,8.3) and (77.18,4.28) .. (55.45,24.97) .. controls (33.72,45.65) and (74.72,67.65) .. (59.72,84.65) ;
   \draw    (26.72,39.65) .. controls (51.72,65.65) and (30.72,77.65) .. (56.55,90.94) .. controls (82.37,104.23) and (78.72,113.65) .. (72.72,122.65) .. controls (66.72,131.65) and (36.88,145) .. (31.88,128) ;
   \draw    (46,28.83) .. controls (33,16.83) and (20.72,33.65) .. (13.72,50.65) .. controls (6.72,67.65) and (18.88,92) .. (25.88,112) ;
  \end{tikzpicture}\end{center}
  &$\overset{\RO}{\sim}$&
  \begin{center}\begin{tikzpicture}[scale=0.6,x=0.75pt,y=0.75pt,yscale=-1,xscale=1,baseline=-\dimexpr\fontdimen22\textfont2\relax]
   \draw    (42.78,84.68) .. controls (40.23,71.78) and (94.52,54.09) .. (85.99,18.43) .. controls (77.46,-17.24) and (22.31,32.38) .. (14.62,44.69) .. controls (6.92,57.01) and (20.44,102.36) .. (23.05,109.37) .. controls (25.67,116.37) and (35.07,132.87) .. (50.57,144.22) ;
   \draw    (83.88,148.33) .. controls (94.68,162.54) and (80.69,167.49) .. (68.74,169) .. controls (56.8,170.51) and (26.71,175.82) .. (47.54,154.23) .. controls (68.37,132.63) and (26.47,112.41) .. (40.73,94.78) ;
   \draw    (75.62,138.33) .. controls (49.53,113.42) and (70,100.53) .. (43.63,88.36) .. controls (17.26,76.18) and (20.51,66.62) .. (26.12,57.37) .. controls (31.73,48.12) and (60.96,33.51) .. (66.68,50.28) ;
   \draw    (56.82,149.96) .. controls (70.32,161.39) and (81.87,144.07) .. (88.14,126.78) .. controls (94.41,109.5) and (81.21,85.69) .. (73.36,66.01) ;
  \end{tikzpicture}\end{center}
  &$\overset{\RO}{\sim}$&
  \begin{center}\begin{tikzpicture}[scale=0.6,x=0.75pt,y=0.75pt,yscale=-1,xscale=1,baseline=-\dimexpr\fontdimen22\textfont2\relax]
   \draw    (35.68,121) .. controls (-0.32,123) and (-3.32,23) .. (46.68,10) .. controls (96.68,-3) and (88.68,37) .. (69.68,60) ;
   \draw    (86.68,113) .. controls (107.68,82) and (16.68,75) .. (70.68,25) ;
   \draw    (91.68,15) .. controls (122.68,-1) and (168.68,122) .. (56.68,122) ;
   \draw    (52.68,76) .. controls (13.68,132) and (81.68,145) .. (85.68,127) ;
  \end{tikzpicture}\end{center}
 \end{longtable}\end{center}
\end{proof}

There exists some knots that have both its reverse and mirror to be distinct to its original knot. One such example is the knot $K=9_{32}$. In fact, $K,rK,\overline{K},\overline{rK}$ are all distinct oriented knots (\cite[p.~4]{Likorish}). However, the reverse of a knot is \textit{often} equivalent to its original knot.

\begin{proposition}
 The reflection of a link diagram negates the writhe of the original link diagram.
\end{proposition}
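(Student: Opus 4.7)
The plan is to verify the identity crossing by crossing. Reflection of a link diagram, by definition, swaps the over-strand and under-strand at every crossing while leaving the underlying link universe (and, importantly, the orientations of the strands) unchanged. So I would first argue that the writhe is additive over crossings, and then reduce the claim to a purely local statement about a single crossing.

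For the local step, I would take a single oriented crossing $C$ of the original diagram and compare $\epsilon(C)$ with the sign $\epsilon(\overline{C})$ of the corresponding crossing in the reflected diagram. Using the pictorial definition of the sign from Figure \ref{figure:crossings}, a positive crossing $\signpositive$ is characterised by a particular choice of over-strand; swapping over and under while keeping both orientations fixed produces exactly the picture $\signnegative$, which has sign $-1$. Symmetrically, a negative crossing becomes a positive crossing under the swap. Thus $\epsilon(\overline{C}) = -\epsilon(C)$ for every crossing, and the bijection between crossings of $D$ and crossings of $\overline{D}$ is induced by the reflection itself.

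Summing over all crossings then gives
\[
  \omega(\overline{D}) \;=\; \sum_{\overline{C}\text{ a crossing of }\overline{D}} \epsilon(\overline{C}) \;=\; \sum_{C\text{ a crossing of }D} -\epsilon(C) \;=\; -\omega(D),
\]
which is the desired identity. There is no real obstacle here: the entire content is the observation that swapping over/under at a crossing (with orientations held fixed) exchanges the two pictures in Figure \ref{figure:crossings}. The only thing worth being careful about is that reflection, as defined in the text, does not reverse orientations, so the "right/left jump" mnemonic in the remark after Figure \ref{figure:crossings} really does change from one side to the other under the swap.
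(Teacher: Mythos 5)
Your proof is correct and takes essentially the same route as the paper's own (sketch) proof: both reduce the claim to the local observation that swapping over- and under-strand at a single oriented crossing negates its sign, and then sum over all crossings. You in fact supply more detail than the paper does, spelling out the bijection between crossings and the additivity of the writhe, which the paper leaves implicit.
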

\begin{proof}[Proof sketch]
 We look at a single crossing and see what happens to the sign when the crossing is reflected,
 \[\epsilon\left(\signpositive\right)=-\epsilon\left(\reflectbox{\signpositive}\right).\]
\end{proof}

\begin{proposition}\label{proposition:writhe_regular_isotopy}
 The writhe of a link preserves under regular isotopy, but not under ambient isotopy.
\end{proposition}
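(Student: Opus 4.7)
The plan is to verify the claim by checking each move that generates regular isotopy (planar isotopy \RO, Reidemeister \RII, Reidemeister \RIII) separately, and then exhibit the failure under \RI to complete the proof. By Reidemeister's theorem \ref{theorem:reidemeisters}, regular isotopy is generated by \RO, \RII, \RIII, so showing invariance under each of these moves (together with the fact that the writhe is well-defined independently of orientation, by the discussion after Example \ref{example:writhe-example}) suffices.

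For \RO, there is nothing to check: a planar isotopy does not create, destroy, or alter any crossing, so the sum $\sum_C \epsilon(C)$ is literally unchanged. For \RII, the move inserts or deletes two crossings formed by two local strands. I would fix any consistent orientation on the two strands in the \RII tangle and then directly inspect the resulting pair of crossings: one is obtained from the other by swapping which strand passes over, and since both crossings involve the \emph{same} two oriented strands, the rule in Figure \ref{figure:crossings} forces them to have opposite signs. Hence their contributions cancel and the total writhe is preserved. For \RIII, the move does not change the number of crossings, nor does it change which pair of strands crosses at each crossing, nor does it change the over/under data at each crossing — it only slides the middle strand across the crossing of the other two. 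Since the sign of a crossing depends only on the local orientations and the over/under data of the two strands at that crossing, each of the three crossings keeps its sign under the move, so the writhe is again preserved.

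Finally, to see that ambient isotopy can change the writhe, I would exhibit an \RI move. An \RI move inserts or removes a single kink, which is a single crossing of sign $\pm1$ (the sign depending on the handedness of the kink). Thus the writhe changes by exactly $\pm 1$ under an \RI move, which is nonzero. A concrete witness is the unknot $0_1$ drawn as a round circle (writhe $0$) versus the unknot drawn with one positive kink (writhe $+1$): the two are ambient isotopic but have different writhes. The only potentially subtle step is the sign analysis for \RII, since one must be careful that the argument is independent of how the two strands in the local tangle are oriented; this is handled by the remark preceding Proposition \ref{proposition:writhe_regular_isotopy} that reversing both strands simultaneously does not change a crossing sign, so it is enough to check the two possible relative orientations of the strands, both of which give opposite signs at the two crossings of the \RII tangle.
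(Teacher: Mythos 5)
Your overall strategy is the same as the paper's: check invariance move by move ($\RO$, $\RII$, $\RIII$) and exhibit the failure under $\RI$ with a single kink. The $\RO$, $\RIII$, and $\RI$ parts are sound. However, your justification of the $\RII$ case rests on a false description of the move. In an $\RII$ bigon the \emph{same} strand passes over at both crossings --- that is exactly what makes the bigon removable by an isotopy --- so the second crossing is not obtained from the first by ``swapping which strand passes over.'' What actually produces the opposite signs is the tangent data: the over-strand bulges across the under-strand and comes back, so its direction of travel relative to the under-strand is reversed between the two crossings, and the rotation carrying the under-strand's direction to the over-strand's direction is counterclockwise at one crossing and clockwise at the other. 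Equivalently, one fixes an orientation of the two local strands and inspects both crossings directly; this is what the paper does, drawing out the orientation cases explicitly (and, like you, using the fact that reversing all strands leaves every sign unchanged, so only the relative orientation of the two strands matters).

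The conclusion of your $\RII$ step is correct and the repair is routine, so this is a local error in justification rather than a failure of the approach. Once it is fixed, your treatment of $\RIII$ --- no crossing is created or destroyed, and neither the over/under data nor the local orientations at any crossing change, so each of the three signs is preserved individually --- is in fact cleaner than the paper's, which only asserts that $\RIII$ is ``analogous to $\RII$'' and leaves the verification to the reader.
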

\begin{proof}
 A function that is preserved under regular isotopy but not under ambient isotopy, means that we need to show the writhe does not change under $\RII$ and $\RIII$ moves, but does change under $\RI$ moves.
 \begin{align*}
  \RI: \qquad &\omega\left(\reflectbox{\knotsinmath{\begin{scope}[x=0.75pt,y=0.75pt,yscale=-1,xscale=1,yshift=-0.6cm,scale=0.5]
   \draw    (36,52.23) .. controls (29,84.23) and (4,67.23) .. (15,47.23) .. controls (25.78,27.63) and (47.12,42.61) .. (71.5,53.57) ;
   \draw [shift={(73,54.23)}, rotate = 203.75] [color={rgb, 255:red, 0; green, 0; blue, 0 }  ][line width=0.75]    (10.93,-3.29) .. controls (6.95,-1.4) and (3.31,-0.3) .. (0,0) .. controls (3.31,0.3) and (6.95,1.4) .. (10.93,3.29)   ;
   \draw    (47,11.23) -- (42,30.23) ;
  \end{scope}}}\!\right) =
  \omega\left(\knotsinmath{\begin{scope}[scale=0.2]
   \draw[black,->] (0,2) -- (0,-2);
  \end{scope}}\right) + 1\\
  &\omega\left(\knotsinmath{\begin{scope}[x=0.75pt,y=0.75pt,yscale=-1,xscale=1,yshift=-0.6cm,scale=0.5]
   \draw    (36,52.23) .. controls (29,84.23) and (4,67.23) .. (15,47.23) .. controls (25.78,27.63) and (47.12,42.61) .. (71.5,53.57) ;
   \draw [shift={(73,54.23)}, rotate = 203.75] [color={rgb, 255:red, 0; green, 0; blue, 0 }  ][line width=0.75]    (10.93,-3.29) .. controls (6.95,-1.4) and (3.31,-0.3) .. (0,0) .. controls (3.31,0.3) and (6.95,1.4) .. (10.93,3.29)   ;
   \draw    (47,11.23) -- (42,30.23) ;
  \end{scope}}\right) =
  \omega\left(\knotsinmath{\begin{scope}[scale=0.2]
   \draw[black,->] (0,2) -- (0,-2);
  \end{scope}}\right) - 1
 \end{align*}
 For $\RII$, given any orientation, we can see
 \begin{align*}
  \omega\left(\knotsinmath{\begin{scope}[x=0.75pt,y=0.75pt,yscale=-1,xscale=1,yshift=-0.4cm,scale=0.5]
    \draw    (14,48) .. controls (53.6,18.3) and (74.58,18.23) .. (112.84,47.12) ;
    \draw [shift={(114,48)}, rotate = 217.35] [color={rgb, 255:red, 0; green, 0; blue, 0 }  ][line width=0.75]    (10.93,-3.29) .. controls (6.95,-1.4) and (3.31,-0.3) .. (0,0) .. controls (3.31,0.3) and (6.95,1.4) .. (10.93,3.29)   ;
    \draw    (39,41.23) .. controls (52,53.23) and (69,51.23) .. (83,37.23) ;
    \draw    (20,20.23) -- (29,29.23) ;
    \draw    (94,24.23) -- (106.77,7.81) ;
    \draw [shift={(108,6.23)}, rotate = 487.87] [color={rgb, 255:red, 0; green, 0; blue, 0 }  ][line width=0.75]    (10.93,-3.29) .. controls (6.95,-1.4) and (3.31,-0.3) .. (0,0) .. controls (3.31,0.3) and (6.95,1.4) .. (10.93,3.29)   ;
   \end{scope}}\right) = \omega\left(\knotsinmath{\begin{scope}[x=0.75pt,y=0.75pt,yscale=-1,xscale=1,yshift=-0.4cm,scale=0.5]
    \draw    (14,48) .. controls (53.6,18.3) and (74.58,18.23) .. (112.84,47.12) ;
    \draw [shift={(114,48)}, rotate = 217.35] [color={rgb, 255:red, 0; green, 0; blue, 0 }  ][line width=0.75]    (10.93,-3.29) .. controls (6.95,-1.4) and (3.31,-0.3) .. (0,0) .. controls (3.31,0.3) and (6.95,1.4) .. (10.93,3.29)   ;
    \draw    (39,41.23) .. controls (52,53.23) and (69,51.23) .. (83,37.23) ;
    \draw    (29,29.23) -- (16.32,14.74) ;
    \draw [shift={(15,13.23)}, rotate = 408.81] [color={rgb, 255:red, 0; green, 0; blue, 0 }  ][line width=0.75]    (10.93,-3.29) .. controls (6.95,-1.4) and (3.31,-0.3) .. (0,0) .. controls (3.31,0.3) and (6.95,1.4) .. (10.93,3.29)   ;
    \draw    (94,24.23) -- (102,12.23) ;
   \end{scope}}\right) &= \omega\left(\knotsinmath{\begin{scope}[x=0.75pt,y=0.75pt,yscale=-1,xscale=1,yshift=-0.4cm,scale=0.5]
    \draw    (107,44.23) .. controls (76.31,17.5) and (48.56,15.28) .. (15.02,50.16) ;
    \draw [shift={(14,51.23)}, rotate = 313.36] [color={rgb, 255:red, 0; green, 0; blue, 0 }  ][line width=0.75]    (10.93,-3.29) .. controls (6.95,-1.4) and (3.31,-0.3) .. (0,0) .. controls (3.31,0.3) and (6.95,1.4) .. (10.93,3.29)   ;
    \draw    (39,41.23) .. controls (52,53.23) and (69,51.23) .. (83,37.23) ;
    \draw    (20,20.23) -- (29,29.23) ;
    \draw    (94,24.23) -- (106.77,7.81) ;
    \draw [shift={(108,6.23)}, rotate = 487.87] [color={rgb, 255:red, 0; green, 0; blue, 0 }  ][line width=0.75]    (10.93,-3.29) .. controls (6.95,-1.4) and (3.31,-0.3) .. (0,0) .. controls (3.31,0.3) and (6.95,1.4) .. (10.93,3.29)   ;
   \end{scope}}\right) = \omega\left(\knotsinmath{\begin{scope}[x=0.75pt,y=0.75pt,yscale=-1,xscale=1,yshift=-0.4cm,scale=0.5]
    \draw    (107,44.23) .. controls (76.31,17.5) and (48.56,15.28) .. (15.02,50.16) ;
    \draw [shift={(14,51.23)}, rotate = 313.36] [color={rgb, 255:red, 0; green, 0; blue, 0 }  ][line width=0.75]    (10.93,-3.29) .. controls (6.95,-1.4) and (3.31,-0.3) .. (0,0) .. controls (3.31,0.3) and (6.95,1.4) .. (10.93,3.29)   ;
    \draw    (39,41.23) .. controls (52,53.23) and (69,51.23) .. (83,37.23) ;
    \draw    (17.26,14.79) -- (29,29.23) ;
    \draw [shift={(16,13.23)}, rotate = 50.91] [color={rgb, 255:red, 0; green, 0; blue, 0 }  ][line width=0.75]    (10.93,-3.29) .. controls (6.95,-1.4) and (3.31,-0.3) .. (0,0) .. controls (3.31,0.3) and (6.95,1.4) .. (10.93,3.29)   ;
    \draw    (94,24.23) -- (102,12.23) ;
   \end{scope}}\right)\\
   &= \omega\left(\resolveriimove\right) = 0 \text{, for the given orientation.}
 \end{align*}
 And it should be clear that it is also true for its reflection, i.e., $\omega\left(\riimove\right)=0=\omega\left(\resolveriimove\right)$, given any orientation. 
 Now $\RIII$ is analogous to what we did for $\RII$. So the details are left for the reader, i.e., given any orientation we have,
 \[
  \omega\left(\knotsinmath{\begin{scope}[scale=0.2]
    \draw[draw=white,double=black ,ultra thick,double distance=0.5pt] (  0:2) .. controls (270:0.7) .. (180:2);
    \draw[draw=white,double=black  ,ultra thick,double distance=0.5pt] ( 60:2) .. controls (150:0.7) .. (240:2);
    \draw[draw=white,double=black,ultra thick,double distance=0.5pt] (120:2) .. controls ( 30:0.7) .. (300:2);
   \end{scope}
  }\right) = 0 = \omega\left(\knotsinmath{
   \begin{scope}[scale=0.2]
    \draw[draw=white,double=black ,ultra thick,double distance=0.5pt] (  0:2) .. controls ( 90:0.7) .. (180:2);
    \draw[draw=white,double=black  ,ultra thick,double distance=0.5pt] ( 60:2) .. controls (330:0.7) .. (240:2);
    \draw[draw=white,double=black,ultra thick,double distance=0.5pt] (120:2) .. controls (210:0.7) .. (300:2);
   \end{scope}
  }\right)
 \]
\end{proof}

\subsection{Invariants}\label{section:invariants}

By only considering the Reidemeister moves, it is difficult to distinguish links. This is because there is no method that can tell us whether we have not made the right number of Reidemeister moves yet. So the best way to distinguish links is by using invariants, which are algebraic ways of characterising these topological objects.

\begin{definition}[invariant]
 An \textit{invariant}\index{invariant} $f$ is a function
 \[f\colon\{\text{equivalence classes of topological objects}\}\to{S}\]
 for some set $S$, with the property $f(A)=f(B)$ whenever $A$ and $B$ are equivalent.
\end{definition}

So a \textit{link invariant} is just an invariant function $f\colon\{\text{link type}\}\to{S}$ for some set $S$.

Thus, to prove that two links are in fact distinct (the main problem of knot theory), we need to show that they produce different outputs in a link invariant.

\begin{remark}
 Invariant functions can only tell the objects apart - they can not show equivalency.
\end{remark}

\begin{definition}[crossing number]
 The \textit{crossing number}\index{crossing number} is a link invariant, such that it is defined as the minimal number of crossings needed for a diagram of the link $L$, where the minimum is taken over the infinity of possible diagrams of a link. We write $c(L)$ for the link $L$.
\end{definition}

For low crossing numbers, we can find the knots by tabulating all knot diagrams; see the following result. In general, however, it is difficult to know which knots have the same knot type. This is solved by associating a knot to an invariant.

\begin{proposition}\label{proposition:no_knot_1_2}
 There are no knots with crossing number one or two.
\end{proposition}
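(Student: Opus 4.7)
The plan is to show that any knot diagram with one or two crossings can be reduced via Reidemeister moves to a zero-crossing diagram, and so represents the unknot. Since $c(0_1)=0$, this forces the crossing number of any genuine knot to be either $0$ or at least $3$, establishing the claim.

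The unifying tool is a simple Euler-characteristic computation applied to the underlying link universe, viewed as a connected $4$-valent planar multigraph with $V$ vertices, $E = 2V$ edges, and hence $F = V+2$ faces. Since the face-degrees sum to $2E = 4V$, a short face always exists: for $V = 1$ the average face-degree is $4/3 < 2$, and for $V = 2$ it is exactly $2$. So in both cases the diagram must contain a monogon (a face of degree $1$) or a bigon (a face of degree $2$).

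First I would handle the one-crossing case. With $V = 1$ a bigon is impossible (it needs two distinct vertices), so the short face is necessarily a monogon: a loop based at the single crossing. This is precisely the local picture of a Reidemeister $\RI$ move, and applying it yields a zero-crossing diagram, i.e.\ the unknot. Next, for $V = 2$, if a monogon is present I apply $\RI$ and reduce to the one-crossing case already handled. Otherwise all four faces are bigons, and for each such bigon I would inspect the over/under pattern at its two crossings: if the same strand passes over at both crossings then the bigon admits an $\RII$ move and collapses; the only alternative is the ``Hopf-like'' alternating pattern, in which one strand is over at one crossing and under at the other.

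The main obstacle is ruling out this alternating pattern in a \emph{knot} diagram. I would argue by a direct local trace: starting from one of the four strand-ends at a crossing of the offending bigon and following the curve through both crossings, the alternating pattern pairs the eight strand-ends into two disjoint closed cycles, so the diagram would represent a two-component link rather than a knot — contradicting the hypothesis. Hence every bigon in a single-component two-crossing diagram admits $\RII$, and the resulting reduction to a zero-crossing diagram completes the proof.
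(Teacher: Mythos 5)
Your route is genuinely different from the paper's: the paper simply draws the single-component link universe with one crossing, checks that both crossing choices are unknots via $\RI$, and leaves the two-crossing case to the reader as ``the same argument.'' Your Euler-characteristic count ($F=V+2$, total face degree $2E=4V$) is more systematic and actually supplies the missing two-crossing analysis, so it would buy a cleaner and more complete proof --- except that one step is justified incorrectly.

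The problem is the disposal of the ``alternating'' bigon. The number of components of a diagram is determined entirely by its universe: at every crossing the curve goes straight through, i.e.\ opposite edge-ends are joined, regardless of which strand is drawn on top. So the claim that ``the alternating pattern pairs the eight strand-ends into two disjoint closed cycles'' attributes to the over/under data something it cannot control, and the principle you are invoking is false in general --- the trefoil's bigons are exactly of this alternating clasp type, yet the trefoil is a knot. The repair is to drop the over/under case split altogether: in the all-bigon case the universe is forced to be the four parallel edges joining the two crossings (the Hopf-link universe), and tracing it --- opposite ends paired at each vertex --- yields two components however the crossings are resolved. Hence the all-bigon branch is vacuous for a knot, and the remaining branches (a monogon exists, apply $\RI$, reduce to the one-crossing case) finish the proof. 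With that correction your argument is sound; as a minor aside, your parenthetical that a bigon ``needs two distinct vertices'' is also false (the outer face of the one-crossing figure-eight universe has degree two), but it is never used.
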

\begin{proof}
 We first draw the link universe with one component and one crossing and then show all the possible choices. The strategy here is to draw a link universe with the number of crossings we need without having to create more than one component.
 \begin{center}
  \begin{tikzpicture}[scale=1.3]
   \begin{scope}
    \node (a) at (0,0) {};
    \draw (a.center) .. controls (a.8 north west) and (a.8 south west) .. (a.center);
    \draw (a.center) .. controls (a.8 north east) and (a.8 south east) .. (a.center);
    \draw (0,-0.5) node[scale=0.7] {link universe};
   \end{scope}
   \begin{scope}[xshift=2cm,rotate=-4]
    \node (a) at (0,0) {};
    \draw (a.center) .. controls (a.8 north west) and (a.8 south west) .. (a);
    \draw (a) .. controls (a.8 north east) and (a.8 south east) .. (a.center);
    \draw (0,-0.5) node[scale=0.7,rotate=4] {link diagram};
    \draw (0,-0.8) node[scale=0.7,rotate=4] {unknot};
   \end{scope}
   \begin{scope}[xshift=4cm,rotate=4]
    \node (a) at (0,0) {};
    \draw (a) .. controls (a.8 north west) and (a.8 south west) .. (a.center);
    \draw (a.center) .. controls (a.8 north east) and (a.8 south east) .. (a);
    \draw (0,-0.5) node[scale=0.7,rotate=-4] {link diagram};
    \draw (0,-0.8) node[scale=0.7,rotate=-4] {unknot};
   \end{scope}
  \end{tikzpicture}
 \end{center}
 We clearly see that the first and second choices are unknots (via $\RI$). Therefore, there are no knots with crossing number one.
 We leave the details for crossing number two to the reader, as it follows the same argument.
\end{proof}

\begin{definition}[unlinking number]
 The \textit{unlinking number}\index{unlinking number} is another link invariant. It is defined to be the minimum number of crossing changes (from ``over'' to ``under'' or vice versa) needed to change link $L$ to the unlink, where the minimum is taken over all possible sets of crossing changes in all possible diagrams of $L$. We write this as $u(L)$.
\end{definition}

So, like the crossing number invariant, the unlinking number is very hard to calculate.

\begin{definition}[complexity {\cite{knotes}}]\label{definition:complexity}
 We define the \textit{complexity}\index{complexity} of a link $\kappa(L)$ to be the minimal ordered pair of integers $(c(L),u(L))$, where $c(L)$ is the crossing number of link $L$ and $u(L)$ is the unlinking number of $L$.
 We order complexities \textit{lexocographically}, i.e., $(a,b)<(c,d)$ if and only if $(a<c)$ or ($a=c$ and $b<d$).
\end{definition}

\begin{example}
 The most basic complexity, is that of the unknot, i.e., $\kappa\left(\KPA\right)=(0,0)$.
 
 For the following trefoil,
 \begin{center}
  \begin{tikzpicture}[scale=0.5]
   \begin{scope}
    \begin{scope}[rotate=  0]
     \node (a) at (0,-1) {};
    \end{scope}
    \begin{scope}[rotate=120]
     \node (b) at (0,-1) {};
    \end{scope}
    \begin{scope}[rotate=240]
     \node (c) at (0,-1) {};
    \end{scope}
    \draw (a.center) .. controls (a.4 north west) and (c.4 north east) .. (c);
    \draw (b.center) .. controls (b.4 north west) and (a.4 north east) .. (a);
    \draw (c.center) .. controls (c.4 north west) and (b.4 north east) .. (b);
    \draw (a) .. controls (a.16 south west) and (c.16 south east) .. (c.center);
    \draw (b) .. controls (b.16 south west) and (a.16 south east) .. (a.center);
    \draw (c) .. controls (c.16 south west) and (b.16 south east) .. (b.center);
   \end{scope}
  \end{tikzpicture}
 \end{center}
 We can see that it has complexity $(3,1)$, by changing one of its crossings as shown below,
 \begin{center}
  \begin{tikzpicture}[scale=0.5]
   \begin{scope}
    \begin{scope}[rotate=  0] \node (a) at (0,-1) {}; \end{scope}
    \begin{scope}[rotate=120] \node (b) at (0,-1) {}; \end{scope}
    \begin{scope}[rotate=240] \node (c) at (0,-1) {}; \end{scope}
    \draw (a) .. controls (a.4 north west) and (c.4 north east) .. (c);
    \draw (b.center) .. controls (b.4 north west) and (a.4 north east) .. (a.center);
    \draw (c.center) .. controls (c.4 north west) and (b.4 north east) .. (b);
    \draw (a.center) .. controls (a.16 south west) and (c.16 south east) .. (c.center);
    \draw (b) .. controls (b.16 south west) and (a.16 south east) .. (a);
    \draw (c) .. controls (c.16 south west) and (b.16 south east) .. (b.center);
   \end{scope}
  \end{tikzpicture}
 \end{center}
 For the figure-eight knot $4_1$, it is not hard to see that it has complexity $(4,1)$ by also changing one of its crossings.
 
 So, $\kappa(0_1)<\kappa(3_1)<\kappa(4_1)$.
\end{example}

\begin{lemma}
 Every link $L$ with a corresponding diagram $D$ has an unlinking number.
\end{lemma}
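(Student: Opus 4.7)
The plan is to reduce the statement to two facts: (i) every link admits a diagram with finitely many crossings (by Reidemeister's Theorem \ref{theorem:reidemeisters}), and (ii) every link diagram with $n$ crossings can be turned into a diagram of an unlink by changing \emph{at most} $n$ crossings. Once (ii) is established, the set
\[
 \{k \in \mathbb{Z}_{\geq 0} : k \text{ crossing changes on some diagram of } L \text{ produce an unlink}\}
\]
is a nonempty subset of $\mathbb{Z}_{\geq 0}$, hence has a minimum by the well-ordering principle, and that minimum is by definition $u(L)$.

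To prove (ii), I would use the standard ``descending diagram'' trick. Pick a diagram $D$ of $L$ with components $L_1,\dots,L_m$, choose an orientation and a base point on each $L_i$, and fix an ordering $L_1 < L_2 < \cdots < L_m$. Now traverse the components in order; at each crossing, alter (if necessary) the over/under information so that: (a) when two distinct components meet, the strand on the earlier component passes over, and (b) when one component crosses itself, the strand reached \textbf{first} from the base point passes over. Each crossing is adjusted at most once, so this uses at most $n$ changes. Call the resulting diagram $D'$.

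The key topological claim is that $D'$ represents an unlink. For a single descending component, one can show by induction on the number of self-crossings that it bounds an embedded disc in $\mathbb{R}^3$ (intuitively, start a planar projection at the base point and lift continuously downwards; because every self-crossing is first met as an over-strand, the component never has to loop back over itself to close up, so it can be isotoped to a round circle via a sequence of Reidemeister I and II moves). For the multi-component case, condition (a) lets us push $L_i$ entirely below $L_j$ for $i > j$ in the third coordinate, splitting the link into a disjoint union of the individual components; applying the single-component case to each then yields the unlink $\KPA^m$.

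The main obstacle is the geometric justification that a descending diagram really is an unlink; this is where all the topology hides. I would prove it by induction on the number of crossings: choose a crossing which is first met along the traversal, use the fact that between that crossing and the one it pairs with (as an undercrossing) there is no ``blocking'' strand above, and perform a Reidemeister II move (or a sequence thereof together with planar isotopy) to eliminate the pair, strictly decreasing the crossing count while preserving the descending property. Everything else --- existence of a diagram, finiteness of $n$, and well-ordering --- is routine, so the proof reduces cleanly to this inductive unknotting lemma.
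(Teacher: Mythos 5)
Your proposal is correct and follows essentially the same route as the paper's proof sketch: traverse each component from a base point, adjust every crossing on first encounter so the diagram becomes descending (the paper uses the mirror convention, making the first-met strand an underpass, i.e.\ an ascending diagram), conclude the result is an unlink, and deduce $u(L)\leq c(L)$ so the minimum exists. Your version is more careful than the paper's --- in particular about ordering the components, the well-ordering step, and flagging that the real topological content is the claim that a descending diagram is trivial, which the paper simply asserts.
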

\begin{proof}[Proof sketch]
 We begin by choosing a starting point on all of the components on the link diagram. Following a clockwise orientation, we follow each component of the link diagram from the starting point.
 When a crossing appears, if it is an overpass, we change it to an underpass, otherwise we keep it as it is.
 Once we are done, we have an unlink. This shows us that the unlinking number $u(L)\leq{c(L)}$.
\end{proof}

There are many other link invariants, but in this paper we will mainly be focusing on the Jones polynomial and the Kauffman bracket.

\newpage
\section{The Kauffman bracket construction}\label{chapter:kauffman}
 We will now define the Jones polynomial. This will be based on the Kauffman bracket \cite{kauffman1986,kauffman1988,kauffman_stat_mech}.
We will then show that this admits a skein relation \cite{conway_skein,homfly,jones1985}. We will not go deep into the theory of skein relations. The interested reader is directed to \cite{conway_skein} and \cite{homfly}.

Our discussion is based on that in Kauffman's paper \cite{kauffman1986}, Jones' paper \cite{jones1985}, Jones' book \cite{jonesbook}, Conway's paper \cite{conway_skein}, and HOMFLY-PT's paper \cite{homfly,pt}.

\begin{remark}
 We note that HOMFLY-PT is an acronym for the eight people who wrote the two papers, namely, Hoste, Ocneanu, Millett, Freyd, Lickorish, and Yetter. The PT are the other two who wrote a different paper on the same subject: Pyztycki and Traczyk.
\end{remark}

\textbf{Notation \cite[p.~330]{conway_skein}.}
 Formally, when we look at an enclosed region of a link such that there are four distinct points (NW, NE, SE, SW), then we call the enclosed region a \textit{tangle}\index{tangle}.
 \begin{figure}[H]
  \vskip 0.5cm
  \centering
  \begin{tikzpicture}
   \draw[dashed] (0,0) circle (0.5);
   \draw (0,0) node {$L$};
   \draw (0.4,0.4)--(0.7,0.7);
   \draw (0.7,0.7) node[anchor=south west] {NE};
   \draw (-0.4,0.4)--(-0.7,0.7);
   \draw (-0.7,0.7) node[anchor=south east] {NW};
   \draw (0.4,-0.4)--(0.7,-0.7);
   \draw (0.7,-0.7) node[anchor=north west] {SE};
   \draw (-0.4,-0.4)--(-0.7,-0.7);
   \draw (-0.7,-0.7) node[anchor=north east] {SW};
  \end{tikzpicture}
  \vskip 0.5cm
  \caption{tangle}
  \label{figure:tangle}
 \end{figure}
 We call it an \textit{elementary $2$-tangle}\index{tangle!elementary $2$-tangle} if the tangle consists of exactly one crossing or none. Such as the following.
 \begin{figure}[H]
  \centering
  \begin{tikzpicture}[scale=0.6]
   \begin{scope}
    \draw[dashed] (0,0) circle(1.5);
    \draw[-] (-1,-1) to [bend left=40] (1,-1);
    \draw[-] (1,1) to [bend left=40] (-1,1);
   \end{scope}
   \draw (0,-2) node[scale=1.5] {$\tangle{\infty}$};
   \begin{scope}[xshift=5cm]
    \draw[dashed] (0,0) circle(1.5);
    \draw[-] (-1,-1)--(1,1);
    \draw[ultra thick, draw=white,double=black,double distance=0.5pt] (1,-1)--(-1,1);
    \draw[-] (-0.9,0.9)--(-1,1);
    \draw (0,-2) node[scale=1.5] {$\tangle{-1}$};
   \end{scope}
   \begin{scope}[xshift=10cm]
    \draw[dashed] (0,0) circle(1.5);
    \draw[-] (1,-1)--(-1,1);
    \draw[ultra thick,draw=white,double=black,double distance=0.5pt] (-1,-1)--(1,1);
    \draw[-] (-0.9,-0.9)--(1,1);
    \draw (0,-2) node[scale=1.5] {$\tangle{1}$};
   \end{scope}
   \begin{scope}[xshift=15cm]
    \draw[dashed] (0,0) circle(1.5);
    \draw[-] (-1,-1) to [bend right=40] (-1,1);
    \draw[-] (1,-1) to [bend left=40] (1,1);
    \draw (0,-2) node[scale=1.5] {$\tangle{0}$};
   \end{scope}
  \end{tikzpicture}
  \caption{elementary $2$-tangles}
  \label{figure:elementary_2_tangles}
 \end{figure}
 These elementary $2$-tangles are usually recognised as the skein relation of a link (see \S\ref{section:skein}).
 For more details on tangles, the interested reader is directed to Conway's paper \cite{conway_skein}.
 
\begin{definition}[Laurent polynomial]
 A \textit{Laurent polynomial}\index{Laurent polynomial} is an expression of the form $p=\sum_{k=-N}^Na_kA^k$ for some natural number $N$ and some list of coefficients $a_{-N},\dotsc,a_N\in\ZZ$.
\end{definition}

The aim of the Kauffman bracket is to define an easier, more combinatorial approach to the Jones polynomial.

\subsection{`states model' and the Kauffman bracket polynomial}


Our first definition of the Jones polynomial will require us to define the Kauffman bracket. We will begin by defining a state of a link diagram (as was presented in Kauffman's paper \cite{kauffman1988}).

Given a non-oriented link diagram, we notice that each crossing is divided into four regions. By convention, we label the regions of each crossing $A$ and $B$, by starting from the over-strand in a counterclockwise fashion. Both possible crossings are labelled below.
\begin{center}
 \begin{tikzpicture}[scale=0.75]
  \begin{scope}
   \draw (-1,1)--(1,-1);
   \draw[ultra thick,draw=white,double=black,double distance=0.5pt] (-1,-1)--(1,1);
   \draw (0,0) node[scale=1.25,anchor=north] {$A$};
   \draw (0,0) node[scale=1.25,anchor=west] {$B$};
   \draw (0,0) node[scale=1.25,anchor=south] {$A$};
   \draw (0,0) node[scale=1.25,anchor=east] {$B$};
   \draw[->] (0.5,0.5) to [bend right=30] (0,1);
   \draw[->] (-0.5,-0.5) to [bend right=30] (0,-1);
  \end{scope}
  \begin{scope}[xshift=4cm]
   \draw (-1,-1)--(1,1);
   \draw[ultra thick,draw=white,double=black,double distance=0.5pt] (-1,1)--(1,-1);
   \draw (0,0) node[scale=1.25,anchor=west] {$A$};
   \draw (0,0) node[scale=1.25,anchor=south] {$B$};
   \draw (0,0) node[scale=1.25,anchor=east] {$A$};
   \draw (0,0) node[scale=1.25,anchor=north] {$B$};
   \draw[->] (-0.5,0.5) to [bend right=30] (-1,0);
   \draw[->] (0.5,-0.5) to [bend right=30] (1,0);
  \end{scope}
 \end{tikzpicture}
\end{center}

\begin{definition}[splitting marker]
 Given a non-oriented link diagram, a \textit{splitting marker}\index{splitting marker} is defined as a marker that essentially smooths the crossing. We colour the splitting marker in red in the figure below, and show how we can get two different split diagrams for any crossing.
 \begin{figure}[H]
  \centering
  \begin{tikzpicture}[scale=0.5]
   \begin{scope}
    \draw (-1,-1)--(1,1);
    \draw (-1,1)--(1,-1);
    \draw[draw=red] (-0.5,0)--(0.5,0);
    \draw[->] (1,0)--(2,0);
    \draw (2,1) .. controls (3,0) .. (4,1);
    \draw (2,-1) .. controls (3,0) .. (4,-1);
   \end{scope}
   \begin{scope}[xshift=6.5cm]
    \draw (-1,-1)--(1,1);
    \draw (-1,1)--(1,-1);
    \draw[draw=red] (0,0.5)--(0,-0.5);
    \draw[->] (1,0)--(2,0);
    \draw (2,1) .. controls (3,0) .. (2,-1);
    \draw (4,1) .. controls (3,0) .. (4,-1);
   \end{scope}
  \end{tikzpicture}
  \caption{splitting marker}
  \label{figure:splitting_marker}
 \end{figure}
\end{definition}

\begin{definition}[state]\label{definition:state}
 A \textit{state}\index{state} of a link $L$ is a choice of the splitting marker for each crossing in $L$.
 So, by convention, we define an $A$-smoothing and a $B$-smoothing as the following.
 \begin{figure}[H]
  \centering
  \begin{tikzpicture}[scale=0.5]
   \draw (-1,-1)--(1,1);
   \draw[ultra thick,draw=white,double=black] (-1,1)--(1,-1);
   \draw (0,0) node[scale=1.75,anchor=west] {$A$};
   \draw (0,0) node[scale=1.75,anchor=south] {$B$};
   \draw (0,0) node[scale=1.75,anchor=east] {$A$};
   \draw (0,0) node[scale=1.75,anchor=north] {$B$};
   \draw[->] (0,-1)--(-2,-3);
   \draw[->] (0,-1)--(2,-3);
   \begin{scope}[xshift=-4cm,yshift=-5cm]
    \draw (0,1) .. controls (1,0) .. (2,1);
    \draw (0,-1) .. controls (1,0) .. (2,-1);
    \draw[draw=red] (1.5,0)--(0.5,0);
    \draw (1,-2) node[scale=2] {$A$-smoothing};
   \end{scope}
   \begin{scope}[xshift=2cm,yshift=-5cm]
    \draw (0,1) .. controls (1,0) .. (0,-1);
    \draw (2,1) .. controls (1,0) .. (2,-1);
    \draw[draw=red] (1,0.5)--(1,-0.5);
    \draw (1,-2) node[scale=2] {$B$-smoothing};
   \end{scope}
   \begin{scope}[xshift=12cm]
    \draw (-1,1)--(1,-1);
    \draw[ultra thick,draw=white,double=black] (-1,-1)--(1,1);
    \draw (0,0) node[scale=2,anchor=north] {$A$};
    \draw (0,0) node[scale=2,anchor=west] {$B$};
    \draw (0,0) node[scale=2,anchor=south] {$A$};
    \draw (0,0) node[scale=2,anchor=east] {$B$};
    \draw[->] (0,-1)--(-2,-3);
    \draw[->] (0,-1)--(2,-3);
   \end{scope}
   \begin{scope}[xshift=8cm,yshift=-5cm]
    \draw (0,1) .. controls (1,0) .. (0,-1);
    \draw (2,1) .. controls (1,0) .. (2,-1);
    \draw[draw=red] (1,0.5)--(1,-0.5);
    \draw (1,-2) node[scale=2] {$A$-smoothing};
   \end{scope}
   \begin{scope}[xshift=14cm,yshift=-5cm]
    \draw (0,1) .. controls (1,0) .. (2,1);
    \draw (0,-1) .. controls (1,0) .. (2,-1);
    \draw[draw=red] (1.5,0)--(0.5,0);
    \draw (1,-2) node[scale=2] {$B$-smoothing};
   \end{scope}
  \end{tikzpicture}
  \caption{states}
  \label{figure:states}
 \end{figure}
 So for an $n$ crossing diagram $L$ we have a state $s$ is a bijective map $s\mapsto\{A,B\}^n$. This means we have $2^n$ states for an $n$ crossing diagram.
\end{definition}

\textbf{Notation.}
When we smooth out a crossing on a diagram $D$, we call the $A$-smoothed diagram $D_{\{A\}}$, and we call the $B$-smoothed diagram $D_{\{B\}}$. The subscripted set represents the smoothings of the respected crossings on $D$. For example, if we have two crossings $c_1,c_2$ on a diagram $D$, then the four states would be $D_{\{A,A\}}$, $D_{\{A,B\}}$, $D_{\{B,A\}}$, and $D_{\{B,B\}}$.

\begin{remark}
 Since these are tangles, we can instead define the splitting of a single crossing. We immediately get the other crossings, by rotating by $\pi/2$.
 For instance, if we define $\tangle{-1}\mapsto\{\tangle{\infty}_A,\tangle{0}_B\}$, then by rotating all the tangles and keeping $A$ and $B$ in place, we get $\tangle{1}\mapsto\{\tangle{0}_A,\tangle{\infty}_B\}$.
\end{remark}

\begin{example}
 Consider the trefoil knot diagram $D$ drawn below. To find all the states of $D$, we need to resolve all three crossings $c_1$, $c_2$, and $c_3$ to find all possible $2^3$ states.
 \begin{center}
  \begin{tikzpicture}[every path/.style={},scale=0.02,yscale=-1]
   \clip (10,120)--(10,11)--(120,11)--(120,120)--cycle;
   \draw    (81,52) .. controls (94,158) and (-17,89) .. (23,59) ;
   \draw    (85,100) .. controls (145,132) and (115,14) .. (53,45) ;
   \draw    (80,25) .. controls (85,5) and (53,11) .. (41,27) .. controls (29,43) and (36,70) .. (66,87) ;
   \draw (77,110) node[scale=48,yscale=-1] {$c1$};
   \draw (18,40) node[scale=48,yscale=-1] {$c3$};
   \draw (94,25) node[scale=48,yscale=-1] {$c2$};
  \end{tikzpicture}
 \end{center}
 So, the first two possible diagrams are obtained by considering crossing $c_1$ and finding the $A$-smoothing and $B$-smoothing.
 \begin{center}
  \begin{tikzpicture}[scale=0.02,yscale=-1]
   \draw    (101,52.5) .. controls (121,125.5) and (186,11.5) .. (75,47.5) ;
   \draw    (53,57.5) .. controls (20,66.5) and (48.03,100.44) .. (65.02,109.47) .. controls (82,118.5) and (96,108.5) .. (89.21,90.43) .. controls (82.42,72.36) and (73.63,63.29) .. (63.32,42.39) .. controls (53,21.5) and (95,-4.5) .. (99,31.5) ;
   \draw    (219,71.5) .. controls (191,78.5) and (234,117.5) .. (261,107.5) .. controls (288,97.5) and (304,34.5) .. (242,58.5) ;
   \draw    (268,63.5) .. controls (271,86.5) and (232,96.5) .. (230.32,53.39) .. controls (228.63,10.29) and (262,6.5) .. (266,42.5) ;
  \end{tikzpicture}\\
  {$D_{\{A\}}$\qquad\qquad\qquad\qquad$D_{\{B\}}$}
 \end{center}
 The next possible diagrams are obtained by resolving the second crossing $c_2$ of $D_{\{A\}}$ and $D_{\{B\}}$.
 \begin{center}
  \begin{tikzpicture}[scale=0.02,yscale=-1]
   \draw    (53,57.5) .. controls (20,66.5) and (48.03,100.44) .. (65.02,109.47) .. controls (82,118.5) and (96,108.5) .. (89.21,90.43) .. controls (82.42,72.36) and (75.63,67.29) .. (63.32,42.39) .. controls (51,17.5) and (78.67,2.2) .. (89.83,14.85) .. controls (101,27.5) and (95,37.5) .. (80,45.5) ;
   \draw    (341,58.5) .. controls (304,66.5) and (354,110.5) .. (381,100.5) .. controls (408,90.5) and (426,19.5) .. (386,57.5) ;
   \draw    (386,57.5) .. controls (366,82.5) and (349,79.5) .. (350.32,46.39) .. controls (351.63,13.29) and (427,29.5) .. (362,51.5) ;
   \draw   (107.5,50.75) .. controls (107.5,36.53) and (119.03,25) .. (133.25,25) .. controls (147.47,25) and (159,36.53) .. (159,50.75) .. controls (159,64.97) and (147.47,76.5) .. (133.25,76.5) .. controls (119.03,76.5) and (107.5,64.97) .. (107.5,50.75) -- cycle ;
   \draw    (226,61.5) .. controls (193,70.5) and (221.03,104.44) .. (238.02,113.47) .. controls (255,122.5) and (269,112.5) .. (262.21,94.43) .. controls (255.42,76.36) and (248.63,71.29) .. (236.32,46.39) .. controls (224,21.5) and (251.67,6.2) .. (262.83,18.85) .. controls (274,31.5) and (268,41.5) .. (253,49.5) ;
   \draw    (464,72.5) .. controls (442,74.5) and (484,114.5) .. (506,108.5) .. controls (528,102.5) and (542,37.5) .. (520,27.5) ;
   \draw    (487,59.5) .. controls (527,45.5) and (510,68.5) .. (501.85,77.01) .. controls (493.7,85.52) and (485,81.5) .. (475.32,54.39) .. controls (465.63,27.29) and (493,16.5) .. (520,27.5) ;
  \end{tikzpicture}\\
  {$D_{\{A,A\}}$\qquad\qquad\qquad$D_{\{A,B\}}$\quad\qquad\qquad$D_{\{B,A\}}$\quad\qquad\qquad$D_{\{B,B\}}$}
 \end{center}
 And so the $8$ states are thus obtained by resolving the last crossing $c_3$ of $D_{\{A,A\}}$, $D_{\{A,B\}}$, $D_{\{B,A\}}$, and $D_{\{B,B\}}$.
 \begin{center}
  \begin{tikzpicture}[scale=0.02,yscale=-1]
   \draw   (84.5,51) .. controls (84.5,39.4) and (93.9,30) .. (105.5,30) .. controls (117.1,30) and (126.5,39.4) .. (126.5,51) .. controls (126.5,62.6) and (117.1,72) .. (105.5,72) .. controls (93.9,72) and (84.5,62.6) .. (84.5,51) -- cycle ;
   \draw   (50,97) .. controls (50,84.3) and (60.3,74) .. (73,74) .. controls (85.7,74) and (96,84.3) .. (96,97) .. controls (96,109.7) and (85.7,120) .. (73,120) .. controls (60.3,120) and (50,109.7) .. (50,97) -- cycle ;
   \draw   (30.5,47.75) .. controls (30.5,35.19) and (40.69,25) .. (53.25,25) .. controls (65.81,25) and (76,35.19) .. (76,47.75) .. controls (76,60.31) and (65.81,70.5) .. (53.25,70.5) .. controls (40.69,70.5) and (30.5,60.31) .. (30.5,47.75) -- cycle ;
   \draw    (218.5,72) .. controls (210.5,89) and (235.5,114) .. (206.5,111) .. controls (177.5,108) and (202.5,87) .. (200.5,67) .. controls (198.5,47) and (191.17,33.7) .. (211.83,30.85) .. controls (232.5,28) and (226.5,56) .. (218.5,72) -- cycle ;
   \draw   (230.5,76) .. controls (230.5,64.4) and (239.9,55) .. (251.5,55) .. controls (263.1,55) and (272.5,64.4) .. (272.5,76) .. controls (272.5,87.6) and (263.1,97) .. (251.5,97) .. controls (239.9,97) and (230.5,87.6) .. (230.5,76) -- cycle ;
   \draw   (335.5,49) .. controls (335.5,37.4) and (344.9,28) .. (356.5,28) .. controls (368.1,28) and (377.5,37.4) .. (377.5,49) .. controls (377.5,60.6) and (368.1,70) .. (356.5,70) .. controls (344.9,70) and (335.5,60.6) .. (335.5,49) -- cycle ;
   \draw   (315,96) .. controls (315,83.3) and (325.3,73) .. (338,73) .. controls (350.7,73) and (361,83.3) .. (361,96) .. controls (361,108.7) and (350.7,119) .. (338,119) .. controls (325.3,119) and (315,108.7) .. (315,96) -- cycle ;
   \draw    (472.5,77) .. controls (464.5,94) and (489.5,119) .. (460.5,116) .. controls (431.5,113) and (456.5,92) .. (454.5,72) .. controls (452.5,52) and (445.17,38.7) .. (465.83,35.85) .. controls (486.5,33) and (480.5,61) .. (472.5,77) -- cycle ;
  \end{tikzpicture}\\
  {$D_{\{A,A,A\}}$\qquad\qquad\qquad$D_{\{A,A,B\}}$\quad\qquad\qquad$D_{\{A,B,A\}}$\quad\qquad$D_{\{A,B,B\}}$}
 \end{center}
 \begin{center}
  \begin{tikzpicture}[scale=0.02,yscale=-1]
   \draw    (242.68,57.31) .. controls (219.64,64.53) and (236.2,104.04) .. (259.85,104.52) .. controls (283.5,105) and (291.5,49) .. (272.5,26.47) ;
   \draw    (242.68,57.31) .. controls (278.82,43.82) and (235.91,38.52) .. (235,48.15) .. controls (234.1,57.79) and (221.17,57.89) .. (212.42,31.77) .. controls (203.67,5.66) and (248.5,3) .. (272.5,26.47) ;
   \draw   (12,91) .. controls (12,78.3) and (22.3,68) .. (35,68) .. controls (47.7,68) and (58,78.3) .. (58,91) .. controls (58,103.7) and (47.7,114) .. (35,114) .. controls (22.3,114) and (12,103.7) .. (12,91) -- cycle ;
   \draw   (21.5,36.75) .. controls (21.5,24.19) and (31.69,14) .. (44.25,14) .. controls (56.81,14) and (67,24.19) .. (67,36.75) .. controls (67,49.31) and (56.81,59.5) .. (44.25,59.5) .. controls (31.69,59.5) and (21.5,49.31) .. (21.5,36.75) -- cycle ;
   \draw    (153.5,62) .. controls (145.5,79) and (170.5,104) .. (141.5,101) .. controls (112.5,98) and (137.5,77) .. (135.5,57) .. controls (133.5,37) and (126.17,23.7) .. (146.83,20.85) .. controls (167.5,18) and (161.5,46) .. (153.5,62) -- cycle ;
   \draw   (362.5,63.5) .. controls (362.5,50.8) and (372.8,40.5) .. (385.5,40.5) .. controls (398.2,40.5) and (408.5,50.8) .. (408.5,63.5) .. controls (408.5,76.2) and (398.2,86.5) .. (385.5,86.5) .. controls (372.8,86.5) and (362.5,76.2) .. (362.5,63.5) -- cycle ;
   \draw   (338,63.5) .. controls (338,37.27) and (359.27,16) .. (385.5,16) .. controls (411.73,16) and (433,37.27) .. (433,63.5) .. controls (433,89.73) and (411.73,111) .. (385.5,111) .. controls (359.27,111) and (338,89.73) .. (338,63.5) -- cycle ;
  \end{tikzpicture}\\
  {\!\!\!\!\!\!\!$D_{\{B,A,A\}}$\qquad\qquad\!\!\!\!\!\!$D_{\{B,A,B\}}$\qquad\qquad\!\!\!\!$D_{\{B,B,A\}}$\qquad\qquad\,\,$D_{\{B,B,B\}}$}
 \end{center}
\end{example}

\begin{definition}[Kauffman bracket]\label{definition:Kauffman_bracket}
 The \textit{Kauffman bracket}\index{Kauffman bracket} is a function,
 \[\langle{D}\rangle\colon\{\text{unoriented link diagrams D}\}\to\ZZ[A,B,d],\]
 that is an invariant under $\RII$ and $\RIII$, and is uniquely defined by the following properties,
 \begin{enumerate}[(i)]
  \item $\langle{\bigcirc}\rangle=1$,
  \item $\langle{D\cup\bigcirc}\rangle=d\langle{D}\rangle$,
  \item $\left\langle\KPB\right\rangle=A\left\langle\KPC\right\rangle+B\left\langle\KPD\right\rangle$,
  \item $\left\langle\KPE\right\rangle=A\left\langle\KPD\right\rangle+B\left\langle\KPC\right\rangle$
 \end{enumerate}
\end{definition}

 The bracket is well defined with states and splitting markers. After labelling the regions, when we split the diagram, we multiply the bracket of the split diagram with the region the splitting marker is coinciding with.
 \[
  \left\langle\begin{tikzpicture}[every path/.style={},scale=0.5,baseline=-\dimexpr\fontdimen22\textfont2\relax]
   \clip (-1,-1)--(-1,1)--(1,1)--(1,-1)--cycle;
   \draw (-1,-1)--(1,1);
    \draw[ultra thick,draw=white,double=black] (-1,1)--(1,-1);
    \draw (0,0) node[scale=1.4,anchor=west] {$A$};
    \draw (0,0) node[scale=1.4,anchor=south] {$B$};
    \draw (0,0) node[scale=1.4,anchor=east] {$A$};
    \draw (0,0) node[scale=1.4,anchor=north] {$B$};
    \draw[thick,red,dashed] (-0.8,0)--(0.8,0);
    \draw[blue,thick,dashed] (0,-0.8)--(0,0.8);
  \end{tikzpicture}\right\rangle
  =
  \textcolor{red}{A}\left\langle\KPC\right\rangle+\textcolor{blue}{B}\left\langle\KPD\right\rangle
 \]

\begin{remark}
 In tangle notation, (iii) would be $\left\langle\tangle{-1}\right\rangle=A\left\langle\tangle{\infty}_A\right\rangle+B\left\langle\tangle{0}_B\right\rangle$ and by rotating, we get (iv). Thus, defining both is not necessary, but is convenient.
\end{remark}

\begin{proposition}
 The Kauffman bracket is well-defined, i.e., we have the Kauffman bracket defined by its \textit{state-sum}\index{Kauffman bracket!state-sum}
 \[\langle{D}\rangle=\sum_{\text{states }s\text{ of }D}A^{|a(s)|}B^{|b(s)|}d^{|s|-1},\]
 where $a(s)$ is the number of $A$-smoothings in the state $s$, $b(s)$ is the number of $B$-smoothings in the state $s$, and $|s|$ is the number of components in the state $s$.
\end{proposition}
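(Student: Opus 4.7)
The plan is to establish the state-sum identity by showing that both sides satisfy the same four defining axioms of Definition \ref{definition:Kauffman_bracket}, which in turn uniquely determine a function on unoriented diagrams. Write $\langle D\rangle_{\mathrm{ss}}$ for the right-hand side; the goal is to verify properties (i)--(iv) for $\langle\cdot\rangle_{\mathrm{ss}}$, and separately to argue that the axioms can determine at most one value per diagram. Uniqueness is straightforward by induction on the number $n$ of crossings: for $n=0$ the diagram is a disjoint union of $k\ge 1$ circles, and axiom (i) together with $k-1$ applications of (ii) forces $\langle D\rangle=d^{k-1}$; for $n\ge 1$, either (iii) or (iv) (depending on the chosen crossing type) expresses $\langle D\rangle$ as a $\ZZ[A,B,d]$-linear combination of the brackets of two diagrams with $n-1$ crossings, which are determined by induction.

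For existence, I would verify each axiom on $\langle\cdot\rangle_{\mathrm{ss}}$. Axiom (i) is immediate: $\bigcirc$ has no crossings, so the unique empty state $s$ has $a(s)=b(s)=0$ and $|s|=1$, contributing $A^0B^0d^0=1$. For (ii), the states of $D\cup\bigcirc$ biject naturally with the states of $D$ (no new crossings are introduced), and along this bijection $a$ and $b$ are preserved while $|s|$ increases by one, scaling the sum by $d$. For (iii), I would fix the distinguished crossing $c$ and partition the $2^n$ states of $D$ according to the smoothing chosen at $c$. The states that $A$-smooth $c$ are in bijection with all states of the $A$-resolution $D_A$; along this bijection $a$ drops by one, $b$ is unchanged, and $|s|$ is preserved, so that half of the sum equals $A\langle D_A\rangle_{\mathrm{ss}}$. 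Symmetrically, the $B$-smoothed states contribute $B\langle D_B\rangle_{\mathrm{ss}}$, recovering (iii); axiom (iv) follows by rotating the local picture by $\pi/2$, swapping the roles of $A$- and $B$-regions.

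The main obstacle will be the preservation of $|s|$ in the bijection used for (iii) and (iv). Because the component count of a fully smoothed diagram is a global feature of the resulting planar curves, one must argue that smoothing at the single crossing $c$ is genuinely a local surgery: once a smoothing at $c$ is fixed, the immersed planar curves obtained by resolving all remaining crossings of $D$ are combinatorially identical to those obtained by resolving the corresponding crossings of $D_A$ (respectively $D_B$), so the loop counts agree on the nose. With this local-to-global observation in hand, the remainder of the proof is a routine comparison of the exponents of $A$, $B$, and $d$ on either side of the identity.
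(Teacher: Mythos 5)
Your proposal is correct and follows essentially the same route as the paper: induction on the number of crossings, verifying that the state sum reproduces axioms (i)--(iv) via the bijection between states of $D\cup\bigcirc$ and states of $D$, and the partition of states of $D$ according to the smoothing chosen at a distinguished crossing. Your version is somewhat more carefully organised (separating existence from uniqueness and flagging the preservation of $|s|$ explicitly, a point the paper passes over), but the mathematical content is the same.
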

\begin{proof}
 We show the uniqueness of the polynomial bracket by induction on the number of crossings in the link diagram.
 
 For a diagram with no crossings, we clearly have $\left\langle\KPA\right\rangle=1$, since the only state would be the unknot itself.
 Now if the link diagram $D$ is empty, then we need $\left\langle{D\cup\bigcirc}\right\rangle=1$, which is clearly satisfied.
 
 If the link diagram $D$ is not empty, we see that any state $s$ of $D$ with $\bigcirc$ is the same as the state of $D\cup\bigcirc$.
 We note
 $\left|a\left(s\cup\KPA\right)\right|=|a(s)|$ and $\left|b\left(s\cup\KPA\right)\right|=|b(s)|$,
 since they have the same crossings and so the smoothings will be the same (except for the extra circle which will not affect $a(s)$ or $b(s)$). So
 \begin{align*}
  \left\langle{D\cup\KPA}\right\rangle
  &=
  \sum_{s}{A^{|a(s)|}B^{|b(s)|}d^{|s|+1-1}}\\
  &= \sum_{s}{A^{|a(s)|}B^{|b(s)|}d^{|s|-1}d},
 \end{align*}
 which is clearly $d\left\langle{D}\right\rangle$. Thus for an $m$-unlink (no crossings), we have,
 \[\left\langle{\KPA^m}\right\rangle=d^{m-1}.\]
 We now suppose that the Kauffman bracket is in fact unique for all link diagrams with less than $c(D)$ crossings (inductive hypothesis).
 Thus, we want to show that for any arbitrary link diagram $D$, we have $\left\langle{D}\right\rangle=A\left\langle{D_A}\right\rangle+B\left\langle{D_B}\right\rangle$. And by the inductive hypothesis,
 \begin{align*}
  A\langle{D_A}\rangle+B\langle{D_B}\rangle &= A\sum_{s\in{D_A}}A^{|a(s)|}B^{|b(s)|}d^{|s|-1}+B\sum_{s\in{D_B}}A^{|b(S)|}B^{|b(s)|}d^{|s|-1}\\
  &= \sum_{s\in{D_A}}A^{|a(s)|+1}B^{|b(s)|}d^{|s|-1}+\sum_{s\in{D_B}}A^{|a(s)|}B^{|b(s)|+1}d^{|s|-1}
 \end{align*}
 And since $\sum_{s\in{D_A}}A^{|a(s)|+1}B^{|b(s)|}d^{|s|-1}$ is the same as $\tangle{1}\mapsto\{\tangle{0}_A,\tangle{\infty}_B\}$ and, similarly, for the other sum we have it being $\tangle{-1}\mapsto\{\tangle{\infty}_A,\tangle{0}_B\}$. Adding the states of $\tangle{1}$ with the states of $\tangle{-1}$ gives us the states of $D$. Thus, we have
 \[= \sum_{s\in{D}}A^{|a(s)|}B^{|b(s)|}d^{|s|-1} = \langle{D}\rangle\]
 And we are done.
\end{proof}

\begin{proposition}\label{proposition:Kauffman_regular_isotopy}
 The Kauffman bracket is invariant under regular isotopy if we have $B=A^{-1}$ and $d=-A^{2}-A^{-2}$.
\end{proposition}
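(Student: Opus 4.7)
The plan is to verify invariance under $\RII$ and $\RIII$ separately: the $\RII$ calculation will pin down the required values of $B$ and $d$, and $\RIII$ invariance will then follow essentially for free from $\RII$ invariance.

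For $\RII$, I would apply the skein relations (iii) and (iv) to each of the two crossings of the $\riimove$ tangle in succession, writing $\langle \riimove \rangle$ as a weighted sum over the four joint states with coefficients $A^2, AB, AB, B^2$. I would then trace arcs through the two stacked elementary tangles to identify each state; a case check shows that two of the four states produce the horizontal resolution $\KPC$, one produces the vertical resolution $\KPD$, and the remaining one produces $\KPC$ together with a disjoint extra loop. Using rule (ii) to absorb the loop as a factor of $d$ yields
\[
\left\langle \riimove \right\rangle \;=\; (A^2 + ABd + B^2) \left\langle \KPC \right\rangle \;+\; AB \left\langle \KPD \right\rangle.
\]
Since the link after the $\RII$ move is the vertical resolution $\KPD$, invariance forces $AB = 1$ and $A^2 + ABd + B^2 = 0$. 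The first gives $B = A^{-1}$; substituting into the second then forces $d = -A^2 - A^{-2}$.

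For $\RIII$, with $B$ and $d$ now fixed, I would exploit the $\RII$ invariance just established. The two sides of $\RIII$ differ by sliding a single strand across the crossing of the other two, so I would apply the skein relation to one of the crossings lying on that sliding strand. The $A$-smoothings of the resulting diagrams on the two sides agree up to a planar isotopy of the interior (the sliding strand is cut open, so its crossings become inessential). The $B$-smoothings on the two sides differ by exactly one $\RII$ move, so their brackets agree by the invariance just proved. Adding the two contributions yields $\left\langle \text{LHS of }\RIII \right\rangle = \left\langle \text{RHS of }\RIII \right\rangle$.

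The main obstacle will be the bookkeeping in the $\RII$ calculation. The four states correspond to the four combinations of the elementary tangles $\KPC$ and $\KPD$ stacked vertically, and it is easy to misidentify which combination produces the extra closed loop versus the uncrossed vertical or horizontal diagram; swapping the role of $A$ and $B$ at either crossing flips the outcome. The safe route is to draw each of the four states and trace the arcs explicitly through both crossing regions, so that the coefficients $A^2 + ABd + B^2$ and $AB$ land on the correct tangles.
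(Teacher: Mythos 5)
Your proposal is correct and follows essentially the same route as the paper: the $\RII$ computation expands both crossings into the four states, collects the coefficients $(A^2+B^2+ABd)$ and $AB$ on the two resolutions, and forces $AB=1$ and $A^2+B^2+ABd=0$; the $\RIII$ step then resolves a single crossing and invokes the just-established $\RII$ invariance to match the two sides. The only caution, which you already flag, is the bookkeeping of which of the four states carries the extra closed loop, and that is exactly where the paper's diagrammatic expansion does the work.
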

\begin{proof}
 For $\RII$,
 \begin{align*}
  \left\langle\riimove\right\rangle
  &=
  A\left\langle\crossnegpos\right\rangle
  +
  B\left\langle\negativetwistwithalonecurve\right\rangle\\
  &=
  A\left(A\left\langle\rotatebox{85}{\!\!\!\!\!\resolveriimove}\right\rangle
  +
  B\left\langle\resolveriimove\right\rangle\right)
  +
  B\left(A\left\langle\lookslikelol\right\rangle
  +
  B\left\langle\rotatebox{85}{\!\!\!\!\!\resolveriimove}\right\rangle\right)\\
  &=
  A^2\left\langle\rotatebox{85}{\!\!\!\!\!\resolveriimove}\right\rangle
  +
  B^2\left\langle\rotatebox{85}{\!\!\!\!\!\resolveriimove}\right\rangle
  +
  AB\left\langle\resolveriimove\right\rangle
  +
  ABd\left\langle\rotatebox{85}{\!\!\!\!\!\resolveriimove}\right\rangle\\
  &=
  (A^2+B^2+ABd)\left\langle\rotatebox{85}{\!\!\!\!\!\resolveriimove}\right\rangle
  +
  AB\left\langle\resolveriimove\right\rangle
 \end{align*}
 We need $(A^2+B^2+ABd)\left\langle\rotatebox{85}{\!\!\!\!\!\resolveriimove}\right\rangle+AB\left\langle\resolveriimove\right\rangle=\left\langle\resolveriimove\right\rangle$, so $AB=1$ and $A^2+B^2+ABd=0$.
 And we are done.
 As for $\RIII$, we have
 \begin{align*}
  \left\langle\knotsinmath{\begin{scope}[scale=0.15]
    \draw[draw=white,double=black ,ultra thick,double distance=0.5pt] (  0:2) .. controls (270:0.7) .. (180:2);
    \draw[draw=white,double=black  ,ultra thick,double distance=0.5pt] ( 60:2) .. controls (150:0.7) .. (240:2);
    \draw[draw=white,double=black,ultra thick,double distance=0.5pt] (120:2) .. controls ( 30:0.7) .. (300:2);
   \end{scope}}\right\rangle
   &= A\left\langle\knotsinmath{\begin{scope}[x=0.75pt,y=0.75pt,yscale=-1,xscale=1,scale=0.3,yshift=-0.9cm]
    \draw    (32,59.98) .. controls (49,23.98) and (95,23.98) .. (117,60.98) ;
    \draw    (49,52.15) .. controls (63,59.15) and (75,60.15) .. (93,52.15) ;
    \draw    (51,10.15) .. controls (63,22.15) and (80,23.15) .. (96,11.15) ;
    \draw    (16,35.15) .. controls (30,42.15) and (16,35.15) .. (32,43) ;
    \draw    (110,42.15) .. controls (120,34.15) and (114,39.15) .. (125,30.15) ;
   \end{scope}}\right\rangle + B\left\langle\knotsinmath{\begin{scope}[x=0.75pt,y=0.75pt,yscale=-1,xscale=1,scale=0.3,yshift=-0.9cm]
    \draw    (96,10.65) .. controls (84,17.65) and (80.99,24.66) .. (89,32.66) .. controls (97,40.65) and (100,44.32) .. (118,64.65) ;
    \draw    (49,52.15) .. controls (63,59.15) and (75,60.15) .. (93,52.15) ;
    \draw    (56,9.65) .. controls (65,17.65) and (65,21.65) .. (55,32.65) .. controls (45,43.65) and (39,49.65) .. (25,64.65) ;
    \draw    (16,35.15) .. controls (30,42.15) and (16,35.15) .. (32,43) ;
    \draw    (110,42.15) .. controls (120,34.15) and (114,39.15) .. (125,30.15) ;
   \end{scope}}\right\rangle
  \end{align*}
  By using the fact that the bracket is invariant under $\RII$, we find,
  \begin{align*}
   &= A\left\langle\knotsinmath{\begin{scope}[x=0.75pt,y=0.75pt,yscale=-1,xscale=1,scale=0.3,yshift=-0.9cm]
    \draw    (21.72,26.85) .. controls (42.43,49.7) and (97.9,36.72) .. (118.45,24.93) ;
    \draw    (18.45,55.93) .. controls (30.45,51.93) and (54.45,47.93) .. (76.45,48.93) .. controls (98.45,49.93) and (129.45,52.93) .. (137.45,55.93) ;
    \draw    (51,10.15) .. controls (63,22.15) and (80,23.15) .. (96,11.15) ;
   \end{scope}}\right\rangle + B\left\langle\rotatebox{180}{\knotsinmath{\begin{scope}[x=0.75pt,y=0.75pt,yscale=-1,xscale=1,scale=0.3,yshift=-0.3cm]
    \draw    (96,10.65) .. controls (84,17.65) and (80.99,24.66) .. (89,32.66) .. controls (97,40.65) and (100,44.32) .. (118,64.65) ;
    \draw    (49,52.15) .. controls (63,59.15) and (75,60.15) .. (93,52.15) ;
    \draw    (56,9.65) .. controls (65,17.65) and (65,21.65) .. (55,32.65) .. controls (45,43.65) and (39,49.65) .. (25,64.65) ;
    \draw    (16,35.15) .. controls (30,42.15) and (16,35.15) .. (32,43) ;
    \draw    (110,42.15) .. controls (120,34.15) and (114,39.15) .. (125,30.15) ;
   \end{scope}}}\right\rangle\\
   &= A\left\langle\rotatebox{180}{\knotsinmath{\begin{scope}[x=0.75pt,y=0.75pt,yscale=-1,xscale=1,scale=0.3,yshift=-0.3cm]
    \draw    (32,59.98) .. controls (49,23.98) and (95,23.98) .. (117,60.98) ;
    \draw    (49,52.15) .. controls (63,59.15) and (75,60.15) .. (93,52.15) ;
    \draw    (51,10.15) .. controls (63,22.15) and (80,23.15) .. (96,11.15) ;
    \draw    (16,35.15) .. controls (30,42.15) and (16,35.15) .. (32,43) ;
    \draw    (110,42.15) .. controls (120,34.15) and (114,39.15) .. (125,30.15) ;
   \end{scope}}}\right\rangle + B\left\langle\rotatebox{180}{\knotsinmath{\begin{scope}[x=0.75pt,y=0.75pt,yscale=-1,xscale=1,scale=0.3,yshift=-0.3cm]
    \draw    (96,10.65) .. controls (84,17.65) and (80.99,24.66) .. (89,32.66) .. controls (97,40.65) and (100,44.32) .. (118,64.65) ;
    \draw    (49,52.15) .. controls (63,59.15) and (75,60.15) .. (93,52.15) ;
    \draw    (56,9.65) .. controls (65,17.65) and (65,21.65) .. (55,32.65) .. controls (45,43.65) and (39,49.65) .. (25,64.65) ;
    \draw    (16,35.15) .. controls (30,42.15) and (16,35.15) .. (32,43) ;
    \draw    (110,42.15) .. controls (120,34.15) and (114,39.15) .. (125,30.15) ;
   \end{scope}}}\right\rangle\\
   &= \left\langle\knotsinmath{\begin{scope}[scale=0.15]
    \draw[draw=white,double=black ,ultra thick,double distance=0.5pt] (  0:2) .. controls ( 90:0.7) .. (180:2);
    \draw[draw=white,double=black  ,ultra thick,double distance=0.5pt] ( 60:2) .. controls (330:0.7) .. (240:2);
    \draw[draw=white,double=black,ultra thick,double distance=0.5pt] (120:2) .. controls (210:0.7) .. (300:2);
   \end{scope}}\right\rangle
 \end{align*}
 Thus, the bracket is invariant under regular isotopy.
\end{proof}

And by the uniqueness of the Kauffman bracket, we then have the following state sum, where $B$ and $d$ are replaced with $A^{-1}$ and $(-A^{2}-A^{-2})$, respectively, we have
\[\langle{D}\rangle=\sum_{\text{states }s\text{ of }D}A^{\alpha(s)}(-A^2-A^{-2})^{|\text{components of state }s|-1}\]
where $\alpha(s)=|a(s)|+|b(s)|$.

\begin{example} \label{example:left-handed-trefoil-kauffman} Consider the left-handed trefoil (the trefoil with the negative writhe). We will calculate its bracket polynomial using its inductive and state-sum definition. We begin with the inductive definition.
 \begin{align*}
  \left\langle\trefoil\right\rangle 
  &=
  A\left\langle\twopositivetwists\right\rangle
  +
  A^{-1}\left\langle\hopflink\!\!\right\rangle\\
  &=
  A\left(A\left\langle\reflectbox{\negativetwistwithcircle}\right\rangle
  +
  A^{-1}\left\langle\reflectbox{\negativetwist}\!\!\right\rangle\right)
  +
  A^{-1}\left(A\left\langle\reflectbox{\negativetwist}\!\!\right\rangle
  +
  A^{-1}\left\langle\reflectbox{\positivetwistinward}\!\!\right\rangle
  \right)\\
  &=
  A^2(-A^2-A^{-2})\left\langle\reflectbox{\negativetwist}\!\!\right\rangle
  +
  A\langle\bigcirc\bigcirc\rangle + A^{-1}\langle\bigcirc\rangle\\
  &+
  A^{-2}\left(A\langle\bigcirc\rangle+A^{-1}\langle\bigcirc\bigcirc\rangle\right) + A\langle\bigcirc\bigcirc\rangle + A^{-1}\langle\bigcirc\rangle\\
  &=
  (-A^4-1)(A\langle\bigcirc\bigcirc\rangle+A^{-1}\langle\bigcirc\rangle) + 2A\langle\bigcirc\bigcirc\rangle + 3A^{-1}\\
  &+
  A^{-3}\langle\bigcirc\bigcirc\rangle\\
  &=
  (-A^5+A^{-3}+A)(-A^2-A^{-2})\langle\bigcirc\rangle - A^3 + 2A^{-1}\\
  &= -A^{-5} - A^3 + A^7
 \end{align*}
 We now use the state-sum definition.\newpage
 \begin{center}
  \begin{longtable}{c>{\centering\arraybackslash}m{25mm}||c||c}
   \multicolumn{2}{c}{state}&\multicolumn{1}{c}{$\alpha(s)$}&\multicolumn{1}{c}{$|\text{components of state}|$}\\\hline
   $\{A,A,A\}$&
   \begin{center}\begin{tikzpicture}[scale=0.6,x=0.75pt,y=0.75pt,yscale=-1,xscale=1,baseline=-\dimexpr\fontdimen22\textfont2\relax]
    \draw   (84.5,51) .. controls (84.5,39.4) and (93.9,30) .. (105.5,30) .. controls (117.1,30) and (126.5,39.4) .. (126.5,51) .. controls (126.5,62.6) and (117.1,72) .. (105.5,72) .. controls (93.9,72) and (84.5,62.6) .. (84.5,51) -- cycle ;
    \draw   (50,97) .. controls (50,84.3) and (60.3,74) .. (73,74) .. controls (85.7,74) and (96,84.3) .. (96,97) .. controls (96,109.7) and (85.7,120) .. (73,120) .. controls (60.3,120) and (50,109.7) .. (50,97) -- cycle ;
    \draw   (30.5,47.75) .. controls (30.5,35.19) and (40.69,25) .. (53.25,25) .. controls (65.81,25) and (76,35.19) .. (76,47.75) .. controls (76,60.31) and (65.81,70.5) .. (53.25,70.5) .. controls (40.69,70.5) and (30.5,60.31) .. (30.5,47.75) -- cycle ;
   \end{tikzpicture}\end{center}
   &$3$&$3$\\
   $\{A,A,B\}$&
   \begin{center}\begin{tikzpicture}[scale=0.6,x=0.75pt,y=0.75pt,yscale=-1,xscale=1,baseline=-\dimexpr\fontdimen22\textfont2\relax]
   \draw    (36.5,53) .. controls (28.5,70) and (53.5,95) .. (24.5,92) .. controls (-4.5,89) and (20.5,68) .. (18.5,48) .. controls (16.5,28) and (9.17,14.7) .. (29.83,11.85) .. controls (50.5,9) and (44.5,37) .. (36.5,53) -- cycle ;
   \draw   (48.5,57) .. controls (48.5,45.4) and (57.9,36) .. (69.5,36) .. controls (81.1,36) and (90.5,45.4) .. (90.5,57) .. controls (90.5,68.6) and (81.1,78) .. (69.5,78) .. controls (57.9,78) and (48.5,68.6) .. (48.5,57) -- cycle ;
  \end{tikzpicture}\end{center}
   &$1$&$2$\\
   $\{A,B,A\}$&
   \begin{center}\begin{tikzpicture}[scale=0.6,x=0.75pt,y=0.75pt,yscale=-1,xscale=1,baseline=-\dimexpr\fontdimen22\textfont2\relax]
    \draw   (38.5,27) .. controls (38.5,15.4) and (47.9,6) .. (59.5,6) .. controls (71.1,6) and (80.5,15.4) .. (80.5,27) .. controls (80.5,38.6) and (71.1,48) .. (59.5,48) .. controls (47.9,48) and (38.5,38.6) .. (38.5,27) -- cycle ;
    \draw   (18,74) .. controls (18,61.3) and (28.3,51) .. (41,51) .. controls (53.7,51) and (64,61.3) .. (64,74) .. controls (64,86.7) and (53.7,97) .. (41,97) .. controls (28.3,97) and (18,86.7) .. (18,74) -- cycle ;
   \end{tikzpicture}\end{center}
   &$1$&$2$\\
   $\{A,B,B\}$&
   \begin{center}\begin{tikzpicture}[scale=0.6,x=0.75pt,y=0.75pt,yscale=-1,xscale=1,baseline=-\dimexpr\fontdimen22\textfont2\relax]
    \draw    (41.5,55) .. controls (33.5,72) and (58.5,97) .. (29.5,94) .. controls (0.5,91) and (25.5,70) .. (23.5,50) .. controls (21.5,30) and (14.17,16.7) .. (34.83,13.85) .. controls (55.5,11) and (49.5,39) .. (41.5,55) -- cycle ;
   \end{tikzpicture}\end{center}
   &$-1$&$1$\\
   $\{B,A,A\}$&
   \begin{center}\begin{tikzpicture}[scale=0.6,x=0.75pt,y=0.75pt,yscale=-1,xscale=1,baseline=-\dimexpr\fontdimen22\textfont2\relax]
    \draw   (7,85) .. controls (7,72.3) and (17.3,62) .. (30,62) .. controls (42.7,62) and (53,72.3) .. (53,85) .. controls (53,97.7) and (42.7,108) .. (30,108) .. controls (17.3,108) and (7,97.7) .. (7,85) -- cycle ;
    \draw   (16.5,30.75) .. controls (16.5,18.19) and (26.69,8) .. (39.25,8) .. controls (51.81,8) and (62,18.19) .. (62,30.75) .. controls (62,43.31) and (51.81,53.5) .. (39.25,53.5) .. controls (26.69,53.5) and (16.5,43.31) .. (16.5,30.75) -- cycle ;
   \end{tikzpicture}\end{center}
   &$1$&$2$\\
   $\{B,A,B\}$&
   \begin{center}\begin{tikzpicture}[scale=0.6,x=0.75pt,y=0.75pt,yscale=-1,xscale=1,baseline=-\dimexpr\fontdimen22\textfont2\relax]
    \draw    (38.5,57) .. controls (30.5,74) and (55.5,99) .. (26.5,96) .. controls (-2.5,93) and (22.5,72) .. (20.5,52) .. controls (18.5,32) and (11.17,18.7) .. (31.83,15.85) .. controls (52.5,13) and (46.5,41) .. (38.5,57) -- cycle ;
   \end{tikzpicture}\end{center}
   &$-1$&$1$\\
   $\{B,B,A\}$&
   \begin{center}\begin{tikzpicture}[scale=0.6,x=0.75pt,y=0.75pt,yscale=-1,xscale=1,baseline=-\dimexpr\fontdimen22\textfont2\relax]
    \draw    (45.68,54.79) .. controls (22.64,62.01) and (39.2,101.52) .. (62.85,102) .. controls (86.5,102.48) and (94.5,46.48) .. (75.5,23.95) ;
    \draw    (45.68,54.79) .. controls (81.82,41.3) and (38.91,36) .. (38,45.63) .. controls (37.1,55.27) and (24.17,55.37) .. (15.42,29.25) .. controls (6.67,3.14) and (51.5,0.48) .. (75.5,23.95) ;
   \end{tikzpicture}\end{center}
   &$-1$&$1$\\
   $\{B,B,B\}$&
   \begin{center}\begin{tikzpicture}[scale=0.6,x=0.75pt,y=0.75pt,yscale=-1,xscale=1,baseline=-\dimexpr\fontdimen22\textfont2\relax]
    \draw   (52.5,54.5) .. controls (52.5,41.8) and (62.8,31.5) .. (75.5,31.5) .. controls (88.2,31.5) and (98.5,41.8) .. (98.5,54.5) .. controls (98.5,67.2) and (88.2,77.5) .. (75.5,77.5) .. controls (62.8,77.5) and (52.5,67.2) .. (52.5,54.5) -- cycle ;
    \draw   (28,54.5) .. controls (28,28.27) and (49.27,7) .. (75.5,7) .. controls (101.73,7) and (123,28.27) .. (123,54.5) .. controls (123,80.73) and (101.73,102) .. (75.5,102) .. controls (49.27,102) and (28,80.73) .. (28,54.5) -- cycle ;
   \end{tikzpicture}\end{center}
   &$-3$&$2$
  \end{longtable}
 \end{center}
 Thus, the polynomial becomes
 \begin{align*}
  \left\langle{D}\right\rangle &= \sum_{\text{states }s\text{ of }D}A^{\alpha(s)}(-A^2-A^{-2})^{|\text{components of state }s|-1}\\
  &= A^3d^2 + Ad + Ad + A^{-1} + Ad + A^{-1} + A^{-1} + A^{-3}d\\
  &= d(A^3d+3A+A^{-3}) + 3A^{-1}\\
  &= (-A^2-A^{-2})(-A^5-A+3A+A^{-3}) + 3A^{-1}\\
  &= A^7+A^3-2A^3-2A^{-1}-A^{-1}-A^{-5}+3A^{-1} = A^7-A^3-A^{-5}
 \end{align*}
\end{example}

Let us see what happens to the Kauffman bracket when we apply the Reidemeister moves.

\begin{lemma}\label{lemma:Kauffman_RI}
 The Kauffman bracket is not invariant under $\RI$.
\end{lemma}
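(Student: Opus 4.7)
The plan is to exhibit an explicit link diagram $D$ with a single kink (a configuration of the form obtained by an $\RI$ move from a plain strand) and to compute $\langle D \rangle$ in terms of the bracket of the strand, showing the two differ by a nontrivial scalar factor, so the bracket cannot agree on $\RI$-equivalent diagrams.

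First, I would take a kink consisting of exactly one crossing and apply axiom (iii) of Definition~\ref{definition:Kauffman_bracket} to smooth that crossing. One of the two smoothings (the one that undoes the kink) yields the plain strand, while the other smoothing yields the plain strand together with a disjoint unknotted loop. Concretely, this gives an equation of the form
\[
\left\langle\text{kink}\right\rangle \;=\; A\,\langle \text{strand}\cup \bigcirc\rangle + A^{-1}\,\langle\text{strand}\rangle,
\]
or with the roles of $A$ and $A^{-1}$ swapped, depending on the sign of the kink.

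Next, I would use axiom (ii), together with the substitution $d=-A^2-A^{-2}$ from Proposition~\ref{proposition:Kauffman_regular_isotopy}, to replace the disjoint circle by a factor of $d$. Collecting terms gives
\[
\left\langle\text{kink}\right\rangle \;=\; \bigl(A(-A^2-A^{-2}) + A^{-1}\bigr)\langle\text{strand}\rangle \;=\; -A^{3}\,\langle\text{strand}\rangle,
\]
and the analogous computation for the mirror-image kink yields the factor $-A^{-3}$ instead. Since $-A^{\pm 3}\neq 1$ in $\mathbb{Z}[A,A^{-1}]$, the bracket of the kinked diagram genuinely differs from the bracket of the plain strand, which proves non-invariance under $\RI$.

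There is no real obstacle here; the computation is a direct application of the defining axioms. The only care required is bookkeeping: one should treat the two kink orientations separately to record both correction factors $-A^{3}$ and $-A^{-3}$, since these scalars (one for each sign of the kink) are exactly the data needed later to upgrade the bracket to an honest link invariant by multiplying by an appropriate power of $-A^{3}$ determined by the writhe.
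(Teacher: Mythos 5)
Your proposal is correct and follows essentially the same route as the paper: smooth the single crossing of the kink via axiom (iii), note that one smoothing returns the plain strand while the other returns the strand with a disjoint circle, apply axiom (ii) with $d=-A^2-A^{-2}$, and collect terms to obtain the scalar factors $-A^{\mp3}$ for the two kink types. The explicit remark that $-A^{\pm3}\neq1$ in $\ZZ[A,A^{-1}]$ is a small but welcome addition that the paper leaves implicit.
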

\begin{proof}
 \begin{align*}
  \left\langle\rimove\right\rangle
  &=
  A\left\langle\alonecurve\right\rangle
  +
  A^{-1}\left\langle\alonecurvewithcircle\!\right\rangle\\
  &=
  A\left\langle\alonecurve\right\rangle
  +
  A^{-1}(-A^2-A^{-2})\left\langle\alonecurve\right\rangle
  =
  -A^{-3}\left\langle\alonecurve\right\rangle
 \end{align*}
 and
 \begin{align*}
  \left\langle\reflectbox{\rimove}\!\right\rangle
  &=
  A\left\langle\alonecurvewithcircle\!\right\rangle
  +
  A^{-1}\left\langle\alonecurve\right\rangle\\
  &=
  -A^3\left\langle\alonecurve\right\rangle
 \end{align*}
 So, the Kauffman bracket is not invariant under $\RI$.
\end{proof}



\begin{example}
 We can use the facts we proved in Lemma \ref{lemma:Kauffman_RI} for a faster calculation. For instance, let us see the trefoil again.
 \[
  \left\langle\trefoil\right\rangle 
  =
  A\left\langle\twopositivetwists\right\rangle
  +
  A^{-1}\left\langle\hopflink\!\!\right\rangle
 \]
 we now apply two $\RI$ moves and multiply $(-A^3)^2=A^6$. So, we would get
 \[
  = A^7+A^{-1}\left\langle\hopflink\!\!\right\rangle
  =
  A^7 + A^{-1}\left(A\left\langle\reflectbox{\negativetwist}\!\!\right\rangle
  +
  A^{-1}\left\langle\reflectbox{\positivetwistinward}\!\!\right\rangle\right)
 \]
 we then apply an $\RI$ move again to both and find,
 \[
  = A^7+A^{-1}(A(-A^3)+A^{-1}(-A^{-3}))=A^7-A^3-A^{-5}
 \]
 Hence we have simplified the calculation from Example \ref{example:left-handed-trefoil-kauffman} by threefold.
\end{example}

\subsection{Kauffman's bracket polynomial definition of the Jones polynomial}

We have already seen from Lemma \ref{lemma:Kauffman_RI} that the bracket is not invariant under $\RI$. We now discuss the method of `normalising' the bracket with respect to its writhe in order to make it invariant under ambient isotopy. Here is a precise definition.

\begin{definition}[The Jones polynomial - via Kauffman bracket]\label{definition:Jones-polynomial-bracket}
 The \textit{Jones polynomial}\index{Jones polynomial!via Kauffman bracket} is the function
 \[V(L)\colon\{\text{oriented links L with corresponding diagrams D}\}\to\ZZ[t^{-1/2},t^{1/2}],\]
 given by $V(L)=((-A)^{-3\omega(D)}\langle{|D|}\rangle)_{t^{1/2}=A^{-2}}$, where $|D|$ is the link diagram $D$ without orientation.
\end{definition}

\begin{proposition}
 The Jones polynomial is a link invariant.
\end{proposition}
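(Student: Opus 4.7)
The plan is to invoke Reidemeister's theorem: since two links are equivalent iff their diagrams are related by a finite sequence of planar isotopies and Reidemeister moves, it suffices to show that $V(L)$ is unchanged when any of $\RO$, $\RI$, $\RII$, $\RIII$ is applied to a chosen diagram $D$ of $L$. I would also note once and for all that the substitution $t^{1/2}=A^{-2}$ is applied at the end, so it is enough to establish invariance as an element of $\ZZ[A^{\pm 1}]$; the substitution then descends to $\ZZ[t^{\pm 1/2}]$.

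The easy cases are $\RO$, $\RII$, and $\RIII$. Under $\RO$ nothing changes, so there is nothing to prove. Under $\RII$ and $\RIII$ the writhe $\omega(D)$ is unchanged by Proposition~\ref{proposition:writhe_regular_isotopy}, and the bracket $\langle|D|\rangle$ is unchanged by Proposition~\ref{proposition:Kauffman_regular_isotopy} (with $B=A^{-1}$ and $d=-A^2-A^{-2}$, which is exactly the specialisation built into the definition of $V$). Hence the product $(-A)^{-3\omega(D)}\langle|D|\rangle$ is unchanged, so $V(L)$ is unchanged.

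The substantive case is $\RI$, which is really the whole point of normalising by the writhe. Write $D'$ for the diagram obtained from $D$ by inserting an extra kink of sign $\epsilon\in\{+1,-1\}$. Then $\omega(D')=\omega(D)+\epsilon$, while Lemma~\ref{lemma:Kauffman_RI} gives $\langle|D'|\rangle=(-A)^{3\epsilon}\langle|D|\rangle$ (the two cases $\epsilon=+1$ and $\epsilon=-1$ yielding the factors $-A^3$ and $-A^{-3}$ respectively, matching the two orientations of the kink computed in that lemma). Therefore
\begin{align*}
(-A)^{-3\omega(D')}\langle|D'|\rangle
&=(-A)^{-3(\omega(D)+\epsilon)}\cdot(-A)^{3\epsilon}\langle|D|\rangle\\
&=(-A)^{-3\omega(D)}\langle|D|\rangle,
\end{align*}
so the $\RI$ change cancels exactly. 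Combined with the previous paragraph, $V(L)$ depends only on the equivalence class of $L$.

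The only mild subtlety worth flagging is that $V$ is defined using both the oriented writhe and the unoriented bracket; but this is harmless because $\omega$ is well defined on an unoriented diagram up to the global orientation reversal $L\mapsto rL$ (as observed in the discussion following the definition of the reverse), and this reversal preserves each crossing sign, hence $\omega(D)$. The ``main obstacle'' is really just the bookkeeping in the $\RI$ case: one must check that the signs in Lemma~\ref{lemma:Kauffman_RI} line up with the crossing signs, so that the exponents of $(-A)$ cancel rather than double. Everything else is immediate from the results already established.
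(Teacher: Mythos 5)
Your proposal is correct and follows essentially the same route as the paper: reduce to the Reidemeister moves, dispose of $\RII$ and $\RIII$ via Propositions~\ref{proposition:Kauffman_regular_isotopy} and~\ref{proposition:writhe_regular_isotopy}, and check that the $(-A)^{-3\omega(D)}$ normalisation exactly cancels the $(-A)^{\pm3}$ factor from Lemma~\ref{lemma:Kauffman_RI} under an $\RI$ move. Your parametrisation by the kink sign $\epsilon$ merely condenses the paper's two separate positive/negative-twist computations into one line, and the sign bookkeeping matches.
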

\begin{proof}
 We check what happens to the Jones polynomial under the three Reidemeister moves (ambient isotopy).
 We already know that the bracket polynomial is invariant under regular isotopy from Proposition \ref{proposition:Kauffman_regular_isotopy}, and we also know from Proposition \ref{proposition:writhe_regular_isotopy} that the writhe is invariant under regular isotopy.
 Thus, we only need to show that the Jones polynomial is invariant under $\RI$.
 
 Now remember from Lemma \ref{lemma:Kauffman_RI}, if we apply an $\RI$ move to the diagram, then we need to multiply by a factor of $-A^{\pm3}$ depending on its writhe.
 
 We now consider the positive twist, i.e.,
 \[ \omega\left(\reflectbox{\knotsinmath{\begin{scope}[x=0.75pt,y=0.75pt,yscale=-1,xscale=1,yshift=-0.35cm,scale=0.3]
   \draw    (36,52.23) .. controls (29,84.23) and (4,67.23) .. (15,47.23) .. controls (25.78,27.63) and (47.12,42.61) .. (71.5,53.57) ;
   \draw [shift={(73,54.23)}, rotate = 203.75] [color={rgb, 255:red, 0; green, 0; blue, 0 }  ][line width=0.75]    (10.93,-3.29) .. controls (6.95,-1.4) and (3.31,-0.3) .. (0,0) .. controls (3.31,0.3) and (6.95,1.4) .. (10.93,3.29)   ;
   \draw    (47,11.23) -- (42,30.23) ;
  \end{scope}}}\!\right)=+1 \qquad\text{ and }\qquad \left\langle\reflectbox{\knotsinmath{\begin{scope}[x=0.75pt,y=0.75pt,yscale=-1,xscale=1,yshift=-0.35cm,scale=0.3]
   \draw    (36,52.23) .. controls (29,84.23) and (4,67.23) .. (15,47.23) .. controls (25.78,27.63) and (47.12,42.61) .. (71.5,53.57) ;
   \draw    (47,11.23) -- (42,30.23) ;
  \end{scope}}}\right\rangle=-A^3\left\langle\alonecurve\right\rangle \]
 So,
 \begin{align*}
  V\left(\reflectbox{\knotsinmath{\begin{scope}[x=0.75pt,y=0.75pt,yscale=-1,xscale=1,yshift=-0.35cm,scale=0.3]
   \draw    (36,52.23) .. controls (29,84.23) and (4,67.23) .. (15,47.23) .. controls (25.78,27.63) and (47.12,42.61) .. (71.5,53.57) ;
   \draw [shift={(73,54.23)}, rotate = 203.75] [color={rgb, 255:red, 0; green, 0; blue, 0 }  ][line width=0.75]    (10.93,-3.29) .. controls (6.95,-1.4) and (3.31,-0.3) .. (0,0) .. controls (3.31,0.3) and (6.95,1.4) .. (10.93,3.29)   ;
   \draw    (47,11.23) -- (42,30.23) ;
  \end{scope}}}\!\right) &= (-A)^{-3\omega\left(\reflectbox{\knotsinmath{\begin{scope}[x=0.75pt,y=0.75pt,yscale=-1,xscale=1,yshift=-0.35cm,scale=0.3]
   \draw    (36,52.23) .. controls (29,84.23) and (4,67.23) .. (15,47.23) .. controls (25.78,27.63) and (47.12,42.61) .. (71.5,53.57) ;
   \draw [shift={(73,54.23)}, rotate = 203.75] [color={rgb, 255:red, 0; green, 0; blue, 0 }  ][line width=0.75]    (10.93,-3.29) .. controls (6.95,-1.4) and (3.31,-0.3) .. (0,0) .. controls (3.31,0.3) and (6.95,1.4) .. (10.93,3.29)   ;
   \draw    (47,11.23) -- (42,30.23) ;
  \end{scope}}}\!\right)}\left\langle\reflectbox{\knotsinmath{\begin{scope}[x=0.75pt,y=0.75pt,yscale=-1,xscale=1,yshift=-0.35cm,scale=0.3]
   \draw    (36,52.23) .. controls (29,84.23) and (4,67.23) .. (15,47.23) .. controls (25.78,27.63) and (47.12,42.61) .. (71.5,53.57) ;
   \draw    (47,11.23) -- (42,30.23) ;
  \end{scope}}}\!\right\rangle\\
  &= (-A)^{-3\left(\omega\left(\alonecurve\right)+1\right)}\left(-A^{3}\right)\left\langle\alonecurve\right\rangle\\
  &= (-A)^{-3\omega\left(\alonecurve\right)}\left\langle\alonecurve\right\rangle = V\left(\alonecurve\right)
 \end{align*}
 And similarly,
 \begin{align*}
  V\left(\knotsinmath{\begin{scope}[x=0.75pt,y=0.75pt,yscale=-1,xscale=1,yshift=-0.35cm,scale=0.3]
   \draw    (36,52.23) .. controls (29,84.23) and (4,67.23) .. (15,47.23) .. controls (25.78,27.63) and (47.12,42.61) .. (71.5,53.57) ;
   \draw [shift={(73,54.23)}, rotate = 203.75] [color={rgb, 255:red, 0; green, 0; blue, 0 }  ][line width=0.75]    (10.93,-3.29) .. controls (6.95,-1.4) and (3.31,-0.3) .. (0,0) .. controls (3.31,0.3) and (6.95,1.4) .. (10.93,3.29)   ;
   \draw    (47,11.23) -- (42,30.23) ;
  \end{scope}}\right) &= (-A)^{-3\omega\left(\knotsinmath{\begin{scope}[x=0.75pt,y=0.75pt,yscale=-1,xscale=1,yshift=-0.35cm,scale=0.3]
   \draw    (36,52.23) .. controls (29,84.23) and (4,67.23) .. (15,47.23) .. controls (25.78,27.63) and (47.12,42.61) .. (71.5,53.57) ;
   \draw [shift={(73,54.23)}, rotate = 203.75] [color={rgb, 255:red, 0; green, 0; blue, 0 }  ][line width=0.75]    (10.93,-3.29) .. controls (6.95,-1.4) and (3.31,-0.3) .. (0,0) .. controls (3.31,0.3) and (6.95,1.4) .. (10.93,3.29)   ;
   \draw    (47,11.23) -- (42,30.23) ;
  \end{scope}}\right)}\left\langle\knotsinmath{\begin{scope}[x=0.75pt,y=0.75pt,yscale=-1,xscale=1,yshift=-0.35cm,scale=0.3]
   \draw    (36,52.23) .. controls (29,84.23) and (4,67.23) .. (15,47.23) .. controls (25.78,27.63) and (47.12,42.61) .. (71.5,53.57) ;
   \draw    (47,11.23) -- (42,30.23) ;
  \end{scope}}\right\rangle\\
  &= (-A)^{-3\omega\left(\alonecurve\right)+3}(-A^{-3})\left\langle\alonecurve\right\rangle\\
  &= (-A)^{-3\omega\left(\alonecurve\right)}\left\langle\alonecurve\right\rangle = V\left(\alonecurve\right)
 \end{align*}
 Thus, it is also invariant under $\RI$, making it a link invariant under ambient isotopy.
\end{proof}

\begin{example}\label{example:3_1_reflection_jones}
 Let us calculate the Jones polynomial for the trefoil from our examples above,
 \[V(\overline{3_1})=-A^{-3(-3)}(-A^{-5}-A^3+A^7)=A^4+A^{12}-A^{16}=-t^{-4}+t^{-3}+t^{-1}\]
\end{example}

\begin{proposition}\label{proposition:two_3_1_knots}
 There exists two distinct knots with crossing number three.
\end{proposition}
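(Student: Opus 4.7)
The plan is to exhibit $3_1$ and $\overline{3_1}$ as two distinct knots, each having crossing number exactly three, using the Jones polynomial as the invariant that separates them.

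First I would show they are distinct. We already have $V(\overline{3_1}) = -t^{-4}+t^{-3}+t^{-1}$ from Example \ref{example:3_1_reflection_jones}. For $V(3_1)$, I would observe that reflecting a diagram interchanges over- and under-strands at every crossing, which simultaneously negates the writhe $\omega(D)$ and, in the Kauffman bracket recursion of Definition \ref{definition:Kauffman_bracket}, swaps the two rules (iii) and (iv). Equivalently, reflection exchanges the roles of $A$ and $B=A^{-1}$ throughout the state-sum, so $\langle|\overline{D}|\rangle$ is obtained from $\langle|D|\rangle$ by the substitution $A \mapsto A^{-1}$. Under the specialization $t^{1/2} = A^{-2}$, this is exactly $t \mapsto t^{-1}$, and the sign correction $(-A)^{-3\omega}$ is consistent with negating the writhe. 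Therefore
\[
V(3_1) \;=\; -t^{4}+t^{3}+t,
\]
which is manifestly a different Laurent polynomial from $V(\overline{3_1}) = -t^{-4}+t^{-3}+t^{-1}$. Since the Jones polynomial is a link invariant, the knots $3_1$ and $\overline{3_1}$ are not equivalent.

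Second, to establish that both knots have crossing number exactly three, I would argue as follows. Each of $3_1$ and $\overline{3_1}$ admits a diagram with three crossings (the standard trefoil diagram), so $c(3_1) \le 3$ and $c(\overline{3_1}) \le 3$. For the unknot we have $\omega = 0$ and $\langle \bigcirc \rangle = 1$, so $V(0_1) = 1$, which differs from both polynomials computed above. Hence neither $3_1$ nor $\overline{3_1}$ is equivalent to the unknot, so their crossing numbers are strictly positive. By Proposition \ref{proposition:no_knot_1_2}, no knot has crossing number one or two, so $c(3_1), c(\overline{3_1}) \ge 3$. Combined with the upper bound, both crossing numbers equal $3$.

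The only genuinely delicate step is the reflection identity $V(\overline{L})(t) = V(L)(t^{-1})$; everything else is bookkeeping or an appeal to previously proved facts. I would justify this identity carefully by tracking a single generic crossing through both the recursive and state-sum formulations of the bracket, noting that the symmetry $A \leftrightarrow A^{-1}$ of the state-sum $\langle D \rangle = \sum_s A^{\alpha(s)}(-A^{2}-A^{-2})^{|s|-1}$, together with $\omega(\overline{D}) = -\omega(D)$, forces the claimed substitution in $V$.
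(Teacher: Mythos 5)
Your proof is correct, and it reaches the conclusion by a route that differs from the paper's in two respects. The paper first enumerates the possible three-crossing link universes, tabulates all $2^3$ crossing choices, observes that only the two trefoils fail to be the unknot, and then distinguishes them by quoting $V(\overline{3_1})$ from Example \ref{example:3_1_reflection_jones} and $V(3_1)$ from the later braid-theoretic computation in Example \ref{example:og_Jones_3_1}. You skip the enumeration entirely --- which is legitimate, since the proposition only asserts the \emph{existence} of two such knots, not that they are the only ones --- and instead obtain $V(3_1)=-t^{4}+t^{3}+t$ from $V(\overline{3_1})=-t^{-4}+t^{-3}+t^{-1}$ via the reflection identity $V(\overline{L})=(V(L))_{t=t^{-1}}$; this is the identity the paper proves immediately after this proposition, and you re-derive it independently from the $A\leftrightarrow A^{-1}$ symmetry of the bracket together with $\omega(\overline{D})=-\omega(D)$, so no circularity arises, and your value agrees with the paper's $A^{-4}+A^{-12}-A^{-16}$ under $t=A^{-4}$. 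What your version buys is a self-contained argument that also pins down $c(3_1)=c(\overline{3_1})=3$ exactly: the paper's sketch never explicitly rules out the trefoils having smaller crossing number, whereas you combine the obvious upper bound from the standard diagram with $V\neq 1$ (so neither is the unknot) and Proposition \ref{proposition:no_knot_1_2} to get the matching lower bound. What the paper's enumeration buys in exchange is the stronger, unstated fact that the two trefoils are the \emph{only} knots of crossing number three.
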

\begin{proof}[Proof sketch]
 This also follows an argument analogous to the one in Proposition \ref{proposition:no_knot_1_2}. We begin by drawing all possible link universes with three crossings such that
 \begin{enumerate}
  \item there are no twists (since an $\RI$ can undo it thus leaving it with a lower crossing number)
  \item if there is a loop, it has to be connected to the whole of the knot, since we are trying to draw a link universe of one component.
 \end{enumerate}
 So, we have the following link universe.
 \begin{center}
  \begin{tikzpicture}[scale=0.5]
   \begin{scope}
    \begin{scope}[rotate=  0] \node (a) at (0,-1) {}; \end{scope}
    \begin{scope}[rotate=120] \node (b) at (0,-1) {}; \end{scope}
    \begin{scope}[rotate=240] \node (c) at (0,-1) {}; \end{scope}
    \draw (a.center) .. controls (a.4 north west) and (c.4 north east) .. (c.center);
    \draw (b.center) .. controls (b.4 north west) and (a.4 north east) .. (a.center);
    \draw (c.center) .. controls (c.4 north west) and (b.4 north east) .. (b.center);
    \draw (a.center) .. controls (a.16 south west) and (c.16 south east) .. (c.center);
    \draw (b.center) .. controls (b.16 south west) and (a.16 south east) .. (a.center);
    \draw (c.center) .. controls (c.16 south west) and (b.16 south east) .. (b.center);
   \end{scope}
  \end{tikzpicture}
 \end{center}
 By tabulating all $2^3$ knot diagrams, we find that there are only in fact two that are not the unknot.
 \begin{center}
  \reflectbox{\begin{tikzpicture}[scale=0.5]
   \mytrefoil;
   \draw (0,-2.5) node[scale=2,xscale=-1] {$3_1$};
  \end{tikzpicture}}
  \begin{tikzpicture}[scale=0.5]
    \mytrefoil;
    \draw (0,-2.5) node[scale=2] {$\overline{3_1}$};
  \end{tikzpicture}
 \end{center}
 So, we only need to show that $V(3_1)\neq{V(\overline{3_1})}$.
 
 We have already calculated $V(\overline{3_1})$ form Example \ref{example:3_1_reflection_jones}. And for the sake of not repeating onesself, we will soon see from Example \ref{example:og_Jones_3_1} that we have $V(3_1)=A^{-4}+A^{-12}-A^{-16}$. Thus, $V(3_1)\neq{V(\overline{3_1})}$, and so $3_1\not\sim\overline{3_1}$.
\end{proof}

So, now we know that the trefoil and its reflection are not equivalent knots. And so we have the following result.

\begin{proposition}
 The Jones polynomial of a mirror link $\overline{L}$ of link $L$ has the polynomial $V(\overline{L})=(V(L))_{t=t^{-1}}$.
\end{proposition}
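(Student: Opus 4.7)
The plan is to track separately how the writhe and the Kauffman bracket transform under reflection, then to feed both into the defining formula for $V$.

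First I would show that reflecting a diagram swaps $A$ and $B = A^{-1}$ in the Kauffman bracket, i.e., $\langle |\overline{D}| \rangle = \langle |D| \rangle \big|_{A \to A^{-1}}$. The cleanest argument uses the state sum: reflecting a single crossing swaps over- and under-strand, which in the labelling convention preceding Definition \ref{definition:state} interchanges the two regions marked $A$ with the two regions marked $B$. Consequently every state $s$ of $D$ corresponds to a state $s'$ of $\overline{D}$ with the same number of resulting loops but with $a(s') = b(s)$ and $b(s') = a(s)$. Because the loop factor $d = -A^{2} - A^{-2}$ is symmetric under $A \leftrightarrow A^{-1}$, the state sum for $\overline{D}$ is literally that of $D$ with $A$ and $A^{-1}$ interchanged. (Alternatively, the two skein rules (iii) and (iv) of Definition \ref{definition:Kauffman_bracket} already exhibit exactly this $A \leftrightarrow B$ symmetry between the two crossing types.)

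Next I would invoke Proposition \ref{proposition:writhe_regular_isotopy}'s accompanying observation that reflection negates every crossing sign, giving $\omega(\overline{D}) = -\omega(D)$.

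These two facts combine cleanly. Define the Laurent polynomial $\Phi(A) := (-A)^{-3\omega(D)} \langle |D| \rangle \in \ZZ[A^{\pm 1}]$, so that $V(L)$ is $\Phi(A)$ evaluated at $A = t^{-1/4}$. Then
\[
(-A)^{-3\omega(\overline{D})} \langle |\overline{D}| \rangle \;=\; (-A)^{3\omega(D)} \, \langle |D| \rangle\big|_{A \to A^{-1}} \;=\; \Phi(A^{-1}),
\]
where the last equality uses $(-A)^{3\omega(D)} = (-A^{-1})^{-3\omega(D)}$. Substituting $A = t^{-1/4}$ in $\Phi(A^{-1})$ is the same as substituting $A = t^{1/4}$ in $\Phi(A)$, and under the identification $t^{1/2} = A^{-2}$ the change $A \mapsto A^{-1}$ is precisely $t \mapsto t^{-1}$. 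Hence $V(\overline{L})(t) = V(L)(t^{-1})$.

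The main obstacle, such as it is, lies in the first step: verifying carefully that reflection genuinely interchanges the roles of the $A$- and $B$-smoothings at every crossing, so that the state sum is transformed exactly by $A \leftrightarrow A^{-1}$. Once that bijection between states is established, the remainder is bookkeeping in the substitution $t^{1/2} = A^{-2}$.
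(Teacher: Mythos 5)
Your proposal is correct and follows essentially the same route as the paper's own proof: reflection negates the writhe and interchanges $A$ and $A^{-1}$ in the bracket, and these combine under the substitution $t^{1/2}=A^{-2}$ to give $t\mapsto t^{-1}$. Your version merely spells out the state-sum bijection and the normalisation bookkeeping more carefully than the paper does.
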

\begin{proof}
 We know that the writhe of the reflection of the link $L$ is the negation of the writhe of $L$.
 As for the Kauffman bracket, it is not hard to see that $A=B$ when looking at the reflection, since, $\left\langle\KPB\right\rangle=A\left\langle\KPD\right\rangle+B\left\langle\KPC\right\rangle$, whereas $\left\langle\KPE\right\rangle=A\left\langle\KPC\right\rangle+B\left\langle\KPD\right\rangle$.
 Thus, $\langle\overline{D}\rangle=(\langle{D}\rangle)_{A=A^{-1}}$.
 And so, $V(\overline{D})=(-A^{3\omega(D)}(\langle{D}\rangle)_{A=A^{-1}}=V(D)_{A=A^{-1}}=V(D)_{t=t^{-1}}$
\end{proof}








\subsection{The skein relation definition of the Jones polynomial}\label{section:skein}


In this section, we will begin by exploring the Jones polynomial defined by its skein relation and we will also see the Kauffman bracket polynomial and how its normalisation (with respect to the writhe) is equivalent to the Jones polynomial. 

The \textit{skein relation} was shown by John Conway in the 1960's \cite{conway_skein}. He defined the Alexander polynomial via the skein relation (now sometimes known as the Alexander-Conway polynomial). So what is a skein relation?

\begin{definition}[skein relation]
 The \textit{skein relation}\index{skein relation} is defined as the relation between three link diagrams which we call $L_+$, $L_-$ and $L_0$. These diagrams are the same, except in the neighbourhood of a single crossing. See the below figure.
 \begin{figure}[H]
 \centering
 \begin{tikzpicture}[scale=0.6]
  \begin{scope}
   \draw[dashed] (0,0) circle(1.5);
   \draw[->] (1,-1)--(-1,1);
   \draw[ultra thick, draw=white,double=black,double distance=0.5pt] (-1,-1)--(1,1);
   \draw[->] (0.9,0.9)--(1,1);
   \draw (0,-2) node[scale=1.5] {$L_+$};
  \end{scope}
  \begin{scope}[xshift=5cm]
   \draw[dashed] (0,0) circle(1.5);
   \draw[->] (-1,-1)--(1,1);
   \draw[ultra thick, draw=white,double=black,double distance=0.5pt] (1,-1)--(-1,1);
   \draw[->] (-0.9,0.9)--(-1,1);
   \draw (0,-2) node[scale=1.5] {$L_-$};
  \end{scope}
  \begin{scope}[xshift=10cm]
   \draw[dashed] (0,0) circle(1.5);
   \draw[->] (-1,-1) to [bend right=40] (-1,1);
   \draw[->] (1,-1) to [bend left=40] (1,1);
   \draw (0,-2) node[scale=1.5] {$L_0$};
  \end{scope}
 \end{tikzpicture}
 \caption{skein relation}
 \label{figure:skein_relation}
 \end{figure}
 In other words, they are the three elementary $2$-tangles $\tangle{1}$, $\tangle{-1}$ and $\tangle{0}$.
\end{definition}

To define an invariant function that admits a skein relation, we have to notice that the skein relation is related to the unknotting number, in that when we have a skein relation, the complexity of the other two link diagrams are strictly less than that of the original link diagram.

We now define an invariant function $P$ that admits a skein relation, and is unique. We will not go too deep into this theory. The interested reader is directed to \cite{homfly} and \cite{Likorish}.
\begin{definition}[skein relation invariant polynomial]\label{definition:skein_relation_invariant}
 It is said that a link invariant $P\colon\{\text{Oriented links in }\RR^3\}\to{A}$, where $A$ is a commutative ring, admits a \textit{skein relation invariant}\index{skein relation invariant} if
 \begin{enumerate}[(i)]
  \item $P(0_1)=P(\KPA)=1$ and
  \item there exists three invertible elements $a_+$, $a_-$ and $a_0$ in $A$, such that whenever we have three links $L_+$, $L_-$ and $L_0$ that are skein related, then $a_+P(L_+)+a_-P(L_-)+a_0P(L_0)=0$.
 \end{enumerate}
\end{definition}

The following theorem says that if the skein relation invariant polynomial is well defined (i.e., it is a link invariant), then it is uniquely determined by $a_+$, $a_-$ and $a_0$.
\begin{theorem}
 If the skein relation invariant polynomial $P$ defined above is in fact an invariant, then it is uniquely determined by $a_+$, $a_-$ and $a_0$.
\end{theorem}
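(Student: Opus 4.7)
The plan is to prove uniqueness by strong induction on the complexity $\kappa(L) = (c(L), u(L))$ from Definition \ref{definition:complexity}, ordered lexicographically. Given two invariants $P$ and $P'$ both satisfying axioms (i)--(ii) of Definition \ref{definition:skein_relation_invariant} with the same constants $a_+, a_-, a_0$, I would show $P(L) = P'(L)$ for every link $L$. Because $a_+$ and $a_-$ are invertible, the skein relation at any chosen crossing can always be solved for $P(L)$ in terms of the values on the two other members of the skein triple, so the task is to arrange that those other two members have strictly smaller complexity.

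For the base case $\kappa(L) = (0,0)$, the link $L$ must be an $m$-unlink $\KPA^m$ for some $m \geq 1$. Axiom (i) pins down $P(0_1) = 1$. For $m \geq 2$ I would use a one-crossing diagram of $\KPA^{m-1}$ consisting of $m-2$ disjoint unknots together with one further unknot carrying an appropriately oriented \RI\ kink, chosen so that the oriented smoothing splits the kinked component into a separate circle. Both $L_+$ and $L_-$ are \RI-isotopic to $\KPA^{m-1}$ while $L_0 = \KPA^m$, so the skein relation reads
\[
(a_+ + a_-)\, P(\KPA^{m-1}) + a_0\, P(\KPA^m) = 0,
\]
which by invertibility of $a_0$ determines $P(\KPA^m)$ from the already-determined $P(\KPA^{m-1})$ via a sub-induction on $m$.

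For the inductive step with $\kappa(L) = (c, u)$ and $u \geq 1$, I would fix a diagram $D$ of $L$ together with a set $S$ of $u$ crossings whose simultaneous switching yields an unlink diagram. Pick any $x \in S$ and form the skein triple $L_+, L_-, L_0$ at $x$. One of $L_\pm$ equals $L$; call the other $\tilde L$. Then $\tilde L$ is obtained from $D$ by switching $x$, so flipping the remaining $u-1$ crossings of $S$ still produces an unlink diagram, giving $u(\tilde L) \leq u - 1$. The smoothing $L_0$ inherits the diagram $D$ with $x$ replaced by two non-crossing arcs, hence has strictly fewer crossings. Granting strict descent in complexity, the inductive hypothesis determines $P(\tilde L)$ and $P(L_0)$, and the skein relation $a_+ P(L_+) + a_- P(L_-) + a_0 P(L_0) = 0$ then solves uniquely for $P(L)$.

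The main obstacle is verifying that both descents strictly reduce complexity: the diagram $D$ realizing the unknotting number of $L$ need not simultaneously realize the crossing number, so a priori $c(\tilde L)$ or $c(L_0)$ could exceed $c(L)$. To circumvent this I would refine the induction to a \emph{diagram-dependent} complexity $\kappa_D(L) = (c(D), u_D(L))$, where $u_D(L)$ is the minimal number of crossing switches \emph{within $D$} that turn $D$ into an unlink diagram, and induct on $\min_D \kappa_D(L)$ taken over diagrams $D$ of $L$. Within a minimizing $D$, switching $x$ preserves $c(D)$ while strictly dropping $u_D$, and smoothing $x$ strictly drops $c(D)$, so each descent move produces a link whose own minimum per-diagram complexity is strictly smaller, closing the induction.
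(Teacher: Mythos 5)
Your proposal is correct and follows essentially the same route the paper takes (the paper defers this proof to the analogous uniqueness argument for the Jones polynomial in Proposition~\ref{proposition:Jones-polynomial-skein}): induction on the complexity $\kappa(L)=(c(L),u(L))$, with the unlink base case handled by a kinked diagram in which $L_+\sim L_-\sim\KPA^{m-1}$ and $L_0=\KPA^{m}$. Your refinement to a diagram-dependent complexity in the final paragraph in fact repairs a well-foundedness gap that the paper's own sketch glosses over, since the diagram realizing $u(L)$ need not realize $c(L)$.
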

The proof of the uniqueness of the skein relation invariant is closely analogous to the uniqueness of the Jones polynomial via skein theory, Proposition \ref{proposition:Jones-polynomial-skein}, and so is left as an exercise for the reader. The existence proof of this polynomial can be found in \cite{Likorish}.

\begin{proposition}[skein relation invariant is homogeneous {\cite[p.~72]{knotes}}]
 The skein relation invariant is homogeneous in $a_+$, $a_-$, and $a_0$, i.e., all of the terms have a total degree of zero of the variables.
\end{proposition}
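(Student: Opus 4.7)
The plan is to induct on a lexicographically ordered pair $(n(D), u(D))$, where $D$ is any diagram of the oriented link $L$, $n(D)$ is its number of crossings, and $u(D)$ is the minimum number of crossing switches turning $D$ into an unlink diagram. The heart of the argument is that every coefficient appearing in the skein-relation recursion is a ratio of $a_+, a_-, a_0$, so total-degree-zero homogeneity propagates from the unknot outward.

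First I would compute the \emph{loop value}. Applying the skein relation at a crossing produced by an $\RI$ move on $L$, both crossing choices yield diagrams ambient isotopic to $L$ while the smoothing yields $L\sqcup\KPA$. Link invariance of $P$ collapses the relation to
\[(a_+ + a_-)\,P(L) \;+\; a_0\,P(L\sqcup\KPA) \;=\; 0,\]
so $P(L\sqcup\KPA) = -\tfrac{a_+ + a_-}{a_0}\,P(L)$. Each monomial on the right is a Laurent monomial of total degree zero in $a_+,a_-,a_0$, so adjoining a disjoint unknot preserves the homogeneity property. In particular the base case $u(D)=0$ (an unlink diagram of $k$ components) is handled by iterating this formula from $P(0_1)=1$, giving $P(\KPA^{\,k}) = \bigl(-(a_+ + a_-)/a_0\bigr)^{k-1}$, which is homogeneous of total degree zero.

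For the inductive step with $u(D)\geq 1$, I would pick a crossing whose switch strictly decreases $u(D)$; such a crossing always exists, since any minimal unknotting sequence of switches for $D$ provides one. Applying the skein relation at this crossing expresses $P(L)$ as
\[P(L) \;=\; -\tfrac{a_-}{a_+}\,P(L') \;-\; \tfrac{a_0}{a_+}\,P(L_0)\]
(or the symmetric expression if the chosen crossing is negative), where $L'$ is presented by a diagram $D'$ with $n(D')=n(D)$ but $u(D')=u(D)-1$, and $L_0$ is presented by the smoothed diagram $D_0$ with $n(D_0)=n(D)-1$. Both input complexities are strictly smaller than $(n(D),u(D))$, so by the inductive hypothesis $P(L')$ and $P(L_0)$ are each sums of Laurent monomials of total degree zero. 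Since the multiplicative coefficients $-a_-/a_+$ and $-a_0/a_+$ themselves have total degree zero, so does every term of $P(L)$.

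The main obstacle is a matter of packaging rather than content: although $P$ is a link invariant, the induction is carried out on a chosen diagram of $L$, and one must ensure that the argument yields a value depending only on the link. The uniqueness theorem immediately preceding guarantees exactly this, so the degree-zero homogeneity demonstrated via any convenient diagram transfers to $P(L)$; once that bookkeeping is in place, the actual degree arithmetic is routine.
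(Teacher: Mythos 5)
Your proof is correct. The paper gives no proof of this proposition (it is stated with only a citation to \cite{knotes}), but your induction on the pair $(n(D),u(D))$ --- with the loop value $P(L\sqcup\KPA)=-\tfrac{a_++a_-}{a_0}P(L)$ feeding the base case and the degree-zero coefficients $-a_-/a_+$, $-a_0/a_+$ propagating homogeneity in the inductive step --- is precisely the complexity induction the paper itself deploys in proving the existence and uniqueness of the Jones polynomial via skein theory (Proposition \ref{proposition:Jones-polynomial-skein}), so it is the intended argument.
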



\begin{proposition}[The Jones polynomial - skein theory\index{Jones polynomial!via skein relation}]\label{proposition:Jones-polynomial-skein}
 The \textit{Jones polynomial} invariant $V(L)$ admits a skein relation such that it is a function $V\colon\{\text{Oriented links in }\RR^3\}\to\ZZ[t^{-1/2},t^{1/2}]$ such that $V(0_1)=V(\KPA)=1$, and whenever three oriented links $L_+,L_-,L_0$ are skein related, then
 \[t^{-1}V(L_+)-tV(L_-)+(t^{-1/2}-t^{1/2})V(L_0)=0.\]
 And it is uniquely determined by those two conditions.
\end{proposition}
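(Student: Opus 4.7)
For existence I would work directly with the Kauffman-bracket definition $V(L)=(-A)^{-3\omega(D)}\langle|D|\rangle$ evaluated at $t^{1/2}=A^{-2}$. Expanding $\langle L_+\rangle$ and $\langle L_-\rangle$ by the two smoothing rules (with $B=A^{-1}$), one obtains the same two smoothings---the oriented smoothing $L_0$ and its complement---with the coefficients $A$ and $A^{-1}$ swapped between the two expansions. Multiplying the first expansion by $A$, the second by $A^{-1}$, and subtracting eliminates the non-oriented smoothing and yields
\[A\langle L_+\rangle - A^{-1}\langle L_-\rangle = (A^2-A^{-2})\langle L_0\rangle.\]
The three links agree outside a single crossing, so $\omega(L_+)=\omega(L_0)+1$ and $\omega(L_-)=\omega(L_0)-1$. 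Writing $\langle L_\pm\rangle=(-A)^{3\omega(L_\pm)}V(L_\pm)$, dividing through by $(-A)^{3\omega(L_0)}$, and using $(-A)^{\pm 3}=\mp A^{\pm 3}$, the bracket identity becomes $-A^{4}V(L_+)+A^{-4}V(L_-)=(A^2-A^{-2})V(L_0)$, which on substituting $A^{-4}=t$ and $A^{-2}=t^{1/2}$ and multiplying by $-1$ is exactly the stated skein relation.

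For uniqueness I would first determine $V$ on every unlink by induction on the number of components. Starting from $V(\KPA)=1$, consider an $(m-1)$-unlink placed disjointly alongside a singly-curled unknot; applying the skein relation at the curl's self-crossing, both $L_+$ and $L_-$ are equivalent (by $\RI$) to $\KPA^m$ while $L_0=\KPA^{m+1}$. This collapses to $(t^{-1}-t)V(\KPA^m)+(t^{-1/2}-t^{1/2})V(\KPA^{m+1})=0$, and factoring $t^{-1}-t=(t^{-1/2}-t^{1/2})(t^{1/2}+t^{-1/2})$ gives $V(\KPA^m)=(-(t^{1/2}+t^{-1/2}))^{m-1}$. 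For a general link $L$ with diagram $D$, fix basepoints on each component and a component ordering, and call a crossing \emph{non-descending} if the first strand encountered in this traversal is the under-strand. A diagram with no non-descending crossings represents an unlink, since the first component then lies entirely above the rest and is itself an unknot, so it can be split off and the argument iterated. Inducting lexicographically on the pair (number of crossings of $D$, number of non-descending crossings of $D$), if $D$ is not yet descending then swapping a non-descending crossing produces a skein triple in which the partner $L'$ drops the non-descending count by one at fixed crossing number, and the smoothing $L_0$ has one fewer crossing; both $V(L')$ and $V(L_0)$ are determined by the inductive hypothesis, so the skein relation forces $V(L)$.

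The main obstacle is the descending-diagram lemma itself: making rigorous the assertion that a fully descending diagram presents an unlink. The informal ``lift the first component off'' isotopy has to be realized as a concrete sequence of Reidemeister moves, and one must verify that the lift does not destroy the descending property of the remaining components so that the recursion actually closes. Once this is granted, the rest of the argument is mechanical bookkeeping. The alternative route of inducting directly on the author's complexity $\kappa(L)=(c(L),u(L))$ is tempting, but more delicate: the diagram realizing $c(L)$ and the one realizing $u(L)$ need not coincide, so verifying a strict decrease of $\kappa$ under a chosen crossing change requires extra care that the descending-diagram induction bypasses entirely.
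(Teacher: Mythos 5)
Your existence argument is the same as the paper's: both derive $-A^{4}V(L_+)+A^{-4}V(L_-)=(A^2-A^{-2})V(L_0)$ from the two bracket expansions of $\langle L_\pm\rangle$ together with $\omega(L_\pm)=\omega(L_0)\pm1$, and your computation is correct. Where you genuinely diverge is in the uniqueness half. The paper inducts on the complexity $\kappa(L)=(c(L),u(L))$ ordered lexicographically, asserting that in a skein triple based at $L_+$ one has $\kappa(L_0)<\kappa(L_+)$ (fewer crossings) and $\kappa(L_-)<\kappa(L_+)$ (same crossing number, unlinking number dropped by one), and then stops; it does not address the issue you flag, namely that $c(L)$ and $u(L)$ are minima over \emph{different} diagrams, so a crossing change in the particular diagram at hand need not decrease $u$ of the underlying link, nor need the diagram realize $c(L)$ in the first place. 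Your replacement — fixing basepoints and a component order, counting non-descending crossings, and inducting lexicographically on (crossings of $D$, non-descending crossings of $D$) — is the standard Lickorish-style argument and is the more rigorous of the two, at the cost of having to prove the descending-diagram lemma (a fully descending diagram is an unlink), which you correctly identify as the one nontrivial geometric input; note that this lemma needs only that the first component lies entirely above the others so it can be lifted off by an ambient isotopy, and the descending property of the remaining components is a statement about their mutual crossings only, which the lift does not touch, so the recursion does close. Both approaches agree on the unlink base case $V\left(\KPA^m\right)=\left(-(t^{1/2}+t^{-1/2})\right)^{m-1}$ via the $\RI$-kink skein triple (the paper's displayed formula drops the sign, but its own derivation gives the signed version you state). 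In short: same existence proof, a different and sounder induction scheme for uniqueness.
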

\begin{proof}
 We first show that the Jones polynomial admits a skein relation, then we will show that it can be uniquely determined by the skein relation conditions.
 The first property $V(\KPA)=1$ is trivial.
 For the second property we need to remove $D_\infty$ terms found in the Kauffman bracket. So consider the diagrams below.
 \begin{center}
  \begin{tikzpicture}[scale=0.6]
   \begin{scope}
    \draw[dashed] (0,0) circle(1.5);
    \draw[-] (-1,-1) to [bend left=40] (1,-1);
    \draw[-] (1,1) to [bend left=40] (-1,1);
   \end{scope}
   \draw (0,-2) node[scale=1.5] {$D_\infty$};
   \begin{scope}[xshift=5cm]
    \draw[dashed] (0,0) circle(1.5);
    \draw[-] (-1,-1)--(1,1);
    \draw[ultra thick, draw=white,double=black,double distance=0.5pt] (1,-1)--(-1,1);
    \draw[-] (-0.9,0.9)--(-1,1);
    \draw (0,-2) node[scale=1.5] {$D_-$};
   \end{scope}
   \begin{scope}[xshift=10cm]
    \draw[dashed] (0,0) circle(1.5);
    \draw[-] (1,-1)--(-1,1);
    \draw[ultra thick,draw=white,double=black,double distance=0.5pt] (-1,-1)--(1,1);
    \draw[-] (-0.9,-0.9)--(1,1);
    \draw (0,-2) node[scale=1.5] {$D_+$};
   \end{scope}
   \begin{scope}[xshift=15cm]
    \draw[dashed] (0,0) circle(1.5);
    \draw[-] (-1,-1) to [bend right=40] (-1,1);
    \draw[-] (1,-1) to [bend left=40] (1,1);
    \draw (0,-2) node[scale=1.5] {$D_0$};
   \end{scope}
  \end{tikzpicture}
 \end{center}
 So we have $\left\langle{D_-}\right\rangle=\left\langle{\KPB}\right\rangle=A\left\langle{D_\infty}\right\rangle+A^{-1}\left\langle{D_0}\right\rangle$ and $\left\langle{D_+}\right\rangle=\left\langle\KPE\right\rangle=A\left\langle{D_0}\right\rangle+A^{-1}\left\langle{D_\infty}\right\rangle$. And so noting the substitution $t^{1/2}=A^{-2}$, we have
 \[
  A^{4}(-A)^{-3\omega(D_+)}\left\langle{D_+}\right\rangle-A^{-4}(-A)^{-3\omega(D_-)}\left\langle{D_-}\right\rangle+(A^2-A^{-2})(-A)^{-3\omega(D_0)}\left\langle{D_0}\right\rangle
 \]
 Now $\omega(D_0)+1=\omega(D_+)$ and $\omega(D_-)=\omega(D_0)-1$. So the above formula becomes
 \[
  (-A)^{-3\omega(D_0)}(-A\left\langle{D_+}\right\rangle+A^{-1}\left\langle{D_-}\right\rangle+(A^2-A^{-2})\langle{D_0}\rangle)
 \]
 Now we substitute the relations we wrote above $\left\langle{D_-}\right\rangle$ and $\left\langle{D_+}\right\rangle$, so
 \[
  -A(A\left\langle{D_0}\right\rangle+A^{-1}\left\langle{D_\infty}\right\rangle)+A^{-1}(A\left\langle{D_\infty}\right\rangle+A^{-1}\left\langle{D_0}\right\rangle)+(A^2-A^{-2})\left\langle{D_0}\right\rangle=0.
 \]
 So we are done.
 
 We now show the uniqueness by induction on the number of crossings $n$ on an arbitrary link $L$.
 
 We note that we already know $V(\KPA)=1$. And by considering any arbitrary link $L$, we will now see that for an $m$ component unlink $V(\KPA^m)=(t^{1/2}+t^{-1/2})^{m-1}$.
 \begin{center}
  \begin{tikzpicture}[x=0.75pt,y=0.75pt,yscale=-1,xscale=1]
   \draw   (16.17,17) -- (52.98,17) -- (52.98,40.93) -- (16.17,40.93) -- cycle ;
   \draw    (53.47,22.28) .. controls (64.33,22.38) and (64.33,36.38) .. (53.47,36.28) ;
   \draw   (71.52,28.48) .. controls (71.52,22.96) and (75.99,18.48) .. (81.52,18.48) .. controls (87.04,18.48) and (91.52,22.96) .. (91.52,28.48) .. controls (91.52,34.01) and (87.04,38.48) .. (81.52,38.48) .. controls (75.99,38.48) and (71.52,34.01) .. (71.52,28.48) -- cycle ;
   \draw   (123.17,17) -- (159.98,17) -- (159.98,40.93) -- (123.17,40.93) -- cycle ;
   \draw    (177.02,30.47) .. controls (194.02,44.47) and (194.53,9.8) .. (182.53,20.8) .. controls (170.53,31.8) and (170.53,36.8) .. (160.47,36.28) ;
   \draw    (172.53,26.8) .. controls (169.02,24.47) and (166.53,23) .. (159.53,23) ;
   \draw   (223.17,17) -- (259.98,17) -- (259.98,40.93) -- (223.17,40.93) -- cycle ;
   \draw    (278.02,27.78) .. controls (292.23,13.95) and (301.23,38.95) .. (288.23,35.95) .. controls (275.23,32.95) and (271,21.91) .. (260.23,22.95) ;
   \draw    (273.53,31.44) .. controls (270.02,33.76) and (267.53,35.23) .. (260.53,35.23) ;
   \draw   (164,32.83) .. controls (167.65,32.82) and (170.61,32.09) .. (172.88,30.63) .. controls (171.29,32.82) and (170.4,35.73) .. (170.18,39.37) ;
   \draw   (262.45,32.13) .. controls (265.92,33.23) and (268.96,33.44) .. (271.57,32.74) .. controls (269.39,34.34) and (267.65,36.84) .. (266.34,40.24) ;
   \draw   (59.59,32.06) .. controls (60.44,29.71) and (60.73,27.51) .. (60.45,25.48) .. controls (61.29,27.35) and (62.69,29.06) .. (64.66,30.62) ;
   \draw   (67.01,29) .. controls (69.47,27.83) and (71.19,26.41) .. (72.17,24.74) .. controls (71.93,26.66) and (72.45,28.83) .. (73.71,31.25) ;
   \draw (29.17,19) node [anchor=north west][inner sep=0.75pt][yscale=-1]   [align=left] {$\displaystyle L$};
   \draw (136.17,19) node [anchor=north west][inner sep=0.75pt]   [align=left][yscale=-1] {$\displaystyle L$};
   \draw (236.17,19) node [anchor=north west][inner sep=0.75pt]   [align=left][yscale=-1] {$\displaystyle L$};
  \end{tikzpicture}
  \par{$L_0=L\cup\KPA$\qquad\qquad$L_+\sim{L}$\qquad\qquad$L_-\sim{L}$}
 \end{center}
 By the skein relation definition, we have $t^{-1}V(L)-tV(L)+(t^{-1/2}-t^{1/2})V(L\cup\KPA)=0$, and thus
 \[V\left(L\cup\KPA\right)=\frac{t-t^{-1}}{t^{-1/2}-t^{1/2}}V(L)=-(t^{1/2}+t^{-1/2})V(L).\]
 So by induction on $m$, we find $V(L\cup\KPA^m)=(-t^{1/2}-t^{-1/2})^mV(L)$.
 Now we know that the link $L_0$ will have one less crossing than $L_+$ and $L_-$. So, without loss of generality, if our oriented link $L$ is $L_+$, then we have complexity $(c(L_+),u(L_+))$.
 Since $c(L_0)<c(L_+)$, we have $\kappa(L_0)<\kappa(L_+)$. Now $c(L_-)=c(L_+)$, but $u(L_-)=c(L_+)-1$. So we have the complexities of the two links $L_-$ and $L_0$, strictly less than the complexity of $L_+$.
 Now, by induction on the complexity, we can see that the Jones polynomial is in fact unique.
\end{proof}

\begin{remark}
 It can be seen that by substituting $a_+$ for $t^{-1}$, $a_-$ for $-t$, and $a_0$ for $t^{-1/2}-t^{1/2}$, the skein relation invariant $P$ becomes the Jones polynomial $V$.
\end{remark}

We have shown in the above proof what the Jones polynomial of the disjoint union of a link and the $m$-unlink is, so we shall write it as the following corollary.
\begin{corollary}
 The disjoint union of a link $L$ and the $m$-unlink is given by $V\left(L\cup\KPA^m\right)=(-t^{1/2}-t^{-1/2})^mV(L)$.
\end{corollary}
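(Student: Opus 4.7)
The plan is to prove this by induction on $m$, using only the skein relation and the invariance of $V$, both of which have just been established in Proposition~\ref{proposition:Jones-polynomial-skein}. Indeed, the identity was essentially extracted inside that proof, so the task here is just to promote it to an all-$m$ statement.

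For the base case $m=0$, the equality $V(L\cup\KPA^0)=V(L)$ is a tautology. For the inductive step, assume the formula holds for $m-1$. I form a skein triple $(L_+,L_-,L_0)$ by choosing a small disc in the plane, disjoint from a diagram of $L\cup\KPA^{m-1}$, and inserting a twisted arc whose closure gives an extra circle. Smoothing the resulting crossing yields $L_0=L\cup\KPA^{m}$, whereas either crossing resolution $L_\pm$ becomes $L\cup\KPA^{m-1}$ after a single $\RI$ move, hence is equivalent to it. Since $V$ is a link invariant, $V(L_+)=V(L_-)=V(L\cup\KPA^{m-1})$.

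Substituting into the skein relation,
\[
t^{-1}V(L\cup\KPA^{m-1})-tV(L\cup\KPA^{m-1})+(t^{-1/2}-t^{1/2})V(L\cup\KPA^{m})=0,
\]
I solve for the unknown term:
\[
V(L\cup\KPA^{m})=\frac{t-t^{-1}}{t^{-1/2}-t^{1/2}}V(L\cup\KPA^{m-1})=-(t^{1/2}+t^{-1/2})V(L\cup\KPA^{m-1}).
\]
Applying the inductive hypothesis to the right-hand side yields $V(L\cup\KPA^{m})=(-t^{1/2}-t^{-1/2})^{m}V(L)$, closing the induction.

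There is no real obstacle: the only subtle point is making sure that the skein triple can actually be produced disjointly from $L$, which is immediate because one can always choose a small planar disc missing the given diagram. Everything else is a two-line algebraic manipulation already performed in the preceding proposition's proof.
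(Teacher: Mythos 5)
Your proof is correct and follows essentially the same route as the paper: the identity $V(L\cup\KPA)=-(t^{1/2}+t^{-1/2})V(L)$ is derived inside the proof of Proposition~\ref{proposition:Jones-polynomial-skein} by exactly this skein triple (a kinked arc with $L_\pm\sim L$ via $\RI$ and $L_0=L\cup\KPA$), followed by the same induction on $m$. No gaps.
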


We then have the following theorem, which we will prove at the end of the essay using nicer results with braid representations.
\begin{theorem}\label{theorem:disjoint_links}
 The two disjoint links $L_1$ and $L_2$ has the following Jones polynomial,
 \[V(L_1\cup{L_2})=(-t^{1/2}-t^{-1/2})V(L_1)V(L_2)\]
\end{theorem}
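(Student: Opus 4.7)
The plan is to prove this directly from the Kauffman bracket definition of the Jones polynomial (Definition \ref{definition:Jones-polynomial-bracket}), since we have already established the state-sum formula and its interaction with the writhe. Choose diagrams $D_1, D_2$ for $L_1, L_2$ placed in disjoint regions of the plane (so that no crossing of $D_1 \cup D_2$ involves both components), and then exploit the fact that the state-sum factors across disjoint components.

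The key observation is that a state $s$ of $D_1 \cup D_2$ is the same data as a pair of states $(s_1, s_2)$ of $D_1$ and $D_2$ separately, since the set of crossings of the disjoint union is the disjoint union of crossings. This bijection is additive on all the state-sum quantities: $|a(s)| = |a(s_1)| + |a(s_2)|$, $|b(s)| = |b(s_1)| + |b(s_2)|$, and crucially $|s| = |s_1| + |s_2|$ (the components in the smoothed pictures also just disjoint-union). Plugging into
\[
\langle D_1 \cup D_2 \rangle = \sum_{s} A^{|a(s)|-|b(s)|} (-A^2-A^{-2})^{|s|-1}
\]
and splitting the sum as a product of sums over $s_1$ and $s_2$, one extra factor of $d = -A^2-A^{-2}$ survives (because $(|s_1|+|s_2|)-1 = (|s_1|-1)+(|s_2|-1)+1$), yielding
\[
\langle D_1 \cup D_2 \rangle = (-A^2-A^{-2})\,\langle D_1 \rangle\,\langle D_2 \rangle.
\]

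Next I would note that the writhe is clearly additive under disjoint union, $\omega(D_1 \cup D_2) = \omega(D_1) + \omega(D_2)$, since there are no new crossings. Multiplying by $(-A)^{-3\omega(D_1\cup D_2)} = (-A)^{-3\omega(D_1)}(-A)^{-3\omega(D_2)}$ and using Definition \ref{definition:Jones-polynomial-bracket},
\[
V(L_1 \cup L_2) = (-A^2 - A^{-2})\,V(L_1)\,V(L_2).
\]
Finally, the substitution $t^{1/2} = A^{-2}$, equivalently $A = t^{-1/4}$, gives $A^2 = t^{-1/2}$ and $A^{-2} = t^{1/2}$, so the prefactor becomes $-t^{-1/2}-t^{1/2}$, matching the stated formula.

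The main obstacle is conceptual rather than computational: one must carefully justify the bijection between states of $D_1 \cup D_2$ and pairs of states of $D_1, D_2$, and in particular argue that the count of components of the fully smoothed diagram is indeed additive (which uses that the two diagrams live in disjoint disks of the plane, so smoothing does not merge their components). Everything else is bookkeeping. An alternative route via the skein relation would work by induction on the complexity of $L_2$, using Corollary 3.X for the base case $L_2 = 0_1$ and applying the skein relation at a crossing entirely contained in $L_2$, but this induction is longer and the Kauffman bracket argument is essentially a one-line calculation once the state decomposition is in hand.
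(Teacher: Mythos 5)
Your proof is correct, but it takes a genuinely different route from the one in the paper. You work entirely with the Kauffman bracket state sum: states of a split diagram $D_1\cup D_2$ biject with pairs of states, the exponents $|a(s)|-|b(s)|$ and the loop counts are additive, so $\langle D_1\cup D_2\rangle=d\,\langle D_1\rangle\langle D_2\rangle$ with $d=-A^2-A^{-2}$, and additivity of the writhe finishes the job after substituting $t^{1/2}=A^{-2}$. The paper instead defers the proof to \S\ref{section:proving_theorem} and proves it through the braid machinery: it writes $L_1\cup L_2$ as the closure of a tensor product of braids $\alpha\otimes\beta$, proves a lemma that the Markov trace of $\phi_{n+m}(\alpha\otimes\beta)$ factors as the product of the traces of $\phi_n(\alpha)$ and $\phi_m(\beta)$ (the two closed Temperley--Lieb diagrams are disjoint), and then reads off $V(L_1\cup L_2)=dV(L_1)V(L_2)$ from the original construction --- indeed the author explicitly remarks that the result ``can be derived easily'' from the bracket but chooses braids to exercise that formalism. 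Your argument is shorter and more self-contained, needing only the state-sum formula already established in the Kauffman chapter; the paper's argument buys an illustration of how disjointness of links translates into disjointness of braid words and multiplicativity of the trace, which is the theme of its final chapter. The one point you rightly flag --- that the loop count of a fully smoothed split diagram is additive because the two diagrams sit in disjoint disks --- is exactly the content that the paper's lemma encodes diagrammatically, so the two proofs hinge on the same geometric fact dressed in different clothing.
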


\begin{example}\label{example:jones_skein_4_1}
 Let us calculate the Jones polynomial of the figure eight knot $4_1$ drawn below via skein theory. We let the knot $4_1$ be the knot $L_+$ where we have circled our chosen neighbourhood.
 \begin{center}
 \begin{minipage}{0.3\textwidth}
  \begin{center}
   \begin{tikzpicture}[scale=0.02,yscale=-1]
    \draw    (41,121) .. controls (5,123) and (2,23) .. (52,10) .. controls (102,-3) and (94,37) .. (75,60) ;
    \draw    (92,113) .. controls (113,82) and (22,75) .. (76,25) ;
    \draw    (97,15) .. controls (128,-1) and (174,122) .. (62,122) ;
    \draw    (58,76) .. controls (19,132) and (87,145) .. (91,127) ;
    \draw   (8.04,76.36) .. controls (11.54,72.43) and (13.82,68.37) .. (14.86,64.2) .. controls (15.05,68.5) and (16.46,72.93) .. (19.09,77.49) ;
    \draw  [dash pattern={on 1.69pt off 2.76pt}][line width=1.5]  (67,22.5) .. controls (67,11.73) and (75.73,3) .. (86.5,3) .. controls (97.27,3) and (106,11.73) .. (106,22.5) .. controls (106,33.27) and (97.27,42) .. (86.5,42) .. controls (75.73,42) and (67,33.27) .. (67,22.5) -- cycle ;
   \end{tikzpicture}
  \end{center}
 \end{minipage}%
 \begin{minipage}{0.3\textwidth}
  \begin{center}
   \begin{tikzpicture}[scale=0.02,yscale=-1]
    \draw    (42,122) .. controls (-15,83) and (53,-29) .. (76,14) ;
    \draw    (92,113) .. controls (113,82) and (7,53) .. (97,15) ;
    \draw    (97,15) .. controls (128,-1) and (174,122) .. (62,122) ;
    \draw    (58,76) .. controls (19,132) and (87,145) .. (91,127) ;
    \draw   (13.95,75.26) .. controls (17.51,71.39) and (19.85,67.37) .. (20.96,63.21) .. controls (21.08,67.51) and (22.42,71.97) .. (24.98,76.56) ;
    \draw    (77,56) .. controls (87,44) and (93,43) .. (87,31) ;
    \draw  [dash pattern={on 1.69pt off 2.76pt}][line width=1.5]  (59,21.5) .. controls (59,10.73) and (67.73,2) .. (78.5,2) .. controls (89.27,2) and (98,10.73) .. (98,21.5) .. controls (98,32.27) and (89.27,41) .. (78.5,41) .. controls (67.73,41) and (59,32.27) .. (59,21.5) -- cycle ;
   \end{tikzpicture}
  \end{center}
 \end{minipage}%
 \begin{minipage}{0.3\textwidth}
  \begin{center}
   \begin{tikzpicture}[scale=0.02,yscale=-1]
    \draw    (41,121) .. controls (5,123) and (27,12.4) .. (52,10) .. controls (77,7.6) and (68,22.6) .. (104,8.6) ;
    \draw    (91,113) .. controls (110.13,84.76) and (44.94,68.16) .. (58.97,40.88) .. controls (73,13.6) and (108,39.6) .. (79,64.6) ;
    \draw    (104,8.6) .. controls (137,-11.4) and (174,122) .. (62,122) ;
    \draw    (63,82.6) .. controls (20,125.6) and (87,146) .. (91,128) ;
    \draw   (17.74,70.02) .. controls (21.45,66.29) and (23.93,62.35) .. (25.2,58.24) .. controls (25.16,62.55) and (26.33,67.05) .. (28.71,71.74) ;
    \draw  [dash pattern={on 1.69pt off 2.76pt}][line width=1.5]  (61,22.5) .. controls (61,11.73) and (69.73,3) .. (80.5,3) .. controls (91.27,3) and (100,11.73) .. (100,22.5) .. controls (100,33.27) and (91.27,42) .. (80.5,42) .. controls (69.73,42) and (61,33.27) .. (61,22.5) -- cycle ;
    \draw   (64.85,70.95) .. controls (64.86,67.63) and (64.17,64.8) .. (62.8,62.48) .. controls (64.87,64.29) and (67.62,65.6) .. (71.07,66.4) ;
   \end{tikzpicture}
  \end{center}
 \end{minipage}
 \par{$L_+$\qquad\qquad\qquad\qquad\qquad\qquad$L_-$\qquad\qquad\qquad\qquad\qquad$L_0$}
 \end{center}
 Note that the knot $L_-$ is Reidemeiser equivalent to the unknot $0_1$, so $V(L_-)=1$. Whereas $L_0$ is Reidemeister equivalent to the Hopf link with the given orientations.
 So, by the definition of the Jones polynomial, we have
 \begin{align*}
  t^{-1}V(L_+)-tV(L_-)+(t^{-1/2}-t^{1/2})V(L_0)&=0\\
  t^{-1}V(4_1)-t+(t^{-1/2}-t^{1/2})V(L_0)&=0 \tag{*}\label{equation:Jonespolynomialof4_1}
 \end{align*}
 Now, we need to find what $V(L_0)$ is. So, we let $L_0$ be the Hopf link $H_-$ (since they are Reidemeister equivalent).
 \begin{center}
  \begin{tikzpicture}[scale=0.02,yscale=-1]
   \draw    (41,121) .. controls (5,123) and (27,12.4) .. (52,10) .. controls (77,7.6) and (68,22.6) .. (104,8.6) ;
   \draw    (104,8.6) .. controls (137,-11.4) and (174,122) .. (62,122) ;
   \draw   (17.74,70.02) .. controls (21.45,66.29) and (23.93,62.35) .. (25.2,58.24) .. controls (25.16,62.55) and (26.33,67.05) .. (28.71,71.74) ;
   \draw  [dash pattern={on 1.69pt off 2.76pt}][line width=1.5]  (29,115.95) .. controls (29,103.16) and (39.36,92.8) .. (52.15,92.8) .. controls (64.94,92.8) and (75.3,103.16) .. (75.3,115.95) .. controls (75.3,128.74) and (64.94,139.1) .. (52.15,139.1) .. controls (39.36,139.1) and (29,128.74) .. (29,115.95) -- cycle ;
   \draw   (60.95,99.62) .. controls (57.97,101.67) and (55.87,104.02) .. (54.65,106.68) .. controls (54.99,103.71) and (54.46,100.45) .. (53.05,96.88) ;
   \draw    (76.5,130) .. controls (72.5,151) and (46.78,147.2) .. (53.64,111.1) .. controls (60.5,75) and (90.5,73) .. (79.5,112) ;
   \draw   (198,65.25) .. controls (198,37.5) and (220.5,15) .. (248.25,15) .. controls (276,15) and (298.5,37.5) .. (298.5,65.25) .. controls (298.5,93) and (276,115.5) .. (248.25,115.5) .. controls (220.5,115.5) and (198,93) .. (198,65.25) -- cycle ;
   \draw  [draw opacity=0] (290.52,19.35) .. controls (295.91,17.34) and (301.73,16.31) .. (307.79,16.47) .. controls (334.3,17.16) and (355.2,40.35) .. (354.47,68.25) .. controls (353.74,96.16) and (331.66,118.22) .. (305.14,117.52) .. controls (297.87,117.33) and (291.01,115.45) .. (284.91,112.24) -- (306.46,67) -- cycle ;
   \draw   (290.52,19.35) .. controls (295.91,17.34) and (301.73,16.31) .. (307.79,16.47) .. controls (334.3,17.16) and (355.2,40.35) .. (354.47,68.25) .. controls (353.74,96.16) and (331.66,118.22) .. (305.14,117.52) .. controls (297.87,117.33) and (291.01,115.45) .. (284.91,112.24) ;
   \draw    (260.5,92) .. controls (233.5,77) and (229.5,35) .. (264.5,29) ;
   \draw   (192.31,70.71) .. controls (195.56,67.08) and (197.58,63.4) .. (198.35,59.67) .. controls (198.82,63.46) and (200.52,67.29) .. (203.45,71.18) ;
   \draw   (243.72,52.13) .. controls (241.21,55.28) and (239.79,58.34) .. (239.45,61.34) .. controls (238.7,58.42) and (236.86,55.58) .. (233.93,52.82) ;
   \draw  [dash pattern={on 1.69pt off 2.76pt}][line width=1.5]  (249,99.85) .. controls (249,87.06) and (259.36,76.7) .. (272.15,76.7) .. controls (284.94,76.7) and (295.3,87.06) .. (295.3,99.85) .. controls (295.3,112.64) and (284.94,123) .. (272.15,123) .. controls (259.36,123) and (249,112.64) .. (249,99.85) -- cycle ;
   \draw    (458.5,75.8) .. controls (432.5,92.8) and (416.5,81.8) .. (418.26,72.06) .. controls (420.01,62.32) and (446.5,33.8) .. (462.35,36.45) .. controls (478.19,39.11) and (464.33,89.99) .. (485.47,92.12) .. controls (506.6,94.25) and (540.5,26.8) .. (486.5,57.8) ;
   \draw   (417.89,59.79) .. controls (422.38,59.34) and (425.98,58.07) .. (428.69,55.99) .. controls (426.88,58.88) and (425.96,62.59) .. (425.95,67.11) ;
   \draw  [dash pattern={on 1.69pt off 2.76pt}][line width=1.5]  (449.2,77.25) .. controls (449.2,67.01) and (457.51,58.7) .. (467.75,58.7) .. controls (477.99,58.7) and (486.3,67.01) .. (486.3,77.25) .. controls (486.3,87.49) and (477.99,95.8) .. (467.75,95.8) .. controls (457.51,95.8) and (449.2,87.49) .. (449.2,77.25) -- cycle ;
  \end{tikzpicture}
  \par{$H_-$\qquad\qquad\qquad\qquad\qquad$H_+$\qquad\qquad\qquad\qquad\qquad$H_0$}
 \end{center}
 We note that the knot $L_0\sim_R0_1$. So, from the definition of the Jones polynomial we get,
 \begin{equation}
  t^{-1}V(H_+)-tV(H_-)+(t^{-1/2}-t^{1/2})V(H_0)=0 \tag{**}\label{equation:Hopflinkskein}
 \end{equation}
 and so we need to calculate the Jones polynomial of the unlink: $V(L_+)$. We let $L_+$ be $M_0$ since they are R-equivalent.
 \begin{center}
  \begin{tikzpicture}[scale=0.02,yscale=-1]
   \draw   (17.06,75.55) .. controls (17.06,52.99) and (34.56,34.7) .. (56.14,34.7) .. controls (77.72,34.7) and (95.22,52.99) .. (95.22,75.55) .. controls (95.22,98.11) and (77.72,116.4) .. (56.14,116.4) .. controls (34.56,116.4) and (17.06,98.11) .. (17.06,75.55) -- cycle ;
   \draw   (107.04,74.65) .. controls (107.04,52.09) and (124.54,33.8) .. (146.12,33.8) .. controls (167.7,33.8) and (185.2,52.09) .. (185.2,74.65) .. controls (185.2,97.21) and (167.7,115.5) .. (146.12,115.5) .. controls (124.54,115.5) and (107.04,97.21) .. (107.04,74.65) -- cycle ;
   \draw   (12.5,78.36) .. controls (15.03,75.65) and (16.64,72.86) .. (17.34,70) .. controls (17.56,72.94) and (18.71,75.98) .. (20.77,79.09) ;
   \draw   (112.22,65.77) .. controls (109.53,68.12) and (107.77,70.59) .. (106.96,73.19) .. controls (106.83,70.46) and (105.76,67.59) .. (103.74,64.59) ;
   \draw  [dash pattern={on 1.69pt off 2.76pt}][line width=1.5]  (79.62,72.37) .. controls (79.62,60.91) and (88.5,51.63) .. (99.46,51.63) .. controls (110.42,51.63) and (119.3,60.91) .. (119.3,72.37) .. controls (119.3,83.82) and (110.42,93.11) .. (99.46,93.11) .. controls (88.5,93.11) and (79.62,83.82) .. (79.62,72.37) -- cycle ;
   \draw    (426.5,81.8) .. controls (413.5,96.8) and (366.5,96.8) .. (391.5,57.8) .. controls (416.5,18.8) and (421.51,75.14) .. (474.5,92.97) .. controls (527.5,110.8) and (498.5,10.8) .. (448.5,58.8) ;
   \draw   (435.4,58.27) .. controls (436.13,64.73) and (437.91,69.88) .. (440.76,73.72) .. controls (436.85,71.19) and (431.88,69.96) .. (425.84,70.04) ;
   \draw  [dash pattern={on 1.69pt off 2.76pt}][line width=1.5]  (417.36,83.07) .. controls (410.11,67.75) and (416.14,49.69) .. (430.85,42.72) .. controls (445.55,35.76) and (463.35,42.52) .. (470.61,57.84) .. controls (477.87,73.16) and (471.83,91.22) .. (457.12,98.19) .. controls (442.42,105.15) and (424.62,98.38) .. (417.36,83.07) -- cycle ;
   \draw    (271.5,59.11) .. controls (258.5,44.11) and (211.5,44.11) .. (236.5,83.11) .. controls (261.5,122.11) and (266.51,65.77) .. (319.5,47.94) .. controls (372.5,30.11) and (343.5,130.11) .. (293.5,82.11) ;
   \draw   (282.87,60.3) .. controls (282.21,66.77) and (280.48,71.93) .. (277.67,75.8) .. controls (281.55,73.23) and (286.51,71.96) .. (292.55,71.97) ;
   \draw  [dash pattern={on 1.69pt off 2.76pt}][line width=1.5]  (262.36,58.84) .. controls (255.11,74.16) and (261.14,92.22) .. (275.85,99.19) .. controls (290.55,106.15) and (308.35,99.38) .. (315.61,84.07) .. controls (322.87,68.75) and (316.83,50.69) .. (302.12,43.72) .. controls (287.42,36.76) and (269.62,43.52) .. (262.36,58.84) -- cycle ;
  \end{tikzpicture}
  \par{$M_0$\qquad\qquad\qquad\qquad$M_+$\qquad\qquad\qquad\qquad$M_-$}
 \end{center}
 Note that the knots $M_+$ and $M_-$ are R-equivalent to $0_1$, so we have $V(M_+)=V(M_-)=1$. So then using the skein theory definition of the Jones polynomial, we have $t^{-1}-t+(t^{-1/2}-t^{1/2})V(M_0)=0$. And so
 \begin{align*}
  V(M_0) &= \frac{t-t^{-1}}{t^{-1/2}-t^{1/2}}\\
  &= \frac{(t^{-1/2}-t^{1/2})(-t^{1/2}-t^{-1/2})}{(t^{-1/2}-t^{1/2})}=-t^{1/2}-t^{-1/2}
 \end{align*}
 Now noting that we have $H_+\sim_RM_0$, we consider Equation (\ref{equation:Hopflinkskein}),
 \begin{align*}
  \qquad\qquad&\!\!\!\!\!\!\!\!\!\!\!\!\!\!\!\!\!\!\!\!\!\!\!\!\!t^{-1}(-t^{1/2}-t^{-1/2})-tV(H_-)+t^{-1/2}-t^{1/2}=0\\
  V(H_-) &= t^{-1}(-t^{-3/2}-t^{1/2})\\
  &= -t^{-5/2}-t^{-1/2}
 \end{align*}
 Now we can calculate the Jones polynomial of our figure eight knot. We consider Equation (\ref{equation:Jonespolynomialof4_1}),
 \begin{align*}
  V(4_1) &= t(t-(t^{-1/2}-t^{1/2})V(H_-))\\
  &= t^2-(t^{1/2}-t{3/2})(-t^{-5/2}-t^{-1/2})\\
  &= t^2+t^{-2}+1-t^{-1}-t
 \end{align*}\qed
\end{example}




\begin{remark}
 \textit{The Alexander polynomial}\index{The Alexander polynomial} \cite{alexander} also admits a skein relation \cite{conway_skein}. From the skein relation invariant polynomial, we can get the Alexander polynomial by substituting $a_+=1$, $a_-=-1$ and $a_0=t^{-1/2}-t^{1/2}$ \cite{homfly}. That is,
 \[\nabla(L_+)-\nabla(L_-)+(t^{-1/2}-t^{1/2})\nabla(L_0)=0.\]
 While the skein relation invariant polynomial determines both Alexander's and Jones' polynomial, there are no other relations between them, for instance
 \begin{itemize}
  \item $V(4_1)=V(11_{388})$, but $\nabla(4_1)\neq\nabla(11_{388})$.
  \item $V(11_{388})=V(\overline{11_{388}})$ and $\nabla(11_{388})=\nabla(\overline{11_{388}})$, but $P(11_{388})\neq{P(\overline{11_{388}})}$.
  \item $\nabla(11_{471})=1=\nabla(0_1)$, but $V(11_{471})\neq1$.
 \end{itemize}
 On another note, we notice that for any knot $K$, $V(K)_{t=-1}=\nabla(K)_{t=-1}$. This is the determinant of $K$.
\end{remark}

\begin{remark}
 The Jones polynomial is as strong as the 3-colouring invariant polynomial is weak.
\end{remark}

The following conjecture is still open. Khovanov homology \cite{khovanov}, however, does detect the unknot.
\begin{conjecture}[Jones \cite{jones1985}]
 Does there exist a non-trivial knot $K$ such that $V(K)=1$?
\end{conjecture}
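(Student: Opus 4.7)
The plan is to reduce the conjecture to structural classes where the Jones polynomial is rigid enough to control the crossing count, and then to attack the residual difficulty by categorification. The first step exploits the classical Kauffman--Murasugi--Thistlethwaite theorem: for an adequate (in particular alternating) diagram with $n$ crossings, the breadth of $V(K)$ equals $n$. Since $V(K)=1$ has breadth zero, the conjecture holds outright for alternating and adequate knots; I would next try to extend this rigidity by bounding how many ``inadequate'' crossings a minimal diagram can absorb without collapsing the breadth, for instance through an iterative analysis of the extremal Kauffman states in the state-sum formula of Definition~\ref{definition:Kauffman_bracket}.

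The second step would be to categorify and invoke a detection theorem on the stronger invariant. Since $V(K)$ is (after normalisation) the graded Euler characteristic of Khovanov homology $Kh(K)$, and Kronheimer--Mrowka proved that $Kh$ detects the unknot, it suffices to show that $V(K)=1$ forces $Kh(K)$ to have the same total rank as $Kh(0_1)$, namely $2$. I would pursue this via the Lee deformation and Rasmussen's $s$-invariant, trying to leverage the sharp inequalities these provide together with the Euler-characteristic constraint $\chi_q(Kh(K))=V(K)=1$ to pin down the full homology.

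The hard part, and the reason the problem has remained open since Jones posed it in 1985, is exactly this last step: $V(K)=1$ only controls an alternating sum of ranks, and Khovanov homology routinely admits many cancelling pairs of generators in adjacent homological gradings whose contributions wash out in the Euler characteristic. Any successful plan must therefore exhibit an obstruction to such cancellation that is forced by $V=1$ alone --- perhaps via torsion lower bounds in $Kh(K;\ZZ)$, or via an inequality coming from knot Floer homology. Short of a new insight at this juncture, the fallback is the brute-force computational programme, which has verified the conjecture for knots up to roughly $24$ crossings but can only ever refute counterexamples, never establish the statement.
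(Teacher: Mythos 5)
There is no proof to compare against: the statement you were given is posed in the paper as an open conjecture (the paper only remarks, without proof, that Khovanov homology does detect the unknot), and your proposal does not close it either --- it is a research programme whose decisive step is missing. Concretely, the gap is the one you yourself flag at the end: the implication ``$V(K)=1$ $\Rightarrow$ $Kh(K)$ has total rank $2$'' is exactly where all known approaches break down, because the Jones polynomial only records the graded Euler characteristic of $Kh(K)$ and places no obstruction on cancelling pairs of generators in adjacent homological degrees. Neither the Lee spectral sequence nor Rasmussen's $s$-invariant supplies such an obstruction; the $s$-invariant constrains a single filtration level and is already trivial for many knots with large Khovanov homology. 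So step two, as written, assumes the conclusion.

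Your first step also does less than it appears to. The Kauffman--Murasugi--Thistlethwaite breadth argument does settle the conjecture for adequate (hence alternating) knots, and that is a correct and worthwhile observation, but the proposed ``extension by bounding inadequate crossings'' is not a known or obviously viable technique: any counterexample to the conjecture would necessarily live far from the adequate regime, where the extremal states in the state sum of Definition~\ref{definition:Kauffman_bracket} can and do cancel, and no iterative control of that cancellation is currently available. The honest status of the problem is the one the paper records: it is open, verified computationally for knots of low crossing number, and the strongest adjacent result is unknot detection by the categorified invariant rather than by $V$ itself.
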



From the author's exposition on knot theory \cite{knotpaper}, it was shown that the Alexander polynomial can be defined using a method known as ``colouring'', and thus we have the following conjecture.
\begin{conjecture}
 Can we define the Jones polynomial via colouring?
\end{conjecture}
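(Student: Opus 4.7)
Since the statement is phrased as a conjecture rather than a theorem, the plan is to sketch how one might \emph{prove} the affirmative: that the Jones polynomial does admit a colouring-style definition. First I would fix what ``colouring'' should mean here. In Fox $n$-colouring, which yields the Alexander polynomial, one labels each arc of a link diagram by an element of $\ZZ/n\ZZ$ subject to the local relation $2a = b + c$ at each crossing, where $a$ labels the overstrand and $b,c$ the two arcs of the understrand; one then counts the resulting module of colourings. The target is an analogous scheme whose weighted enumeration recovers $V(L) \in \ZZ[t^{-1/2}, t^{1/2}]$.

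A natural first attempt starts from the Kauffman state sum: each state is already a two-colouring of the crossings by the symbols $A$ and $B$. The plan would be to promote this crossing-colouring into an arc-colouring by tracking, for each state, how the original arcs merge into the resulting collection of circles, so that the rule ``which labelling contributes which monomial'' becomes local at each crossing. Summing over such colourings with weights $A$ or $A^{-1}$ at each crossing would reproduce $\langle D \rangle$, which after the writhe normalisation from Definition \ref{definition:Jones-polynomial-bracket} yields $V(L)$.

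A more ambitious route, closer to classical Fox colouring, would go through the Temperley-Lieb algebra $\TL_n$ to be developed later in the thesis. I would label arcs by basis elements of a standard $\TL_n$-module and read the Kauffman skein relation $\left\langle \KPB \right\rangle = A \left\langle \KPC \right\rangle + A^{-1} \left\langle \KPD \right\rangle$ as a local linear compatibility condition at each crossing. Counting such colourings with appropriate weights would formally recover the bracket, and normalising by $(-A)^{-3\omega(D)}$ delivers $V(L)$. Equivalently, one could look for a \emph{biquandle} or ``quantum quandle'' structure enhancing the dihedral quandle behind Fox colouring, whose colouring counts aggregate to the Jones polynomial.

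The hard part will be making ``colouring'' both local and combinatorial in a manner faithful to the classical Fox paradigm. Since $V$ detects chirality, for instance $V(3_1) \neq V(\overline{3_1})$ by Proposition \ref{proposition:two_3_1_knots}, while Fox $n$-colouring does not for any fixed $n$, any successful scheme must either draw labels from an infinite set such as a $\ZZ[t^{\pm 1/2}]$-module, or package infinitely many finite colouring counts into a single generating function. Proving that such a scheme can be made \emph{both} local at each crossing \emph{and} complete enough to recover $V(L)$ for every oriented link is precisely the content of the conjecture, and I expect this to be the principal obstacle.
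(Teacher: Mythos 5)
The paper offers no proof of this statement: it is posed as an open question, motivated only by the remark that the Alexander polynomial admits a colouring-based definition in the author's earlier exposition. There is therefore no argument of the paper's to compare yours against, and your proposal has to be judged on its own terms.

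On those terms, what you have written is not a proof but a list of candidate strategies, none of which is carried far enough to establish anything. Your first route --- reading a Kauffman state as a colouring of the crossings by $\{A,B\}$ --- is just a relabelling of the state-sum definition already in the paper; it is not a colouring in the Fox sense (labels on arcs subject to a local algebraic relation, with the invariant extracted by counting solutions), so it does not answer the question as posed. Your second route, through $\TL_n$-modules or biquandle structures, is stated as an intention with no construction, no candidate labelling set, and no verification of invariance under the Reidemeister moves. Your closing paragraph is the most accurate part of the proposal: the genuine difficulty is exactly whether a local, count-based colouring scheme can be made both well-defined and strong enough to recover $V(L)$ --- in particular to detect chirality, which Fox $n$-colouring cannot. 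Since you end by conceding that establishing this ``is precisely the content of the conjecture,'' you have, by your own account, not proved it. That is consistent with the paper, which likewise leaves the question open, but it means the proposal cannot be accepted as a resolution of the conjecture.
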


While these two definitions were amongst the last to be created with respect to its history, they are amongst the easiest since it is more of a combinatorial calculation, requiring little to no knowledge of more results.

The next chapters will introduce some results and notation we need, to define the original definition of the Jones polynomial. It is in the author's opinion that the original definition of the Jones polynomial (presented in the final chapter), is by far the easiest in terms of both comprehension and calculation.

\section{The braid group \texorpdfstring{$\Bcal_n$}{B}}\label{chapter:braids}
 Thus far, we only know of topological methods for studying links, which motivates the next two chapters where we will learn algebraic methods for studying links. This is because braids can be defined both algebraically and geometrically. So, the aim of the next two chapters will be to discuss the connection between braids and links (Alexander's and Markov's theorem), in hopes of constructing a link invariant. In fact, the original construction of the Jones polynomial \cite{jones1985} (see Chapter \ref{chapter:jones}) utilises these algebraic methods.


We begin with introducing the notion of braids, which happen to be closely related to links with the exception that it cannot be tangled with itself (and is not closed).
Braid equivalence, similarly to link equivalence, is defined with ambient isotopy. Thus, allowing us to define invariant functions on braids.

The aim of this chapter is to form the braid group from a geometrical perspective onto an algebraic presentation. This is done by defining a multiplication (braid concatenation) on the set of braids $\Bcal_n$, thus giving us the group structure we need.
By defining a homomorphism from the braid group onto the symmetric group, we find that the kernel is an interesting group called the pure braid group.

This chapter is based on the presentations given in \cite{enc}, \cite{jones_quantum}, \cite{birman}, \cite{birman_survey}, \cite{BurdeZieschang}, \cite{cromwell}, \cite{jordan}, \cite{roger_fenn}, \cite{jones1987}, \cite{jonesbook}, \cite{jonespoly}, \cite{kassel_turaev}, \cite{kauffman_lomonaco}, \cite{kawauchi}, \cite{livingston}, \cite{manturov}, \cite{murasagibook}, \cite{rolfsen_braids}, and \cite{wilson}.

\subsection{Braids}

Informally, braids consist of a top and a bottom bar, with strings tied to each bar, such that the strings always go downwards. There exists a permutation from the $n$ points on the top bar to the $n$ points on the bottom bar. Recall that a \textit{permutation}\index{permutation} of a set $S$ is a bijective map $f\colon{S\to{S}}$. Here is a precise definition.

\begin{definition}[braid]
 An $n$-\textit{braid}\index{braid} is a set of $n$ injective (non-intersecting) smooth paths (called strands) in $\RR^2\times[0,1]$ connecting the points on the top bar with the points on the bottom bar (i.e., there exists a permutation from the $n$ points on the top bar to the $n$ points on the bottom bar).
 Such that the projection of each of the paths onto the $\RR^2$ plane (shadow - over/under strands) is a diffeomorphism (i.e., no self-tangles: the strings never have a horizontal tangent vector).
 An example of a 3-braid can be seen below in Figure \ref{figure:3-braid}.
\end{definition}

\begin{figure}[H]
 \centering
 \begin{tikzpicture}
  \braid[strands=3,braid start={(0,0)}]
   {\sigma_1 \sigma_2 \sigma_1}
  \draw (0.25,0)--(1.75,0);
  \draw (0.25,-1.5)--(1.75,-1.5);
 \end{tikzpicture}
 \caption{Example of a 3-braid}\label{figure:3-braid}
\end{figure}

\begin{remark}
 Note that the end points of the braids are fixed. And a strand is always going down. So equivalency is essentially ambient isotopy while fixing the end points.
\end{remark}


\begin{definition}[braid equivalence]
 Similarly to links, two braids are \textit{equivalent}\index{equivalent!braids} if they are \textit{ambient isotopic}.
\end{definition}

Similarly to links and knots, we call the equivalence class of braids, \textit{braid type}\index{braid type}.

\begin{proposition}
 Braid equivalence in the above definition is an equivalence relation.
\end{proposition}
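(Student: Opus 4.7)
The plan is to mirror the proof of Proposition \ref{proposition:ambient_isotopy_eq}, which established the analogous statement for link equivalence, and adapt it to the braid setting. Since braid equivalence is defined as ambient isotopy of the strands in $\RR^2 \times [0,1]$ (with the extra conditions that endpoints are fixed and strands remain monotone), everything reduces to showing that the relation ``there exists a smooth family of diffeomorphisms $h_t$ with $h_0 = \id$ and $h_1(A) = B$'' is reflexive, symmetric, and transitive. Let $\beta_1, \beta_2, \beta_3$ denote arbitrary $n$-braids.

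For reflexivity, take $H(x,t) = x$, which is trivially smooth and satisfies $h_0 = h_1 = \id$, so $\beta_1 \sim \beta_1$. For symmetry, suppose $\beta_1 \sim \beta_2$ via a smooth family $h_t$ with $h_1(\beta_1) = \beta_2$. I would define the reverse isotopy by
\[
 k_t(x) \;=\; h_{1-t}\!\left(h_1^{-1}(x)\right),
\]
so that $k_0 = h_1 \circ h_1^{-1} = \id$ and $k_1(\beta_2) = h_0(h_1^{-1}(\beta_2)) = h_1^{-1}(\beta_2) = \beta_1$. Because $h_t$ is a smooth family of diffeomorphisms, so is its inverse, and composition of smooth maps is smooth; hence $k_t$ is a valid ambient isotopy and $\beta_2 \sim \beta_1$.

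For transitivity, suppose $\beta_1 \sim \beta_2$ via $h_t$ and $\beta_2 \sim \beta_3$ via $f_t$. I would concatenate the isotopies: first run $h_t$ on $[0,1/2]$ and then run $f_t \circ h_1$ on $[1/2,1]$, after a smooth reparametrisation of the $t$-variable so that the joined family is smooth across $t = 1/2$ (using a standard bump-function reparametrisation $\varphi\colon[0,1]\to[0,1]$ with $\varphi'$ vanishing to all orders at the endpoints). The resulting family $g_t$ satisfies $g_0 = \id$ and $g_1(\beta_1) = f_1(h_1(\beta_1)) = f_1(\beta_2) = \beta_3$, and is a diffeomorphism at each time because compositions of diffeomorphisms are diffeomorphisms.

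The only subtlety, which I do not expect to be a real obstacle, is ensuring that the ambient isotopies respect the braid structure (fixing the top and bottom bars pointwise and keeping strands monotone in the vertical direction). This is automatic from the conventions set up in the definition of braid equivalence, since these conditions are preserved under composition and under the time-reversal construction above; the constructed families $k_t$ and $g_t$ restrict to the identity on the boundary bars whenever $h_t$ and $f_t$ do. Thus, analogously to Proposition \ref{proposition:ambient_isotopy_eq}, braid equivalence is an equivalence relation.
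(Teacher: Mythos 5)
Your proof is correct and follows essentially the same route as the paper: reflexivity is immediate, symmetry comes from running the equivalence backwards, and transitivity from concatenating the two equivalences. The paper phrases this informally in terms of reversing and concatenating ``moves,'' whereas you make the same argument precise at the level of ambient isotopies (mirroring Proposition \ref{proposition:ambient_isotopy_eq}), including the reparametrisation at the junction and the check that the boundary-fixing condition is preserved --- a welcome tightening, but not a different proof.
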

\begin{proof}
 It is obviously reflexive. It is symmetric since if braid $\alpha$ can be transformed into braid $\beta$, then by looking over the `moves' of the transformation, we can just go backwards. Finally, it is transitive since if braid $\alpha$ can be transformed into braid $\beta$ and then to braid $\gamma$, by looking at the `moves' of the two transformations, we can concatenate them and form one so that we can get a transformation from $\alpha$ to $\gamma$.
\end{proof}

We can, just as with links, perform elementary moves, the $\Delta$-moves (Definition \ref{definition:triangle-move}), on braids to show that two braids are equivalent. So by fixing the end points of the braids, we can move the strings however we would like, ensuring that we still satisfy the requirements of a braid and without breaking the strands.

  
  

\begin{example}
 The $\RII$ move on a braid via the triangle move:
 \begin{center}
  \begin{tikzpicture}
   \braid[braid start={(0,0)},strands=2] {\sigma_1^{-1}\sigma_1}
   
   \draw (1.5,-0.5) node {$\sim$};
   
   \draw[thin] (2.25,0)--(2.25,-1);
   \draw[ultra thick,white,double=black,double distance=0.5] (2,0)--(2.5,-0.5)--(2,-1);
   \draw[thin,dashed] (2,0)--(2,-1);
   
   \draw (3,-0.5) node {$\sim$};
   
   \draw[thin] (3.5,0)--(3.5,-1);
   \draw[thin] (4,0)--(4,-1);
  \end{tikzpicture}
 \end{center}
 It is easy to see the $\RIII$ move on a braid via the triangle move, and is left as an exercise for the reader.
\end{example}

\begin{remark}
 The bars on the braids are usually drawn to emphasise that the diagram represents a braid. So this is not always necessary.
\end{remark}

Now that we know we can perform the $\RII$ and $\RIII$ moves on a braid, it is typical to wonder if we can perform an $\RI$ move. Obviously we cannot since by definition, the strand cannot self-tangle. However, in the next chapter, we will see that when relating braids to knots (via closure), there exists a fascinating move that mimics the $\RI$ move when closing the braid.

\begin{definition}[braid permutation]\label{definition:braid permutation}
 Suppose we have an $n$-braid $\alpha$, with $A_1$ connected to $A^\prime_{1^\prime}$, $A_2$ connected to $A^\prime_{2^\prime}$, $\ldots$, $A_n$ connected to $A^\prime_{n^\prime}$. Then, we can assign to $\alpha$ a permutation,
 \[\begin{pmatrix}1&2&\cdots&n\\1^\prime&2^\prime&\cdots&n^\prime\end{pmatrix}.\]
 This is called the \textit{braid permutation}\index{braid permutation}.
\end{definition}

The \textit{trivial} $n$-braid, which we get by connecting $A_1$ to $A^\prime_1$, $A_2$ to $A^\prime_2$, $\ldots$, $A_n$ to $A^\prime_n$ corresponds to the identity permutation, $\begin{pmatrix}1&2&\cdots&n\\1&2&\cdots&n\end{pmatrix}$.
\begin{figure}[H]
 \centering
 \begin{tikzpicture}[scale=0.5]
  \draw[thin] (0.5,0)--(4.5,0);
  \draw[thin] (1,0)--(1,-3);
  \draw[thin] (1.5,0)--(1.5,-3);
  \draw[thin] (2,0)--(2,-3);
  \draw[thin] (3,-1.5) node[scale=1.3] {$\bm{\ldots}$};
  \draw[thin] (4,0)--(4,-3);
  \draw[thin] (0.5,-3)--(4.5,-3);
 \end{tikzpicture}
 \caption{the trivial n-braid}
 \label{figure:}
\end{figure}

The braid permutation for Figure \ref{figure:3-braid} is
\[\begin{pmatrix}1&2&3\\3&2&1\end{pmatrix}=(1\;3).\]

\begin{proposition}
 The braid permutation is invariant under ambient isotopy on braids.
\end{proposition}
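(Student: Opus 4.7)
The plan is to prove this by showing that the braid permutation depends only on the endpoint connectivity of the strands, and that ambient isotopy (with fixed endpoints) cannot alter this connectivity. I will give two complementary arguments: a direct topological argument via the definition of isotopy, and a combinatorial argument via the elementary moves on braid diagrams.

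For the topological argument, let $\alpha,\beta$ be two ambient isotopic $n$-braids, with ambient isotopy $H\colon\RR^3\times[0,1]\to\RR^3$ such that $h_0=\id$ and $h_1(\alpha)=\beta$. By definition of braid equivalence, the isotopy fixes the top and bottom bars pointwise, so the endpoints $A_1,\dotsc,A_n$ and $A'_1,\dotsc,A'_n$ are preserved. For each $t\in[0,1]$, $h_t(\alpha)$ is a braid, and in particular each strand of $\alpha$ is carried to a strand of $h_t(\alpha)$ which is a continuous path from some $A_i$ to some $A'_{i'}$. The index $i'$ associated to the $i$-th strand defines a function $\pi_t\colon\{1,\dotsc,n\}\to\{1,\dotsc,n\}$, and by the continuous dependence of $h_t$ on $t$ together with the discreteness of the codomain $\{1,\dotsc,n\}$, the function $t\mapsto\pi_t(i)$ is constant. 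Hence $\pi_0=\pi_1$, i.e., $\alpha$ and $\beta$ induce the same permutation.

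For the combinatorial argument, I would appeal to the fact (established earlier via Reidemeister's theorem and the $\Delta$-move discussion) that two braids are ambient isotopic if and only if they differ by a finite sequence of planar isotopies and the moves $\RII$ and $\RIII$ (the $\RI$ move is excluded because a strand of a braid cannot have a horizontal tangent). Each of these moves is local: $\RII$ cancels or introduces a pair of adjacent crossings between the same two strands, and $\RIII$ slides a strand past a crossing of two other strands. In every case one verifies directly from the pictures that the top-to-bottom pairing of endpoints is unchanged, since no endpoint is moved and no strand is cut and reglued. Planar isotopy obviously preserves the pairing.

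Neither step poses a serious obstacle; the only subtle point is justifying that the isotopy really preserves the labelling of individual strands, which is handled by the continuity/discreteness argument above. I would present the topological argument as the main proof and remark that it can also be seen diagrammatically by checking $\RII$ and $\RIII$.
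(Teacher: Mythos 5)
Your proof is correct and its main (topological) argument is essentially the paper's own: the paper simply observes that braid equivalence keeps the endpoints fixed, so the induced permutation cannot change, and your continuity-plus-discreteness argument is just a careful spelling-out of that one-line observation. The supplementary check via the $\RII$ and $\RIII$ moves is a fine bonus but not needed.
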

\begin{proof}
 If two braids are equivalent, their braid permutations are equal (this is clear since by definition, in order to transform one braid into another, we have to keep the end points fixed). Thus, this is a braid invariant.
\end{proof}

\subsection{Representations of groups}

This section will give a brief on representations of groups.

\begin{definition}[representation]
 A \textit{representation}\index{representation} of a finite group $G$ on an $n$-dimensional $\FF$-vector space $V$ is a group homomorphism $\phi\colon{G\to\text{GL}_n(V)}$.
\end{definition}

Given a group $G$ that acts on set $X=\{x_1,x_2,\ldots,x_n\}$. We call the vector space $\FF(X)$ a \textit{permutation representation} and consists of linear combinations.

So, recall that the group algebra $\CC[S_n]$ is generated by $a_i=(i\;i+1)$ for $i\leq{n-1}$. And has the following relations.
\begin{enumerate}[(i)]
 \item $a_i^2=1$
 \item $a_ia_j=a_ja_i$ if $|i-j|>1$
 \item $a_ia_{i+1}a_i=a_{i+1}a_ia_{i+1}$
\end{enumerate}
This is called the \textit{presentation}\index{presentation} of $\CC[S_n]$. 

\subsection{The braid group}

In this section we will give the set of braids a group structure by defining a multiplication. This is useful because we can use algebraic instead of geometrical structures.

We let $\Bcal_n$ be the set of the equivalence classes of all $n$-braids.

\begin{definition}[braid multiplication]
 \textit{Braid multiplication}\index{braid multiplication} of $n$-braids is defined by concatenation of the braids by putting one braid on top of the other, i.e., $\Bcal_n\times\Bcal_n\to\Bcal_n$. By convention, $\alpha\beta$ means $\alpha$ on top of $\beta$.
 \begin{center}
  \begin{tikzpicture}[scale=0.75]
   \begin{scope}[yshift=-0.75cm,xshift=-3.5cm]
    \draw (0,0) rectangle (1,1);
    \draw (0.25,1.5)--(0.25,1);
    \draw (0.25,0)--(0.25,-0.5);
    \draw (0.75,0)--(0.75,-0.5);
    \draw (0.75,1.5)--(0.75,1);
    \draw (0.53,1.25) node {$\cdots$};
    \draw (0.53,-0.25) node {$\cdots$};
    \draw (0.5,0.5) node[scale=1.33] {$\alpha$};
   \end{scope}
   \begin{scope}[yshift=-0.75cm,xshift=-2cm]
    \draw (-0.25,0.5) node[scale=1.33] {$\cdot$};
    \draw (0,0) rectangle (1,1);
    \draw (0.25,1.5)--(0.25,1);
    \draw (0.25,0)--(0.25,-0.5);
    \draw (0.75,0)--(0.75,-0.5);
    \draw (0.75,1.5)--(0.75,1);
    \draw (0.53,1.25) node {$\cdots$};
    \draw (0.53,-0.25) node {$\cdots$};
    \draw (0.5,0.5) node[scale=1.33] {$\beta$};
   \end{scope}
   \draw (-0.5,-0.25) node[scale=1.33] {$=$};
   \draw (0,0) rectangle (1,1);
   \draw (0.25,1.5)--(0.25,1);
   \draw (0.25,0)--(0.25,-0.5);
   \draw (0.75,0)--(0.75,-0.5);
   \draw (0.75,1.5)--(0.75,1);
   \draw (0.53,1.25) node {$\cdots$};
   \draw (0.53,-0.25) node {$\cdots$};
   \draw (0,-0.5) rectangle (1,-1.5);
   \draw (0.25,-1.5)--(0.25,-2);
   \draw (0.75,-1.5)--(0.75,-2);
   \draw (0.53,-1.75) node {$\cdots$};
   \draw (0.5,0.5) node[scale=1.33] {$\alpha$};
   \draw (0.5,-1) node[scale=1.33] {$\beta$};
  \end{tikzpicture}
 \end{center}
\end{definition}

It is typical to ask if (how) we can multiply an $n$-braid with an $m$-braid, for $n\neq{m}$. In fact we will need to do this in the next chapter when relating a braid to a knot.
It turns out that there exists a natural inclusion from $\Bcal_n$ to $\Bcal_{n+1}$ by adding a vertical string to the right of the braid. So, we have the inclusions,
\[\Bcal_1\subset\Bcal_2\subset\cdots\]
This is quite similar to the Temperley-Lieb algebra planar diagram relations in Chapter \ref{chapter:tl}.
For the sake of completeness and less ambiguity, we shall define precise definitions we can use.
\begin{definition}[braid inclusion]
 The \textit{braid inclusion map}\index{linear operations on $\Bcal_n$!braid inclusion map $\incl$} $\incl_n\colon\Bcal_n\to\Bcal_{n+1}$ is defined by,
 \begin{figure}[H]
 \centering
 \[\incl_n\left(\knotsinmath{\begin{scope}[yshift=-0.35cm,scale=0.75]
    \draw (0,0) rectangle (1,1);
    \draw (0.25,1.5)--(0.25,1);
    \draw (0.25,0)--(0.25,-0.5);
    \draw (0.75,0)--(0.75,-0.5);
    \draw (0.75,1.5)--(0.75,1);
    \draw (0.53,1.25) node {$\cdots$};
    \draw (0.53,-0.25) node {$\cdots$};
    \draw (0.5,0.5) node[scale=1.33] {$\alpha$};
   \end{scope}}\right) =\,\, \knotsinmath{\begin{scope}[yshift=-0.35cm,scale=0.75]
    \draw (0,0) rectangle (1,1);
    \draw (0.25,1.5)--(0.25,1);
    \draw (0.25,0)--(0.25,-0.5);
    \draw (0.75,0)--(0.75,-0.5);
    \draw (0.75,1.5)--(0.75,1);
    \draw (0.53,1.25) node {$\cdots$};
    \draw (0.53,-0.25) node {$\cdots$};
    \draw (0.5,0.5) node[scale=1.33] {$\alpha$};
    \draw (1.25,1.5)--(1.25,-0.5);
   \end{scope}}\]
  \caption{braid inclusion}
 \end{figure}
\end{definition}

So, we can multiply an $n$-braid with an $m$-braid for $n<m$, means that we have to use the inclusion map on the $n$-braid recursively until $n=m$. We have the following example.
\begin{example}
 Let $\alpha\in\Bcal_n$ and $\varrho\in\Bcal_{n+3}$, so to multiply them we need to perform the braid inclusion map on $\alpha$ thrice, i.e.,
 \[\incl_n(\alpha)\in\Bcal_{n+1}\tto\incl_{n+1}(\incl_{n}(\alpha))\in\Bcal_{n+2}\tto\incl_{n+2}(\incl_{n+1}(\incl_{n}(\alpha)))\in\Bcal_{n+3}\]
 visually we have,
 \begin{align*}
  \knotsinmath{\begin{scope}[yshift=-0.35cm,scale=0.75]
    \draw (0,0) rectangle (1,1);
    \draw (0.25,1.5)--(0.25,1);
    \draw (0.25,0)--(0.25,-0.5);
    \draw (0.75,0)--(0.75,-0.5);
    \draw (0.75,1.5)--(0.75,1);
    \draw (0.53,1.25) node {$\cdots$};
    \draw (0.53,-0.25) node {$\cdots$};
    \draw (0.5,0.5) node[scale=1.33] {$\alpha$};
   \end{scope}} \cdot \knotsinmath{\begin{scope}[yshift=-0.35cm,scale=0.75]
    \draw (0,0) rectangle (2,1);
    \draw (0.25,1.5)--(0.25,1);
    \draw (0.25,0)--(0.25,-0.5);
    \draw (1.25,1.5)--(1.25,1);
    \draw (1.25,0)--(1.25,-0.5);
    \draw (1.5,0)--(1.5,-0.5);
    \draw (1.5,1.5)--(1.5,1);
    \draw (1.75,0)--(1.75,-0.5);
    \draw (1.75,1.5)--(1.75,1);
    \draw (0.85,1.25) node {$\cdots$};
    \draw (0.85,-0.25) node {$\cdots$};
    \draw (1,0.5) node[scale=1.33] {$\varrho$};
   \end{scope}} &= \incl_{n+2}\circ\incl_{n+1}\circ\incl_{n}\left(\knotsinmath{\begin{scope}[yshift=-0.35cm,scale=0.75]
    \draw (0,0) rectangle (1,1);
    \draw (0.25,1.5)--(0.25,1);
    \draw (0.25,0)--(0.25,-0.5);
    \draw (0.75,0)--(0.75,-0.5);
    \draw (0.75,1.5)--(0.75,1);
    \draw (0.53,1.25) node {$\cdots$};
    \draw (0.53,-0.25) node {$\cdots$};
    \draw (0.5,0.5) node[scale=1.33] {$\alpha$};
   \end{scope}}\right) \cdot \knotsinmath{\begin{scope}[yshift=-0.35cm,scale=0.75]
    \draw (0,0) rectangle (2,1);
    \draw (0.25,1.5)--(0.25,1);
    \draw (0.25,0)--(0.25,-0.5);
    \draw (0.75,0)--(0.75,-0.5);
    \draw (0.75,1.5)--(0.75,1);
    \draw (1.25,1.5)--(1.25,1);
    \draw (1.25,0)--(1.25,-0.5);
    \draw (1.5,0)--(1.5,-0.5);
    \draw (1.5,1.5)--(1.5,1);
    \draw (1.75,0)--(1.75,-0.5);
    \draw (1.75,1.5)--(1.75,1);
    \draw (0.53,1.25) node {$\cdots$};
    \draw (0.53,-0.25) node {$\cdots$};
    \draw (1,0.5) node[scale=1.33] {$\varrho$};
   \end{scope}}\\
  &= \knotsinmath{\begin{scope}[yshift=-0.35cm,scale=0.75]
    \draw (0,0) rectangle (1,1);
    \draw (0.25,1.5)--(0.25,1);
    \draw (0.25,0)--(0.25,-0.5);
    \draw (0.75,0)--(0.75,-0.5);
    \draw (0.75,1.5)--(0.75,1);
    \draw (0.53,1.25) node {$\cdots$};
    \draw (0.53,-0.25) node {$\cdots$};
    \draw (0.5,0.5) node[scale=1.33] {$\alpha$};
    \draw (1.25,1.5)--(1.25,-0.5);
    \draw (1.5,1.5)--(1.5,-0.5);
    \draw (1.75,1.5)--(1.75,-0.5);
   \end{scope}} \cdot \knotsinmath{\begin{scope}[yshift=-0.35cm,scale=0.75]
    \draw (0,0) rectangle (2,1);
    \draw (0.25,1.5)--(0.25,1);
    \draw (0.25,0)--(0.25,-0.5);
    \draw (0.75,0)--(0.75,-0.5);
    \draw (0.75,1.5)--(0.75,1);
    \draw (1.25,1.5)--(1.25,1);
    \draw (1.25,0)--(1.25,-0.5);
    \draw (1.5,0)--(1.5,-0.5);
    \draw (1.5,1.5)--(1.5,1);
    \draw (1.75,0)--(1.75,-0.5);
    \draw (1.75,1.5)--(1.75,1);
    \draw (0.53,1.25) node {$\cdots$};
    \draw (0.53,-0.25) node {$\cdots$};
    \draw (1,0.5) node[scale=1.33] {$\varrho$};
   \end{scope}} = \knotsinmath{\begin{scope}[yshift=0.45cm,scale=0.75]
    \draw (0,0) rectangle (1,1);
    \draw (0.25,1.5)--(0.25,1);
    \draw (0.25,0)--(0.25,-0.5);
    \draw (0.75,0)--(0.75,-0.5);
    \draw (0.75,1.5)--(0.75,1);
    \draw (0.53,1.25) node {$\cdots$};
    \draw (0.53,-0.5) node {$\cdots$};
    \draw (0.5,0.5) node[scale=1.33] {$\alpha$};
    \draw (1.25,1.5)--(1.25,-0.5);
    \draw (1.5,1.5)--(1.5,-0.5);
    \draw (1.75,1.5)--(1.75,-0.5);
   \end{scope}
   \begin{scope}[yshift=-1.05cm,scale=0.75]
    \draw (0,0) rectangle (2,1);
    \draw (0.25,1.5)--(0.25,1);
    \draw (0.25,0)--(0.25,-0.5);
    \draw (0.75,0)--(0.75,-0.5);
    \draw (0.75,1.5)--(0.75,1);
    \draw (1.25,1.5)--(1.25,1);
    \draw (1.25,0)--(1.25,-0.5);
    \draw (1.5,0)--(1.5,-0.5);
    \draw (1.5,1.5)--(1.5,1);
    \draw (1.75,0)--(1.75,-0.5);
    \draw (1.75,1.5)--(1.75,1);
    \draw (0.53,-0.25) node {$\cdots$};
    \draw (1,0.5) node[scale=1.33] {$\varrho$};
   \end{scope}}
 \end{align*}
\end{example}

In general, the $n$-braid $\alpha\beta$ is not equivalent to $\beta\alpha$. However, braid multiplication is associative, i.e., $\alpha(\beta\gamma)=(\alpha\beta)\gamma$ (see proof of Theorem \ref{theorem:braid_group}).

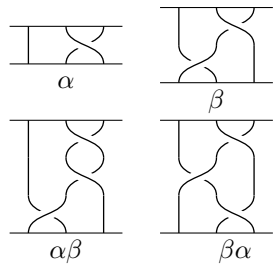
\begin{figure}[H]
 \centering
 \begin{tikzpicture}
  \braid[strands=3,braid start={(0,-0.25)}]
   {\sigma_2^{-1}}
  \draw (0.25,-0.25)--(1.75,-0.25);
  \draw (0.25,-0.75)--(1.75,-0.75);
  \draw (1,-1) node {$\alpha$};
  
  \braid[strands=3,braid start={(2,0)}]
   {\sigma_2^{-1}\sigma_1}
  \draw (2.25,0)--(3.75,0);
  \draw (2.25,-1)--(3.75,-1);
  \draw (3,-1.25) node {$\beta$};
  
  \braid[strands=3,braid start={(0,-3/2)}] {\sigma_2^{-1}\sigma_2^{-1}\sigma_1}
  \draw (0.25,-1.5)--(1.75,-1.5);
  \draw (0.25,-3)--(1.75,-3);
  \draw (1,-3.25) node {$\alpha\beta$};
  
  \braid[strands=3,braid start={(2,-3/2)}] {\sigma_2^{-1}\sigma_1\sigma_2^{-1}}
  \draw (4.5/2,-3/2)--(7.5/2,-3/2);
  \draw (4.5/2,-6/2)--(7.5/2,-6/2);
  \draw (6.5/2,-6.5/2) node {$\beta\alpha$};
 \end{tikzpicture}
 \caption{(\cite[Exercise 10.2.1,~p.~202]{murasagibook}) braid multiplication is not commutative, i.e. $\alpha\beta$ is not equivalent to $\beta\alpha$ (their braid permutations are different, hence they are not equivalent)}
 \label{figure:braid_mult_comm}
\end{figure}

The braid identity $\Xi$ is the trivial braid such that $\Xi\alpha=\alpha=\alpha\Xi$.
\begin{figure}[H]
 \centering
 \begin{tikzpicture}
  \draw (0.1,0)--(0.1,-1);
  \draw (0.2,0)--(0.2,-1);
  \draw (0.3,0)--(0.3,-1);
  \draw (0.55,-0.5) node {$\ldots$};
  \draw (0.8,0)--(0.8,-1);
  \draw (0.9,0)--(0.9,-1);
  \draw (0,0)--(1,0);
  \draw (0,-1)--(1,-1);
 \end{tikzpicture}
 \caption{trivial braid}
 \label{figure:trivial_braid}
\end{figure}

Finally, the inverse of a braid is the reflection of a braid, such that $\alpha^{-1}\cdot\alpha=\Xi=\alpha\cdot\alpha^{-1}$.
 \begin{figure}[H]
  \centering
  \[\left(\knotsinmath{\begin{scope}[yshift=-0.35cm,scale=0.75]
    \draw (0,0) rectangle (1,1);
    \draw (0.25,1.5)--(0.25,1);
    \draw (0.25,0)--(0.25,-0.5);
    \draw (0.75,0)--(0.75,-0.5);
    \draw (0.75,1.5)--(0.75,1);
    \draw (0.53,1.25) node {$\cdots$};
    \draw (0.53,-0.25) node {$\cdots$};
    \draw (0.5,0.5) node[scale=1.33] {$\alpha$};
   \end{scope}}\right)^{-1} = \,\,\reflectbox{\knotsinmath{\begin{scope}[yshift=-0.35cm,scale=0.75]
    \draw (0,0) rectangle (1,1);
    \draw (0.25,1.5)--(0.25,1);
    \draw (0.25,0)--(0.25,-0.5);
    \draw (0.75,0)--(0.75,-0.5);
    \draw (0.75,1.5)--(0.75,1);
    \draw (0.53,1.25) node {$\cdots$};
    \draw (0.53,-0.25) node {$\cdots$};
    \draw (0.5,0.5) node[scale=1.33] {$\alpha$};
   \end{scope}}}\]
  \caption{braid inverse}
 \end{figure}
and so,
   \[\knotsinmath{\begin{scope}[yshift=0.45cm,scale=0.75]
    \draw (0,0) rectangle (1,1);
    \draw (0.25,1.5)--(0.25,1);
    \draw (0.25,0)--(0.25,-0.5);
    \draw (0.75,0)--(0.75,-0.5);
    \draw (0.75,1.5)--(0.75,1);
    \draw (0.53,1.25) node {$\cdots$};
    \draw (0.53,-0.55) node {$\cdots$};
    \draw (0.5,0.5) node[scale=1.33] {$\alpha$};
    \draw (-1,-0.5) node[scale=1.33] {$\alpha\cdot\alpha^{-1}=$};
   \end{scope}
   \begin{scope}[yshift=-1.05cm,xshift=0.75cm,scale=0.75,xscale=-1]
    \draw (0,0) rectangle (1,1);
    \draw (0.25,1.5)--(0.25,1);
    \draw (0.25,0)--(0.25,-0.5);
    \draw (0.75,0)--(0.75,-0.5);
    \draw (0.75,1.5)--(0.75,1);
    \draw (0.53,-0.25) node {$\cdots$};
    \draw (0.5,0.5) node[scale=1.33] {$\alpha$};
   \end{scope}} = \knotsinmath{\begin{scope}[yshift=0.5cm]
    \draw (0.1,0)--(0.1,-1);
    \draw (0.2,0)--(0.2,-1);
    \draw (0.3,0)--(0.3,-1);
    \draw (0.55,-0.5) node {$\ldots$};
    \draw (0.8,0)--(0.8,-1);
    \draw (0.9,0)--(0.9,-1);
   \end{scope}}=\Xi\]

\begin{example}
 The following shows an example of multiplying a braid by its inverse.
 \begin{figure}[H]
  \centering
  \begin{tikzpicture}
   \braid[strands=2,braid start={(0,0)}]
   {\sigma_1}
   \draw (0.5/2,0)--(2.5/2,0);
   \draw (0.5/2,-1/2)--(2.5/2,-1/2);
   \draw (1.5/2,-1.5/2) node {$\alpha$};
   \braid[strands=2,braid start={(3/2,0)}]
   {\sigma_1^{-1}}
   \draw (3.5/2,0)--(5.5/2,0);
   \draw (3.5/2,-1/2)--(5.5/2,-1/2);
   \draw (4.5/2,-1.5/2) node {$\alpha^{-1}$};
   \braid[strands=2,braid start={(1.5/2,-2/2)}] {\sigma_1 \sigma_1^{-1}}
   \draw (2/2,-2/2)--(4/2,-2/2);
   \draw (2/2,-4/2)--(4/2,-4/2);
   \draw (3/2,-4.5/2) node {$\alpha\alpha^{-1}$};
  \end{tikzpicture}
  \caption{braid inverse example}
  \label{figure:braid_inverse}
 \end{figure}
 The braid $\alpha\alpha^{-1}$ is clearly equivalent to the trivial braid after an $\RII$ move.
\end{example}

We are done. Let us summarize the braid group with the following statement.
\begin{theorem}[the geometrical braid group]\index{braid group $\Bcal_n$!geometrical}\label{theorem:braid_group}
 The set of all $n$-braids $\Bcal_n$ forms a group under braid composition (concatenation) defined below. Given two $n$-braids $\alpha$ and $\beta$,
 \begin{center}
  \begin{tikzpicture}
   \draw (0.5,-0.5) node {$\alpha$};
   \draw (0,0)--(1,0)--(1,-1)--(0,-1)--cycle;
   \draw (1.5,-0.5) node {$\odot$};
   \draw (2.5,-0.5) node {$\beta$};
   \draw (2,0)--(3,0)--(3,-1)--(2,-1)--cycle;
   \draw (3.5,-0.5) node {$=$};
   \draw (4.5,0) node {$\alpha$};
   \draw (4.5,-1) node {$\beta$};
   \draw (4,0.5)--(5,0.5)--(5,-1.5)--(4,-1.5)--cycle;
   \draw[dashed] (4,-0.5)--(5,-0.5);
   \draw (5.5,-0.5) node {$=$};
   \draw (6.5,-0.5) node {$\alpha\beta$};
   \draw (6,0)--(7,0)--(7,-1)--(6,-1)--cycle;
  \end{tikzpicture}
 \end{center}
\end{theorem}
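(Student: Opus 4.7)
The plan is to verify the four group axioms in turn: closure, associativity, existence of identity, and existence of inverses. For each, I will work with the equivalence classes of braids under ambient isotopy fixing endpoints, so every identity need only hold up to ambient isotopy. Closure is essentially definitional: given two $n$-braids $\alpha$ and $\beta$, stacking $\alpha$ above $\beta$ and rescaling the height coordinate to lie in $[0,1]$ yields a set of $n$ injective smooth paths connecting the top to the bottom, and the resulting projection still has no horizontal tangents after a small planar isotopy, so $\alpha\beta \in \Bcal_n$.

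For associativity, I would observe that both $(\alpha\beta)\gamma$ and $\alpha(\beta\gamma)$ correspond to stacking $\alpha$, $\beta$, $\gamma$ vertically and then rescaling to $[0,1]$; the only difference is where the internal bars are placed during rescaling. The two rescalings differ by a smooth reparametrisation of the $t$-coordinate that fixes endpoints, which is easily promoted to an ambient isotopy of $\RR^2 \times [0,1]$. For the identity, the braid $\Xi\alpha$ is obtained from $\alpha$ by prepending a segment of trivial strands, and similarly for $\alpha\Xi$; the rescaling argument again provides an ambient isotopy to $\alpha$.

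The main obstacle is the existence of inverses: showing that for every $n$-braid $\alpha$, the reflection $\alpha^{-1}$ across the bottom bar satisfies $\alpha\alpha^{-1} \sim \Xi$. I would approach this by induction on the number of crossings of $\alpha$. If $\alpha$ has no crossings, it is isotopic to $\Xi$ itself. Otherwise, choose the topmost crossing of $\alpha$ in the stacked diagram $\alpha\alpha^{-1}$; by reflection symmetry, the bottommost crossing of $\alpha^{-1}$ (i.e., the one appearing immediately below the centre line) is the same crossing with opposite over/under information. These two cancelling crossings sit next to each other in the middle of $\alpha\alpha^{-1}$, and a braid-version of the $\RII$ move (which is realised by a triangle move, as illustrated earlier in the chapter) removes them. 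This reduces $\alpha\alpha^{-1}$ to $\alpha'(\alpha')^{-1}$ where $\alpha'$ has one fewer crossing, so induction applies. The dual identity $\alpha^{-1}\alpha \sim \Xi$ follows by the same argument with roles exchanged.

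The delicate point in the inverse argument is that the ``topmost crossing of $\alpha$'' and ``bottommost crossing of $\alpha^{-1}$'' must genuinely be adjacent in the sense required for an $\RII$ move: their four strands have to form the standard two-strand crossing/anti-crossing pattern with nothing between them. This is true by construction because the reflection operation reverses the $t$-ordering of the crossings, so the two innermost crossings in $\alpha\alpha^{-1}$ are mirror images of one another across the seam. Once this geometric picture is clear, the induction is routine and completes the verification that $\Bcal_n$ is a group.
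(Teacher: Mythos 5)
Your proposal is correct and follows essentially the same route as the paper: verifying closure, associativity, identity, and inverses for concatenation up to ambient isotopy, with the inverse given by the mirror image. Your inductive cancellation of the two innermost crossings via $\RII$ moves simply fills in the detail that the paper compresses into the phrase ``equivalent to the trivial braid by elementary triangle moves.''
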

\begin{proof}
 Let $\alpha,\beta,\gamma$ be $n$-braids. Composing two $n$-braids, by definition, will give a new $n$-braid. So we have closure: $\odot\colon\Bcal_n\odot\Bcal_n\to\Bcal_n$. We also have associativity, i.e. $\alpha(\beta\gamma)=(\alpha\beta)\gamma$ (see figure below).
 \begin{center}
  \begin{tikzpicture}
   \draw (0.5,0) node {$\alpha$};
   \draw (0.5,-1) node {$\beta\gamma$};
   \draw (0,0.5)--(1,0.5)--(1,-1.5)--(0,-1.5)--cycle;
   \draw[dashed] (0,-0.5)--(1,-0.5);
   \draw (1.5,-0.5) node {$=$};
   \draw (2.5,0.5) node {$\alpha$};
   \draw (2.5,-0.5) node {$\beta$};
   \draw (2.5,-1.5) node {$\gamma$};
   \draw (2,1)--(3,1)--(3,-2)--(2,-2)--cycle;
   \draw[dashed] (2,0)--(3,0);
   \draw[dashed] (2,-1)--(3,-1);
   \draw (3.5,-0.5) node {$=$};
   \draw (4.5,0) node {$\alpha\beta$};
   \draw (4.5,-1) node {$\gamma$};
   \draw (4,0.5)--(5,0.5)--(5,-1.5)--(4,-1.5)--cycle;
   \draw[dashed] (4,-0.5)--(5,-0.5);
  \end{tikzpicture}
 \end{center}
 We have an identity $\Xi$ which is the trivial $n$-braid. So, $\alpha\Xi=\alpha=\Xi\alpha$. The figure below shows $\alpha\Xi=\alpha$ (from this, it then follows that $\Xi\alpha=\alpha$).
 \begin{center}
  \begin{tikzpicture}
   \draw (0.5,-0.5) node {$\alpha$};
   \draw (0,0)--(1,0)--(1,-1)--(0,-1)--cycle;
   \draw (1.5,-0.5) node {$\odot$};
   \draw (2.1,0)--(2.1,-1);
   \draw (2.2,0)--(2.2,-1);
   \draw (2.3,0)--(2.3,-1);
   \draw (2.55,-0.5) node {$\ldots$};
   \draw (2.8,0)--(2.8,-1);
   \draw (2.9,0)--(2.9,-1);
   \draw (2,0)--(3,0)--(3,-1)--(2,-1)--cycle;
   \draw (3.5,-0.5) node {$=$};
   \draw (4.5,0) node {$\alpha$};
   \draw (4.1,-0.5)--(4.1,-1.5);
   \draw (4.2,-0.5)--(4.2,-1.5);
   \draw (4.3,-0.5)--(4.3,-1.5);
   \draw (4.55,-1) node {$\ldots$};
   \draw (4.8,-0.5)--(4.8,-1.5);
   \draw (4.9,-0.5)--(4.9,-1.5);
   \draw (4,0.5)--(5,0.5)--(5,-1.5)--(4,-1.5)--cycle;
   \draw[dashed] (4,-0.5)--(5,-0.5);
   \draw (5.5,-0.5) node {$=$};
   \draw (6.5,-0.5) node {$\alpha$};
   \draw (6,0)--(7,0)--(7,-1)--(6,-1)--cycle;
  \end{tikzpicture}
 \end{center}
 Finally, we have inverses. By definition, there exists an $n$-braid, called $\alpha^{-1}$ for the sake of argument, where the composition of this with another $n$-braid, say $\alpha$, would produce a braid which is equivalent to the trivial braid by elementary triangle moves. So, $\alpha\alpha^{-1}=\Xi=\alpha^{-1}\alpha$. The inverse braid is the mirror image of the braid.
\end{proof}

So, similarly to how there exists a natural inclusion of the set $\Bcal_n$, the braid group $\Bcal_n$ is embedded as a subgroup of $\Bcal_{n+1}$ (with the braid inclusion map).

There exists a representation of the group by writing all the braids as words on some generators. The generators $\sigma_i$ is defined by connecting $A^\prime_i$ to $A_{i+1}$ and $A^\prime_{i+1}$ to $A_i$ and is seen in Figure \ref{figure:braid_generator}.

\begin{figure}[H]
 \centering
 \begin{tikzpicture}
  \draw (-0.5/2,-0.5/2) node {$\sigma_i =$};
  \braid[braid start={(0,0)},strands=8] {\sigma_4}
  \draw (1/2,-1.5/2) node {1};
  \draw (2/2,-1.5/2) node {2};
  \draw (4/2,-1.5/2) node {$i$};
  \draw (5/2,-1.53/2) node {$i+1$};
  \draw (8/2,-1.5/2) node {$n$};
  \draw (0.5/2,0)--(8.5/2,0);
  \draw (0.5/2,-1/2)--(8.5/2,-1/2);
 \end{tikzpicture}
 \caption{braid generator $\sigma_i$}
 \label{figure:braid_generator}
\end{figure}

And the inverse, $\sigma_i^{-1}$ is just
\begin{center}
 \begin{tikzpicture}
  \draw (-0.5/2,-0.5/2) node {$\sigma_i^{-1}=$};
  \braid[braid start={(0,0)},strands=8] {\sigma_4^{-1}}
  \draw (1/2,-1.5/2) node {1};
  \draw (2/2,-1.5/2) node {2};
  \draw (4/2,-1.5/2) node {$i$};
  \draw (5/2,-1.53/2) node {$i+1$};
  \draw (8/2,-1.5/2) node {$n$};
  \draw (0.5/2,0)--(8.5/2,0);
  \draw (0.5/2,-1/2)--(8.5/2,-1/2);
 \end{tikzpicture}
\end{center}

These generators are also known as \textit{elementary transpositions}\index{elementary transposition}.

We can use these generators to express any element in the braid group.

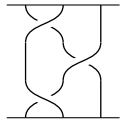
\begin{figure}[H]
 \centering
 \begin{tikzpicture}
  \braid[braid start={(0,0)},strands=3] {\sigma_1\sigma_2\sigma_1^{-1}}
  \draw (0.5/2,0)--(3.5/2,0);
  \draw (0.5/2,-3/2)--(3.5/2,-3/2);
 \end{tikzpicture}
 \caption{$\sigma_1\sigma_2\sigma_1^{-1}$}
\end{figure}

We can also express any element in the braid group using these generators. We can do this by dividing our braid diagram with lines parallel to the bars. The rule is that we can only have one crossing in each divided rectangle, if there are more crossings, then we can just shift one crossing downwards or upwards. See below figure.

\begin{figure}[H]
 \centering
 \begin{tikzpicture}
  \braid[braid start={(0,0)},strands=3] {\sigma_2\sigma_2\sigma_1^{-1}}
  \draw (0.5/2,0)--(3.5/2,0);
  \draw[dashed] (0.5/2,-1/2)--(3.5/2,-1/2);
  \draw[dashed] (0.5/2,-2/2)--(3.5/2,-2/2);
  \draw (0.5/2,-3/2)--(3.5/2,-3/2);
  
  \draw (4.5/2,-0.5/2) node {$\sigma_2$};
  \draw (4.5/2,-1.5/2) node {$\sigma_2$};
  \draw (4.5/2,-2.5/2) node {$\sigma_1^{-1}$};
 \end{tikzpicture}
 \caption{$\sigma_2\sigma_2\sigma_1^{-1}$}
\end{figure}
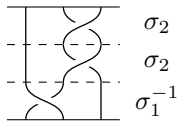

Thus the braid group $\Bcal_n$ can be described by those generators and three relations. It is usually referred to the Artin braid group, since Artin introduced it \cite{artin1925,artin1947,birman_survey}.
\begin{definition}[Artin's braid group $\Bcal_n$ {\cite{artin1925,artin1947}}]
 The \textit{Artin braid group $\Bcal_n$}\index{braid group $\Bcal_n$!Artin's presentation} is the group generated by $\sigma_i$ for $i=1,\ldots,n-1$,
 \begin{center}
  \begin{tikzpicture}
   \draw (-0.5/2,-0.5/2) node {$\sigma_i =$};
   \braid[braid start={(0,0)},strands=8] {\sigma_4}
   \draw (1/2,-1.5/2) node {1};
   \draw (2/2,-1.5/2) node {2};
   \draw (4/2,-1.5/2) node {$i$};
   \draw (5/2,-1.5/2) node {$i+1$};
   \draw (8/2,-1.5/2) node {$n$};
   \draw (0.5/2,0)--(8.5/2,0);
   \draw (0.5/2,-1/2)--(8.5/2,-1/2);
  \end{tikzpicture}
 \end{center}
 and has the following relations
 \begin{enumerate}[(i)]
  \item $\sigma_i\sigma_j=\sigma_j\sigma_i \mbox{ if } |i-j|\geq2$
  \item $\sigma_i\sigma_{i+1}\sigma_i=\sigma_{i+1}\sigma_i\sigma_{i+1} \mbox{ for } i=1,2,\ldots,n-2$
  \item $\sigma_i\sigma_i^{-1}=\Xi, \mbox{ for } i=1,\ldots{n-1}$
 \end{enumerate}
\end{definition}

Notice that the first relation just means that we can shift the crossings if they are ``separated.''
\begin{figure}[H]
 \centering
 \begin{tikzpicture}
  \braid[braid start={(0,0)},strands=4] {\sigma_1\sigma_3}
  \draw (0.5/2,0)--(4.5/2,0);
  \draw (0.5/2,-2/2)--(4.5/2,-2/2);
  
  \draw (5/2,-1/2) node {$=$};
  
  \braid[braid start={(5/2,0)},strands=4] {\sigma_3\sigma_1}
  \draw (5.5/2,0)--(9.5/2,0);
  \draw (5.5/2,-2/2)--(9.5/2,-2/2);
 \end{tikzpicture}
 \caption{$\sigma_i\sigma_j=\sigma_j\sigma_i$, if $|i-j|\geq2$}
\end{figure}

The second relation, here, is essentially the Redemeister $3$ move.
\begin{figure}[H]
 \centering
 \begin{tikzpicture}
  \braid[braid start={(0,0)},strands=3] {\sigma_1\sigma_2\sigma_1}
  \draw (0.5/2,0)--(3.5/2,0);
  \draw (0.5/2,-3/2)--(3.5/2,-3/2);
  
  \draw (4/2,-1.5/2) node {$=$};
  
  \braid[braid start={(4/2,0)},strands=3] {\sigma_2\sigma_1\sigma_2}
  \draw (4.5/2,0)--(7.5/2,0);
  \draw (4.5/2,-3/2)--(7.5/2,-3/2);
 \end{tikzpicture}
 \caption{$\sigma_i\sigma_{i+1}\sigma_i=\sigma_{i+1}\sigma_i\sigma_{i+1}$}
\end{figure}
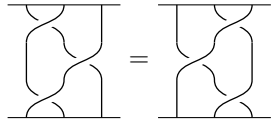

The third relation is just the Redeimeister 2 move.
\begin{figure}[H]
 \centering
 \begin{tikzpicture}
  \braid[braid start={(0,0)},strands=6] {\sigma_3\sigma_3^{-1}}
  \draw (0.5/2,0)--(6.5/2,0);
  \draw (0.5/2,-2/2)--(6.5/2,-2/2);
  \draw (1.5/2,-1/2) node[scale=0.6] {$\ldots$};
  \draw (5.5/2,-1/2) node[scale=0.6] {$\ldots$};
  \draw (7/2,-1/2) node {$=$};
  \draw (7.5/2,0)--(9/2,0);
  \draw (7.5/2,-2/2)--(9/2,-2/2);
  \draw (7.6/2,0)--(7.6/2,-2/2);
  \draw (7.9/2,0)--(7.9/2,-2/2);
  \draw (8.25/2,-1/2) node[scale=0.6] {$\ldots$};
  \draw (8.6/2,0)--(8.6/2,-2/2);
  \draw (8.9/2,0)--(8.9/2,-2/2);
 \end{tikzpicture}
 \caption{$\sigma_i\sigma_i^{-1}=\Xi$}
\end{figure}


\begin{example}
 $\Bcal_1$ is generated by $\sigma_1$\\
 $\Bcal_2$ is generated by $\sigma_1,\sigma_2$ and has the relation $\sigma_1\sigma_2\sigma_1=\sigma_2\sigma_1\sigma_2$\\
 $\Bcal_3$ is generated by $\sigma_1,\sigma_2,\sigma_3$ and has (in addition to the relations in $\Bcal_2$) $\sigma_2\sigma_3\sigma_2=\sigma_3\sigma_2\sigma_3$ and $\sigma_1\sigma_3=\sigma_3\sigma_1$.
\end{example}

\begin{remark}
 The braid group $\Bcal_2$ is isomorphic to $\ZZ$.
\end{remark}

We have already seen the braid permutation from Definition \ref{definition:braid permutation}. So it is not hard to see that there exists a natural homomorphism from the Artin braid group $\Bcal_n$ into the symmetric group $S_n$.
\begin{corollary}
 There is a group homomorphism $\varphi$, from $\Bcal_n$ onto the symmetric group $S_n$.
 We have the exact sequence
 \[1\to{P\Bcal_n}\to\Bcal_n\to{S_n}\to1\]
 where the kernel of the homomorphism is known as the pure braid group $P\Bcal_n$.
\end{corollary}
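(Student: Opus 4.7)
The plan is to take $\varphi$ to be precisely the braid permutation map already introduced in Definition \ref{definition:braid permutation}: each $n$-braid $\alpha$ connects the top points $A_1,\dots,A_n$ to the bottom points $A'_{1'},\dots,A'_{n'}$, and I send $\alpha$ to the permutation $\pi_\alpha\in S_n$ with $\pi_\alpha(i)=i'$. The earlier remark that the braid permutation is invariant under ambient isotopy already shows this map is well-defined on equivalence classes, i.e.\ on $\Bcal_n$. So the work is to verify (a) homomorphism, (b) surjectivity, and (c) identify the kernel.

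For the homomorphism property, I would work from the concatenation picture of braid multiplication. In $\alpha\beta$ a strand enters the top of $\alpha$ at position $i$, exits $\alpha$ at position $\pi_\alpha(i)$, which is the same position it enters $\beta$, and then exits the bottom of $\beta$ at $\pi_\beta(\pi_\alpha(i))$. Therefore $\varphi(\alpha\beta)=\pi_\beta\circ\pi_\alpha=\varphi(\beta)\varphi(\alpha)$ (or $\varphi(\alpha)\varphi(\beta)$, depending on the chosen composition convention for $S_n$); in either case this is a group homomorphism. The identity braid $\Xi$ obviously maps to $\mathrm{id}\in S_n$, so inverses are handled automatically.

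A cleaner alternative, which also gives surjectivity essentially for free, is to define $\varphi$ on the Artin generators by $\sigma_i\mapsto s_i:=(i\ \ i+1)\in S_n$ and then appeal to the universal property of the presentation. One must check that the Artin relations are satisfied by the $s_i$: relation (i), $\sigma_i\sigma_j=\sigma_j\sigma_i$ for $|i-j|\geq 2$, goes to the fact that disjoint transpositions commute; relation (ii), $\sigma_i\sigma_{i+1}\sigma_i=\sigma_{i+1}\sigma_i\sigma_{i+1}$, goes to the standard identity $s_is_{i+1}s_i=s_{i+1}s_is_{i+1}=(i\ \ i+2)$ in $S_n$; relation (iii), $\sigma_i\sigma_i^{-1}=\Xi$, is automatic. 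Since the adjacent transpositions $s_1,\dots,s_{n-1}$ generate $S_n$, the induced map $\varphi\colon\Bcal_n\to S_n$ is surjective. A brief sanity check shows this presentation-level map agrees with the geometric braid permutation.

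Finally, setting $P\Bcal_n:=\ker\varphi$ by definition, the first isomorphism theorem gives $\Bcal_n/P\Bcal_n\cong S_n$, which is exactly the exactness of
\[1\longrightarrow P\Bcal_n\longrightarrow \Bcal_n\xrightarrow{\;\varphi\;} S_n\longrightarrow 1.\]
The only mildly delicate step is the homomorphism check in the geometric formulation, because one must be careful with the convention $\alpha\beta$ meaning ``$\alpha$ on top of $\beta$''; once a convention is fixed, everything is routine, and there is no real obstacle.
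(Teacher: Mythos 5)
Your proof is correct and takes essentially the same route the paper intends: the paper states this corollary without proof, immediately after observing that the braid permutation of Definition~\ref{definition:braid permutation} gives a ``natural homomorphism,'' which is precisely the map you construct and verify. Your added care about the composition convention (concatenation yields $\pi_\beta\circ\pi_\alpha$, so one must either accept an anti-homomorphism or fix the left-to-right convention in $S_n$) and the generator-level check $\sigma_i\mapsto(i\ \ i+1)$ for surjectivity are exactly the details the paper leaves implicit.
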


So the end points of elements in the pure braid group are not permuted. We will give one example. The interested reader is directed to \cite{birman_survey}.
\begin{example}
 An example of an element in the pure braid group $P\Bcal_n$ is shown below,
 \begin{center}
  \begin{tikzpicture}
   \braid[strands=3,braid start={(0,0)},braid width=0.5cm,braid height=0.5cm] {\sigma_2\sigma_1\sigma_2\sigma_1\sigma_1\sigma_2^{-1}\sigma_1^{-1}\sigma_2^{-1}}
   \draw (1,-4.5) node {$\sigma_2\sigma_1\sigma_2\sigma_1^2\sigma_2^{-1}\sigma_1^{-1}\sigma_2^{-1}$};
  \end{tikzpicture}
 \end{center}
 The generators of the pure braid group are given by
 \[{\text{pb}}_{i,j}=(\sigma_{j-1}\sigma_{j-2}\cdots\sigma_{i+1})\sigma_i^2(\sigma_{i+1}^{-1}\cdots\sigma_{j-2}^{-1}\sigma_{j-1}^{-1}),\]
 for $i<j$. And so, visually, we have
 \begin{figure}[H]
  \centering
  \begin{tikzpicture}
   \braid[strands=6,braid start={(0,0)},braid width=0.5cm,braid height=0.5cm] {\sigma_2\sigma_3\sigma_4^{-1}\sigma_4^{-1}\sigma_3^{-1}\sigma_2^{-1}}
   \draw (1,-3.25) node {$i$};
   \draw (2.5,-3.25) node {$j$};
   \draw (-0.75,-1.625) node {$\text{pb}_{i,j}=$};
  \end{tikzpicture}
  \caption{pure braid generator}
  \label{figure:pure_braid}
 \end{figure}
\end{example}

Now we have an easier, purely algebraic, method of showing that two braids are equivalent.

\begin{proposition}[{\cite[Exercise~10.2.4,~p.~207]{murasagibook}}]
 The following relations hold for any $\Bcal_n$ for $n\geq3$.
 \[
  \sigma_1\sigma_2\sigma_1^{-1}=\sigma_2^{-1}\sigma_1\sigma_2 \mbox{ and } \sigma_2\sigma_1^{-1}\sigma_2^{-1}=\sigma_1^{-1}\sigma_2^{-1}\sigma_1
 \]
\end{proposition}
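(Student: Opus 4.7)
The plan is to deduce both identities directly from Artin's braid relation $\sigma_1\sigma_2\sigma_1 = \sigma_2\sigma_1\sigma_2$, which is available because the hypothesis $n \geq 3$ ensures that both $\sigma_1$ and $\sigma_2$ are generators of $\Bcal_n$ and that relation (ii) of Artin's presentation applies with $i=1$. In other words, both identities are purely formal consequences of the braid relation and the existence of inverses, so no geometric argument is needed.

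For the first identity, I would start from $\sigma_1\sigma_2\sigma_1 = \sigma_2\sigma_1\sigma_2$ and multiply on the left by $\sigma_2^{-1}$ and on the right by $\sigma_1^{-1}$, which immediately collapses to $\sigma_2^{-1}\sigma_1\sigma_2 = \sigma_1\sigma_2\sigma_1^{-1}$. This is the first claimed relation, so one line of manipulation suffices.

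For the second identity, I would first invert both sides of Artin's relation to obtain $\sigma_1^{-1}\sigma_2^{-1}\sigma_1^{-1} = \sigma_2^{-1}\sigma_1^{-1}\sigma_2^{-1}$ (using $(xyz)^{-1} = z^{-1}y^{-1}x^{-1}$), and then multiply on the left by $\sigma_2$ and on the right by $\sigma_1$. The left side becomes $\sigma_2\sigma_1^{-1}\sigma_2^{-1}\sigma_1^{-1}\sigma_1 = \sigma_2\sigma_1^{-1}\sigma_2^{-1}$, while the right side becomes $\sigma_2\sigma_2^{-1}\sigma_1^{-1}\sigma_2^{-1}\sigma_1 = \sigma_1^{-1}\sigma_2^{-1}\sigma_1$, giving the second identity.

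There is essentially no main obstacle here: both identities are algebraic rearrangements of the single braid relation, and the only subtlety is bookkeeping the order of factors when inverting. (Alternatively, one could observe that the second identity is obtained from the first by applying the anti-automorphism $\sigma_i \mapsto \sigma_i^{-1}$ followed by conjugation, but the direct multiplication above is cleaner.)
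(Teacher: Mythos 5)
Your proof is correct and is essentially the same argument as the paper's: both derive the two identities purely algebraically from Artin's relation $\sigma_1\sigma_2\sigma_1=\sigma_2\sigma_1\sigma_2$ (the paper inserts $\sigma_2^{-1}\sigma_2$ or $\sigma_1^{-1}\sigma_1$ and cancels, which is the same manipulation as your multiplying both sides by $\sigma_2^{-1}$ and $\sigma_1^{-1}$). The paper merely supplements the computation with braid diagrams observing that both relations are instances of the Reidemeister type 3 move, which adds nothing to the logical content.
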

\begin{proof} {\ }
 \begin{align*}
  \sigma_1\sigma_2\sigma_1^{-1} &= \sigma_2^{-1}\sigma_2\sigma_1\sigma_2\sigma_1^{-1}\\
   &= \sigma_2^{-1}\sigma_1\sigma_2\sigma_1\sigma_1^{-1} = \sigma_2^{-1}\sigma_1\sigma_2
 \end{align*}
 \begin{align*}
  \sigma_2\sigma_1^{-1}\sigma_2^{-1} &= \sigma_2\sigma_1^{-1}\sigma_2^{-1}\sigma_1^{-1}\sigma_1\\
   &= \sigma_2\sigma_2^{-1}\sigma_1^{-1}\sigma_2^{-1}\sigma_1 = \sigma_1^{-1}\sigma_2^{-1}\sigma_1
 \end{align*}
 These will look like
 \begin{center}
  \begin{tikzpicture}
   \braid[strands=3,braid start={(0,0)},braid width=0.5cm,braid height=0.5cm] {\sigma_1\sigma_2\sigma_1^{-1}}
   \draw (1,-2) node {$\sigma_1\sigma_2\sigma_1^{-1}$};
   \braid[strands=3,braid start={(2,0)},braid width=0.5cm,braid height=0.5cm] {\sigma_2^{-1}\sigma_1\sigma_2}
   \draw (3,-2) node {$\sigma_2^{-1}\sigma_1\sigma_2$};
   \draw (2,-0.75) node {$\sim$};
  \end{tikzpicture}
 \end{center}
 And
 \begin{center}
  \begin{tikzpicture}
   \braid[strands=3,braid start={(0,0)},braid width=0.5cm,braid height=0.5cm] {\sigma_2\sigma_1^{-1}\sigma_2^{-1}}
   \draw (1,-2) node {$\sigma_2\sigma_1^{-1}\sigma_2^{-1}$};
   \braid[strands=3,braid start={(2,0)},braid width=0.5cm,braid height=0.5cm] {\sigma_1^{-1}\sigma_2^{-1}\sigma_1}
   \draw (3,-2) node {$\sigma_1^{-1}\sigma_2^{-1}\sigma_1$};
   \draw (2,-0.75) node {$\sim$};
  \end{tikzpicture}
 \end{center}
 Which are clearly just the Reidemeister type 3 move.
\end{proof}

%
To motivate our next example, we will very briefly discuss the conjugacy problem. Our discussion is based on that in Birman's survey \cite[Ch.~5,~p.~60]{birman_survey}.
The \textit{conjugacy problem}\index{conjugacy problem} asks for an algorithm to decide whether $\{a\}=\{a'\}$, where $\{a\},\{a'\}$ are the conjugacy classes of the elements $a,a'\in\Bcal_n$. A solution was first given in 1965 by Garside \cite{garside}. While, there does exist a finite set of conjugates of a representation of an arbitrary element $a\in\Bcal_n$, it is exponential in both $n$ and $|a|$. This makes it difficult for us to understand the combinatorial solution sometimes.
\begin{example}
 The following two braids are known to be equivalent. So we will show that these two braids are equivalent by using the braid group relations and generators. In other words, we are finding a solution that gives us an algorithm to go from the first braid to the second braid (the conjugacy problem). The reader is advised to attempt it.
 \begin{center}
  \begin{tikzpicture}
   \begin{scope}[xshift=-2cm]
    \braid[braid start={(-3,-0.5)},braid width=0.5cm,braid height=0.5cm,strands=4] {\sigma_1^{-1}\sigma_2^{-1}\sigma_3\sigma_3\sigma_2\sigma_1\sigma_3^{-1}\sigma_1\sigma_2\sigma_3^{-1}}
    \draw (-1.75,-6.5) node {$\sigma_1^{-1}\sigma_2^{-1}\sigma_3^2\sigma_2\sigma_1\sigma_3^{-1}\sigma_1\sigma_2\sigma_3^{-1}$};
   \end{scope}
   \begin{scope}[xshift=-1.5cm]
    \braid[braid start={(3,0)},braid width=0.5cm,braid height=0.5cm,strands=4] {\sigma_2\sigma_1^{-1}\sigma_2^{-1}\sigma_3\sigma_3\sigma_1^{-1}\sigma_2\sigma_1\sigma_1\sigma_3^{-1}\sigma_2\sigma_3^{-1}}
    \draw (4.25,-6.5) node {$\sigma_2\sigma_1^{-1}\sigma_2^{-1}\sigma_3^2\sigma_1^{-1}\sigma_2\sigma_1^2\sigma_3^{-1}\sigma_2\sigma_3^{-1}$};
   \end{scope}
  \end{tikzpicture}
 \end{center}
 \begin{proof} {\ }
  \begin{align*}
   \sigma_1^{-1}\sigma_2^{-1}\sigma_3^2\sigma_2\sigma_1\sigma_3^{-1}\sigma_1\sigma_2\sigma_3^{-1} &= \sigma_2\sigma_2^{-1}\sigma_1^{-1}\sigma_2^{-1}\sigma_3^2\sigma_2\sigma_1\sigma_3^{-1}\sigma_1\sigma_2\sigma_3^{-1}\sigma_2\sigma_2^{-1}\\
   &= \sigma_2\sigma_1^{-1}\sigma_2^{-1}\sigma_1^{-1}\sigma_3^2\sigma_2\sigma_1\sigma_3^{-1}\sigma_1\sigma_3^{-1}\sigma_2\sigma_3^{-1}\sigma_2^{-1}\\
   &= \sigma_2\sigma_1^{-1}\sigma_2^{-1}\sigma_3\sigma_1^{-1}\sigma_3\sigma_2\sigma_3^{-1}\sigma_1^2\sigma_3^{-1}\sigma_2\sigma_3^{-1}\sigma_2^{-1}\\
   &= \sigma_2\sigma_1^{-1}\sigma_2^{-1}\sigma_3^2\sigma_1^{-1}\sigma_2\sigma_1^{2}\sigma_3^{-2}\sigma_2\sigma_3^{-1}\sigma_2^{-1}\\
   &= \sigma_2\sigma_1^{-1}\sigma_2^{-1}\sigma_3^2\sigma_1^{-1}\sigma_2\sigma_1^2\sigma_3^{-1}\sigma_2\sigma_3^{-1}\sigma_2\sigma_2^{-1}\\
   &= \sigma_2\sigma_1^{-1}\sigma_2^{-1}\sigma_3^2\sigma_1^{-1}\sigma_2\sigma_1^2\sigma_3^{-1}\sigma_2\sigma_3^{-1}
  \end{align*}
 \end{proof}
 We will show each relation diagrammatically.
 \begin{align*}
  \knotsinmath{\braid[braid start={(-1,2)},braid width=0.5cm,braid height=0.5cm,strands=4] {\sigma_1^{-1}\sigma_2^{-1}\sigma_3\sigma_3\sigma_2\sigma_1\sigma_3^{-1}\sigma_1\sigma_2\sigma_3^{-1}}} \quad\sim \knotsinmath{\braid[braid start={(0,3)},braid width=0.5cm,braid height=0.5cm,strands=4]{\sigma_2\sigma_2^{-1}\sigma_1^{-1}\sigma_2^{-1}\sigma_3\sigma_3\sigma_2\sigma_1\sigma_3^{-1}\sigma_1\sigma_2\sigma_3^{-1}\sigma_2\sigma_2^{-1}}}
  \quad\sim \knotsinmath{\braid[braid start={(0,3)},braid width=0.5cm,braid height=0.5cm,strands=4]{\sigma_2\sigma_1^{-1}\sigma_2^{-1}\sigma_1^{-1}\sigma_3\sigma_3\sigma_2\sigma_1\sigma_3^{-1}\sigma_1\sigma_3^{-1}\sigma_2\sigma_3^{-1}\sigma_2^{-1}}}
  \quad\sim \knotsinmath{\braid[braid start={(0,3)},braid width=0.5cm,braid height=0.5cm,strands=4]{\sigma_2\sigma_1^{-1}\sigma_2^{-1}\sigma_3\sigma_1^{-1}\sigma_3\sigma_2\sigma_3^{-1}\sigma_1\sigma_1\sigma_3^{-1}\sigma_2\sigma_3^{-1}\sigma_2^{-1}}}\\\\
  \quad\sim \knotsinmath{\braid[braid start={(0,3)},braid width=0.5cm,braid height=0.5cm,strands=4]{\sigma_2\sigma_1^{-1}\sigma_2^{-1}\sigma_3\sigma_3\sigma_1^{-1}\sigma_2\sigma_1\sigma_1\sigma_3^{-1}\sigma_3^{-1}\sigma_2\sigma_3^{-1}\sigma_2^{-1}}}
  \quad\sim \knotsinmath{\braid[braid start={(0,3)},braid width=0.5cm,braid height=0.5cm,strands=4]{\sigma_2\sigma_1^{-1}\sigma_2^{-1}\sigma_3\sigma_3\sigma_1^{-1}\sigma_2\sigma_1\sigma_1\sigma_3^{-1}\sigma_2\sigma_3^{-1}\sigma_2\sigma_2^{-1}}}
  \quad\sim \knotsinmath{\braid[braid start={(0,2.5)},braid width=0.5cm,braid height=0.5cm,strands=4]{\sigma_2\sigma_1^{-1}\sigma_2^{-1}\sigma_3\sigma_3\sigma_1^{-1}\sigma_2\sigma_1\sigma_1\sigma_3^{-1}\sigma_2\sigma_3^{-1}}}
 \end{align*}
\end{example}

\section{The link between links \textit{\&} braids}\label{chapter:braids-knots}

We will now see how to associate links with braids and vice versa.

This chapter is based on the presentations given in \cite{adams}, \cite{enc}, \cite{jones_quantum}, \cite{birman}, \cite{birman_survey}, \cite{BurdeZieschang}, \cite{cromwell}, \cite{jordan}, \cite{roger_fenn}, \cite{gadgil}, \cite{jones1985}, \cite{jones1987}, \cite{jonesbook}, \cite{jonespoly}, \cite{kauffman1986}, \cite{kauffman1988}, \cite{kawauchi}, \cite{manturov}, \cite{murasagibook}, \cite{rolfsen_braids}, \cite{speicher}, and \cite{wilson}.

However, mainly our discussions will be based on that in Birman's book \cite{birman} and Birman's survey \cite{birman_survey}.

\begin{definition}[braid closure]
 Consider any $n$-braid $\alpha$. We define the \textit{closure}\index{linear operations on $\Bcal_n$!braid closure} of the braid $\alpha$ by connecting the parallel arcs on the right from the top bar to the bottom bar by starting with the far right, and we denote it as $\reallywidehat{\alpha}$.
 \begin{figure}[H] \centering\[
  \knotsinmath{\begin{scope}[xshift=-0.5cm]\draw (0,0.5) node {$\reallywidehat{\alpha}=$};\end{scope}\begin{scope}[yshift=0.1cm,scale=0.75]
  \draw (0.5,0.5) node[scale=1.33] {$\alpha$};
  \draw (0,0) rectangle (1,1);
  \draw (0.25,1.5)--(0.25,1);
  \draw (0.25,0)--(0.25,-0.5);
  \draw (0.75,0)--(0.75,-0.5);
  \draw (0.75,1.5)--(0.75,1);
  \draw[red] (0.75,1.5)--(1.25,1.5)--(1.25,-0.5)--(0.75,-0.5);
  \draw[red] (0.25,1.5)--(0.25,1.75)--(1.5,1.75)--(1.5,-0.75)--(0.25,-0.75)--(0.25,-0.5);
  \draw (0.53,1.25) node {$\cdots$};
  \draw (0.53,-0.25) node {$\cdots$};
 \end{scope}}
 \]\caption{braid closure}\end{figure}
\end{definition}

\begin{example}
 The closure of the braid $\sigma_1\sigma_2\sigma_1$ is shown below and is Reidemeister equivalent to a link (it should be fairly easy to recognise which link).
 \begin{center}
  \begin{tikzpicture}
   \braid[strands=3,braid start={(0,0)},braid width=0.5cm,braid height=0.5cm] {\sigma_1\sigma_2\sigma_1}
   \draw[red] (1.5,0)--(2,0)--(2,-1.5)--(1.5,-1.5);
   \draw[red] (1,0)--(1,0.2)--(2.2,0.2)--(2.2,-1.7)--(1,-1.7)--(1,-1.5);
   \draw[red] (1/2,0)--(1/2,0.4)--(2.4,0.4)--(2.4,-1.9)--(1/2,-1.9)--(1/2,-1.5);
  \end{tikzpicture}
 \end{center}
\end{example}

The braid closure is a regular diagram of a link. We can assign an orientation by going from the top to the bottom each time we add an arc, i.e.,
\begin{center}
 \knotsinmath{\begin{scope}[scale=0.75]
  \draw (0.5,0.5) node[scale=1.33] {$\reallywidehat{\alpha}$};
  \draw (0,0) rectangle (1,1);
  \draw (0.25,1)--(0.25,1.5);
  \draw (0.25,0)--(0.25,-0.5);
  \draw (0.75,0)--(0.75,-0.5);
  \draw (0.75,1)--(0.75,1.5);
  \draw[red,->] (1,1.5)--(1.1,1.5);
  \draw[red,->] (0.875,1.75)--(0.9,1.75);
  \draw[red] (0.75,1.5)--(1.25,1.5)--(1.25,-0.5)--(0.75,-0.5);
  \draw[red] (0.25,1.5)--(0.25,1.75)--(1.5,1.75)--(1.5,-0.75)--(0.25,-0.75)--(0.25,-0.5);
  \draw (0.53,1.25) node {$\cdots$};
  \draw (0.53,-0.25) node {$\cdots$};
 \end{scope}}
\end{center}
Thinking of the braid closure as a map from braid types to link types. It is typical to question whether this map is injective or surjective. We can easily show that it is in fact not injective (for instance, $\reallywidehat{\sigma_1\sigma_2\sigma_1^{-1}}=\reallywidehat{\sigma_2}$, but $\sigma_1\sigma_2\sigma_1^{-1}\not\sim\sigma_2$). Fascinatingly enough, it is, however, surjective. This was first shown in 1923 by Alexander \cite{alexander1923}.


\begin{theorem}[Alexander's theorem \cite{alexander1923}]\index{Alexander's theorem}\label{theorem:alexander}
 Every link is ambient isotopic to a braid closure.
\end{theorem}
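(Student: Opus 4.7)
The plan is to use Alexander's original ``braiding'' construction: start from an arbitrary link diagram and algorithmically modify it until it becomes a braid closure around a chosen axis point. First I would take any oriented link diagram $D$ of $L$ in the plane $\mathbb{R}^2$, which exists by Reidemeister's theorem (Theorem \ref{theorem:reidemeisters}), and after a small perturbation assume $D$ is piecewise-linear with finitely many edges. Fix a point $p \in \mathbb{R}^2$ that avoids $D$; the point $p$ is intended to become the braid axis when we view the picture as being pierced perpendicularly at $p$. With respect to $p$, classify each oriented edge of $D$ as \emph{positive} if traversing it along the orientation moves counterclockwise around $p$, and \emph{negative} otherwise.

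The key observation is that if every edge of $D$ is positive, then cutting the plane along a ray from $p$ to infinity (chosen so as to miss every vertex and cross each strand transversely) already yields a braid whose closure is $D$: every strand winds monotonically around $p$, so after the cut we obtain $n$ descending strands with crossings, i.e.\ an $n$-braid in the sense of Chapter \ref{chapter:braids}. So the goal reduces to eliminating the negative edges. The main step is the following \emph{Alexander move}: for a negative edge $e$, subdivide it into short sub-edges (finitely many suffice) so that each sub-edge is contained in a small disk disjoint from a chosen reference point $q$ on the far side of $p$; then replace each such sub-edge by the two remaining sides of a triangle passing on the opposite side of $p$ through $q$. This is precisely a $\Delta$-move (Definition \ref{definition:triangle-move}), so by Proposition \ref{proposition:ambient_isotopy_eq} the link type is preserved. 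When a replacement edge crosses other strands of $D$, I would decree that the new arc passes \emph{uniformly above} (or uniformly below) the existing diagram; this is always realisable in $\mathbb{R}^3$ since we have a free $z$-coordinate to exploit, and it does not change the ambient isotopy class of the link.

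To conclude, I would verify that the procedure terminates. By choosing $q$ on the opposite side of $p$ from $e$, the two new edges of the replacement triangle traverse from the endpoints of $e$ around $p$ in the counterclockwise direction, hence are both positive. Thus the number of negative edges strictly decreases with each Alexander move, and after finitely many moves we arrive at a diagram every edge of which is positive. By the observation above, this diagram is the closure of a braid, and by construction it is ambient isotopic to the original link $L$.

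The main obstacle will be the geometric bookkeeping: ensuring that the subdivision of $e$ is fine enough that each replacement triangle is disjoint from the rest of $D$ (except at the endpoints of the sub-edge being replaced), so that each step really is a single $\Delta$-move rather than a more complicated modification. Once this is handled carefully, the over/under assignment for the new arcs is free and the termination argument is essentially combinatorial.
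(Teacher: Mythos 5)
Your proposal is correct and follows essentially the same route as the paper: Alexander's original argument of fixing an axis point, classifying edges by their sense of rotation about it, and repeatedly applying $\Delta$-moves to throw each wrongly-oriented edge over the axis, with termination because each such move reduces the count of bad edges by one. Your added care about subdividing the offending edge finely and fixing a uniform over/under convention for the new arcs addresses exactly the geometric bookkeeping the paper's sketch glosses over, so nothing substantive is missing.
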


Before we begin to sketch the proof. We will need to define Seifert circles.

\begin{definition}[Seifert circles]
 Let $D$ be an oriented diagram for the oriented link $L$ and let $D'$ be modified as shown in Figure \ref{figure:seiferts_algorithm}.
 \begin{figure}[H]
    \centering
    \begin{tikzpicture}[scale=0.3]
     \begin{scope}
      \draw[->] (2,0)--(0,2);
      \draw[ultra thick,draw=white,double=black,double distance=0.5pt] (0,0)--(2,2);
      \draw[->] (1.9,1.9)--(2,2);
     \end{scope}
     \begin{scope}[xshift=2cm]
      \draw[|->] (1,1)--(2,1);
     \end{scope}
     \begin{scope}[xshift=5cm]
      \draw[->] (0,0) .. controls (0.65,1) .. (0,2);
      \draw[->] (1.3,0) .. controls (0.65,1) .. (1.3,2);
     \end{scope}
     \begin{scope}[xshift=6.3cm]
      \draw[|->] (2,1)--(1,1);
     \end{scope}
     \begin{scope}[xshift=9.3cm]
      \draw[->] (0,0)--(2,2);
      \draw[ultra thick, draw=white,double=black,double distance=0.5pt] (2,0)--(0,2);
      \draw[->] (0.1,1.9)--(0,2);
     \end{scope}
    \end{tikzpicture}
    \caption{Seifert's algorithm}
    \label{figure:seiferts_algorithm}
 \end{figure}
 Diagram $D'$ is the same as $D$ except in a small neighbourhood of each crossing where the crossing has been removed in the only way compatible with the orientation.
 This $D'$ is just a disjoint union of oriented simple closed curves in $\RR^2$, called \textit{Seifert circles}\index{Seifert circles}.
\end{definition}

\begin{example} {\ }
 \begin{figure}[H]
  \centering
  \begin{tikzpicture}[scale=0.3]
  \begin{scope}[xshift=6cm]
    \mytrefoil
    \draw (0,-2.5) node[scale=3.2] {trefoil knot};
  \end{scope}
  \begin{scope}[yshift=-6cm]
    \mytrefoil
    \draw[->] ( 90:2) -- +(  0:0.2);
    \draw (0,-2.5) node[scale=3.2] {add orientation};
  \end{scope}
  \begin{scope}[yshift=-6cm,xshift=12cm,scale=2]
    \draw[->] ( 90:1) -- +(  0:0.1);
    \draw[->] ( 90:0.2) -- +(0:0.05);
    \draw (0,0) circle(1);
    \draw (0,0) circle(0.2);
    \draw (0,-1.5) node[scale=1.6] {form Seifert circles};
  \end{scope}
  \end{tikzpicture}
  \caption{trefoil knot Seifert circles}
 \end{figure}
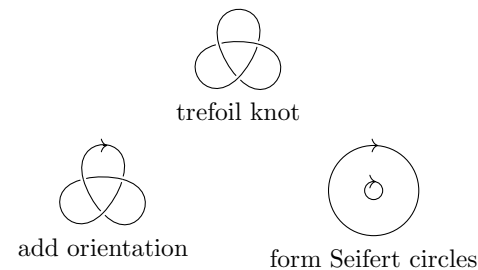
\end{example}

\begin{definition}[braided around]
 We say that a link is \textit{braided around}\index{braided around} a point if the Seifert circles of the link are nested and coherent (i.e., they all orient clockwise around the point).
\end{definition}

We have the following equivalent definition for a link being braided around a point.
\begin{proposition}
 A link is braided around a point if each edge of the link is visible from the point and is oriented clockwise.
\end{proposition}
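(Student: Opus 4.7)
The plan is to prove both implications of the equivalence by working in polar coordinates $(r,\theta)$ centered at the given point $p$ (which we may assume avoids $L$ and all crossings). The key reformulation of the condition ``visible from $p$ and oriented clockwise'' is that the angular coordinate $\theta$ is strictly monotone decreasing along each oriented edge, so that the radial ray from $p$ meets each edge transversally and only once. I will take this as the working definition of ``visible and clockwise'' and then show that it is equivalent to the Seifert circles being nested and coherent (all clockwise) around $p$.

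First I would prove the forward direction. Assume $L$ is braided around $p$, so its Seifert circles form a nested coherent family all oriented clockwise around $p$. Recall from Seifert's algorithm (Figure \ref{figure:seiferts_algorithm}) that each edge of $L$ (a maximal arc between crossings in the diagram) is, up to a small modification at its endpoints, a sub-arc of a unique Seifert circle. A simple closed curve enclosing $p$ once clockwise has winding number $-1$ about $p$, and by the Jordan curve theorem every such curve admits a parametrization in which $\theta$ is strictly monotone decreasing; in particular every radial ray from $p$ meets the curve transversally. Hence every edge inherits these two properties: $\theta$ decreases along it (clockwise orientation) and it is not tangent to any radial ray from $p$ (visibility).

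For the converse, suppose every edge is visible from $p$ and oriented clockwise, i.e.\ $\theta$ decreases monotonically along each oriented edge. Seifert's algorithm resolves each crossing by connecting incoming to outgoing strands in the orientation-respecting way, so locally it joins two monotone-decreasing arcs into a single monotone-decreasing arc (a short local check at the two crossing types of Figure \ref{figure:seiferts_algorithm} confirms this). Hence each resulting Seifert circle $C$ is a closed curve along which $\theta$ decreases everywhere; its total change in $\theta$ around the loop must therefore be a negative multiple of $2\pi$, and since $C$ is simple, this multiple is exactly $-1$. Thus $C$ encloses $p$ once with clockwise orientation, i.e.\ the Seifert circles are coherent. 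Finally, since any two disjoint simple closed curves in the plane that both contain $p$ in their interior are nested (again by the Jordan curve theorem), the family is nested, and $L$ is braided around $p$.

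The main obstacle is the local verification at crossings in the converse direction: one must check that Seifert's resolution interacts correctly with the clockwise-monotone condition on $\theta$. Concretely, at a crossing the two strands contribute four germs of arcs at angles near some common $\theta_0$, and one has to see that the unique orientation-respecting pairing of ``in'' to ``out'' germs is precisely the one for which both resulting smoothed arcs remain monotone decreasing in $\theta$; the other pairing would reverse the role of incoming and outgoing and break monotonicity. Once this local picture is established, the global conclusion about winding numbers and nesting follows from standard planar topology.
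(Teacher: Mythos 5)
The paper states this proposition without any proof (it is immediately followed by a remark pointing to Yamada and Vogel), so there is no argument of the author's to compare yours against; what follows is an assessment of your proposal on its own terms. Your converse direction --- visibility plus clockwise orientation implies braided around --- is essentially sound, and it is the direction the proposition's ``if'' actually asserts. The local check you flag as the main obstacle is indeed the right thing to verify: at a crossing both strands are $\theta$-decreasing, the orientation-respecting Seifert smoothing joins the incoming germ of one strand to the outgoing germ of the other across a tiny $\theta$-interval, so each Seifert circle is a simple closed curve along which $\theta$ is strictly decreasing; it therefore meets every ray from $p$ exactly once, winds once clockwise, and any two disjoint simple closed curves each enclosing $p$ are nested.

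The forward direction, however, contains a genuine error. You assert that a simple closed curve enclosing $p$ once clockwise ``admits a parametrization in which $\theta$ is strictly monotone decreasing.'' Reparametrization cannot change the image of the curve, and a simple closed curve of winding number $-1$ about $p$ can perfectly well meet a fixed radial ray in three (or any odd number of) points --- picture a round circle with a thin finger that doubles back in angle without self-intersecting. For such a curve no parametrization makes $\theta$ monotone, and an edge of the diagram lying on that finger is neither visible from $p$ nor clockwise, even though the Seifert circles remain nested and coherently oriented in the winding-number sense. So ``nested and coherent'' does not literally imply the visibility condition; the two notions agree only up to a planar isotopy that straightens each Seifert circle into a star-shaped curve about $p$ (which is precisely the content of the Yamada--Vogel results the paper's remark cites). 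To repair your argument you would either need to strengthen the definition of coherence to ``each Seifert circle meets every ray from $p$ exactly once,'' or state the forward implication only up to ambient isotopy of the diagram.
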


\begin{remark}
 It is in the author's opinion that checking Seifert circles is easier at times. It also motivates Vogel's proof of Alexander's theorem in 1990 \cite{vogel} which was based on Yamada's proof in 1987 \cite{yamada} using Seifert circles and ``infinity moves''.
 Vogel's proof turns out to terminate quicker than the original proof given by Alexander. The interested reader is directed to \cite{birman_survey}.
\end{remark}

Thus it is easy to see that the only three knots in our knot table from the \hyperlink{appendix:knot_table}{Appendix} that are already braided around a point are $3_1$, $5_1$, and $7_1$.

\begin{proof}[Proof sketch of Theorem \ref{theorem:alexander}]
 We will show that there is a method that can show that any link can be braided around a point.
 Consider a link diagram $L$ and a point $O$ on the plane $P$ of the diagram (the point should not touch the link). If there exists a point on the plane such that the link diagram is braided around the point, then we ``cut'' the diagram along that point to get the braid.
 \begin{figure}[H]
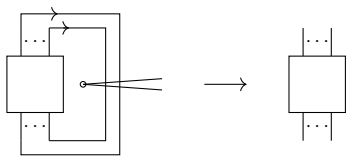

  \centering
  \knotsinmath{\begin{scope}[scale=0.75]
   \draw (0,0) rectangle (1,1);
   \draw (0.25,1)--(0.25,1.5);
   \draw (0.25,0)--(0.25,-0.5);
   \draw (0.75,0)--(0.75,-0.5);
   \draw (0.75,1)--(0.75,1.5);
   \draw[->] (1,1.5)--(1.1,1.5);
   \draw[->] (0.875,1.75)--(0.9,1.75);
   \draw (0.75,1.5)--(1.75,1.5)--(1.75,-0.5)--(0.75,-0.5);
   \draw (0.25,1.5)--(0.25,1.75)--(2,1.75)--(2,-0.75)--(0.25,-0.75)--(0.25,-0.5);
   \draw (0.53,1.25) node {$\cdots$};
   \draw (0.53,-0.25) node {$\cdots$};
   \draw (1.35,0.5) circle (0.05);
   \draw (1.35,0.5)--(2.75,0.6);
   \draw (1.35,0.5)--(2.75,0.4);
  \end{scope}\begin{scope}[xshift=3.75cm,scale=0.75]
   \draw[->] (-1.5,0.5)--(-0.75,0.5);
   \draw (0,0) rectangle (1,1);
   \draw (0.25,1)--(0.25,1.5);
   \draw (0.25,0)--(0.25,-0.5);
   \draw (0.75,0)--(0.75,-0.5);
   \draw (0.75,1)--(0.75,1.5);
   \draw (0.53,1.25) node {$\cdots$};
   \draw (0.53,-0.25) node {$\cdots$};
  \end{scope}}
  \caption{cutting a braided around link}
  \label{figure:cutting_braided_around}
 \end{figure}
 Now consider an arbitrary link diagram. We need to construct this, so that it will be braided around some point. We need to first set a point $O$ on the plane $P$. If there happens to be an edge, say $AB$, that is oriented counterclockwise, then we apply the triangle method by finding some point $C$ on $P$, such that $ABC$ contains $O$, by replacing $AB$ by $AC$ and $BC$. Now $AC$ and $BC$ are both oriented clockwise: the orientation we were looking for. We continue doing this until we get a link diagram that is braided around our point $O$.
 \begin{figure}[H]
  \centering
  \begin{tikzpicture}
   \draw[ultra thick] (0,0)--(-1,1);
   \draw[dashed] (0,0)--(2,1.25)--(-1,1);
   \draw[->,thick] (-0.5,0.5)--(-0.25,0.25);
   \draw (0,0)--(-0.3,-0.3);
   \draw (-1,1)--(-0.7,1.3);
   \draw (0.35,0.75) circle (0.05);
   \draw (-1,1) node[anchor=south east] {$A$};
   \draw (0,0) node[anchor=north west] {$B$};
   \draw (2,1.25) node[anchor=west] {$C$};
   \begin{scope}[xshift=6cm]
    \draw[->] (-3,0.75)--(-2.5,0.75);
    \draw[dashed] (0,0)--(-1,1);
    \draw (0,0)--(2,1.25)--(-1,1);
    \draw[->] (0.8,0.5)--(0.5,0.3125);
    \draw[->] (0.2,1.1)--(0.5,1.125);
    \draw (0,0)--(-0.3,-0.3);
    \draw (-1,1)--(-0.7,1.3);
    \draw (0.35,0.75) circle (0.05);
    \draw (-1,1) node[anchor=south east] {$A$};
    \draw (0,0) node[anchor=north west] {$B$};
    \draw (2,1.25) node[anchor=west] {$C$};
   \end{scope}
  \end{tikzpicture}
  \caption{Alexander's method}
  \label{figure:alexander_method}
 \end{figure}
 We know that this method will eventually terminate given any point $O$, since with every triangle move we make, the number of counterclockwise edges decreases by one, thus eventually giving us a link that is braided around a point $O$.
\end{proof}

\begin{example}
 We have already mentioned above that the trefoil is already braided around. So, we have the following diagram of the left-handed trefoil.
 \begin{center}
  \begin{tikzpicture}
   \begin{scope}[scale=0.5]
    \mytrefoil;
    \draw[->] ( 88:2.04) -- +(  0:0.01);
   \end{scope}
   \draw (0,0) node {$\circ$};
   \draw[thin] (0,0)--(0.1,1.4);
   \draw[thin] (0,0)--(0.3,1.4);
   \draw (1.5,0) node {$\to$};
   \begin{scope}[xshift=2cm,yshift=0.86cm]
    \braid[strands=2,braid start={(0,0)},braid width=0.5cm,braid height=0.5cm] {\sigma_1\sigma_1\sigma_1}
   \end{scope}
  \end{tikzpicture}
 \end{center}
 After cutting it, we see that the braid representation is given by $\sigma_1^{-3}$. It is easy to see that the right-handed trefoil's braid representation is $\sigma_1^3$.
 
 Now let us find the braid representation of the following figure-eight knot using Alexander's method.
 \begin{center}
  \begin{tikzpicture}[x=0.75pt,y=0.75pt,yscale=-1,xscale=1]
   \draw    (120.7,151) .. controls (152.22,153) and (154.84,53) .. (111.08,40) .. controls (67.31,27) and (74.31,67) .. (90.94,90) ;
   \draw    (75.73,143) .. controls (63.73,107) and (137.34,105) .. (90.07,55) ;
   \draw    (71.69,45) .. controls (44.55,29) and (4.29,152) .. (102.32,152) ;
   \draw    (105.82,106) .. controls (126.75,140.33) and (105.73,178) .. (93.73,185) .. controls (81.73,192) and (78.3,163.97) .. (76.94,157) ;
   \draw   (147.14,82.45) .. controls (144.92,86.16) and (143.74,89.75) .. (143.59,93.23) .. controls (142.69,89.86) and (140.76,86.61) .. (137.79,83.47) ;
   \draw   (35.4,105.33) .. controls (37.96,101.84) and (39.48,98.38) .. (39.95,94.93) .. controls (40.53,98.36) and (42.15,101.78) .. (44.81,105.18) ;
   \draw   (101.46,61.27) .. controls (101.08,65.48) and (101.55,69.2) .. (102.87,72.4) .. controls (100.7,69.7) and (97.67,67.5) .. (93.8,65.8) ;
   \draw   (87.85,117.58) .. controls (83.73,118.89) and (80.46,120.81) .. (78.06,123.32) .. controls (79.61,120.2) and (80.3,116.48) .. (80.15,112.16) ;
   \draw   (110.38,128.56) .. controls (111.65,124.43) and (111.94,120.66) .. (111.26,117.25) .. controls (112.93,120.3) and (115.58,122.99) .. (119.22,125.33) ;
   \draw   (80.44,82.58) .. controls (81.67,78.44) and (81.93,74.67) .. (81.21,71.26) .. controls (82.91,74.3) and (85.59,76.96) .. (89.25,79.27) ;
   \draw   (91.13,183.21) .. controls (95.32,182.14) and (98.69,180.42) .. (101.24,178.05) .. controls (99.51,181.07) and (98.6,184.74) .. (98.5,189.06) ;
   \draw   (90.89,72.67) .. controls (90.89,72.02) and (91.36,71.5) .. (91.94,71.5) .. controls (92.53,71.5) and (93,72.02) .. (93,72.67) .. controls (93,73.31) and (92.53,73.83) .. (91.94,73.83) .. controls (91.36,73.83) and (90.89,73.31) .. (90.89,72.67) -- cycle ;
   \draw[->] (163,100)--(188,100);
   \draw    (299.7,150) .. controls (331.22,152) and (333.84,52) .. (290.08,39) .. controls (246.31,26) and (253.31,66) .. (269.94,89) ;
   \draw    (254.73,142) .. controls (242.73,106) and (316.34,104) .. (269.07,54) ;
   \draw    (250.69,44) .. controls (223.55,28) and (183.29,151) .. (281.32,151) ;
   \draw    (284.82,105) .. controls (290.87,114.91) and (287,192.25) .. (319.5,171.75) .. controls (352,151.25) and (345,65.25) .. (315,42.75) .. controls (285,20.25) and (261,20.75) .. (240.5,29.75) .. controls (220,38.75) and (193,66.25) .. (208,112.75) .. controls (223,159.25) and (264.5,175.25) .. (255.94,156) ;
   \draw   (326.14,81.45) .. controls (323.92,85.16) and (322.74,88.75) .. (322.59,92.23) .. controls (321.69,88.86) and (319.76,85.61) .. (316.79,82.47) ;
   \draw   (214.4,104.33) .. controls (216.96,100.84) and (218.48,97.38) .. (218.95,93.93) .. controls (219.53,97.36) and (221.15,100.78) .. (223.81,104.18) ;
   \draw   (280.46,60.27) .. controls (280.08,64.48) and (280.55,68.2) .. (281.87,71.4) .. controls (279.7,68.7) and (276.67,66.5) .. (272.8,64.8) ;
   \draw   (266.85,116.58) .. controls (262.73,117.89) and (259.46,119.81) .. (257.06,122.32) .. controls (258.61,119.2) and (259.3,115.48) .. (259.15,111.16) ;
   \draw   (284.81,134.18) .. controls (287.04,130.48) and (288.25,126.9) .. (288.41,123.42) .. controls (289.29,126.79) and (291.21,130.04) .. (294.17,133.2) ;
   \draw   (259.44,81.58) .. controls (260.67,77.44) and (260.93,73.67) .. (260.21,70.26) .. controls (261.91,73.3) and (264.59,75.96) .. (268.25,78.27) ;
   \draw   (248.96,21.62) .. controls (252.84,23.54) and (256.51,24.44) .. (259.99,24.31) .. controls (256.71,25.47) and (253.62,27.66) .. (250.73,30.87) ;
   \draw   (269.89,71.67) .. controls (269.89,71.02) and (270.36,70.5) .. (270.94,70.5) .. controls (271.53,70.5) and (272,71.02) .. (272,71.67) .. controls (272,72.31) and (271.53,72.83) .. (270.94,72.83) .. controls (270.36,72.83) and (269.89,72.31) .. (269.89,71.67) -- cycle ;
   \draw    (270.94,71.67) -- (362.65,84.57) ;
   \draw    (270.94,71.67) -- (355.99,78.82) ;
  \end{tikzpicture}
 \end{center}
 After cutting the diagram from our center point, we find the braid representation to be $\sigma_1\sigma_2^{-1}\sigma_1\sigma_2^{-1}$,
 \begin{center}
  \begin{tikzpicture}
   \braid[strands=3,braid height=0.5cm,braid width=0.5cm,braid start={(0,0)}] {\sigma_1\sigma_2^{-1}\sigma_1\sigma_2^{-1}}
  \end{tikzpicture}
 \end{center}
\end{example}


If two braids are equivalent, it is obvious that their closures are also equivalent. However, as we have mentioned above, the map of the braid closure is non-injective. The following are more examples showing equivalent braid closures but non-equivalent braids.
\begin{example}
 Let us show that the closure of the $2$-braid $\sigma_1^{-1}$, the $3$-braid $\sigma_1\sigma_2^{-1}$, and the $4$-braids $\sigma_1\sigma_2\sigma_3^{-1}$ and $\sigma_1\sigma_2^{-1}\sigma_3^{-1}$ are all equivalent to the trivial knot.
 \begin{center}
  \begin{tikzpicture}
   \begin{scope}
    \braid[strands=2,braid start={(0,0)},braid width=0.5cm,braid height=0.5cm] {\sigma_1^{-1}}
    \draw (1,0)--(1.2,0)--(1.2,-0.5)--(1,-0.5);
    \draw (0.5,0)--(0.5,0.2)--(1.4,0.2)--(1.4,-0.7)--(0.5,-0.7)--(0.5,-0.5);
   \end{scope}
   
   \begin{scope}[xshift=2cm]
    \braid[strands=3,braid start={(0,0)},braid width=0.5cm,braid height=0.5cm] {\sigma_1\sigma_2^{-1}}
    \draw (1.5,0)--(1.7,0)--(1.7,-1)--(1.5,-1);
    \draw (1,0)--(1,0.2)--(1.9,0.2)--(1.9,-1.2)--(1,-1.2)--(1,-1);
    \draw (0.5,0)--(0.5,0.4)--(2.1,0.4)--(2.1,-1.4)--(0.5,-1.4)--(0.5,-1);
   \end{scope}
   
   \begin{scope}[xshift=5cm]
    \braid[strands=4,braid start={(0,0)},braid width=0.5cm,braid height=0.5cm] {\sigma_1\sigma_2\sigma_3^{-1}}
    \draw (2,0)--(2.2,0)--(2.2,-1.5)--(2,-1.5);
    \draw (1.5,0)--(1.5,0.2)--(2.4,0.2)--(2.4,-1.7)--(1.5,-1.7)--(1.5,-1.5);
    \draw (1,0)--(1,0.4)--(2.6,0.4)--(2.6,-1.9)--(1,-1.9)--(1,-1.5);
    \draw (0.5,0)--(0.5,0.6)--(2.8,0.6)--(2.8,-2.1)--(0.5,-2.1)--(0.5,-1.5);
   \end{scope}
   
   \begin{scope}[xshift=9cm]
    \braid[strands=4,braid start={(0,0)},braid width=0.5cm,braid height=0.5cm] {\sigma_1\sigma_2^{-1}\sigma_3^{-1}}
    \draw (2,0)--(2.2,0)--(2.2,-1.5)--(2,-1.5);
    \draw (1.5,0)--(1.5,0.2)--(2.4,0.2)--(2.4,-1.7)--(1.5,-1.7)--(1.5,-1.5);
    \draw (1,0)--(1,0.4)--(2.6,0.4)--(2.6,-1.9)--(1,-1.9)--(1,-1.5);
    \draw (0.5,0)--(0.5,0.6)--(2.8,0.6)--(2.8,-2.1)--(0.5,-2.1)--(0.5,-1.5);
   \end{scope}
  \end{tikzpicture}
 \end{center}
 They are all equivalent to the trivial knot via a finite number of $\RI$ moves.
\end{example}

We now discuss how to form a bijective braid closure map. In 1935, Markov found three moves on the braid group, that when related by them, their closures are link equivalent \cite{markov}. Four years later, in 1939, Weinberg reduced the three moves that Markov found, to two moves \cite{weinberg}. These moves are called Markov's moves.

\begin{definition}[Markov's moves]
 Let $\Bcal_\infty=\bigcup_{k\geq1}\Bcal_k$, i.e. the infinite union of the braid groups. There are two operations, called \textit{Markov's moves}\index{Markov's moves}, which may be performed in $\Bcal_\infty$.
 \begin{description}
  \item[M1] If $\beta\in\Bcal_n$, then $\beta\xrightarrow[]{\text{M1}}\gamma\beta\gamma^{-1}$, for some $\gamma\in\Bcal_n$. The braid $\gamma\beta\gamma^{-1}$ is said to be the \textit{conjugate} of $\beta$.
  \begin{figure}[H]
   \centering
   \knotsinmath{
   \begin{scope}[yshift=-1.5cm,scale=0.75]
    \draw (0,0) rectangle (1,1);
    \draw (0.25,1.5)--(0.25,1);
    \draw (0.25,0)--(0.25,-0.5);
    \draw (0.75,0)--(0.75,-0.5);
    \draw (0.75,1.5)--(0.75,1);
    \draw (0.53,1.55) node {$\cdots$};
    \draw (0.53,-0.25) node {$\cdots$};
    \draw (0.5,0.5) node[scale=1.33] {$\gamma$};
   \end{scope}
   \begin{scope}[yshift=1.5cm,scale=0.75]
    \draw (0,0) rectangle (1,1);
    \draw (0.25,1.5)--(0.25,1);
    \draw (0.25,0)--(0.25,-0.5);
    \draw (0.75,0)--(0.75,-0.5);
    \draw (0.75,1.5)--(0.75,1);
    \draw (0.53,-0.55) node {$\cdots$};
    \draw (0.53,1.25) node {$\cdots$};
    \draw (0.5,0.5) node[scale=1.33] {$\gamma^{-1}$};
   \end{scope}
   \begin{scope}[scale=0.75]
    \draw (0,0) rectangle (1,1);
    \draw (0.25,1.5)--(0.25,1);
    \draw (0.25,0)--(0.25,-0.5);
    \draw (0.75,0)--(0.75,-0.5);
    \draw (0.75,1.5)--(0.75,1);
    \draw (0.5,0.5) node[scale=1.33] {$\beta$};
   \end{scope}
   \begin{scope}[scale=0.75,xshift=-2.5cm]
    \draw (1.75,0.5) node[scale=1.33] {$\overset{\MI}{\longrightarrow}$};
    \draw (0,0) rectangle (1,1);
    \draw (0.25,1.5)--(0.25,1);
    \draw (0.25,0)--(0.25,-0.5);
    \draw (0.75,0)--(0.75,-0.5);
    \draw (0.75,1.5)--(0.75,1);
    \draw (0.53,-0.25) node {$\cdots$};
    \draw (0.53,1.25) node {$\cdots$};
    \draw (0.5,0.5) node[scale=1.33] {$\beta$};
   \end{scope}
   }
   \caption{Markov $\MI$ move}
  \end{figure}
  It is easy to see that taking the closure after an M1 move gives the same link, since $\gamma$ and $\gamma^{-1}$ both \textit{cancel} out.
  \item[M2] If $\beta\in\Bcal_n$, then $\beta\xrightarrow[]\MII\incl_n(\beta)\sigma_n$ or $\beta\xrightarrow[]\MII\incl_n(\beta)\sigma_n^{-1}$, for the generator $\sigma_n\in\Bcal_{n+1}$.
  We call the braid $\incl_n(\beta)\sigma_n^{\pm1}$ the \textit{stabilisation}\index{stabilisation} of $\beta$.
  \begin{figure}[H]
   \centering
   \knotsinmath{\begin{scope}[scale=0.75,yshift=0.25cm]
    \draw (0,0) rectangle (1,1);
    \draw (0.25,1.5)--(0.25,1);
    \draw (0.25,0)--(0.25,-0.5);
    \draw (0.75,0)--(0.75,-0.5);
    \draw (0.75,1.5)--(0.75,1);
    \draw (1.25,1.5)--(1.25,-0.5);
    \draw (0.53,-0.25) node {$\cdots$};
    \draw (0.53,1.25) node {$\cdots$};
    \draw (0.5,0.5) node[scale=1.33] {$A$};
    \braid[strands=3,braid width=0.5cm,braid height=0.5cm,braid start={(-0.25,-0.5)},line width=0.25pt] {\sigma_2}
   \end{scope}
   \begin{scope}[xshift=-2cm,scale=0.75,yshift=0cm]
    \draw (1.75,0.5) node[scale=1.33] {$\overset{\MII}{\longrightarrow}$};
    \draw (0,0) rectangle (1,1);
    \draw (0.25,1.5)--(0.25,1);
    \draw (0.25,0)--(0.25,-0.5);
    \draw (0.75,0)--(0.75,-0.5);
    \draw (0.75,1.5)--(0.75,1);
    \draw (0.53,-0.25) node {$\cdots$};
    \draw (0.53,1.25) node {$\cdots$};
    \draw (0.5,0.5) node[scale=1.33] {$\beta$};
   \end{scope}
   \begin{scope}[xshift=2.5cm,scale=0.75,yshift=0.25cm]
    \draw (-1,0.25) node[scale=1.33] {or};
    \draw (0,0) rectangle (1,1);
    \draw (0.25,1.5)--(0.25,1);
    \draw (0.25,0)--(0.25,-0.5);
    \draw (0.75,0)--(0.75,-0.5);
    \draw (0.75,1.5)--(0.75,1);
    \draw (1.25,1.5)--(1.25,-0.5);
    \draw (0.53,-0.25) node {$\cdots$};
    \draw (0.53,1.25) node {$\cdots$};
    \draw (0.5,0.5) node[scale=1.33] {$\beta$};
    \braid[strands=3,braid width=0.5cm,braid height=0.5cm,braid start={(-0.25,-0.5)},line width=0.25pt] {\sigma_2^{-1}}
   \end{scope}
   }
   \caption{Markov $\MII$ move}
  \end{figure}
  It is easy to see that taking the closure after an $\MII$ move produces the same link since the move mimics an $\RI$ move.
 \end{description}
 We say that the two braids $\alpha,\beta\in\Bcal_\infty$ are \textit{Markov equivalent}\index{Markov equivalent} if you can transform one into the other by a finite sequence of the above moves (and their inverses). We denote it as $\alpha\sim_M\beta$.
\end{definition}

For clarity, the closure after an $\MI$ move will look like,
\begin{center}
 \knotsinmath{\begin{scope}[yshift=1.5cm,scale=0.75]
    \draw (0,0) rectangle (1,1);
    \draw (0.25,1.5)--(0.25,1);
    \draw (0.25,0)--(0.25,-0.5);
    \draw (0.75,0)--(0.75,-0.5);
    \draw (0.75,1.5)--(0.75,1);
    \draw (0.53,1.25) node {$\cdots$};
    \draw (0.53,-0.55) node {$\cdots$};
    \draw (0.5,0.5) node[scale=1.33] {$\gamma$};
   \end{scope}
   \begin{scope}[scale=0.75]
    \draw (0,0) rectangle (1,1);
    \draw (0.25,1.5)--(0.25,1);
    \draw (0.25,0)--(0.25,-0.5);
    \draw (0.75,0)--(0.75,-0.5);
    \draw (0.75,1.5)--(0.75,1);
    \draw (0.53,-0.55) node {$\cdots$};
    \draw (0.5,0.5) node[scale=1.33] {$\beta$};
    \draw[red] (0.75,3.5)--(1.25,3.5)--(1.25,-2.5)--(0.75,-2.5);
    \draw[red] (0.25,3.5)--(0.25,3.75)--(1.5,3.75)--(1.5,-2.75)--(0.25,-2.75)--(0.25,-2.5);
    \draw (2.5,0.5) node[anchor=center,scale=1.33] {$\sim$};
   \end{scope}
   \begin{scope}[xscale=-1,yshift=-1.5cm,xshift=-0.75cm,scale=0.75]
    \draw (0,0) rectangle (1,1);
    \draw (0.25,1.5)--(0.25,1);
    \draw (0.25,0)--(0.25,-0.5);
    \draw (0.75,0)--(0.75,-0.5);
    \draw (0.75,1.5)--(0.75,1);
    \draw (0.53,-0.25) node {$\cdots$};
    \draw (0.5,0.5) node[scale=1.33] {$\gamma$};
   \end{scope}}\qquad
   \knotsinmath{\begin{scope}[yshift=-1.5cm,scale=0.75]
    \draw (0,0) rectangle (1,1);
    \draw (0.25,1.5)--(0.25,1);
    \draw (0.25,0)--(0.25,-0.5);
    \draw (0.75,0)--(0.75,-0.5);
    \draw (0.75,1.5)--(0.75,1);
    \draw (0.53,1.55) node {$\cdots$};
    \draw (0.53,-0.25) node {$\cdots$};
    \draw (0.5,0.5) node[scale=1.33] {$\gamma$};
   \end{scope}
   \begin{scope}[yshift=1.5cm,scale=0.75]
    \draw (0,0) rectangle (1,1);
    \draw (0.25,1.5)--(0.25,1);
    \draw (0.25,0)--(0.25,-0.5);
    \draw (0.75,0)--(0.75,-0.5);
    \draw (0.75,1.5)--(0.75,1);
    \draw (0.53,-0.55) node {$\cdots$};
    \draw (0.53,1.25) node {$\cdots$};
    \draw (0.5,0.5) node[scale=1.33] {$\beta$};
   \end{scope}
   \begin{scope}[xscale=-1,xshift=-0.75cm,scale=0.75]
    \draw (0,0) rectangle (1,1);
    \draw (0.25,1.5)--(0.25,1);
    \draw (0.25,0)--(0.25,-0.5);
    \draw (0.75,0)--(0.75,-0.5);
    \draw (0.75,1.5)--(0.75,1);
    \draw (0.5,0.5) node[scale=1.33] {$\gamma$};
   \end{scope}
   \begin{scope}[scale=0.75]
    \draw[red] (0.75,3.5)--(1.25,3.5)--(1.25,-2.5)--(0.75,-2.5);
    \draw[red] (0.25,3.5)--(0.25,3.75)--(1.5,3.75)--(1.5,-2.75)--(0.25,-2.75)--(0.25,-2.5);
    \draw (2.5,0.5) node[scale=1.33] {$\sim$};
   \end{scope}}
   \qquad\knotsinmath{\begin{scope}[scale=0.75]
  \draw (0.5,0.5) node[scale=1.33] {$\beta$};
  \draw (0,0) rectangle (1,1);
  \draw (0.25,1.5)--(0.25,1);
  \draw (0.25,0)--(0.25,-0.5);
  \draw (0.75,0)--(0.75,-0.5);
  \draw (0.75,1.5)--(0.75,1);
  \draw[red] (0.75,1.5)--(1.25,1.5)--(1.25,-0.5)--(0.75,-0.5);
  \draw[red] (0.25,1.5)--(0.25,1.75)--(1.5,1.75)--(1.5,-0.75)--(0.25,-0.75)--(0.25,-0.5);
  \draw (0.53,1.25) node {$\cdots$};
  \draw (0.53,-0.25) node {$\cdots$};
 \end{scope}}
\end{center}
Similarly, the closure after an $\MII$ move will look like,
\begin{center}
 \knotsinmath{
  \begin{scope}[scale=0.75]
   \draw (0,0) rectangle (1,1);
   \draw (0.25,1.5)--(0.25,1);
   \draw (0.25,0)--(0.25,-0.5);
   \draw (0.75,0)--(0.75,-0.5);
   \draw (0.75,1.5)--(0.75,1);
   \draw (1.25,1.5)--(1.25,-0.5);
   \draw (0.53,-0.25) node {$\cdots$};
   \draw (0.53,1.25) node {$\cdots$};
   \draw (0.5,0.5) node[scale=1.33] {$\beta$};
   \braid[strands=3,braid width=0.5cm,braid height=0.5cm,braid start={(-0.25,-0.5)},line width=0.25pt] {\sigma_2}
   \draw[red] (1.25,1.5)--(1.5,1.5)--(1.5,-1)--(1.25,-1);
   \draw[red] (0.75,1.5)--(0.75,1.75)--(1.75,1.75)--(1.75,-1.25)--(0.75,-1.25)--(0.75,-1);
   \draw[red] (0.25,1.5)--(0.25,2)--(2,2)--(2,-1.5)--(0.25,-1.5)--(0.25,-1);
   \draw (2.75,0.5) node[scale=1.33] {$\sim$};
  \end{scope}
 }\qquad\knotsinmath{\begin{scope}[scale=0.75]
  \draw (0.5,0.5) node[scale=1.33] {$\beta$};
  \draw (0,0) rectangle (1,1);
  \draw (0.25,1.5)--(0.25,1);
  \draw (0.25,0)--(0.25,-0.5);
  \draw (0.75,0)--(0.75,-0.5);
  \draw (0.75,1.5)--(0.75,1);
  \draw[red] (0.75,1.5)--(1.25,1.5)--(1.25,-0.5)--(0.75,-0.5);
  \draw[red] (0.25,1.5)--(0.25,1.75)--(1.5,1.75)--(1.5,-0.75)--(0.25,-0.75)--(0.25,-0.5);
  \draw (0.53,1.25) node {$\cdots$};
  \draw (0.53,-0.25) node {$\cdots$};
 \end{scope}}
 \qquad\qquad\qquad
  \knotsinmath{
  \begin{scope}[scale=0.75]
   \draw (0,0) rectangle (1,1);
   \draw (0.25,1.5)--(0.25,1);
   \draw (0.25,0)--(0.25,-0.5);
   \draw (0.75,0)--(0.75,-0.5);
   \draw (0.75,1.5)--(0.75,1);
   \draw (1.25,1.5)--(1.25,-0.5);
   \draw (0.53,-0.25) node {$\cdots$};
   \draw (0.53,1.25) node {$\cdots$};
   \draw (0.5,0.5) node[scale=1.33] {$\beta$};
   \braid[strands=3,braid width=0.5cm,braid height=0.5cm,braid start={(-0.25,-0.5)},line width=0.25pt] {\sigma_2^{-1}}
   \draw[red] (1.25,1.5)--(1.5,1.5)--(1.5,-1)--(1.25,-1);
   \draw[red] (0.75,1.5)--(0.75,1.75)--(1.75,1.75)--(1.75,-1.25)--(0.75,-1.25)--(0.75,-1);
   \draw[red] (0.25,1.5)--(0.25,2)--(2,2)--(2,-1.5)--(0.25,-1.5)--(0.25,-1);
   \draw (2.75,0.5) node[scale=1.33] {$\sim$};
  \end{scope}
 }\qquad\knotsinmath{\begin{scope}[scale=0.75]
  \draw (0.5,0.5) node[scale=1.33] {$\beta$};
  \draw (0,0) rectangle (1,1);
  \draw (0.25,1.5)--(0.25,1);
  \draw (0.25,0)--(0.25,-0.5);
  \draw (0.75,0)--(0.75,-0.5);
  \draw (0.75,1.5)--(0.75,1);
  \draw[red] (0.75,1.5)--(1.25,1.5)--(1.25,-0.5)--(0.75,-0.5);
  \draw[red] (0.25,1.5)--(0.25,1.75)--(1.5,1.75)--(1.5,-0.75)--(0.25,-0.75)--(0.25,-0.5);
  \draw (0.53,1.25) node {$\cdots$};
  \draw (0.53,-0.25) node {$\cdots$};
 \end{scope}}
\end{center}

\begin{theorem}[Markov's theorem \cite{markov}]\index{Markov's theorem}\label{theorem:markov}
 Let $K_1,K_2$ be two oriented links which can be formed from the braids $\beta_1,\beta_2$, respectively. Then $K_1\sim{K_2}\Leftrightarrow\beta_1\sim_M\beta_2$.
\end{theorem}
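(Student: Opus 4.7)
I would split Markov's theorem into its two directions and attack them very asymmetrically, since the reverse implication is essentially a bookkeeping exercise while the forward implication is the whole content of the theorem.

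\textbf{The easy direction} ($\Leftarrow$): I would verify separately that each Markov move preserves the isotopy class of the closure. For an $\MI$ move $\beta \mapsto \gamma\beta\gamma^{-1}$, the closing arcs of the braid allow one to pull $\gamma^{-1}$ around the loop until it sits directly above $\gamma$, where Reidemeister $\RII$ cancellations reduce $\gamma\gamma^{-1}$ to the trivial braid; this whole process is an ambient isotopy of $\reallywidehat{\gamma\beta\gamma^{-1}}$ to $\reallywidehat{\beta}$. For an $\MII$ move $\beta \mapsto \incl_n(\beta)\sigma_n^{\pm 1}$, the extra strand together with its closing arc forms a single kink on the rightmost strand, and this kink is precisely a Type~I Reidemeister move, so the closures agree as links by Theorem~\ref{theorem:reidemeisters}. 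Since $\MI$ and $\MII$ (and their inverses) generate Markov equivalence, any Markov-equivalent braids have isotopic closures.

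\textbf{The hard direction} ($\Rightarrow$): Here I would exploit Alexander's theorem (Theorem~\ref{theorem:alexander}) and Reidemeister's theorem (Theorem~\ref{theorem:reidemeisters}) to reduce the statement to two more local claims. By Alexander's theorem every link is the closure of a braid, and by Reidemeister's theorem $K_1 \sim K_2$ means that diagrams of $K_1$ and $K_2$ are connected by a finite sequence of planar isotopies and moves $\RI, \RII, \RIII$. The plan is therefore to show:
\begin{enumerate}[(a)]
\item (Independence of braiding) If a single link diagram $D$ is braided via Alexander's procedure (Figure~\ref{figure:alexander_method}) about two different base points, or via two different choices of triangle moves, the resulting braids $\alpha_1, \alpha_2$ satisfy $\alpha_1 \sim_M \alpha_2$.
\item (Reidemeister $\leftrightarrow$ Markov) If $D$ and $D'$ differ by a single Reidemeister move and are braided about the same base point, then the resulting braids are related by a finite sequence of $\MI$ and $\MII$ moves.
\end{enumerate}
Combining (a) and (b) via an induction on the length of the Reidemeister sequence relating $K_1$ and $K_2$ yields $\beta_1 \sim_M \beta_2$.

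For (a), the key tool is to express the effect of moving the base point $O$ across a single edge or strand of $D$: a base point crossing an edge corresponds to a conjugation inside the braid (an $\MI$ move), while incorporating an extra Alexander triangle that adds one strand corresponds to a stabilisation (an $\MII$ move). I would verify these local correspondences by picking a small disk around the modification and writing down what happens to the braid word. For (b), the case analysis is: $\RI$ on $D$ translates directly to an $\MII$ move (add or remove a $\sigma_n^{\pm 1}$ on a new rightmost strand after suitably conjugating to move the kink to the right); $\RII$ and $\RIII$ happen entirely inside a local disk that, after suitable $\MI$ conjugations to position the disk at a standard location in the braid, are already equalities in $\Bcal_n$ via the defining relations $\sigma_i \sigma_i^{-1} = 1$ and $\sigma_i \sigma_{i+1}\sigma_i = \sigma_{i+1}\sigma_i \sigma_{i+1}$.

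\textbf{Main obstacle}: The real difficulty is (a), i.e. proving that the braiding procedure is well defined up to Markov equivalence. Alexander's procedure involves many arbitrary choices (base point, which counterclockwise edge to modify first, the triangle vertex $C$), and each such choice a priori produces a genuinely different braid. Controlling all of these choices simultaneously is subtle; the cleanest route is Morton's \emph{threading} construction or, alternatively, Vogel's Seifert-circle algorithm, both of which package the choices into a canonical form and then identify the ambiguities explicitly with $\MI$ and $\MII$ moves. I would follow the Birman--Morton approach outlined in \cite{birman_survey}, which reduces the analysis to checking a small finite list of local pictures and arguing that each can be undone by $\MI$ and $\MII$. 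The full proof is lengthy and I would defer the exhaustive case verification to that reference, presenting only the local pictures that illustrate each move in the essay.
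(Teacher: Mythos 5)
The paper does not actually prove Markov's theorem: it states the result and immediately defers to Birman's survey \cite[pp.~18--26]{birman_survey} and Birman's book \cite{birman}, with the only proof-like content being the informal diagrammatic observations (given when the Markov moves are defined) that closure after an $\MI$ move cancels $\gamma$ against $\gamma^{-1}$ and that closure after an $\MII$ move is an $\RI$ kink. Your sketch of the easy direction ($\Leftarrow$) is exactly that argument made precise, and is correct. Your outline of the hard direction ($\Rightarrow$) --- reduce via Theorems~\ref{theorem:alexander} and \ref{theorem:reidemeisters} to (a) well-definedness of the braiding procedure up to Markov equivalence and (b) translation of each Reidemeister move into $\MI$/$\MII$ moves, then induct on the length of the Reidemeister sequence --- is the standard Birman-style strategy, and you correctly identify (a), the independence of all the choices in Alexander's procedure, as the genuine difficulty. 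Since you too ultimately defer the exhaustive local case analysis to \cite{birman_survey}, your proposal is consistent with, and strictly more informative than, what the paper provides; there is no gap beyond the one both you and the paper explicitly acknowledge by citing the reference for the full argument.
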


For a proof of this; see Birman's survey \cite[pp.~18--26]{birman_survey} or Birman's original paper \cite{birman}.

\subsection{Motivation}\label{section:motivation}

Since we have reached the end of the first part, we will summarise and thus motivate the second part with all of what we learned here.

Alexander's theorem \ref{theorem:alexander} says that all links have a braid representation, and Markov's theorem \ref{theorem:markov} says that two links are equivalent if and only if their braid representations are Markov equivalent (i.e., they can be transformed by $\MI$ and $\MII$ moves). So, then we call the \textit{Markov braid type} the infinite union of the braid group $\Bcal_\infty$ that is quotiented by the two Markov moves (Markov equivalence). And so we have,
\[\{\text{link type}\}\simeq\{\text{Markov braid type}\}\]
So if we prove that a function is invariant under the Markov moves, then by Markov's theorem we have that it is a link invariant.
And this will be the strategy for the upcoming original construction of the Jones polynomial (See Chapter \ref{chapter:jones}).

More precisely, for $a\in\Bcal_n$ and any $i<n$, we have $\reallywidehat{a}=\reallywidehat{\sigma_ia\sigma_i^{-1}}$ and $\reallywidehat{a}=\reallywidehat{\incl_n(a)\sigma_n^{\pm1}}$. So our strategy of constructing the Jones polynomial will be to define a tracial function on the Temperley-Lieb algebra. Where the Temperley-Lieb algebra is seen as a quotient algebra of the group algebra $\CC[\Bcal_n]$. There exists other similar constructions, however for the purposes of this essay we will use the Temperley-Lieb algebras since it has a very comprehensible diagrammatic definition. To see other similar constructions; a very nice read is \cite{jones1987}. The following chapter will briefly touch on the work that Jones did in 1983 \cite{jones1983}. We just want to see the correlation he made between the von Neumann algebras and constructing a link invariant.

\section{von Neumann algebras}\label{chapter:analysis}


The aim of this chapter is to give the essential knowledge in order to understand the construction and encapsulation of the Jones polynomial. So this chapter will give a brief overview on operator algebras. 

The interested reader is directed to \cite{jones1983}.


This chapter is based on the presentations given in \cite{conway_book}, \cite{coxeter}, \cite{jones1983}, \cite{jonesbook}, \cite{kadison_ringrose}, \cite{murphy}, \cite{penneys_subfactors}, \cite{penneys}, \cite{speicher}, and \cite{wenzl}.

\subsection{Background}

We shall first review some background we will need. The discussion in this section is mainly based on that in the author's exposition on analysis \textit{\&} its foundations \cite{analfoundations}.

A \textit{norm}\index{norm} over a vector space $V$ over $\CC$ is a map $\|\cdot\|:V\to\RR$ that satisfies
\begin{enumerate}[(i)]
 \item $\|v\|\geq0$, with equality if and only if $v=0$ for all $v\in{V}$
 \item $\|\lambda{v}\|=|\lambda|\|v\|$ for all $v\in{V}$ and $\lambda\in\CC$
 \item $\|v+w\|\leq\|v\|+\|w\|$ for all $v,w\in{V}$
\end{enumerate}
A \textit{normed vector space}\index{normed vector space} is a vector space $V$ over $\CC$ with a norm $\|\cdot\|$.
We say that a normed vector space is a \textit{Banach space}\index{Banach space} if it is complete (i.e., any Cauchy sequence converges).
We call a map $\varphi:V\to\CC$ a \textit{linear functional}\index{linear functional} if it satisfies $\varphi(\lambda{v}+\mu{w})=\lambda\varphi(v)+\mu\varphi(w)$, for all $\lambda,\mu\in\CC$ and $v,w\in{V}$. We say the linear functional is \textit{bounded}\index{bounded!linear functional} if $\|\varphi\|=\sup_{\|x\|\leq1}|\varphi(x)|<\infty$. The \textit{dual}\index{dual} of the normed vector space $V$, is the normed vector space of all bounded linear functionals $\varphi:V\to\CC$, and we denote it as $V^*$.

Now, we call a map $T:V\to{V}$ \textit{linear}\index{linear} if it satisfies $T(\lambda{v}+\mu{w})=\lambda{T(v)}+\mu{T(w)}$ for all $v,w\in\CC$ and $v,w\in{V}$. We call the linear map \textit{bounded}\index{bounded!linear map} if $\|T\|=\sup_{\|v\|\leq1}\|T(v)\|<\infty$. We let $\BB(V)$\index{$\BB(V)$} be the normed vector space of all bounded linear maps $T:V\to{V}$.

A \textit{Hilbert space}\index{Hilbert space} is essentially a complete inner product on a vector space $V$ over $\CC$, i.e.,
\[\langle\cdot,\cdot\rangle\colon{V\times{V}\to\CC}\]
satisfying the following for all $x,y,z\in{V}$ and $\lambda\in\CC$,
\begin{enumerate}[(i)]
 \item $\langle{x,y}\rangle=\overline{\langle{y,x}\rangle}$
 \item $\langle\lambda{x}+y,z\rangle=\lambda\langle{x,z}\rangle+\langle{y,z}\rangle$
 \item $\langle{x,x}\rangle\geq0$ with equality if and only if $x=0$
\end{enumerate}
Notice that if we let the norm $\|x\|=\sqrt{\langle{x,x}\rangle}$ for any $x\in{V}$, then we can see $|\langle{x,y}\rangle|\leq\|x\|\|y\|$ for any $x,y\in{V}$. This inequality is known as the \textit{Cauchy-Shwarz inequality}\index{Cauchy-Shwarz inequality} (it is easy to show that this is indeed satisfied). So, in fact, an inner product induces a norm.

\textbf{Notation.}
Note that the notation used for the inner product $\langle\cdot,\cdot\rangle$ should not be confused with the Kauffman bracket $\langle\cdot\rangle$ from the previous chapters. In fact, all notations used in this chapter are self-contained, unless stated otherwise.

The \textit{adjoint}\index{adjoint} of an operator $T:\Hcal\to\Hcal^\prime$ is the unique linear map $T^*:\Hcal\to\Hcal$ such that for all $x\in\Hcal^\prime$ and $y\in\Hcal$ we have $\langle{T^*x,y}\rangle=\langle{x,Ty}\rangle$.

An \textit{algebra}\index{algebra} is a vector space $V$ over $\CC$ if there is a multiplication operation $V\times{V}\to{V}$, given by $(u,v)\mapsto{uv}$ such that for all $u,v,w\in{V}$ and $\lambda,\mu\in\CC$, we have $u(v+w)=uv+uw$, $(u+v)w=uw+vw$ and $(\lambda{u})(\mu{v})=(\lambda\mu)(uv)$. We call an algebra $V$ over $\CC$ a \textit{normed algebra}\index{normed algebra} if for all $u,v\in{V}$ we have $\|uv\|\leq\|u\|\|v\|$. We call a normed algebra a \textit{Banach algebra}\index{Banach algebra} if it is a Banach space. If $V$ is a normed algebra with a multiplicative identity, then we call $V$ \textit{unital}\index{unital}.
Given $V$ is a an algebra, and a vector subspace $W$, then we call $B$ a \textit{subalgebra}\index{subalgebra} if $v,w\in{W}$ implies $vw\in{W}$.

And if $V$ is an algebra that is commutative as a ring, then it is a \textit{commutative algebra}\index{commutative algebra}.

We shall assume that our algebras are unital.

We define the \textit{spectrum}\index{spectrum} of $x$, denoted as $\sigma(x)$, in a Banach algebra $V$ as the set of complex numbers $\lambda$ such that $x-\lambda{I}$ is not invertible, i.e., $\sigma(x)=\{\lambda\in\CC:x-\lambda{I}\text{ is not invertible}\}$.

\begin{proposition}
 The spectrum of an element in a Banach algebra $A$ is a non-empty compact subset of $\CC$.
\end{proposition}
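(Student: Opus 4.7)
The plan is to prove compactness first by establishing boundedness and closedness of $\sigma(x)$ separately, and then to address non-emptiness via a complex-analytic argument on the resolvent.

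For boundedness, I would first show that $\sigma(x)$ is contained in the closed disk of radius $\|x\|$. The key observation is that if $|\lambda|>\|x\|$, then $\|x/\lambda\|<1$, so the geometric (Neumann) series $\sum_{n=0}^{\infty}(x/\lambda)^n$ converges absolutely in $A$ (here completeness of the Banach algebra is essential), and its sum is a two-sided inverse for $I-x/\lambda$. Factoring $x-\lambda I = -\lambda(I-x/\lambda)$ then shows $\lambda\notin\sigma(x)$. For closedness, I would show that the set $G\subseteq A$ of invertible elements is open: if $y\in G$ and $\|z-y\|<\|y^{-1}\|^{-1}$, then $z=y(I-y^{-1}(y-z))$ is invertible, again by a Neumann-series argument. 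Since the map $\lambda\mapsto x-\lambda I$ is continuous from $\CC$ to $A$, the preimage of $G$ is open in $\CC$, so its complement $\sigma(x)$ is closed. Combined with boundedness, $\sigma(x)$ is compact.

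The harder part is non-emptiness, and this is where I anticipate the main obstacle: one cannot argue purely within $A$ but must transport the problem into $\CC$ via duality and then invoke complex analysis. I would argue by contradiction: suppose $\sigma(x)=\emptyset$ and define the resolvent $R\colon\CC\to A$ by $R(\lambda)=(x-\lambda I)^{-1}$. Using the resolvent identity $R(\lambda)-R(\mu)=(\lambda-\mu)R(\lambda)R(\mu)$, one checks that $R$ is (norm-) differentiable, hence ``holomorphic'' as an $A$-valued function. For any bounded linear functional $\varphi\in A^{*}$, the composition $f=\varphi\circ R\colon\CC\to\CC$ is then an entire function in the classical sense.

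Finally, I would estimate $R$ at infinity. From $R(\lambda)=-\lambda^{-1}(I-x/\lambda)^{-1}$ for $|\lambda|>\|x\|$, the Neumann expansion gives $\|R(\lambda)\|\to 0$ as $|\lambda|\to\infty$, so $f$ is a bounded entire function, hence constant by Liouville's theorem; since $f(\lambda)\to 0$, this constant is $0$. As $\varphi\in A^{*}$ was arbitrary, the Hahn--Banach theorem forces $R(\lambda)=0$ for every $\lambda$, contradicting the fact that $R(\lambda)$ is an invertible element of a unital algebra. Therefore $\sigma(x)\neq\emptyset$, completing the proof. The crux of the argument is the reduction to a scalar-valued entire function via $A^{*}$, which is what makes Liouville applicable in the Banach-algebra setting.
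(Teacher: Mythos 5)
Your proof is correct and follows essentially the same route as the paper's own sketch: a Neumann-series argument (the paper's ``Murray--von Neumann criterion'') for boundedness and closedness, Heine--Borel for compactness, and the resolvent identity together with a Liouville-type argument for non-emptiness. You usefully fill in the one step the paper leaves implicit, namely the reduction to scalar-valued entire functions via $A^{*}$ and Hahn--Banach, which is exactly what makes the classical Liouville theorem applicable.
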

\begin{proof}[Proof sketch]
 First start with the \textit{Murray-von Neumann criterion}\index{Murray-von Neumann criterion}, which states that for $x$ in a Banach algebra $A$, if $\|x\|<1$ then $1-x$ is invertible.
 It then follows that the spectrum of $x$ is in fact closed.
 Then it follows from Heine-Borel, $\sigma(x)$ is compact since it is closed and bounded.
 
 To show that it is non-empty, we need to define the resolvent set of $x$ $\rho(x)$. This is the complement of the spectrum $\rho(x)=\CC\setminus\sigma(x)$. And we also define the resolvent map of $x$ for $\lambda\in\rho(x)$ as $R_\lambda(x)=(\lambda-x)^{-1}$.
 
 Then it is easy to see that as $\lambda\to\infty$ we have $\|R_\lambda(x)\|\to0$.
 It is also easy to see that for $\lambda,\mu\in\rho(x)$, we have $R_\lambda-R_\mu=(\mu-\lambda)R_\lambda{R_\mu}$.
 
 There is an analogue to Louiville's theorem for complex functions, so it then follows that the spectrum is non-empty.
\end{proof}

\subsection{\texorpdfstring{${C^*}$}{C*}-algebras}

\begin{definition}[$^*$-algebra]
 A $^*$\textit{-algebra}\index{$^*$-algebra} is an algebra $A$ with an involution map $^*\colon{x}\mapsto{x^*}$ which for $x,y\in{A}$ and $\lambda\in\CC$ satisfies
 \begin{enumerate}[(i)]
  \item $(xy)^*=y^*x^*$
  \item $(x+y)^*=x^*+y^*$
  \item $\lambda^*=\bar{\lambda}$
  \item $x^{**}=x$
 \end{enumerate}
 We say a $^*$-algebra is an \textit{abstract} $C^*$\textit{-algebra}\index{$C^*$-algebra!abstract} if it is a Banach $^*$-algebra where the norm satisfies the $C^*$\textit{-identity}\index{$C^*$-identity} $\|x\|^2=\|x^*x\|$.
\end{definition}

The completion of a Banach algebra with an involution is an abstract $C^*$-algebra.

\begin{example}
 The complex numbers $\CC$ forms a $C^*$-algebra, by defining the involution $\lambda^*=\overline{\lambda}$ and where the norm is $\|\lambda\|=|\lambda|$.
\end{example}

We say that a $^*$-algebra is a \textit{concrete $C^*$-algebra}\index{$C^*$-algebra!concrete} if it is a closed subalgebra of $\BB(\Hcal)$.

\begin{example}
 So the set of bounded operators on a Hilbert space $\Hcal$, $\BB(\Hcal)$, is one example of a concrete $C^*$-algebra.
\end{example}

Given a $C^*$-algebra $A$, if $A$ has a unit $1\in{A}$, i.e., an identity element. Then it can easily be shown $1^*=1$. This can be seen by $1=(1^*)^*=(1^*1)^*=(1^*(1^*)^*)=1^*1=1^*$.

\begin{proposition}
 Given a $C^*$-algebra $A$, the involution is continuous.
\end{proposition}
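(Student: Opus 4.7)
The plan is to show that involution is in fact an isometry, which is stronger than continuity. Because involution is additive (by the $^*$-algebra axiom $(x+y)^* = x^* + y^*$), the map $x \mapsto x^*$ sends $x - y$ to $x^* - y^*$, so an isometry bound $\|x^*\| = \|x\|$ immediately yields $\|x^* - y^*\| = \|x - y\|$ and hence (Lipschitz) continuity. So the whole task reduces to proving $\|x^*\| = \|x\|$ for every $x \in A$.

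The key ingredient is the $C^*$-identity $\|x^*x\| = \|x\|^2$ combined with the submultiplicativity of the norm (which holds since $A$ is a Banach algebra): $\|x^*x\| \leq \|x^*\|\|x\|$. Chaining these gives $\|x\|^2 \leq \|x^*\|\|x\|$, so for $x \neq 0$ we can divide by $\|x\|$ to conclude $\|x\| \leq \|x^*\|$ (and the case $x = 0$ is trivial since then $x^* = 0$ by conjugate-linearity). Applying this same inequality to $x^*$ in place of $x$ and invoking the axiom $x^{**} = x$ gives $\|x^*\| \leq \|x^{**}\| = \|x\|$. Combining the two inequalities yields $\|x^*\| = \|x\|$.

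Finally, I would note that the argument works verbatim for both the abstract and concrete cases, since in both settings we have a submultiplicative Banach-algebra norm and the $C^*$-identity at our disposal. There is no real obstacle here: the proof is essentially the standard two-line manipulation of the $C^*$-identity, and the only mildly subtle point is remembering that involution is conjugate-linear rather than linear, so one argues continuity via isometry of the additive structure rather than by invoking a linear-operator bound.
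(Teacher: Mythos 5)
Your proof is correct and follows essentially the same route as the paper: both establish that the involution is an isometry via the $C^*$-identity $\|x\|^2=\|x^*x\|\leq\|x^*\|\,\|x\|$ together with $x^{**}=x$, and then deduce continuity from additivity (the paper simply runs this argument directly on the differences $x_n-x$ of a convergent sequence). Your version is, if anything, slightly cleaner in isolating the isometry statement $\|x^*\|=\|x\|$ first.
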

\begin{proof}
 Let $x_n\in{A}$ such that $x_n\to{x\in{A}}$. So, $\|x_n^*-x^*\|=\|(x_n-x)^*\|$.
 Now we want to show $\|(x_n-x)^*\|=\|x_n-x\|$. So $\|x_n-x\|^2=\|(x_n-x)^*(x_n-x)\|\leq\|(x_n-x)^*\|\|x_n-x\|$, so if $x_n-x\neq0$ then $\|x_n-x\|\leq\|(x_n-x)^*\|$ (otherwise if $x_n-x=0$, then we are done). So, $\|(x_n-x)^*\|\leq\|((x_n-x)^*)^*\|=\|x_n-x\|$. So, then, $\|(x_n-x)^*\|=\|x_n-x\|$.\\
 So, we now have $\|x_n^*-x^*\|=\|x_n-x\|\to0$ since $x_n\to{x}$. Therefore, the involution is continuous.
\end{proof}

\begin{proposition}
 If $A$ is a $C^*$-algebra, then $\sigma(x^*)=\sigma(x)$ for all $x\in{A}$.
\end{proposition}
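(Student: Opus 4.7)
The plan is to reduce the statement to a single auxiliary fact: in a unital $^*$-algebra, an element $y$ is invertible if and only if its adjoint $y^*$ is invertible. Once this lemma is in place, the proposition will follow by applying it to $y = \lambda\cdot 1 - x$ and reading off the spectrum from the involution identities (ii)--(iv) in the definition of a $^*$-algebra. Notably, neither the norm nor the $C^*$-identity will be used; only the purely algebraic structure of the involution together with the fact, established just above the proposition, that $1^* = 1$.

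First I would prove the auxiliary lemma. Suppose $y\in A$ is invertible with inverse $z$, so $yz = zy = 1$. Applying the involution and using (i) gives $z^* y^* = y^* z^* = 1^* = 1$, so $y^*$ is invertible with inverse $z^*$. Conversely, since $y^{**} = y$ by (iv), the same argument applied to $y^*$ in place of $y$ shows that invertibility of $y^*$ implies invertibility of $y$. Hence $y$ and $y^*$ are simultaneously invertible.

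Second I would compute $(\lambda\cdot 1 - x)^*$ for $\lambda \in \CC$. By axioms (ii) and (iii) together with $1^* = 1$, we have $(\lambda\cdot 1 - x)^* = \overline{\lambda}\cdot 1 - x^*$. Combined with the auxiliary lemma, this yields the equivalence
\[
\lambda\cdot 1 - x \text{ is invertible} \iff \overline{\lambda}\cdot 1 - x^* \text{ is invertible},
\]
which by the definition of the spectrum is exactly $\lambda \in \sigma(x) \iff \overline{\lambda}\in \sigma(x^*)$. Therefore $\sigma(x^*) = \{\overline{\lambda} : \lambda \in \sigma(x)\}$, which is what the proposition asserts (and coincides with $\sigma(x)$ when, for instance, $x$ is self-adjoint, in which case $\sigma(x)\subseteq\RR$).

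There is no real obstacle in this argument; the only thing to be careful about is to track the complex conjugate correctly when the scalar passes through the involution via axiom (iii), and to invoke $1^* = 1$ (already noted in the paragraph preceding the proposition) rather than silently assume it. Everything else is a formal manipulation of the $^*$-algebra axioms.
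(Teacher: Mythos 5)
The paper supplies no proof of this proposition at all, so there is nothing to compare your argument against; judged on its own, your reasoning is the standard one and the mathematics is sound. The auxiliary lemma (simultaneous invertibility of $y$ and $y^*$, obtained by applying the involution to $yz=zy=1$ and using $1^*=1$ together with $y^{**}=y$) is correct, as is the computation $(\lambda\cdot 1-x)^*=\overline{\lambda}\cdot 1-x^*$, and you are right that neither the norm nor the $C^*$-identity is needed.

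There is, however, a genuine mismatch between what you prove and what the proposition literally asserts. Your argument establishes $\sigma(x^*)=\{\overline{\lambda}:\lambda\in\sigma(x)\}$, i.e.\ the spectrum of the adjoint is the \emph{complex conjugate} of the spectrum; this is the correct general fact. The proposition as printed claims $\sigma(x^*)=\sigma(x)$, which is false in general: take $A=\CC$ with $x=i$, so that $\sigma(x)=\{i\}$ while $\sigma(x^*)=\sigma(-i)=\{-i\}$. Your closing sentence declares that the conjugated set ``is what the proposition asserts,'' which it is not, and your own parenthetical (observing that the two sets coincide when $x$ is self-adjoint, since then $\sigma(x)\subseteq\RR$) already signals the discrepancy. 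The honest conclusion is that the statement should be read, or corrected, as $\sigma(x^*)=\{\overline{\lambda}:\lambda\in\sigma(x)\}$; under that reading your proof is complete, but as written you should not claim to have proved the equality $\sigma(x^*)=\sigma(x)$ verbatim.
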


Given a $C^*$-algebra $A$, then we have
\begin{itemize}
 \item $x\in{A}$ is \textit{normal}\index{normal} if $xx^*=x^*x$.
 \item $y\in{A}$ is \textit{Hermitian}\index{Hermitian} if $x=x^*$.
 \item $u\in{A}$ is \textit{unitary}\index{unitary} if $uu^*=u^*u=1$.
 \item $p\in{A}$ is a \textit{projection}\index{projection} if $p=p^*=p^2$.
\end{itemize}

The following result is left as an exercise to the reader.
\begin{proposition}
 If $x$ is an element of a $C^*$-algebra $A$. Then $x^*x$ is Hermitian.
\end{proposition}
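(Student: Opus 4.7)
The statement is essentially immediate from the axioms of a $^*$-algebra listed in the definition; no deeper $C^*$-structure (norm, completeness, spectrum) is needed. The plan is to compute $(x^*x)^*$ directly and show it equals $x^*x$.

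First I would apply axiom (i) of the involution, which says $(yz)^* = z^* y^*$, to the product $y = x^*$ and $z = x$. This gives
\[ (x^*x)^* = x^* (x^*)^*. \]
Then I would invoke axiom (iv), namely $x^{**} = x$, to simplify $(x^*)^* = x$. Substituting yields $(x^*x)^* = x^* x$, which is exactly the condition that $x^*x$ be Hermitian.

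There is no real obstacle here; the only thing to be careful about is getting the order right in axiom (i) (the involution reverses the order of multiplication), since a sign-flipped application would produce $xx^*$ instead, which would also be Hermitian by the symmetric argument but would prove the wrong statement. Since the argument uses only the purely algebraic properties of $^*$ and none of the $C^*$-identity, Banach, or Hilbert space structure, the proof will be a one-line display.
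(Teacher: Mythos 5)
Your proof is correct: applying the anti-multiplicativity axiom $(xy)^*=y^*x^*$ followed by $x^{**}=x$ gives $(x^*x)^*=x^*(x^*)^*=x^*x$, which is exactly the Hermitian condition. The paper leaves this proposition as an exercise, and your one-line computation is precisely the intended argument, using only the $^*$-algebra axioms and none of the analytic structure.
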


\begin{definition}
 Given $C^*$-algebras $A$ and $B$, a \textit{$^*$-homomorphism}\index{*-homomorphism} is an algebra homomorphism $\alpha:A\to{B}$ such that $\alpha(x^*)=\alpha(x)^*$ for all $x\in{A}$. A $^*$-homomorphism, $\alpha$, between unital $C^*$-algebras is itself \textit{unital}\index{$C^*$-algebra!unital} if $\alpha(1)=1$.
\end{definition}

\begin{theorem}
 Any $^*$-homomorphism is continuous, with closed image, and is bounded with norm at most one. Any injective $^*$-homomorphism is an isometry.
\end{theorem}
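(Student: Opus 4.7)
The plan is to prove the three claims in the natural order: boundedness with norm at most one, then the isometric property for injective $^*$-homomorphisms, and finally closed image. Throughout, I will work in the unital case (the non-unital case follows by passing to unitisations).

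First I would establish the key inequality $\sigma(\alpha(x)) \subseteq \sigma(x)$ for any $^*$-homomorphism $\alpha\colon A\to B$ and any $x \in A$. This is immediate from the definition of spectrum: if $x - \lambda 1$ is invertible in $A$, then applying $\alpha$ produces an inverse for $\alpha(x) - \lambda 1$ in $B$. Consequently the spectral radius satisfies $r(\alpha(x)) \leq r(x)$. To convert this into a norm bound I would use the $C^*$-identity to reduce to Hermitian elements: for any $x\in A$, the element $x^*x$ is Hermitian, and for a Hermitian element $h$ in a $C^*$-algebra one has $\|h\|^2 = \|h^*h\| = \|h^2\|$; iterating gives $\|h\|^{2^n} = \|h^{2^n}\|$, which together with the spectral radius formula $r(h) = \lim_{n\to\infty}\|h^n\|^{1/n}$ yields $\|h\| = r(h)$. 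Since $\alpha(x^*x)$ is also Hermitian, the chain
\[
 \|\alpha(x)\|^2 = \|\alpha(x)^*\alpha(x)\| = \|\alpha(x^*x)\| = r(\alpha(x^*x)) \leq r(x^*x) = \|x^*x\| = \|x\|^2
\]
gives $\|\alpha\| \leq 1$, and boundedness implies continuity.

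Next I would handle the injective case. The strategy is to argue by contradiction via continuous functional calculus: if $\alpha$ is injective but not isometric, then there exists $x \in A$ with $\|\alpha(x^*x)\| < \|x^*x\|$, and since both $x^*x$ and $\alpha(x^*x)$ are Hermitian, this forces $r(\alpha(x^*x)) < r(x^*x)$, so the inclusion $\sigma(\alpha(x^*x)) \subseteq \sigma(x^*x)$ is strict. I would then select a continuous real-valued function $f$ on $\sigma(x^*x) \cup \{0\}$ that vanishes on $\sigma(\alpha(x^*x))$ but is nonzero somewhere on $\sigma(x^*x)$. The continuous functional calculus for Hermitian elements yields $f(x^*x) \neq 0$ in $A$, while the compatibility of functional calculus with $^*$-homomorphisms gives $\alpha(f(x^*x)) = f(\alpha(x^*x)) = 0$, contradicting injectivity.

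Finally, for closed image I would factor $\alpha$ as $A \twoheadrightarrow A/\ker\alpha \hookrightarrow B$. The kernel $\ker\alpha$ is a closed two-sided $^*$-ideal (closed by continuity from the first paragraph, $^*$-closed because $\alpha$ preserves involution), and a standard result from the theory of $C^*$-algebras says that $A/\ker\alpha$ equipped with the quotient norm is again a $C^*$-algebra. The induced map $\widetilde\alpha\colon A/\ker\alpha \to B$ is then an injective $^*$-homomorphism, hence an isometry by the previous paragraph, so its image coincides with $\alpha(A)$ and is closed as the continuous isometric image of a complete space. The hard part will be the quotient step: showing that the quotient of a $C^*$-algebra by a closed $^*$-ideal remains a $C^*$-algebra is not formal and requires an approximate identity argument in the ideal. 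The rest of the proof is essentially bookkeeping on top of the spectral radius identity $\|h\| = r(h)$ for Hermitian $h$, which is itself the technical heart of $C^*$-algebra theory.
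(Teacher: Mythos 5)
The paper states this theorem without proof (it is quoted as a standard fact from the operator-algebra literature, e.g.\ Murphy or Conway, and immediately followed by the remark that the norm on a $C^*$-algebra is determined by the algebraic structure), so there is no in-paper argument to compare yours against. Your proposal is the standard textbook proof and is correct: the spectral inclusion $\sigma(\alpha(x))\subseteq\sigma(x)$ (or $\sigma(x)\cup\{0\}$ in the non-unital case, which you rightly defer to unitisation), combined with $\|h\|=r(h)$ for Hermitian $h$ via $\|h\|^{2^n}=\|h^{2^n}\|$ and the spectral radius formula, gives $\|\alpha\|\leq 1$; the functional-calculus contradiction argument gives isometry for injective maps; and factoring through $A/\ker\alpha$ gives closed image. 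You correctly identify the two genuinely non-formal ingredients you are importing: the identity $\|h\|=r(h)$ (hence the Gelfand theory underlying the continuous functional calculus) and the fact that the quotient of a $C^*$-algebra by a closed $^*$-ideal is again a $C^*$-algebra, which needs an approximate-identity argument. The only point worth tightening is in the injectivity step: your function $f$ should be chosen to vanish at $0$ as well as on $\sigma(\alpha(x^*x))$ so that $f(x^*x)$ lies in the $C^*$-algebra generated by $x^*x$ and commutes with applying $\alpha$ even when $\alpha$ is not unital; since $\|\alpha(x^*x)\|<\|x^*x\|$ and $\|x^*x\|\in\sigma(x^*x)$, such an $f$ (supported near $\|x^*x\|$) exists, so this is a one-line fix rather than a gap.
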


So, the norm on a $C^*$-algebra is determined by the rest of the algebraic structure.

A $^*$-homomorphism is an \textit{isomorphism} of $C^*$-algebras if it is bijective.
\begin{proposition}
 The inverse of a bijective $^*$-homomorphism is also a $^*$-homomorphism.
\end{proposition}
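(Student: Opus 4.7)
The plan is a direct diagram-chase: since $\alpha\colon A\to B$ is a bijection, $\alpha^{-1}\colon B\to A$ exists as a set-theoretic map, and I would simply verify each axiom of a $^*$-homomorphism by transporting it across $\alpha$. The strategy in every case is the same: given elements of $B$, write them as $\alpha(x_i)$ for (unique) elements $x_i\in A$, apply the corresponding axiom for $\alpha$, and then undo $\alpha$.

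First I would verify linearity. Given $y_1,y_2\in B$ and $\lambda,\mu\in\CC$, let $x_i=\alpha^{-1}(y_i)$. Since $\alpha$ is an algebra homomorphism,
\[
\alpha(\lambda x_1+\mu x_2)=\lambda\alpha(x_1)+\mu\alpha(x_2)=\lambda y_1+\mu y_2,
\]
and applying $\alpha^{-1}$ gives $\alpha^{-1}(\lambda y_1+\mu y_2)=\lambda\alpha^{-1}(y_1)+\mu\alpha^{-1}(y_2)$. The same template works for the multiplicative property: $\alpha(x_1x_2)=\alpha(x_1)\alpha(x_2)=y_1y_2$, so $\alpha^{-1}(y_1y_2)=\alpha^{-1}(y_1)\alpha^{-1}(y_2)$.

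Next I would verify compatibility with the involution. Fix $y\in B$ and set $x=\alpha^{-1}(y)$, so that $\alpha(x)=y$. Because $\alpha$ is a $^*$-homomorphism we have $\alpha(x^*)=\alpha(x)^*=y^*$, and applying $\alpha^{-1}$ yields $\alpha^{-1}(y^*)=x^*=\alpha^{-1}(y)^*$, as required.

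There is essentially no main obstacle here; the result is a purely formal consequence of $\alpha$ being a bijective $^*$-homomorphism, and I would not need to invoke any analytic input (completeness, the $C^*$-identity, or the automatic-continuity theorem quoted just above). The only subtlety worth mentioning is that $\alpha^{-1}$ is automatically continuous as well, but this is not part of the statement and follows for free from the preceding theorem, since $\alpha^{-1}$ is itself an injective $^*$-homomorphism and hence an isometry.
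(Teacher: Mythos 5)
Your proof is correct: the paper actually states this proposition without giving any proof, and your argument — pulling each axiom (linearity, multiplicativity, compatibility with $^*$) back through the bijection $\alpha$ and then applying $\alpha^{-1}$ — is the standard and complete verification. Your closing observation that continuity of $\alpha^{-1}$ comes for free from the preceding theorem (since $\alpha^{-1}$ is an injective $^*$-homomorphism, hence an isometry) is accurate and goes slightly beyond what the statement requires.
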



\subsection{Three topologies on \texorpdfstring{$\BB(\Hcal)$}{B(H)}}

\begin{definition}[WOT, SOT, \& norm topology] {\ }
 \begin{description}
  \item[norm] The \textit{norm topology}\index{norm topology} on $\BB(\Hcal)$ is given by the norm $\|T\|=\sup_{\|v\|\leq1}\|T(v)\|$.
  \item[WOT]
   The \textit{weak operator topology}\index{weak operator topology (WOT)} on $\BB(\Hcal)$ is given by \[\{b\in\BB(\Hcal):|\langle{ax,y}\rangle-\langle{bx,y}\rangle|<\epsilon \text{ where } a\in\BB(\Hcal), x,y\in\Hcal\text{ and }\epsilon>0\}\]
  \item[SOT]
   The \textit{strong operator topology}\index{strong operator topology (SOT)} (SOT) on $\BB(\Hcal)$ is given by
   \[\{b\in\BB(\Hcal):\|ax-bx\|<\epsilon\text{ where }a\in\BB(\Hcal), x\in\Hcal\text{ and }\epsilon>0\}\]
 \end{description}
\end{definition}

So, we then have that the norm topology contains the strong and weak operator topologies, i.e., $\text{WOT}\prec\text{SOT}\prec\text{the norm topology}$. And so for any $A\subset\BB(\Hcal)$, we have $\overline{A}^{\|\cdot\|}\subset\overline{A}^\text{SOT}\subset\overline{A}^\text{WOT}$.

\subsection{von Neumann algebras}

\begin{definition}[commutant]
 We define the \textit{commutant}\index{commutant} of $A\subseteq\BB(\Hcal)$ as
 \[A^\prime=\{y\in\BB(\Hcal) : xy=yx\text{ for all }x\in{A}\},\]
 and the \textit{bicommutant}\index{bicommutant} by $A^{\prime\prime}=(A^\prime)^\prime$.
\end{definition}

The following theorem is known as the \textit{bicommutant theorem}\index{bicommutant theorem}.
\begin{theorem}\label{theorem:vonNeumann}
 Let $M\subset\BB(\Hcal)$ be a unital $^*$-subalgebra. Then the following three are equivalent.
 \begin{enumerate}[(i)]
  \item $M$ is closed in WOT
  \item $M$ is closed in SOT
  \item $M=M^{\prime\prime}=(M^\prime)^\prime$
 \end{enumerate}
\end{theorem}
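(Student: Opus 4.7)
The plan is to prove the cycle of implications (iii) $\Rightarrow$ (i) $\Rightarrow$ (ii) $\Rightarrow$ (iii). The first two implications are routine topological observations; essentially all of the content lies in (ii) $\Rightarrow$ (iii), which is the classical approximation argument due to von Neumann and requires the so-called \emph{amplification trick}.

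First, to establish (iii) $\Rightarrow$ (i), I would note that for any subset $S \subseteq \BB(\Hcal)$ the commutant $S'$ is WOT-closed. This follows because for each fixed $s \in S$ and each $x, y \in \Hcal$, the map $T \mapsto \langle (Ts - sT)x, y\rangle$ is WOT-continuous, so $S' = \bigcap_{s,x,y} \{T : \langle(Ts-sT)x,y\rangle = 0\}$ is an intersection of WOT-closed sets. Applying this to $S = M'$ gives that $M'' = (M')'$ is WOT-closed, so if $M = M''$ then $M$ is WOT-closed. For (i) $\Rightarrow$ (ii), since the WOT is coarser than the SOT (every WOT-open set is SOT-open), every WOT-closed set is a fortiori SOT-closed.

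The main obstacle is (ii) $\Rightarrow$ (iii). The inclusion $M \subseteq M''$ is immediate from definitions, so the real work is the reverse inclusion $M'' \subseteq M$. Since $M$ is SOT-closed, it suffices to show each $T \in M''$ lies in the SOT-closure of $M$, i.e.\ for every $x_1,\dotsc,x_n \in \Hcal$ and $\epsilon > 0$ to find $S \in M$ with $\sum_{i=1}^{n}\|Sx_i - Tx_i\|^2 < \epsilon^2$. I would first handle the case $n = 1$: given $x \in \Hcal$, let $K = \overline{Mx} \subseteq \Hcal$, which is a closed $M$-invariant subspace. Let $p$ be the orthogonal projection onto $K$. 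Because $M$ is a unital $^*$-algebra, $K$ is invariant under $M^* = M$ as well, so both $K$ and $K^\perp$ are $M$-invariant, which forces $p \in M'$. Since $T \in M''$, $T$ commutes with $p$, so $Tp = pT$. The unitality of $M$ gives $x = 1 \cdot x \in Mx \subseteq K$, whence $px = x$ and thus $Tx = Tpx = pTx \in K = \overline{Mx}$. So we can find $S \in M$ with $\|Sx - Tx\| < \epsilon$.

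For general $n$, I would use the amplification trick: work in $\Hcal^{(n)} := \Hcal \oplus \cdots \oplus \Hcal$ ($n$ copies) with the diagonal embedding $\pi \colon \BB(\Hcal) \hookrightarrow \BB(\Hcal^{(n)})$ sending $S \mapsto \operatorname{diag}(S,\dotsc,S)$. A direct matrix calculation shows $\pi(M)' = M_n(M')$ (the algebra of $n\times n$ matrices with entries in $M'$), and hence $\pi(M)'' = \pi(M'')$, consisting of diagonal operators with equal diagonal entries in $M''$. The vector $\mathbf{x} = (x_1,\dotsc,x_n) \in \Hcal^{(n)}$ and the operator $\pi(T) \in \pi(M)''$ then satisfy the hypotheses of the case $n=1$ applied to the unital SOT-closed $^*$-subalgebra $\pi(M) \subseteq \BB(\Hcal^{(n)})$, producing $S \in M$ with $\|\pi(S)\mathbf{x} - \pi(T)\mathbf{x}\| < \epsilon$, which is exactly the required inequality. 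The verification that $\pi(M)' = M_n(M')$ is the most computationally delicate step — it amounts to writing a general bounded operator on $\Hcal^{(n)}$ as an $n\times n$ matrix of operators on $\Hcal$ and checking entrywise that commutation with every $\pi(S)$ forces the entries to commute with every $S \in M$ — but is conceptually straightforward.
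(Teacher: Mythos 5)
Your proposal is correct: it is the classical von Neumann double commutant argument --- commutants are WOT-closed (giving (iii) $\Rightarrow$ (i)), WOT-closed implies SOT-closed, and the cyclic-subspace projection argument combined with the $n$-fold amplification $\pi(M)' = M_n(M')$ handles (ii) $\Rightarrow$ (iii), with unitality used exactly where it is needed to get $x \in \overline{Mx}$. Note, however, that the paper states this theorem without supplying any proof, so there is nothing to compare your argument against; your write-up would in fact fill that gap.
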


\begin{definition}[von Neumann algebra]
 A unital $^*$-subalgebra $M\subseteq\BB(\Hcal)$ is a \textit{von Neumann algebra}\index{von Neumann algebra} if it satisfies any of the properties in the above theorem.
\end{definition}

It is clear that any von Neumann algebra is a $C^*$-algebra.

\begin{example}
 The set of bounded linear maps on the Hilbert space $\BB(\Hcal)$ is a von Neumann algebra.
 We also have $M_n(\CC)$ as a von Neumann algebra.
\end{example}
We note that von Neumann algebras come in pairs $M,M^\prime$.

%

\subsection{Factors}

\begin{definition}[factor]
 We call a von Neumann algebra $M$ a \textit{factor}\index{factor} if $M\cap{M^\prime}=\CC\cdot1_{\BB(\Hcal)}$ (i.e., the center is trivial).
\end{definition}

\begin{example}
 The set of bounded linear maps on the Hilbert space $\BB(\Hcal)$ is thus also a factor.
\end{example}

The only commutative factor is $\CC$.

Given $M$ is a von Neumann algebra, then
\begin{itemize}
 \item let $e,f\in{M}$ be two projections, then $e$ and $f$ are \textit{equivalent}\index{projections!equivalent} if there exists $u\in{M}$ such that $u^*u=e$ and $uu^*=f$. We write $e\sim{f}$ if they are equivalent.
 \item let $e,f\in{M}$ be two projections, then if there exists $u\in{M}$ such that $u^*u=e$ and $u^*uf=u^*u$. We write this as $e\preceq{f}$.
 \item we say a projection $p\in{M}$ such that $p\neq0$ is \textit{minimal}\index{projections!minimal} if for all projections $a\in{M}$ we have that if $a\leq{p}$ then either $a=0$ or $a=p$.
 \item we say a projection $e\in{M}$ is \textit{finite}\index{projections!finite} if we have that for all projections $f\in{M}$, if $e\sim{f}\leq{e}$ then $f=e$.
\end{itemize}

The types of factors $M\subseteq\BB(\Hcal)$:
\begin{itemize}
 \item \textit{type} I \textit{factor}\index{factor!type I} if there exists a minimal projection in $M$.
 \item we call the factor $M\subseteq\BB(\Hcal)$ a \textit{type} II \textit{factor}\index{factor!type II} if there exists a finite projection in $M$ and there does not exist a minimal projection in $M$.
 \begin{itemize}
  \item given $M\subseteq\BB(\Hcal)$ is a type II factor and $1_{\BB(\Hcal)}$ is finite, then $M$ is a \textit{type} II$_1$ \textit{factor}\index{factor!type II$_1$}.
  \item if $1_{\BB(\Hcal)}$ is infinite and there exists finite projections, then it is said to be a \textit{type} II$_\infty$ \textit{factor}\index{factor!type II$_\infty$}.
 \end{itemize}
 \item finally, we have that the factor $M\subseteq\BB(\Hcal)$ is a \textit{type} III \textit{factor}\index{factor!type III} if there exists no finite projections in $M$.
\end{itemize}

We will only discuss type II$_1$ factors in this paper. So to simplify, a factor is of type II$_1$ if all projections are finite (i.e., $1_{\BB(\Hcal)}$ is finite) and there does not exist a minimal projection.

\subsection{Trace}



We call a linear functional $f$ on a von Neumann algebra $M$,
\begin{itemize}
 \item \textit{positive}\index{linear functional!positive} if $f(x^*x)\geq0$ for all $x\in{M}$, 
 \item \textit{faithful}\index{linear functional!faithful} if when we have $f(x^*x)=0$, then $x=0$,
 \item \textit{state}\index{linear functional!state} if $f(1)=1$, and
 \item \textit{tracial}\index{linear functional!tracial} if $f(xy)=f(yx)$ for all $x,y\in{M}$.
\end{itemize}


Type II$_1$ factors $M$ admit a unique faithful tracial state $\tr\colon{M\to\CC}$, so
\begin{enumerate}[(i)]
 \item $\tr(xy)=\tr(yx)$ for all $x,y\in{M}$
 \item $\tr(1)=1$
 \item $\tr(xx^*)>0$ for all $x\in{M}$ where $x\neq0$.
\end{enumerate}

We call a linear functional $\tr\colon{M\to\CC}$ a \textit{trace}\index{trace} if $\tr(xy)=\tr(yx)$ for all $x,y\in{M}$.

Given $M$ is a finite factor, now we can say that two projections $e,f\in{M}$ are equivalent if $\tr(e)=\tr(f)$.


%


\subsection{The index theorem}


We define a Hilbert space $L^2(M,\tr_M)$ to be the completion of $M$ with respect to the inner product $\langle{a,b}\rangle=\tr_M(b^*a)$.

\begin{definition}[Jones index]\index{Jones index}
 Given $N$ is a subfactor of $M$, we define the \textit{Jones index}\index{Jones index} of $N$ as $[M:N]=\dim_N{L^2(M,\tr_M)}$.
\end{definition}

\begin{theorem}[index theorem]\index{Jones index theorem}\label{theorem:index}
 Given $N$ is a II$_1$ subfactor of $M$, then we have
 \[[M:N]\in\left\{4\cos^2(\pi/(n+2)):n\geq1\right\}\cup[4,\infty).\]
\end{theorem}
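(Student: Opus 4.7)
The plan is to use the \emph{basic construction} of Jones, iterated to build a tower of II$_1$ factors, and then extract strong constraints on $[M:N]$ from the positivity of the trace on the resulting algebras. Concretely, starting from $N \subset M$, I would view $M$ acting on $L^2(M,\tr_M)$ and let $e_1 = e_N$ be the orthogonal projection onto $L^2(N,\tr_N)$. The algebra $M_1 = \langle M, e_1\rangle$ generated inside $\BB(L^2(M,\tr_M))$ is again a II$_1$ factor, and one verifies (a) $e_1 x e_1 = E_N(x)\, e_1$ for the $\tr$-preserving conditional expectation $E_N \colon M \to N$, (b) $[M_1 : M] = [M:N]$, and (c) $M_1$ carries a unique faithful tracial state $\tr_1$ extending $\tr_M$ and satisfying the \emph{Markov property} $\tr_1(x e_1) = \tau\, \tr_M(x)$ for $x \in M$, where $\tau := [M:N]^{-1}$.

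Iterating yields a tower $N \subset M \subset M_1 \subset M_2 \subset \cdots$ with projections $e_1, e_2, e_3, \ldots$ satisfying the Temperley--Lieb-type relations
\begin{align*}
e_i^2 &= e_i = e_i^*,\\
e_i e_{i\pm 1} e_i &= \tau\, e_i,\\
e_i e_j &= e_j e_i \quad (|i-j|\geq 2),
\end{align*}
together with the Markov trace property $\tr(w e_n) = \tau\, \tr(w)$ for every word $w$ in $1, e_1, \ldots, e_{n-1}$. Because the ambient trace on each $M_k$ is faithful and positive, every element of the abstract $*$-algebra $A_n(\tau) = \langle 1, e_1,\ldots,e_{n-1}\rangle$ built from the above relations must have nonnegative trace whenever it is of the form $x^*x$; equivalently, the Gram matrices of bases of $A_n(\tau)$ under $\langle a,b\rangle = \tr(b^*a)$ must be positive semidefinite for every $n$.

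The heart of the argument, and the main obstacle, is converting this positivity into the quantized list of admissible values of $\tau$. I would introduce the sequence of polynomials $P_0=1$, $P_1=1$, $P_{k+1}(\tau) = P_k(\tau) - \tau P_{k-1}(\tau)$ (these are renormalised Chebyshev polynomials, with $P_k(1/(4\cos^2\theta)) = \sin((k+1)\theta)/((2\cos\theta)^k \sin\theta)$), and build the associated Jones--Wenzl idempotents $f_k \in A_{k+1}(\tau)$ inductively by
\[
f_{k+1} \;=\; f_k \;-\; \frac{P_{k-1}(\tau)}{P_k(\tau)}\, f_k e_{k+1} f_k,
\]
starting from $f_1 = 1-e_1$. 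A calculation using only the relations shows $f_k^2=f_k$, $f_k e_i = 0$ for $i \leq k$, and $\tr(f_k) = P_k(\tau)$. Faithfulness and positivity of the trace then force $P_k(\tau) \geq 0$ for every $k$, and in fact $P_k(\tau) > 0$ for all $k$ unless the sequence first hits zero, at which point the construction truncates consistently. Analysing the Chebyshev expression, $P_k(\tau) > 0$ for all $k$ iff $\tau \leq 1/4$, while for $\tau > 1/4$ positivity of some $P_k$ fails unless $\tau = 1/(4\cos^2(\pi/(n+2)))$ for some $n\geq 1$, in which case $P_{n+1}$ vanishes and $f_{n+1}=0$ makes the remaining $P_k$ irrelevant. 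Translating back to the index via $[M:N] = 1/\tau$ gives exactly $[M:N] \in \{4\cos^2(\pi/(n+2)) : n \geq 1\} \cup [4,\infty)$.

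The hardest step will be this last positivity analysis: one must justify that the necessary conditions from the Jones--Wenzl recursion are also sufficient (so that every value in the stated set is genuinely realised) and, more delicately, that \emph{no} value in $(1/4,\infty) \setminus \{1/(4\cos^2(\pi/(n+2)))\}$ slips through. Sufficiency is handled by exhibiting explicit subfactors (the hyperfinite ones coming from the tower itself at the permitted $\tau$), while the necessity rests on the Chebyshev zero analysis above, combined with the observation that a trace on the abstract Temperley--Lieb algebra is positive semidefinite precisely at the quantized values and on all of $[4,\infty)$. The earlier results on faithful tracial states on II$_1$ factors supply all the positivity input we need.
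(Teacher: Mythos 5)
Your proposal is correct and follows essentially the route this paper takes: the paper does not prove Theorem \ref{theorem:index} in place but defers to Jones' basic construction, develops the tower of Jones projections with the relations $e_ie_{i\pm1}e_i=[M:N]^{-1}e_i$ and the Markov trace, and then establishes the quantization via the Jones--Wenzl idempotents and positivity of the trace (the theorem asserting $q+q^{-1}\in\{2\cos(\pi/n):n\geq3\}\cup[2,\infty)$, which under $d^2=[M:N]$ is exactly the stated restriction). Your Chebyshev/positivity analysis is the same argument assembled into one place, with the minor caveat that the realization (sufficiency) half you mention is not required by the statement as phrased.
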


Jones proved this using the basic construction explained in the following section.

\subsection{Jones' basic construction}

In this section we will discuss Jones' basic construction for type II$_1$ subfactors. This discussion is majorly based on that of Jones' paper \cite{jones1983}, Penneys' lecture notes \cite{penneys}, and Speicher's lecture notes \cite{speicher}.

\begin{theorem}[GNS - Gelfand-Naimark-Segal]\index{GNS}
 Every abstract $C^*$-algebra is isometrically $^*$-isomorphic to a concrete $C^*$-algebra.
\end{theorem}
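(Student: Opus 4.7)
The plan is to prove this via the Gelfand-Naimark-Segal (GNS) construction, producing for a given abstract $C^*$-algebra $A$ a concrete faithful representation on a Hilbert space. First, given any state $\varphi$ on $A$, I would define a positive semi-definite sesquilinear form on $A$ by $\langle x, y\rangle_\varphi = \varphi(y^*x)$. The subset $N_\varphi = \{x \in A : \varphi(x^*x) = 0\}$ is, by the Cauchy-Schwarz inequality applied to $\varphi$, a closed left ideal of $A$, so $A/N_\varphi$ inherits a genuine inner product. Its completion is a Hilbert space $\Hcal_\varphi$, and left multiplication descends to give a $^*$-homomorphism $\pi_\varphi \colon A \to \BB(\Hcal_\varphi)$ defined on the dense subspace by $\pi_\varphi(a)[x] = [ax]$. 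Boundedness $\|\pi_\varphi(a)\| \leq \|a\|$ follows from $\varphi((ax)^*(ax)) = \varphi(x^*(a^*a)x) \leq \|a^*a\|\,\varphi(x^*x) = \|a\|^2\langle x,x\rangle_\varphi$, which uses that $\|a\|^2 - a^*a$ is positive in $A$.

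The key step, and the main obstacle, is producing enough states to separate points of $A$; equivalently, for each nonzero $a \in A$ I need a state $\varphi$ with $\varphi(a^*a) = \|a\|^2$. This is a Hahn-Banach-plus-spectral-theory argument. The element $x = a^*a$ is Hermitian, and the commutative unital $C^*$-subalgebra $B = C^*(x,1) \subseteq A$ is, by the commutative Gelfand-Naimark theorem, isometrically $^*$-isomorphic to $C(\sigma(x))$. Evaluation at a point $\lambda \in \sigma(x)$ with $|\lambda| = \|x\|$ provides a state $\varphi_0$ on $B$ with $\varphi_0(x) = \|x\| = \|a\|^2$. I would then use Hahn-Banach to extend $\varphi_0$ to a norm-$1$ linear functional $\varphi$ on $A$; the delicate part is verifying that any such norm-$1$ extension of a positive unital functional remains positive, i.e. genuinely a state. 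This follows from the standard characterization: a bounded linear functional $\varphi$ on a unital $C^*$-algebra satisfies $\varphi(a^*a)\geq 0$ for all $a$ if and only if $\|\varphi\| = \varphi(1)$.

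Finally, I would form the direct-sum representation $\pi = \bigoplus_\varphi \pi_\varphi$ indexed over all states $\varphi$ of $A$, acting on $\Hcal = \bigoplus_\varphi \Hcal_\varphi$. This is a unital $^*$-homomorphism into the concrete $C^*$-algebra $\BB(\Hcal)$. By the previous step, for each nonzero $a \in A$ there is some $\varphi$ with $\|\pi_\varphi(a^*a)[1]\|^2 = \varphi(a^*a) = \|a\|^2 \neq 0$, so $\pi$ is injective. Appealing to the result already recorded in the excerpt that any injective $^*$-homomorphism is an isometry, $\pi$ is an isometric $^*$-isomorphism onto its image $\pi(A) \subseteq \BB(\Hcal)$, and the image is closed precisely because $\pi$ is isometric and $A$ is complete. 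Thus $A \cong \pi(A)$ as $C^*$-algebras, with $\pi(A)$ a concrete $C^*$-algebra, completing the proof.
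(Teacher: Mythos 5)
The paper states this theorem without proof (it is recorded as background and immediately followed by the statement of the GNS construction, also unproven), so there is no argument of the paper's to compare against. On its own merits, your proposal is the standard and correct Gelfand--Naimark argument: the GNS construction for a single state, the Hahn--Banach/spectral argument producing a state with $\varphi(a^*a)=\|a\|^2$ for each nonzero $a$ (which is indeed the crux), the universal representation $\bigoplus_\varphi \pi_\varphi$, and the paper's own recorded fact that an injective $^*$-homomorphism is an isometry to conclude. Your use of Cauchy--Schwarz to show $N_\varphi$ is a left ideal and of the $\|\varphi\|=\varphi(1)$ characterization of positivity to preserve statehood under Hahn--Banach extension are both the right tools. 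One small slip: in the faithfulness step you write $\|\pi_\varphi(a^*a)[1]\|^2=\varphi(a^*a)$, but $\|\pi_\varphi(a^*a)[1]\|^2=\varphi\bigl((a^*a)^2\bigr)$; the quantity you want is $\|\pi_\varphi(a)[1]\|^2=\varphi(a^*a)=\|a\|^2$, which shows $\pi_\varphi(a)\neq 0$ directly. (Even as written the conclusion survives, since Cauchy--Schwarz gives $\varphi\bigl((a^*a)^2\bigr)\geq\varphi(a^*a)^2>0$, but the equality asserted is not correct.) This is a notational blemish, not a gap; the proof is sound.
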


\begin{theorem}[GNS construction]\index{GNS construction}
 Given a unital $C^*$-algebra $A$ and a state $f\colon{A\to\CC}$. There exists a Hilbert space $\Hcal^\prime$, a $^*$-homomorphism $\pi\colon{A\to\BB(\Hcal^\prime)}$, and a unit vector $h\in\Hcal^\prime$ such that for all $a\in{A}$ we have $f(a)=\langle\pi(a)h,h\rangle$.
\end{theorem}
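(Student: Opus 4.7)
The plan is to carry out the standard GNS construction, building the Hilbert space directly out of $A$ itself with the state $f$ providing a pre-inner product.

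First I would define a sesquilinear form on $A$ by $\langle a,b\rangle_f := f(b^*a)$. Positivity of $f$ gives $\langle a,a\rangle_f = f(a^*a) \geq 0$, and conjugate-symmetry follows from $f(x^*) = \overline{f(x)}$ (a standard consequence of positivity applied to $(a+\lambda 1)^*(a+\lambda 1)$). Thus $\langle\cdot,\cdot\rangle_f$ is a positive semidefinite Hermitian form, and the Cauchy-Schwarz inequality $|f(b^*a)|^2 \leq f(a^*a)f(b^*b)$ holds. Let $N := \{a \in A : f(a^*a) = 0\}$; Cauchy-Schwarz immediately gives that $N$ is a vector subspace. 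The key observation, to be verified next, is that $N$ is a left ideal: if $a \in N$ and $x \in A$, then $f((xa)^*(xa)) = f(a^* x^* x a) \leq \|x^*x\|\, f(a^*a) = 0$ (using positivity of $f$ applied to $a^*(\|x^*x\|\cdot 1 - x^*x)a$, which is positive since $\|x^*x\|\cdot 1 - x^*x$ is a positive element of $A$).

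Next I would form the quotient $A/N$, which inherits a genuine (positive definite) inner product $\langle [a],[b]\rangle := f(b^*a)$, and take its Hilbert space completion $\Hcal'$. For each $a \in A$, define $\pi(a)\colon A/N \to A/N$ by $\pi(a)[b] := [ab]$; this is well-defined precisely because $N$ is a left ideal. The main technical step will be showing that $\pi(a)$ is bounded on $A/N$ so that it extends by continuity to $\Hcal'$: one computes
\[
\|\pi(a)[b]\|^2 = f(b^* a^* a b) \leq \|a^*a\| \, f(b^*b) = \|a\|^2 \, \|[b]\|^2,
\]
again applying positivity of $f$ to $b^*(\|a^*a\|\cdot 1 - a^*a)b$. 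Hence $\pi(a)$ extends to a bounded operator on $\Hcal'$ with $\|\pi(a)\| \leq \|a\|$. A direct check shows $\pi$ is an algebra homomorphism and that $\pi(a^*) = \pi(a)^*$ (using $\langle \pi(a)[b],[c]\rangle = f(c^* a b) = \langle [b], \pi(a^*)[c]\rangle$), so $\pi$ is a $^*$-homomorphism.

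Finally, set $h := [1] \in A/N \subset \Hcal'$. Since $f$ is a state, $\|h\|^2 = f(1^*1) = f(1) = 1$, so $h$ is a unit vector. Then for every $a \in A$,
\[
\langle \pi(a)h, h\rangle = \langle [a], [1]\rangle = f(1^* a) = f(a),
\]
which is exactly the required identity. The hard part is the boundedness estimate for $\pi(a)$, since it is the one place where the full structure of a $C^*$-algebra (positivity of $\|a^*a\|\cdot 1 - a^*a$, i.e., functional calculus / norm spectral radius agreement) enters in an essential way; the rest of the argument is bookkeeping about the quotient and completion.
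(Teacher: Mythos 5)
Your proof is correct: it is the standard GNS construction, and every step (the positive semidefinite form $\langle a,b\rangle_f=f(b^*a)$, the null space $N$ being a left ideal via positivity of $a^*(\|x^*x\|1-x^*x)a$, the quotient-and-complete step, the boundedness estimate $\|\pi(a)\|\le\|a\|$, and the cyclic vector $h=[1]$) is carried out properly, with the one genuinely $C^*$-algebraic input (that $\|a^*a\|1-a^*a$ is positive, hence of the form $y^*y$) correctly identified. Note, however, that the paper states this theorem without giving any proof, so there is no argument of the paper's to compare yours against; your write-up supplies the missing standard argument and is complete modulo the usual facts about square roots of positive elements, which the paper's background section does not itself develop.
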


We have the GNS representation of a factor $M$ on $L^2(M,\tr_M)$, which is given by $\langle{{x},{y}}\rangle=\tr_M(y^*x)$.

Since the entire chapter was filled with definition and results, it would be reasonable to summarise what it is exactly we need in order to discuss Jones' basic construction.

So a von Neumann algebra $A$ is a unital $^*$-subalgebra of $\BB(\Hcal)$ such that $A=A^{\prime\prime}$; see Theorem \ref{theorem:vonNeumann}. A factor $M$ is a von Neumann algebra such that the center is trivial (i.e., $M\cap{M^\prime}=\CC\cdot1_{\BB(\Hcal)}$). A factor is of type II$_1$ if it is of infinite dimension and it admits a unique tracial state. We then define a Hilbert space $L^2(M,\tr_M)$ to be the completion of $M$ with respect to the inner product $\langle{a,b}\rangle=\tr_M(b^*a)$. So, $M$ acts on $L^2(M,\tr_M)$ by both left and right multiplication. So, for a projection $p\in{M}$, we have the representation $L^2(M,\tr_M)p$. We then have that for II$_1$ factors $N\subset{M}$, we have $[M:N]=\dim_N(L^2(M,\tr_M))$.

We have the conjugate-linear unitary functional $J\colon{L^2(M,\tr_M)\to{L^2(M,\tr_M)}}$ given by ${x}\mapsto{{x^*}}$. Then we have $JM^{\prime}J=M$.


Now consider a subfactor $N\subset{M}$, and the Hilbert space $L^2(M,\tr_M)$. We see that are already five II$_1$ factors, namely: $M$, $M^\prime$, $N$, $N^\prime$, and $JNJ$.

\begin{proposition}[{\cite[pp.~7,8]{jones1983}}]\label{proposition:conditional_expectation_factors}
 Given finite von Neumann algebras $N\subset{M}$ with a faithful normal trace $\tr_N$, with $1_N=1_M$. Then there exists a normal linear map $E_N\colon{M\to{N}}$ defined by $\tr_N(E_N(x)y)=\tr_N(xy)$ for $x\in{M}$ and $y\in{N}$, and satisfies the following properties
 \begin{enumerate}[(i)]
  \item $E_N(axb)=aE_N(x)b$ for $x\in{M}$ and $a,b\in{N}$
  \item $E_N(x^*)=E_N(x)^*$ for all $x\in{M}$
  \item $E_N(x^*)E_N(x)\leq{E_N(x^*x)}$ for all $x\in{M}$
  \item $E_N(x^*x)=0$ implies $x=0$ for all $x\in{M}$
 \end{enumerate}
\end{proposition}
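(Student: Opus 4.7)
The plan is to construct $E_N$ via the GNS Hilbert space and an orthogonal projection, then read off the four properties from the defining identity and tracial symmetry. First, I would form $H = L^2(M,\tr_N)$ by completing $M$ under $\langle a,b\rangle = \tr_N(b^*a)$, and identify $N$ with the closed subspace $K = \overline{N}^{\|\cdot\|_2} \subset H$. Let $e\colon H \to K$ be the orthogonal projection. The defining identity in the proposition is precisely the demand that $\widehat{E_N(x)} = e(\hat{x})$, so the problem reduces to showing that $e(\hat{x})$ actually comes from an element of the algebra $N$ rather than merely from the $L^2$-completion.

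To justify this step, I would argue as follows. For fixed $x \in M$, the functional $\varphi_x\colon N \to \CC$, $y \mapsto \tr_N(xy)$, is normal (since $\tr_N$ is normal and left multiplication by $x$ is ultraweakly continuous on $M$) and bounded in $\|\cdot\|_2$ by Cauchy--Schwarz. On a finite von Neumann algebra with a faithful normal trace the predual is represented by traceable elements, so there is a unique $z \in N$ with $\varphi_x(y) = \tr_N(zy)$ for all $y \in N$; define $E_N(x) := z$. Uniqueness is immediate from faithfulness of $\tr_N$. An equivalent route is to note that $\hat{y}\mapsto\widehat{xy}$ extends to a bounded operator $L_x$ on $H$, that $ e L_x e$ restricted to $K$ commutes with the right action of $N$, and that $N = (JNJ)'$ on $L^2(N,\tr_N)$; this identifies $e L_x e$ on $K$ with left multiplication by some element of $N$, which we call $E_N(x)$. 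Either way $E_N\colon M \to N$ is linear and normal, and the contractivity of $e$ gives $\|E_N(x)\|_2 \le \|x\|_2$, so normality on bounded sets follows as well.

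With the defining identity in hand, properties (i)--(iv) are routine. For (i), given $a,b\in N$, $x \in M$, and arbitrary $y \in N$, the trace property yields
\[ \tr_N(E_N(axb)\,y) = \tr_N(axby) = \tr_N\bigl(x(bya)\bigr) = \tr_N\bigl(E_N(x)\,bya\bigr) = \tr_N\bigl((aE_N(x)b)\,y\bigr), \]
and uniqueness gives $E_N(axb) = aE_N(x)b$. For (ii),
\[ \tr_N(E_N(x^*)y) = \tr_N(x^*y) = \overline{\tr_N(y^*x)} = \overline{\tr_N(y^*E_N(x))} = \tr_N(E_N(x)^*y) \]
for every $y \in N$, so $E_N(x^*) = E_N(x)^*$. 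For (iii), $E_N$ is positive (it is the compression of left multiplication by the projection $e$) and unital since $1 \in N$, so one expands the obvious inequality
\[ 0 \le E_N\bigl((x-E_N(x))^*(x-E_N(x))\bigr) \]
and applies (i) with $a = E_N(x)^*$, $b = E_N(x)$ to obtain the Kadison--Schwarz estimate $E_N(x)^*E_N(x) \le E_N(x^*x)$. Finally for (iv), setting $y=1$ in the defining identity gives $\tr_N(E_N(x^*x)) = \tr_N(x^*x)$, so $E_N(x^*x)=0$ forces $\tr_N(x^*x)=0$, and faithfulness of $\tr_N$ yields $x=0$.

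The hard part will be the very first step, namely promoting the orthogonal projection $e$ on $L^2(M,\tr_N)$ to a genuine map $M \to N$: there is no a priori reason that $e(\hat{x})$ should lie in the image of $N$ rather than in its $\|\cdot\|_2$-completion, and closing this gap is exactly where one must invoke normality of $\varphi_x$ together with the predual theory of finite von Neumann algebras (or, equivalently, the bicommutant characterisation $N = (JNJ)'$ on $L^2(N,\tr_N)$). Once $E_N$ is defined as an honest map into $N$, the remainder of the proof is a sequence of short manipulations using only the trace property and uniqueness.
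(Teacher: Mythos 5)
The paper does not actually prove this proposition: it is stated with a citation to Jones' paper and the text moves straight on to the next result, so there is no internal proof to compare against. Your argument is correct and is essentially the standard construction one finds in the cited source: realise $E_N$ via the orthogonal projection $e$ of $L^2(M,\tr_M)$ onto $\overline{N}^{\|\cdot\|_2}$, check that the defining identity $\tr_N(E_N(x)y)=\tr_N(xy)$ is exactly the statement $\widehat{E_N(x)}=e\hat{x}$, and then derive (i)--(iv) by short manipulations with traciality, faithfulness and density. You correctly isolate the one genuinely delicate point, namely that $e\hat{x}$ must be shown to come from an element of the algebra $N$ and not merely of its $L^2$- or $L^1$-completion. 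Be aware that your first route does not quite close this gap as written: representing the normal functional $y\mapsto\tr_N(xy)$ by the predual theory only produces a (possibly unbounded) element $z\in L^1(N,\tr_N)$, so an additional argument is needed to see $z\in N$ with $\|E_N(x)\|\le\|x\|$. Your second route, identifying $eL_xe$ on $\overline{N}^{\|\cdot\|_2}$ as a bounded operator commuting with the right $N$-action and invoking $N=(JNJ)'$ on $L^2(N,\tr_N)$, supplies exactly the missing boundedness, so the construction is complete. The verifications of (i), (ii) and (iv) are correct as written; for (iii) you should record explicitly that $E_N$ is positive before expanding $0\le E_N\bigl((x-E_N(x))^*(x-E_N(x))\bigr)$, e.g.\ by noting that for $x\ge0$ and $y\in N$ one has $\tr_N(E_N(x)y^*y)=\tr_N(yxy^*)\ge0$, and testing against all positive elements of $N$ forces $E_N(x)\ge0$.
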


The above proposition essentially says that $E_N$ is a trace-preserving, faithful and positive linear map. We call $E_N$ the \textit{conditional expectation}\index{conditional expectation} of $M$ to $N$.

\begin{proposition}[{\cite[Proposition~5.4]{speicher}}]\label{proposition:Jonesprojections}
 Given II$_1$ factors $N\subset{M}$, there exists the conditional expectation $E_N\colon{M\to{N}}$, and an orthogonal projection $e_N\colon{L^2(M,\tr_M)\to{L^2(N,\tr_N)}}$ such that
 \begin{enumerate}[(i)]
  \item it is uniquely determined by $e_Nxe_N=E_N(x)e_N$ for $x\in{M}$
  \item $xe_N=e_Nx$ if and only if $x\in{N}$
  \item $N=M\cap\{e_N\}^\prime$
 \end{enumerate}
\end{proposition}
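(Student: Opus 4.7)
The plan is to construct $e_N$ explicitly as the orthogonal projection from $L^2(M,\tr_M)$ onto the closed subspace $L^2(N,\tr_N)$ (the latter sits isometrically inside the former because, by uniqueness of the trace on the II$_1$ factor, $\tr_M$ restricted to $N$ coincides with $\tr_N$). The whole argument then hinges on a single bridge identity tying $e_N$ to the conditional expectation from Proposition \ref{proposition:conditional_expectation_factors}. Writing $\hat{m}$ for $m \in M$ viewed as a vector in $L^2(M,\tr_M)$, I claim that $e_N \hat{m} = \widehat{E_N(m)}$. To check this, note $\widehat{E_N(m)} \in L^2(N,\tr_N)$ since $E_N(m) \in N$, and for any $n \in N$ the defining property of $E_N$ together with traciality gives
\[\langle \hat{m} - \widehat{E_N(m)}, \hat{n}\rangle = \tr(n^*m) - \tr(n^*E_N(m)) = 0,\]
so $\hat{m} - \widehat{E_N(m)}$ is orthogonal to the dense subspace $\hat{N} \subset L^2(N,\tr_N)$, and hence to all of $L^2(N,\tr_N)$. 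This characterises the projection.

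With this identity in hand, (i) is an algebraic unwinding. For each $m \in M$, using that $M$ acts by left multiplication via the GNS representation,
\[e_N x e_N \hat{m} = e_N x \widehat{E_N(m)} = e_N \widehat{x E_N(m)} = \widehat{E_N(xE_N(m))} = \widehat{E_N(x)E_N(m)},\]
where the final equality invokes the $N$-bimodule property of $E_N$ with right factor $E_N(m) \in N$. On the other hand $E_N(x) e_N \hat{m} = E_N(x)\widehat{E_N(m)} = \widehat{E_N(x)E_N(m)}$, so the two operators agree on the dense subspace $\hat{M}$ and hence everywhere. Uniqueness of the element $y \in N$ satisfying $e_N x e_N = y e_N$ is obtained by evaluating at $\hat{1}$, which yields $\hat{y} = \widehat{E_N(x)}$ and hence $y = E_N(x)$.

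For (ii), if $x \in N$ the bimodule property again gives $x e_N \hat{m} = \widehat{x E_N(m)} = \widehat{E_N(xm)} = e_N x \hat{m}$. Conversely, if $x \in M$ commutes with $e_N$, evaluating at $\hat{1}$ yields $\hat{x} = x e_N \hat{1} = e_N x \hat{1} = \widehat{E_N(x)}$, so $x = E_N(x) \in N$. Property (iii) is then an immediate reformulation: $x \in M$ lies in $\{e_N\}'$ precisely when $x \in N$, i.e.\ $M \cap \{e_N\}' = N$.

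I expect the only delicate step to be the setup --- confirming that left multiplication by $x \in M$ extends to a bounded operator on $L^2(M,\tr_M)$ (standard GNS, and finite since $M$ is II$_1$), and that the embedding $L^2(N,\tr_N) \hookrightarrow L^2(M,\tr_M)$ is genuinely isometric so that $e_N$ is unambiguously defined. Once these foundational points are secured, the three listed conclusions flow mechanically from the single identity $e_N \hat{m} = \widehat{E_N(m)}$ and the bimodule and trace-preserving properties already packaged in Proposition \ref{proposition:conditional_expectation_factors}.
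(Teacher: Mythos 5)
Your argument is correct and is the standard one (it is essentially the proof in the sources the proposition cites); the paper itself gives no proof here, deferring entirely to Speicher, so there is nothing to compare against. The one point worth making explicit is the justification of the bridge identity's last step in (i): $E_N(xE_N(m))=E_N(x)E_N(m)$ uses the bimodule property of Proposition \ref{proposition:conditional_expectation_factors}(i) with $a=1$ and $b=E_N(m)\in N$, exactly as you say, and the passage from agreement on the dense subspace $\hat{M}$ to equality of operators needs the (routine) boundedness of left multiplication, which you have flagged. No gaps.
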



\textbf{Basic construction.}\index{basic construction}
 Given II$_1$ factors $N\subset{M}$.
 Let $e_N\colon{L^2(M,\tr_M)}\to{L^2(N,\tr_N)}$ be an orthogonal projection. Then we have $N=M\cap\{e_N\}^\prime$. So, a direct corollary to this is that we have the von Neumann algebra $\langle{M,e_N}\rangle$ on $L^2(M,\tr_M)$ which is defined by $(M\cup\{e_N\})^{\prime\prime}=JN^\prime{J}$. Thus we also have $[M:N]=\tr(e_N)^{-1}=[\langle{M,e_N}\rangle:M]$, where the trace is defined on $\langle{M,e_N}\rangle$.


And so analogously to how for a von Neumann algebra $M$ there exists a von Neumann algebra $M^\prime$;
The basic construction tells us that for a subfactor $N\subset{M}$ there exists subfactors $JNJ$ and $JN^\prime{J}$ on $L^2(M,\tr_M)$.



Thus, by iterating the basic construction, we find a tower $M_i$ of II$_1$ factors with $M_0=N$, $M_1=M$ and $M_{i+1}=\langle{M_i,e_{M_{i-1}}}\rangle$.
\[N\subset{M}\overset{e_{M}}{\subset}M_2\overset{e_{M_2}}{\subset}\cdots\overset{e_{M_n}}{\subset}\langle{M_n,e_{M_n}}\rangle=M_{n+1}\subset\cdots\]
For better readability, we let $e_i=e_{M_i}$, and call the sequence of $(e_i)$ the \textit{Jones projections}\index{Jones projections}.

\begin{theorem}
 The Jones projections $(e_i)$ satisfy the following properties.
 \begin{enumerate}[(i)]
  \item $\left[M:N\right]e_ie_{i\pm1}e_i=e_i$
  \item $e_ie_j=e_je_i \text{ if }|i-j|\geq2$
  \item $e_i^2=e_i^*=e_i$
  \item $\left[M:N\right]\tr_{n+1}(xe_{n})=tr_{n}(x) \text{ where }x\text{ is a word on }1,e_1,e_2,\ldots,e_{n-1}$
 \end{enumerate}
\end{theorem}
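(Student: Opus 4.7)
The strategy is to verify each of the four relations in turn, leaning on Propositions \ref{proposition:conditional_expectation_factors} and \ref{proposition:Jonesprojections} together with careful bookkeeping of where each $e_i$ sits in the tower. The key positioning fact, which I would establish at the outset, is that since $M_{i+1} = \langle M_i, e_{M_{i-1}}\rangle$, the projection $e_i = e_{M_i}$ is adjoined at stage $i+2$, so $e_i \in M_{i+2}$, while $e_i$ itself is the orthogonal projection of the ambient $L^2$-space onto $L^2(M_i,\tr_{M_i})$.

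First I would dispatch the easy cases. Property (iii) is immediate from Proposition \ref{proposition:Jonesprojections}: $e_i$ is by definition an orthogonal projection, hence $e_i^2 = e_i = e_i^*$. For property (ii), assume without loss of generality $j \geq i+2$; then $e_i \in M_{i+2} \subseteq M_j$, and since $e_j$ is the projection onto $L^2(M_j)$, Proposition \ref{proposition:Jonesprojections}(ii) says $e_j$ commutes with exactly the elements of $M_j$, so $e_ie_j = e_je_i$.

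For property (i), consider first $[M:N]\,e_{i+1}e_ie_{i+1} = e_{i+1}$. Since $e_i \in M_{i+2}$, I would apply the identity from Proposition \ref{proposition:Jonesprojections}(i) to the subfactor $M_{i+1} \subset M_{i+2}$ with $x = e_i$, obtaining $e_{i+1}e_ie_{i+1} = E_{M_{i+1}}(e_i)\,e_{i+1}$. The heart of the matter is then the scalar identity $E_{M_{i+1}}(e_i) = [M:N]^{-1}\cdot 1_{M_{i+1}}$. To establish this, $E_{M_{i+1}}(e_i)$ lies in $M_{i+1}$ and is determined, by faithfulness of $\tr$ on the II$_1$ factor $M_{i+1}$, by the values $\tr\bigl(E_{M_{i+1}}(e_i)\,y\bigr) = \tr(e_i y)$ for $y \in M_{i+1}$; I would combine this with the Markov trace identity $\tr(ye_i) = [M:N]^{-1}\tr(y)$ (which is the Markov property of the basic construction, equivalent to $\tr(e_i) = [M:N]^{-1}$ applied $M_{i+1}$-bimodularly), forcing the conditional expectation to be the claimed scalar. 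Multiplying by $[M:N]$ gives the relation. The companion identity $[M:N]\,e_ie_{i+1}e_i = e_i$ follows by the symmetric version of the same argument inside the next stage of the tower.

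Finally, for property (iv), I would reduce to the Markov trace identity itself. Since each $e_j \in M_{j+2}$, any word $x$ in $1, e_1, \ldots, e_{n-1}$ lies in $M_{n+1}$. Using cyclicity of $\tr$ and $e_n^2 = e_n$, compute $\tr_{n+1}(xe_n) = \tr_{n+1}(e_n x e_n) = \tr_{n+1}\bigl(E_{M_n}(x)\,e_n\bigr)$ via Proposition \ref{proposition:Jonesprojections}(i). Since $E_{M_n}(x) \in M_n$, the Markov identity $\tr(ze_n) = [M:N]^{-1}\tr(z)$ for $z \in M_n$ (a special case of the scalar computation from (i) combined with trace-preservation of conditional expectation) yields $\tr_{n+1}(xe_n) = [M:N]^{-1}\tr_n\bigl(E_{M_n}(x)\bigr) = [M:N]^{-1}\tr_n(x)$, as desired. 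The main obstacle throughout is the scalar evaluation $E_{M_{i+1}}(e_i) = [M:N]^{-1}\cdot 1$; everything else unspools routinely from it, and it rests on the interplay between uniqueness of the trace on the II$_1$ factor $M_{i+1}$ and the compatibility of $\tr$ along the tower.
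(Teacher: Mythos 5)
Your plan matches the paper's treatment, which in fact only verifies part (i) --- and only in the direction $[M:N]e_ne_{n-1}e_n=e_n$ --- by exactly the chain you describe: $e_ne_{n-1}e_n=E_{M_n}(e_{n-1})e_n$ from Proposition \ref{proposition:Jonesprojections}(i), followed by the scalar evaluation $E_{M_n}(e_{n-1})=\tr(e_{n-1})=[M:N]^{-1}$. Your handling of (ii), (iii) and (iv) fills in what the paper leaves to the reader, and the positional bookkeeping ($e_i\in M_{i+2}$, so a word in $1,e_1,\ldots,e_{n-1}$ lies in $M_{n+1}$) is the right way to make (ii) and (iv) precise. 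Like the paper, you take the Markov property $\tr(ye_i)=[M:N]^{-1}\tr(y)$ for $y\in M_{i+1}$ as given rather than deriving it; that is consistent with how the basic construction is presented here.

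The one step that does not go through as stated is the ``companion identity'' $[M:N]\,e_ie_{i+1}e_i=e_i$, which you claim follows ``by the symmetric version of the same argument.'' It does not: the identity $e_ixe_i=E_{M_i}(x)e_i$ is available only for $x\in M_{i+1}$, and $e_{i+1}$ lives in $M_{i+3}$, not in $M_{i+1}$, so the two directions of (i) are genuinely asymmetric in the tower. A clean repair uses the direction you have already proved: $(e_ie_{i+1}e_i)^2=e_i(e_{i+1}e_ie_{i+1})e_i=[M:N]^{-1}e_ie_{i+1}e_i$, so $p:=[M:N]\,e_ie_{i+1}e_i$ is a (self-adjoint, positive) idempotent, hence a projection, with $p\leq e_i$; moreover $\tr(p)=[M:N]\tr(e_{i+1}e_i)=[M:N]\cdot[M:N]^{-1}\tr(e_i)=\tr(e_i)$ by the Markov property applied to $e_i\in M_{i+2}$, and faithfulness of the trace then forces $p=e_i$. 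With that substitution your proposal is complete.
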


These relations are similar to those of the braid group and the Markov moves. This is the similarity that Didier Hatt-Arnold pointed out to Vaughan Jones in 1982 (see Introduction \ref{chapter:intro}).
The reader should instantly notice that the Jones projections $(e_i)$ satisfy the properties of the Temperley-Lieb algebras from Definition \ref{definition:TLn} given $d^{2}=[M:N]$. For instance, for part (i) we have,
\begin{align*}
 [M:N]e_ne_{n-1}e_n &= [M:N]E_n(e_{n-1})e_n\\
 &= [M:N]\tr_{n+1}(e_{n-1})e_n\\
 &= [M:N][M:N]^{-1}e_n = e_n
\end{align*}

\section{The Temperley-Lieb algebras}\label{chapter:tl}
The Temperley-Lieb algebra was first introduced in 1971 by Lieb and Temperley \cite{lieb}.
In this chapter, we will see the relation between the von Neumann algebra that we constructed in the previous chapter (via the Jones' projections) and the Temperley-Lieb algebra. This will help our understanding since it can be explained in a diagrammatic setting.

Our discussion will mainly depend on that found in \cite{penneys}.






\subsection{Quantum integers}

\begin{definition}
 A \textit{qunatum integer}\index{quantum integer} $[n]_q$ for $n\in\ZZ$ and $q\in\{Q\cup-Q\}$ where $Q:=\{e^{i\theta}:\theta\in(0,\pi/2)\}\cup[1,\infty)$, is defined as $\dfrac{q^n-q^{-n}}{q-q^{-1}}$.
\end{definition}

A quantum integer is essentially a deformation of an integer by $q$.

\begin{example}
 So the quantum integer $[1]_q$ is $1$, while $[2]_q=q+q^{-1}$.
\end{example}

\begin{proposition}\label{proposition:doubling_quantum_integer}
 We can double a quantum integer, $[2]_q[n]_q=[n+1]_q+[n-1]_q$.
\end{proposition}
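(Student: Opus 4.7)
The plan is to verify this by a direct algebraic manipulation using the definition $[m]_q = (q^m - q^{-m})/(q - q^{-1})$. First I would write out the left-hand side as
\[
[2]_q [n]_q = (q + q^{-1}) \cdot \frac{q^n - q^{-n}}{q - q^{-1}} = \frac{(q+q^{-1})(q^n - q^{-n})}{q - q^{-1}}.
\]
Then I would expand the numerator and regroup the four resulting monomials into two telescoping pairs:
\[
(q+q^{-1})(q^n - q^{-n}) = (q^{n+1} - q^{-(n+1)}) + (q^{n-1} - q^{-(n-1)}).
\]
Dividing by $q - q^{-1}$ and recognising each summand gives $[n+1]_q + [n-1]_q$, which is exactly what we want.

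There is no real obstacle here; the only thing to be mildly careful about is ensuring the identity is well-defined for every $q$ in the allowed set, in particular that $q - q^{-1} \neq 0$. Since $q \in Q \cup -Q$ with $Q = \{e^{i\theta} : \theta \in (0,\pi/2)\} \cup [1,\infty)$, we have $|q| \geq 1$ and $q \neq \pm 1$, so $q - q^{-1}$ is nonzero and the quantum integers are genuinely defined. After that caveat, the identity is immediate from the distributive law.
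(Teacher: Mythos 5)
Your proof is correct and follows essentially the same route as the paper's: expand $(q+q^{-1})(q^n-q^{-n})$ over the common denominator $q-q^{-1}$ and regroup the four monomials into $[n+1]_q$ and $[n-1]_q$. The only quibble is your side remark that $q\neq\pm1$ for $q\in Q\cup-Q$; in fact $1\in[1,\infty)\subset Q$, though this does not affect the identity, which holds formally (or by the limit $[n]_1=n$).
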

\begin{proof}
 \begin{align*}
  [2]_q[n]_q &= \frac{(q+q^{-1})(q^n-q^{-n})}{q-q^{-1}}\\
  &= \frac{q^{n+1}-q^{1-n}+q^{n-1}-q^{-n-1}}{q-q^{-1}}\\
  &= \frac{q^{n+1}-q^{-(n+1)}}{q-q^{-1}}+\frac{q^{n-1}-q^{-(n-1)}}{q-q^{-1}}
 \end{align*}
 And so the result then follows.
\end{proof}

\begin{proposition}
 Given $m\geq{a}$, we have $[m-a]_q=[m]_q[a+1]_q-[m+1]_q[a]_q$.
\end{proposition}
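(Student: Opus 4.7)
The plan is to verify the identity by a direct computation from the definition $[n]_q = (q^n - q^{-n})/(q - q^{-1})$. Multiplying both sides by $(q-q^{-1})^2$ reduces the claim to the polynomial identity
\[
(q^m - q^{-m})(q^{a+1} - q^{-a-1}) \;-\; (q^{m+1} - q^{-m-1})(q^a - q^{-a}) \;=\; (q-q^{-1})(q^{m-a} - q^{-(m-a)}).
\]
First I would expand both products on the left, obtaining eight monomials. The two extreme monomials $q^{m+a+1}$ and $q^{-m-a-1}$ cancel between the two products, leaving exactly four terms: $-q^{m-a-1}$, $-q^{a-m+1}$, $q^{m-a+1}$, and $q^{a-m-1}$. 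Setting $k := m-a$ for clarity, this regroups as $(q^{k+1} - q^{k-1}) + (q^{-k-1} - q^{-k+1}) = (q^2-1)(q^{k-1} - q^{-k-1})$. Using $q^2 - 1 = q(q-q^{-1})$ and $q^{k-1} - q^{-k-1} = q^{-1}(q^k - q^{-k})$, this becomes $(q-q^{-1})(q^k - q^{-k})$, which is exactly the right-hand side.

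There is no real obstacle here beyond careful exponent bookkeeping; the hypothesis $m \geq a$ is used only to guarantee that $[m-a]_q$ is being interpreted as a quantum integer of a nonnegative index, and the algebra goes through for all integers.

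As an alternative that may read more cleanly in the paper, I would instead argue by induction on $a$, invoking Proposition~\ref{proposition:doubling_quantum_integer}. The base case $a=0$ is immediate since $[1]_q = 1$ and $[0]_q = 0$ give $[m]_q[1]_q - [m+1]_q[0]_q = [m]_q$. For the step, set $f(m,a) := [m]_q[a+1]_q - [m+1]_q[a]_q$ and apply doubling twice: once as $[a+2]_q = [2]_q[a+1]_q - [a]_q$ and once as $[2]_q[m]_q - [m+1]_q = [m-1]_q$. Substituting and regrouping yields $f(m, a+1) = [a+1]_q[m-1]_q - [m]_q[a]_q = f(m-1, a)$, and the inductive hypothesis then gives $f(m-1, a) = [m-1-a]_q = [m-(a+1)]_q$, closing the induction.
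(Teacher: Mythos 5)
Your primary argument is essentially the paper's own proof run in the opposite direction: the paper starts from $[m-a]_q$, multiplies numerator and denominator by $(q-q^{-1})$, inserts a canceling pair of extreme monomials $q^{m+a+1}$ and $q^{-m-a-1}$, and regroups into the two products, whereas you clear denominators and expand the products down to the four surviving monomials. Both are the same exponent bookkeeping and both are correct (your factorization $(q^2-1)(q^{k-1}-q^{-k-1})=(q-q^{-1})(q^k-q^{-k})$ checks out). Your alternative induction on $a$ is a genuinely different route: the base case $[m]_q[1]_q-[m+1]_q[0]_q=[m]_q$ is immediate, and the step $f(m,a+1)=f(m-1,a)$ follows from two applications of the doubling identity $[2]_q[n]_q=[n+1]_q+[n-1]_q$, which the paper has already proved as Proposition~\ref{proposition:doubling_quantum_integer}. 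That version buys a proof with no exponent manipulation at all and makes visible why the statement is a two-variable recursion; its only cost is that the induction must be set up uniformly in $m$ (the hypothesis applied to $f(m-1,a)$ needs $m-1\geq a$, which is exactly the condition under which $f(m,a+1)$ is being considered, so this is harmless). Either write-up is acceptable; the inductive one arguably reads more cleanly than the paper's chain of fractions, which incidentally contains a typo in the denominator ($q^2+q^{-2}$ where $(q-q^{-1})^2=q^2-2+q^{-2}$ is meant).
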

\begin{proof} {\ }
 \begin{align*}
  [m-a]_q &= \frac{q^{m-a}-q^{a-m}}{q-q^{-1}}\\
  &= \frac{(q^{m-a}-q^{a-m})(q-q^{-1})}{(q-q^{-1})(q-q^{-1})}\\
  &= \frac{q^{m-a+1}-q^{m-a-1}-q^{a-m+1}+q^{a-m-1}}{q^2+q^{-2}}\\
  &= \frac{q^{m+a+1}-q^{m+a+1}+q^{-m-a-1}-q^{-a-m-1}+q^{m-a+1}-q^{m-a-1}-q^{a-m+1}+q^{a-m-1}}{q^2+q^{-2}}\\
  &= \frac{q^{m+(a+1)}-q^{m-(a+1)}-q^{(a+1)-m}+q^{-m-(a+1)}}{q^2+q^{-2}}\\&\qquad\quad-\frac{q^{a+(m+1)}-q^{-a+(m+1)}-q^{a-(m+1)}+q^{-a-(m+1)}}{q^2+q^{-2}}\\
  &= \left(\frac{q^m-q^{-m}}{q-q^{-1}}\right)\frac{q^{a+1}-q^{-(a+1)}}{q-q^{-1}}-\left(\frac{q^{m+1}-q^{-(m+1)}}{q-q^{-1}}\right)\frac{q^a-q^{-a}}{q-q^{-1}}\\
  &= [m]_q[a+1]_q-[m+1]_q[a]_q
 \end{align*}
\end{proof}

\subsection{Defining the Temperley-Lieb algebra \texorpdfstring{${TL_n}$}{TLn}}

\begin{definition}\label{definition:TLn}
 For $d\in\{Q\cup-Q\}$, $TL_{n}(d)$\index{Temperley-Lieb algebra!$TL_n$} is the associative unital (i.e., has an identity) $*$-algebra with generators $e_1,e_2,\ldots,e_{n-1}$ and relations
 \begin{enumerate}[(i)]
  \item $e_i^2=e_i=e_i^*$
  \item $e_ie_j=e_je_i$ for $|i-j|\geq2$
  \item $e_ie_{i\pm1}e_i=d^{-2}{e_i}$.
 \end{enumerate}
\end{definition}

So we can reduce each word in the algebra with the relations.

\begin{example}
 So, $TL_1$ is generated by $1$, $TL_2$ is generated by $1$ and $e_1$, $TL_3$ is generated by $1$, $e_1$, $e_2$, $e_1e_2$ and $e_2e_1$, and so on.
\end{example}

\begin{definition}
 The $n$-th \textit{Catalan number}\index{Catalan number} $c_n$ is given by $\dfrac{1}{n+1}\left(\!\!\!\begin{array}{c}2n\\n\end{array}\!\!\!\right)$.
\end{definition}

In our above example, we saw $\dim(TL_1)=1=c_1$, $\dim(TL_2)=2=c_2$ and $\dim(TL_3)=5=c_3$. So, we have the following result.

\begin{proposition}
 $TL_n$ is finite dimensional.
\end{proposition}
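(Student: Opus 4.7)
The plan is to prove this by induction on $n$, reducing arbitrary words in the generators to a controlled normal form. The base case is trivial since $TL_1 = \mathbb{C}\cdot 1$ is one-dimensional. For the inductive step, I would assume $TL_{n-1}$ is finite dimensional; more precisely, the subalgebra $A_{n-1} \subseteq TL_n$ generated by $e_1, \ldots, e_{n-2}$ (a quotient of $TL_{n-1}$) inherits finite dimensionality from the inductive hypothesis. The goal then reduces to showing that $TL_n$ is spanned over $\mathbb{C}$ by the finite set
\[ A_{n-1} \;\cup\; \{\, a \, e_{n-1} \, b : a, b \text{ range over a fixed finite spanning set of } A_{n-1}\,\}, \]
which is a finite union of finite sets.

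The key lemma driving this is: every monomial $w$ in $e_1, \ldots, e_{n-1}$ is, modulo scalar multiples, equal to a word in which $e_{n-1}$ appears at most once. First I would treat two consecutive occurrences of $e_{n-1}$, say $e_{n-1}\, W\, e_{n-1}$ with $W$ a word in $e_1, \ldots, e_{n-2}$. By relation (ii), each letter $e_i$ with $i \leq n-3$ commutes with $e_{n-1}$, so such letters can be slid out to the left of the first $e_{n-1}$ or the right of the last one. What remains sandwiched between the two $e_{n-1}$'s is a word in $e_{n-2}$ alone, which by relation (i) (idempotence $e_{n-2}^2 = e_{n-2}$) collapses to either the empty word or a single $e_{n-2}$. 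In the first case, relation (i) for $e_{n-1}$ gives $e_{n-1} e_{n-1} = e_{n-1}$; in the second, relation (iii) gives $e_{n-1} e_{n-2} e_{n-1} = d^{-2} e_{n-1}$. Either way the count of $e_{n-1}$'s strictly decreases, so iterating yields a word with at most one $e_{n-1}$, as claimed.

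Once this normal form is established, any such word is either entirely a word in $A_{n-1}$, or has the shape $a \cdot e_{n-1} \cdot b$ with $a, b$ words in $A_{n-1}$. By induction $A_{n-1}$ has a finite spanning set $\{w_1, \ldots, w_m\}$, and so $TL_n$ is spanned by the finite set $\{w_i\} \cup \{w_i e_{n-1} w_j : 1 \leq i, j \leq m\}$, giving the crude bound $\dim TL_n \leq m + m^2 < \infty$. The main obstacle, modest as it is, lies in the bookkeeping for the sliding argument: one must be careful that moving the $e_i$ with $i \leq n-3$ past $e_{n-1}$ does not create new adjacencies that would obstruct the collapse of the middle $e_{n-2}$-block. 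This is handled cleanly by processing the letters of $W$ one at a time from left to right, which is a routine but important verification. A slicker but less self-contained alternative would be to construct the diagrammatic Temperley-Lieb algebra on non-crossing pair partitions of $2n$ points, check the defining relations hold for the cup-cap generators, and obtain a surjection from $TL_n$ onto this algebra of dimension $c_n$; this would also yield finite-dimensionality (and the sharp Catalan bound), but it requires first setting up the diagrammatic calculus, so I would defer it to a subsequent proposition where the exact dimension is pinned down.
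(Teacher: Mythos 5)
Your overall strategy (induction on $n$, reducing every word to one containing $e_{n-1}$ at most once) is a legitimate and classical route, different from the paper's: the paper simply exhibits the Jones normal form $(e_{i_1}\cdots e_{k_1})(e_{i_2}\cdots e_{k_2})\cdots(e_{i_p}\cdots e_{k_p})$ with increasing indices and counts such words to get the sharp bound $\dim TL_n\leq c_n$, whereas your inductive spanning set gives only the crude bound $\dim TL_n\leq m+m^2$. That weaker bound is perfectly adequate for finite-dimensionality.

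However, the proof of your key lemma has a genuine flaw. You claim that in $e_{n-1}\,W\,e_{n-1}$ the letters $e_i$ with $i\leq n-3$ can be slid out of the sandwich because they commute with $e_{n-1}$. Commuting with $e_{n-1}$ is not enough: to extract such a letter you must move it past every letter between it and the nearest $e_{n-1}$, and it does \emph{not} commute with $e_{n-2}$. Concretely, in $e_{n-1}\,e_{n-2}\,e_{n-3}\,e_{n-2}\,e_{n-1}$ the letter $e_{n-3}$ is trapped between two copies of $e_{n-2}$ and cannot be slid out in either direction; your left-to-right processing does not resolve this, and you have misdiagnosed the obstruction (it is adjacency to $e_{n-2}$, not to $e_{n-1}$). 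The standard repair is to strengthen the induction hypothesis: assume not merely that $A_{n-1}$ is finite dimensional, but that it is spanned by words in which $e_{n-2}$ occurs at most once, flanked by words in $e_1,\dots,e_{n-3}$. Applying this to $W$ first, either $W$ contains no $e_{n-2}$, in which case it commutes with $e_{n-1}$ and $e_{n-1}We_{n-1}=We_{n-1}$, or $W=Ae_{n-2}B$ with $A,B$ words in $e_1,\dots,e_{n-3}$, in which case $e_{n-1}Ae_{n-2}Be_{n-1}=A\,e_{n-1}e_{n-2}e_{n-1}\,B=d^{-2}Ae_{n-1}B$. With that modification the induction closes and the rest of your argument is sound.
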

\begin{proof}[Sketch of Proof]
 Essentially, since this is a finitely generated group. We have any word in the algebra can be written as
 \[(e_{i_1}e_{i_1-1}\cdots{e_{k_1}})(e_{i_2}e_{i_2-1}\cdots{e_{k_2}})\cdots(e_{i_p}e_{i_p-1}\cdots{e_{k_p}})\]
 for $1\leq{i_1}<i_2<\cdots<i_p$ and for $1\leq{k_1}<k_2<\cdots<k_p$.
 Thus the dimension is less than or equal to the $n^\text{th}$ Catalan number. \[\dim(TL_n)\leq{c_n}\]
\end{proof}

It is easy to realise that this is essentially the type II$_1$ algebra formed from the Jones projections $(e_i)$ in the previous chapter.

\subsection{Diagrammatic definition of the Temperley-Lieb algebra \texorpdfstring{$gTL_n$}{gTLn}}

We now discuss the diagrammatic definition of the Temperley-Lieb algebras. This will help us in the construction of the Jones polynomial in the next chapter.
\begin{definition}\index{Temperley--Lieb algebra!diagrammatic $gTL_n$} 
 For $d\in\{Q\cup-Q\}$, we call the algebra $gTL_{n}(d)$ the \textit{Temperley-Lieb algebra}, whose basis is the set of non-intersecting string diagrams on a rectangle with $n$ boundary points on the top and bottom.
 The multiplication is defined by concatenation $gTL_n\times{gTL_n}\to{gTL_n}$ (analogously to braid concatenation)  and is shown below.
 \begin{center}
  \begin{tikzpicture}[scale=0.75]
   \begin{scope}[yshift=-0.75cm,xshift=-3.5cm]
    \draw (0,0) rectangle (1,1);
    \draw (0.25,1.5)--(0.25,1);
    \draw (0.25,0)--(0.25,-0.5);
    \draw (0.75,0)--(0.75,-0.5);
    \draw (0.75,1.5)--(0.75,1);
    \draw (0.53,1.25) node {$\cdots$};
    \draw (0.53,-0.25) node {$\cdots$};
    \draw (0.5,0.5) node[scale=1.33] {$x$};
   \end{scope}
   \begin{scope}[yshift=-0.75cm,xshift=-2cm]
    \draw (-0.25,0.5) node[scale=1.33] {$\cdot$};
    \draw (0,0) rectangle (1,1);
    \draw (0.25,1.5)--(0.25,1);
    \draw (0.25,0)--(0.25,-0.5);
    \draw (0.75,0)--(0.75,-0.5);
    \draw (0.75,1.5)--(0.75,1);
    \draw (0.53,1.25) node {$\cdots$};
    \draw (0.53,-0.25) node {$\cdots$};
    \draw (0.5,0.5) node[scale=1.33] {$y$};
   \end{scope}
   \draw (-0.5,-0.25) node[scale=1.33] {$=$};
   \draw (0,0) rectangle (1,1);
   \draw (0.25,1.5)--(0.25,1);
   \draw (0.25,0)--(0.25,-0.5);
   \draw (0.75,0)--(0.75,-0.5);
   \draw (0.75,1.5)--(0.75,1);
   \draw (0.53,1.25) node {$\cdots$};
   \draw (0.53,-0.25) node {$\cdots$};
   \draw (0,-0.5) rectangle (1,-1.5);
   \draw (0.25,-1.5)--(0.25,-2);
   \draw (0.75,-1.5)--(0.75,-2);
   \draw (0.53,-1.75) node {$\cdots$};
   \draw (0.5,0.5) node[scale=1.33] {$x$};
   \draw (0.5,-1) node[scale=1.33] {$y$};
  \end{tikzpicture}
 \end{center}
 If a closed loop appears in the diagram, we can remove it by multiplying the diagram by the weight of the closed loop (which is $d$). 
\end{definition}

We have that two elements in $gTL_n$ are equivalent if the strings are ambient isotopic. So by holding all the boundary points in place, we can move the strings around, given that they do not intersect or break the definition.

\begin{example}
 An example of an element in $gTL_5$ would be the following
 \begin{center}
  \begin{tikzpicture}[scale=0.75]
   \draw (0,0) rectangle (3,2);
   \draw (0.25,2) [upup=0.75];
   \draw (0.25,0) [dnup=1];
   \draw (0.75,0) [dndn=0.75];
   \draw (0.5,0) [dnup=1.5];
   \draw (2.75,0) [dnup=-0.25];
   \draw (2.25,0) [dnup=0];
  \end{tikzpicture}
 \end{center}
\end{example}

\begin{example}
 The elements of $gTL_1$ is just a diagram with a single string. The elements of $gTL_2$ is the following
 \begin{center}
  \begin{tikzpicture}[scale=0.5]
   \draw (0,0) rectangle (2,2);
   \draw (0.5,0) [dnup=0];
   \draw (1.5,0) [dnup=0];
   \draw (2.25,0) node[scale=2] {$,$};
   \begin{scope}[xshift=2.5cm]
    \draw (0,0) rectangle (2,2);
    \draw (0.5,0) [dndn=1];
    \draw (0.5,2) [upup=1];
   \end{scope}
  \end{tikzpicture}
 \end{center}
 We also have the following elements of $gTL_3$
 \begin{center}
  \begin{tikzpicture}[scale=0.5]
   \draw (0,0) rectangle (2,2);
   \draw (0.5,0) [dnup=0];
   \draw (1,0) [dnup=0];
   \draw (1.5,0) [dnup=0];
   \draw (2.25,0) node[scale=2] {$,$};
   \begin{scope}[xshift=2.5cm]
    \draw (0,0) rectangle (2,2);
    \draw (0.25,0) [dndn=0.75];
    \draw (0.25,2) [upup=0.75];
    \draw (1.5,0) [dnup=0];
    \draw (2.25,0) node[scale=2] {$,$};
   \end{scope}
   \begin{scope}[xshift=5cm]
    \draw (0,0) rectangle (2,2);
    \draw (1,0) [dndn=0.75];
    \draw (1,2) [upup=0.75];
    \draw (0.5,0) [dnup=0];
    \draw (2.25,0) node[scale=2] {$,$};
   \end{scope}
   \begin{scope}[xshift=7.5cm]
    \draw (0,0) rectangle (2,2);
    \draw (0.25,0) [dndn=0.75];
    \draw (1,2) [upup=0.75];
    \draw (1.75,0) [dnup=-1.5];
    \draw (2.25,0) node[scale=2] {$,$};
   \end{scope}
   \begin{scope}[xshift=10cm]
    \draw (0,0) rectangle (2,2);
    \draw (0.25,2) [upup=0.75];
    \draw (1,0) [dndn=0.75];
    \draw (0.25,0) [dnup=1.5];
   \end{scope}
  \end{tikzpicture}
 \end{center}
\end{example}

So, we have the generators for $gTL_n$,
\begin{figure}[H]
 \centering
 \begin{tikzpicture}[scale=0.75]
  \begin{scope}[xshift=-3.75cm]
   \draw (0,0) rectangle (3,2);
   \draw (0.25,0) [dnup=0];
   \draw (2.75,0) [dnup=0];
   \draw (1.5,1) node {$\cdots$};
   \draw (1.5,-0.35) node[scale=1.33] {$1_n$};
   \draw (3.375,0) node[scale=1.33] {$,$};
  \end{scope}
  \begin{scope}
   \draw (0,0) rectangle (3,2);
   \draw (0.25,2) [upup=0.75];
   \draw (0.25,0) [dndn=0.75];
   \draw (1.25,0) [dnup=0];
   \draw (1.875,1) node {$\ldots$};
   \draw (2.5,0) [dnup=0];
   \draw (2.75,0) [dnup=0];
   \draw (1.5,-0.35) node[scale=1.33] {$E_1$};
   \draw (3.375,0) node[scale=1.33] {$,$};
  \end{scope}
  \begin{scope}[xshift=3.75cm]
   \draw (0,0) rectangle (3,2);
   \draw (0.25,0) [dnup=0];
   \draw (0.5,2) [upup=0.75];
   \draw (0.5,0) [dndn=0.75];
   \draw (1.5,0) [dnup=0];
   \draw (2,1) node {$\ldots$};
   \draw (2.5,0) [dnup=0];
   \draw (2.75,0) [dnup=0];
   \draw (1.5,-0.35) node[scale=1.33] {$E_2$};
   \draw (3.625,0) node[scale=1.33] {$,$};
  \end{scope}
  \begin{scope}[xshift=8cm]
   \draw (0,0) rectangle (3,2);
   \draw (0.25,0) [dnup=0];
   \draw (0.5,0) [dnup=0];
   \draw (0.75,1) node {$\ldots$};
   \draw (1,0) [dnup=0];
   \draw (1.25,2) [upup=0.75];
   \draw (1.25,0) [dndn=0.75];
   \draw (2.25,0) [dnup=0];
   \draw (2.5,1) node {$\ldots$};
   \draw (2.75,0) [dnup=0];
   \draw (1.5,-0.35) node[scale=1.33] {$E_i$};
   \draw (3.625,0) node[scale=1.33] {$,$};
  \end{scope}
  \begin{scope}[xshift=12.25cm]
   \draw (0,0) rectangle (3,2);
   \draw (0.25,0) [dnup=0];
   \draw (0.5,0) [dnup=0];
   \draw (1.125,1) node {$\ldots$};
   \draw (1.75,0) [dnup=0];
   \draw (2,0) [dndn=0.75];
   \draw (2,2) [upup=0.75];
   \draw (1.5,-0.35) node[scale=1.33] {$E_{n-1}$};
  \end{scope}
 \end{tikzpicture}
 \caption{the $gTL_n$ generators}
\end{figure}

To see the relations, we begin by multiplying $E_i^2$,
\begin{figure}[H]
 \centering
 \begin{tikzpicture}[scale=0.75]
  \begin{scope}
   \draw (0,0) rectangle (3,2);
   \draw (0.25,0) [dnup=0];
   \draw (0.5,0) [dnup=0];
   \draw (0.75,1) node {$\ldots$};
   \draw (1,0) [dnup=0];
   \draw (1.25,2) [upup=0.75];
   \draw (1.25,0) [dndn=0.75];
   \draw (2.25,0) [dnup=0];
   \draw (2.5,1) node {$\ldots$};
   \draw (2.75,0) [dnup=0];
   \draw (3.375,1) node[scale=1.33] {$\cdot$};
  \end{scope}
  \begin{scope}[xshift=3.75cm]
   \draw (0,0) rectangle (3,2);
   \draw (0.25,0) [dnup=0];
   \draw (0.5,0) [dnup=0];
   \draw (0.75,1) node {$\ldots$};
   \draw (1,0) [dnup=0];
   \draw (1.25,2) [upup=0.75];
   \draw (1.25,0) [dndn=0.75];
   \draw (2.25,0) [dnup=0];
   \draw (2.5,1) node {$\ldots$};
   \draw (2.75,0) [dnup=0];
  \end{scope}
  \begin{scope}[xshift=8cm]
   \draw (-0.625,1) node[scale=1.33] {$=$};
   \draw (0,3) rectangle (3,-1);
   \draw (0.25,1) [dnup=0];
   \draw (0.5,1) [dnup=0];
   \draw (0.75,2) node {$\ldots$};
   \draw (1,1) [dnup=0];
   \draw (1.25,3) [upup=0.75];
   \draw (1.25,1) [dndn=0.75];
   \draw (2.25,1) [dnup=0];
   \draw (2.5,2) node {$\ldots$};
   \draw (2.75,1) [dnup=0];
   \draw[dashed] (0,1)--(3,1);
   \draw (0.25,-1) [dnup=0];
   \draw (0.5,-1) [dnup=0];
   \draw (0.75,0) node {$\ldots$};
   \draw (1,-1) [dnup=0];
   \draw (1.25,1) [upup=0.75];
   \draw (1.25,-1) [dndn=0.75];
   \draw (2.25,-1) [dnup=0];
   \draw (2.5,0) node {$\ldots$};
   \draw (2.75,-1) [dnup=0];
  \end{scope}
  \begin{scope}[xshift=12.3cm]
   \draw (-0.8,1) node[scale=1.33] {$=$};
   \draw (-0.3,1) node[scale=1.33] {$d$};
   \draw (0,0) rectangle (3,2);
   \draw (0.25,0) [dnup=0];
   \draw (0.5,0) [dnup=0];
   \draw (0.75,1) node {$\ldots$};
   \draw (1,0) [dnup=0];
   \draw (1.25,2) [upup=0.75];
   \draw (1.25,0) [dndn=0.75];
   \draw (2.25,0) [dnup=0];
   \draw (2.5,1) node {$\ldots$};
   \draw (2.75,0) [dnup=0];
  \end{scope}
 \end{tikzpicture}
 \caption{$E_i^2=dE_i$}
\end{figure}
Therefore, we have $E_i^2=d{E_i}$.

We also have $E_iE_j=E_jE_i$ where $|i-j|>1$,
\begin{center}
 \begin{tikzpicture}[scale=0.75]
  \begin{scope}
   \draw (0,0) rectangle (4,2);
   \draw (0.25,0) [dnup=0];
   \draw (0.5,1) node {$\ldots$};
   \draw (0.75,0) [dnup=0];
   \draw (1,2) [upup=0.75];
   \draw (1,0) [dndn=0.75];
   \draw (2,0) [dnup=0];
   \draw (2.75,0) [dnup=0];
   \draw (3,0) [dnup=0];
   \draw (3.375,1) node {$\ldots$};
   \draw (3.75,0) [dnup=0];
   \draw (4.125,1) node[scale=1.33] {$\cdot$};
  \end{scope}
  \begin{scope}[xshift=4.25cm]
   \draw (0,0) rectangle (4,2);
   \draw (0.25,0) [dnup=0];
   \draw (0.5,1) node {$\ldots$};
   \draw (0.75,0) [dnup=0];
   \draw (1,0) [dnup=0];
   \draw (1.75,0) [dnup=0];
   \draw (2,2) [upup=0.75];
   \draw (2,0) [dndn=0.75];
   \draw (3,0) [dnup=0];
   \draw (3.375,1) node {$\ldots$};
   \draw (3.75,0) [dnup=0];
   \draw (4.3,1) node[scale=1.33] {$=$};
  \end{scope}
  \begin{scope}[xshift=8.8cm]
   \draw (0,3) rectangle (4,-1);
   \draw (0.25,1) [dnup=0];
   \draw (0.5,2) node {$\ldots$};
   \draw (0.75,1) [dnup=0];
   \draw (1,3) [upup=0.75];
   \draw (1,1) [dndn=0.75];
   \draw (2,1) [dnup=0];
   \draw (2.75,1) [dnup=0];
   \draw (3,1) [dnup=0];
   \draw (3.375,2) node {$\ldots$};
   \draw (3.75,1) [dnup=0];
   \draw[dashed] (0,1)--(4,1);
   \draw (0.25,-1) [dnup=0];
   \draw (0.5,0) node {$\ldots$};
   \draw (0.75,-1) [dnup=0];
   \draw (1,-1) [dnup=0];
   \draw (1.75,-1) [dnup=0];
   \draw (2,1) [upup=0.75];
   \draw (2,-1) [dndn=0.75];
   \draw (3,-1) [dnup=0];
   \draw (3.375,0) node {$\ldots$};
   \draw (3.75,-1) [dnup=0];
  \end{scope}
  \begin{scope}[xshift=13.4cm]
   \draw (-0.3,1) node[scale=1.33] {$=$};
   \draw (0,3) rectangle (4,-1);
   \draw (0.25,1) [dnup=0];
   \draw (0.5,2) node {$\ldots$};
   \draw (0.75,1) [dnup=0];
   \draw (1,1) [dnup=0];
   \draw (1.75,1) [dnup=0];
   \draw (2,3) [upup=0.75];
   \draw (2,1) [dndn=0.75];
   \draw (3,1) [dnup=0];
   \draw (3.375,2) node {$\ldots$};
   \draw (3.75,1) [dnup=0];
   \draw[dashed] (0,1)--(4,1);
   \draw (0.25,-1) [dnup=0];
   \draw (0.5,0) node {$\ldots$};
   \draw (0.75,-1) [dnup=0];
   \draw (1,1) [upup=0.75];
   \draw (1,-1) [dndn=0.75];
   \draw (2,-1) [dnup=0];
   \draw (2.75,-1) [dnup=0];
   \draw (3,-1) [dnup=0];
   \draw (3.375,0) node {$\ldots$};
   \draw (3.75,-1) [dnup=0];
  \end{scope}
 \end{tikzpicture}
\end{center}

\begin{figure}[H]
 \centering
 \begin{tikzpicture}[scale=0.75]
  \begin{scope}[xshift=8.8cm]
   \draw (-0.25,1) node[scale=1.33] {$=$};
   \draw (0,0) rectangle (4,2);
   \draw (0.25,0) [dnup=0];
   \draw (0.5,1) node {$\ldots$};
   \draw (0.75,0) [dnup=0];
   \draw (1,2) [upup=0.75];
   \draw (1,0) [dndn=0.75];
   \draw (2,0) [dnup=0];
   \draw (2.75,0) [dnup=0];
   \draw (3,0) [dnup=0];
   \draw (3.375,1) node {$\ldots$};
   \draw (3.75,0) [dnup=0];
   \draw (4.125,1) node[scale=1.5] {$\cdot$};
   \draw (4.25,0) rectangle (8.25,2);
   \draw (4.5,0) [dnup=0];
   \draw (4.75,1) node {$\ldots$};
   \draw (5,0) [dnup=0];
   \draw (5.25,0) [dnup=0];
   \draw (6,0) [dnup=0];
   \draw (6.25,2) [upup=0.75];
   \draw (6.25,0) [dndn=0.75];
   \draw (7.25,0) [dnup=0];
   \draw (7.625,1) node {$\ldots$};
   \draw (8,0) [dnup=0];
  \end{scope}
 \end{tikzpicture}
 \caption{$E_iE_j=E_jE_i$ where $|i-j|>1$}
\end{figure}

Our final relation is $E_iE_{i+1}E_i=E_i$.
\begin{figure}[H]
 \centering
 \begin{tikzpicture}[scale=0.75]
  \begin{scope}
   \draw (0,0) rectangle (4,2);
   \draw (0.25,0) [dnup=0];
   \draw (0.5,1) node {$\ldots$};
   \draw (0.75,0) [dnup=0];
   \draw (1,2) [upup=0.75];
   \draw (1,0) [dndn=0.75];
   \draw (2,0) [dnup=0];
   \draw (2.75,0) [dnup=0];
   \draw (3,0) [dnup=0];
   \draw (3.375,1) node {$\ldots$};
   \draw (3.75,0) [dnup=0];
   \draw (4.125,1) node[scale=1.33] {$\cdot$};
  \end{scope}
  \begin{scope}[xshift=4.25cm]
   \draw (0,0) rectangle (4,2);
   \draw (0.25,0) [dnup=0];
   \draw (3.375,1) node {$\ldots$};
   \draw (0.75,0) [dnup=0];
   \draw (1,0) [dnup=0];
   \draw (1.25,0) [dnup=0];
   \draw (1.75,2) [upup=0.75];
   \draw (1.75,0) [dndn=0.75];
   \draw (0.5,1) node {$\ldots$};
   \draw (2.75,0) [dnup=0];
   \draw (3,0) [dnup=0];
   \draw (3.75,0) [dnup=0];
   \draw (4.125,1) node[scale=1.33] {$\cdot$};
  \end{scope}
  \begin{scope}[xshift=8.5cm]
   \draw (0,0) rectangle (4,2);
   \draw (0.25,0) [dnup=0];
   \draw (0.5,1) node {$\ldots$};
   \draw (0.75,0) [dnup=0];
   \draw (1,2) [upup=0.75];
   \draw (1,0) [dndn=0.75];
   \draw (2,0) [dnup=0];
   \draw (2.75,0) [dnup=0];
   \draw (3,0) [dnup=0];
   \draw (3.375,1) node {$\ldots$};
   \draw (3.75,0) [dnup=0];
  \end{scope}
  \begin{scope}[yshift=-3cm,xshift=3cm]
   \draw (-0.25,-1) node[scale=1.33] {$=\;$};
   \draw (0,2) rectangle (4,-4);
   \draw (0.25,0) [dnup=0];
   \draw (0.5,1) node {$\ldots$};
   \draw (0.75,0) [dnup=0];
   \draw (1,2) [upup=0.75];
   \draw (1,0) [dndn=0.75];
   \draw (2.5,0) [dnup=0];
   \draw (2.75,0) [dnup=0];
   \draw (3,0) [dnup=0];
   \draw (3.375,1) node {$\ldots$};
   \draw (3.75,0) [dnup=0];
   \draw[dashed] (0,0)--(4,0);
   \draw (0.25,-2) [dnup=0];
   \draw (0.5,-1) node {$\ldots$};
   \draw (0.75,-2) [dnup=0];
   \draw (1,-2) [dnup=0];
   \draw (1.75,0) [upup=0.75];
   \draw (1.75,-2) [dndn=0.75];
   \draw (2.75,-2) [dnup=0];
   \draw (3,-2) [dnup=0];
   \draw (3.375,-1) node {$\ldots$};
   \draw (3.75,-2) [dnup=0];
   \draw[dashed](0,-2)--(4,-2);
   \draw (0.25,-4) [dnup=0];
   \draw (0.5,-3) node {$\ldots$};
   \draw (0.75,-4) [dnup=0];
   \draw (1,-2) [upup=0.75];
   \draw (1,-4) [dndn=0.75];
   \draw (2.5,-4) [dnup=0];
   \draw (2.75,-4) [dnup=0];
   \draw (3,-4) [dnup=0];
   \draw (3.375,-3) node {$\ldots$};
   \draw (3.75,-4) [dnup=0];
   \draw (4.3,-1) node[scale=1.33] {$\;=$};
  \end{scope}
  \begin{scope}[yshift=-5cm,xshift=7.8cm]
   \draw (0,0) rectangle (4,2);
   \draw (0.25,0) [dnup=0];
   \draw (0.5,1) node {$\ldots$};
   \draw (0.75,0) [dnup=0];
   \draw (1,2) [upup=0.75];
   \draw (1,0) [dndn=0.75];
   \draw (2,0) [dnup=0];
   \draw (2.75,0) [dnup=0];
   \draw (3,0) [dnup=0];
   \draw (3.375,1) node {$\ldots$};
   \draw (3.75,0) [dnup=0];
  \end{scope}
 \end{tikzpicture}
 \caption{$E_iE_{i+1}E_i=E_i$}
\end{figure}
We leave the reader to check $E_iE_{i-1}E_i=E_i$.

The involution $^*$ of a diagram would be the reflection about the horizontal line. Thus, $E_i=E_i^*$. However, for non-generators we have,
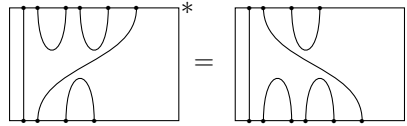
\begin{figure}[H]
 \centering
 \begin{tikzpicture}[scale=0.75]
  \draw (0,0) rectangle (3,2);
  \draw (0.25,0) [dnup=0];
  \draw (0.5,2) [upup=0.5];
  \draw (1.25,2) [upup=0.5];
  \draw (0.5,0) [dnup=1.75];
  \draw (1,0) [dndn=0.5];
  \draw (3.15,2) node[scale=1.33] {$*$};
  \draw (3.45,1) node[scale=1.33] {$=$};
  \begin{scope}[yscale=-1,xshift=4cm,yshift=-2cm]
   \draw (0,0) rectangle (3,2);
   \draw (0.25,0) [dnup=0];
   \draw (0.5,2) [upup=0.5];
   \draw (1.25,2) [upup=0.5];
   \draw (0.5,0) [dnup=1.75];
   \draw (1,0) [dndn=0.5];
  \end{scope}
 \end{tikzpicture}
 \caption{involution example on $gTL_n$}
\end{figure}

Thus the relations for our generators in $gTL_n$ are
\begin{enumerate}[(i)]
 \item $E_i^2=dE_i=dE_i^*$
 \item $E_iE_j=E_jE_i$ for $|i-j|\geq2$
 \item $E_iE_{i\pm1}E_i=E_i$
\end{enumerate}

These relations are closely related to that of $TL_n$. One can define a homomorphism by mapping $e_i$ to $d^{-1}E_i$. In fact, $gTL_n$ is $^*$-isomorphic to $TL_n$.
\begin{theorem}
 There exists a $^*$-isomorphism from $TL_n$ to $gTL_n$.
\end{theorem}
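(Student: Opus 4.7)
The plan is to define a map $\varphi: TL_n \to gTL_n$ on generators by $\varphi(1) = 1_n$ and $\varphi(e_i) = d^{-1} E_i$, extend it as a $*$-algebra homomorphism, and then show it is bijective. The rescaling factor $d^{-1}$ is forced: requiring $\varphi(e_i)^2 = \varphi(e_i)$ gives $\alpha^2 d = \alpha$ for $\varphi(e_i) = \alpha E_i$, so $\alpha = d^{-1}$. Well-definedness then amounts to checking that each of the three defining relations of $TL_n$ is sent to an identity in $gTL_n$. From the $gTL_n$ relations one reads off directly that $\varphi(e_i)^2 = d^{-2}\cdot dE_i = \varphi(e_i)$, that $\varphi(e_i)\varphi(e_j) = d^{-2}E_iE_j = d^{-2}E_jE_i = \varphi(e_j)\varphi(e_i)$ for $|i-j|\ge 2$, and that $\varphi(e_i)\varphi(e_{i\pm 1})\varphi(e_i) = d^{-3}E_iE_{i\pm 1}E_i = d^{-3}E_i = d^{-2}\varphi(e_i)$. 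Compatibility with $*$ follows from $E_i^* = E_i$, which gives $\varphi(e_i)^* = d^{-1}E_i = \varphi(e_i) = \varphi(e_i^*)$ (for real $d$; for complex $d$ one adjusts the involution to match conventions).

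Surjectivity is immediate, since $E_i = d\,\varphi(e_i)$ lies in the image and the $E_i$ together with $1_n$ generate $gTL_n$. The remaining task is injectivity, which I would establish by a dimension count. The excerpt already shows $\dim(TL_n) \leq c_n$ by reducing every word on the $e_i$ to a canonical form indexed by a non-crossing arc system. Combined with the surjection $\varphi$, this forces $\dim(gTL_n) \leq \dim(TL_n) \leq c_n$. So it suffices to exhibit $c_n$ linearly independent diagrams in $gTL_n$; the resulting equalities $\dim(TL_n) = \dim(gTL_n) = c_n$ then upgrade $\varphi$ from a surjection to an isomorphism.

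The main obstacle is precisely this final linear independence step: one must show that the $c_n$ non-crossing pair partitions of the $2n$ boundary points give linearly independent elements of $gTL_n$, despite the loop-removal rule that collapses closed circles to a factor of $d$. The cleanest route is to produce a faithful Markov trace $\tau: gTL_n \to \mathbb{C}$, defined on a basis diagram $D$ by closing off the strands on the right and setting $\tau(D) = d^{\ell(D) - n}$, where $\ell(D)$ is the number of loops in the closure, and extended linearly. A diagrammatic verification shows $\tau$ is tracial and $*$-compatible; the sesquilinear form $\langle x,y\rangle := \tau(y^*x)$ on the span of the diagrams then has a Gram matrix that I would show is non-singular by a direct combinatorial argument, for example by establishing upper-triangularity with respect to a suitable partial order on non-crossing pair partitions, with diagonal entries equal to nonzero powers of $d$. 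Non-singularity of this Gram matrix delivers the linear independence of the $c_n$ diagrams and closes the proof.
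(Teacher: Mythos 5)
The paper states this theorem without proof (it only remarks just beforehand that ``one can define a homomorphism by mapping $e_i$ to $d^{-1}E_i$''), so your first two paragraphs already go beyond what the paper records: the rescaled generators, the verification of the three relations, surjectivity, and the dimension bound $\dim(TL_n)\leq c_n$ from the earlier word-reduction argument are all correct and constitute the standard proof. Note, though, that given the paper's definition of $gTL_n$ --- an algebra \emph{whose basis is} the set of non-crossing string diagrams, with loop removal built into the multiplication rather than imposed as a linear relation --- the equality $\dim(gTL_n)=c_n$ holds by fiat. The dimension count then closes immediately: $c_n=\dim(gTL_n)\leq\dim(TL_n)\leq c_n$, and your surjection is an isomorphism. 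The ``main obstacle'' you identify is therefore not an obstacle at all in this setting.

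More importantly, the route you propose for that step is not merely unnecessary but would fail. The Gram matrix of the Markov trace form $\langle x,y\rangle=\tau(y^*x)$ on the diagram basis is genuinely singular for some of the admissible values $d\in Q\cup(-Q)$: already for $n=2$ the matrix is $\bigl(\begin{smallmatrix}1 & d^{-1}\\ d^{-1} & 1\end{smallmatrix}\bigr)$ with determinant $1-d^{-2}$, which vanishes at $d=1=2\cos(\pi/3)$ --- exactly one of the values singled out by the index theorem. In general the determinant is a product of quantum integers $[k]$ and vanishes whenever $d=2\cos(\pi/k)$ for small enough $k$, so no choice of ordering can make the matrix triangular with nonzero diagonal. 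Non-degeneracy of the trace form is sufficient but not necessary for linear independence of the diagrams, and at these special $d$ the diagrams remain independent (indeed they are a basis by definition) while your proposed certificate of independence disappears. If you did want an argument that works for a quotient-style definition of $gTL_n$ and for all $d$, you would instead argue combinatorially that distinct loop-free diagrams can never be identified by the loop-removal rewriting, e.g.\ by working over $\ZZ[d]$ with $d$ an indeterminate and specialising afterwards.
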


So from this point forth, we denote $\TL_n$ as our Temperley-Lieb algebra, where we will use the diagrammatic definition. So we have the generators $E_i$ for $i=1,2,\ldots,n-1$ and the relations above.


\subsection{Linear operations on \texorpdfstring{$\TL_n$}{\TL n}}

We now discuss some sub-planar algebra on $\TL_n$ which will make things easier to comprehend.

\begin{definition}\label{definition:linear_operations_tl}
 Consider the following diagram in $\TL_n$,
 \begin{center}
  \begin{tikzpicture}[scale=0.75]
   \draw (-0.5,0.5) node[scale=1.33] {$\displaystyle x=$};
   \draw (0,0) rectangle (1,1);
   \draw (0.25,1.5)--(0.25,1);
   \draw (0.25,0)--(0.25,-0.5);
   \draw (0.75,1.5)--(0.75,1);
   \draw (0.75,0)--(0.75,-0.5);
   \draw (0.53,1.25) node {$\cdots$};
   \draw (0.53,-0.25) node {$\cdots$};
   \draw (0.5,0.5) node[scale=1.33] {$x$};
  \end{tikzpicture}
 \end{center}
 we then have the following linear operations on $\TL_n$
 \begin{enumerate}[(i)]
  \item The \textit{inclusion}\index{linear operations on $\TL_n$!inclusion} $i_n\colon\TL_n\to\TL_{n+1}$ is a unital, injective $^*$-homomorphism. We essentially add a string to the right of the diagram.
   \begin{figure}[H]
   \centering
   \begin{tikzpicture}[scale=0.75]
    \draw (-1,0.5) node[scale=1.33] {$\displaystyle i_n(x)=$};
    \draw (0,0) rectangle (1,1);
    \draw (0.25,1.5)--(0.25,1);
    \draw (0.25,0)--(0.25,-0.5);
    \draw (0.75,1.5)--(0.75,1);
    \draw (0.75,0)--(0.75,-0.5);
    \draw (0.53,1.25) node {$\cdots$};
    \draw (0.53,-0.25) node {$\cdots$};
    \draw (0.5,0.5) node[scale=1.33] {$x$};
    \draw (1.25,1.5)--(1.25,-0.5);
   \end{tikzpicture}
   \caption{inclusion on $\TL_n$}
   \end{figure}
  \item The \textit{conditional expectation}\index{linear operations on $\TL_n$!conditional expectation} $\Ecal_{n+1}\colon\TL_{n+1}\to\TL_n$ is a surjective $^*$-map of $\CC$-vector spaces. We essentially close the last string on the right.
   \begin{figure}[H]
   \centering
   \begin{tikzpicture}[scale=0.75]
    \draw (-1.25,0.5) node[scale=1.33] {$\displaystyle \Ecal_{n+1}(x)=$};
    \draw (0.5,0.5) node[scale=1.33] {$x$};
    \draw (0,0) rectangle (1,1);
    \draw (0.25,1.5)--(0.25,1);
    \draw (0.25,0)--(0.25,-0.5);
    \draw (0.75,0)--(0.75,-0.5);
    \draw (0.75,1.5)--(0.75,1);
    \draw (0.9,1)--(0.9,1.5);
    \draw (0.9,-0.5)--(0.9,0);
    \draw[red] (0.9,1.5)--(1.2,1.5)--(1.2,-0.5)--(0.9,-0.5);
    \draw (0.53,1.25) node {$\cdots$};
    \draw (0.53,-0.25) node {$\cdots$};
   \end{tikzpicture}
   \caption{conditional expectation $\Ecal_{n+1}$ on $\TL_{n+1}$}
   \end{figure}
  \item The \textit{trace}\index{linear operations on $\TL_n$!trace} $\tr_n\colon\TL_n\to\TL_0$ is a linear $^*$-map of $\CC$-vector spaces. Essentially, it is the closure of the diagram.
   \begin{figure}[H]
   \centering
   \begin{tikzpicture}[scale=0.75]
    \draw (-1,0.5) node[scale=1.33] {$\displaystyle \tr_{n}(x)=$};
    \draw (0.5,0.5) node[scale=1.33] {$x$};
    \draw (0,0) rectangle (1,1);
    \draw (0.25,1.5)--(0.25,1);
    \draw (0.25,0)--(0.25,-0.5);
    \draw (0.75,0)--(0.75,-0.5);
    \draw (0.75,1.5)--(0.75,1);
    \draw[red] (0.75,1.5)--(1.25,1.5)--(1.25,-0.5)--(0.75,-0.5);
    \draw[red] (0.25,1.5)--(0.25,1.75)--(1.5,1.75)--(1.5,-0.75)--(0.25,-0.75)--(0.25,-0.5);
    \draw (0.53,1.25) node {$\cdots$};
    \draw (0.53,-0.25) node {$\cdots$};
   \end{tikzpicture}
   \caption{trace $\tr_n$ on $\TL_n$}
   \end{figure}
  \item The \textit{identity}\index{linear operations on $\TL_n$!identity map} $\id_n\colon\TL_n\to\TL_n$ which is given by
   \begin{center}
   \begin{tikzpicture}[scale=0.75]
    \draw (-1,0.5) node[scale=1.33] {$\displaystyle \id_{n}(x)=$};
    \draw (0.5,0.5) node[scale=1.33] {$x$};
    \draw (0,0) rectangle (1,1);
    \draw (0.25,1.5)--(0.25,1);
    \draw (0.25,0)--(0.25,-0.5);
    \draw (0.75,0)--(0.75,-0.5);
    \draw (0.75,1.5)--(0.75,1);
    \draw (0.53,1.25) node {$\cdots$};
    \draw (0.53,-0.25) node {$\cdots$};
   \end{tikzpicture}
   \end{center}
 \end{enumerate}
\end{definition}

It is typical to wonder whether the conditional expectation on $\TL_n$ defined above is related to the conditional expectation on factors in the previous chapter. This would make sense as to why we started with von Neumann algebras before defining the Temperley-Lieb algebras. The following proposition will answer our question.
\begin{proposition}\label{proposition:tldiagramrelations}
 There exists the following relations,
 \begin{enumerate}[(i)]
  \item $\Ecal_{n+1}\circ{i_n}=d\cdot\id_n$
  \item $\tr_{n+1}=tr_n\circ\Ecal_{n+1}$
  \item $(i_n\circ{i_{n-1}}\circ\Ecal_n(x))E_n=E_ni_n(x)E_n$ for all $x\in\TL_n$
  \item $\tr_n(xy)=\tr_n(yx)$ for all $x,y\in\TL_n$
  \item $\tr_{n+1}(i_n(x)\cdot{E_n})=\tr_n(x)$ for all $x\in\TL_n$
  \item $\tr_n(\Ecal_{n+1}(x)\cdot{y})=\tr_{n+1}(x\cdot{i_n(y)})$ for all $x\in\TL_{n+1}$ and $y\in\TL_n$
  \item $\tr_n(1_n)=d^n$
 \end{enumerate}
\end{proposition}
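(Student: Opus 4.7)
The plan is to verify all seven relations by direct diagrammatic calculation using the planar structure of $\TL_n$ established in Definition \ref{definition:linear_operations_tl}. In each case, both sides will be drawn as planar tangles and shown to be isotopic up to the removal of closed loops, each of which contributes a factor of $d$ by the Temperley--Lieb loop relation.

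Parts (i), (ii), (iv), (v), and (vii) are essentially immediate pictorial observations. For (i), applying $i_n$ adds a vertical string to the right of $x$ and applying $\Ecal_{n+1}$ then closes that string; the result is $x$ with one disjoint closed loop attached, which equals $d\cdot x$. For (ii), both $\tr_{n+1}(x)$ and $\tr_n(\Ecal_{n+1}(x))$ produce the same total closure of all $n+1$ strands of $x$, the only difference being the order in which we close them; planar isotopy identifies the two closed diagrams. For (iv), cyclicity follows from the fact that the closure of $xy$ and the closure of $yx$ differ only by an isotopy that cyclically rotates the diagram around its closing arcs. For (v), the product $i_n(x)\cdot E_n$ has $x$ on the left $n$ strands, a vertical string at position $n{+}1$ coming out of the top of $i_n(x)$, and then the cup-cap of $E_n$ below; taking $\tr_{n+1}$ closes the rightmost pair through $E_n$ into a single arc that simply reconnects the top and bottom of the $n$-th strand of $x$, producing precisely the closure of $x$. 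For (vii), $1_n$ is $n$ parallel vertical strands, and closing each strand individually gives $n$ disjoint circles, so $\tr_n(1_n)=d^n$.

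The most delicate step is (iii), which is the characteristic ``absorption'' relation for the Jones projection. To prove it, I would draw $E_n\cdot i_n(x)\cdot E_n$ and observe the following: $i_n(x)$ places $x$ on strands $1,\dots,n$ with a straight vertical strand at position $n{+}1$; the lower $E_n$ caps together the bottom endpoints $n$ and $n{+}1$ of $i_n(x)$, while the upper $E_n$ cups together the top endpoints $n$ and $n{+}1$. Consequently, the vertical strand at position $n{+}1$ inside $i_n(x)$ becomes an arc that connects the top-$n$ and bottom-$n$ endpoints of $x$ on its right side; diagrammatically this exactly realises the conditional expectation $\Ecal_n(x)\in\TL_{n-1}$ sitting on strands $1,\dots,n-1$. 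The remaining structure, namely strands at positions $n$ and $n{+}1$ outside $x$ together with the caps and cups of the two $E_n$'s, is precisely the diagram $i_n(i_{n-1}(-))\cdot E_n$ applied to that $\Ecal_n(x)$. Rearranging these strands by planar isotopy yields the left-hand side.

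For (vi), once (iii) and the simpler relations are established, I would argue as follows. The left side $\tr_n(\Ecal_{n+1}(x)\cdot y)$ is obtained by first closing the $(n{+}1)$-st strand of $x\in\TL_{n+1}$, stacking $y$, and then taking the full $n$-strand closure. The right side $\tr_{n+1}(x\cdot i_n(y))$ first adds a vertical strand to $y$, stacks it with $x$, and then takes the full $(n{+}1)$-strand closure. Both produce the same closed planar diagram: the $(n{+}1)$-st arc on the right is closed either by $\Ecal_{n+1}$ (internally) or by the outer closure of $\tr_{n+1}$ together with the vertical strand of $i_n(y)$, and planar isotopy identifies the two. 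Alternatively, this can be derived algebraically by combining (ii) with the $\TL_n$-bimodule property of $\Ecal_{n+1}$, namely $\Ecal_{n+1}(x\cdot i_n(y))=\Ecal_{n+1}(x)\cdot y$, which itself is a one-line diagrammatic check. The main obstacle throughout is not conceptual but notational: carefully keeping track of which strand is which when the $(n{+}1)$-st strand is involved in both an inclusion and a cup/cap; once this bookkeeping is fixed, every identity reduces to recognising isotopic planar diagrams modulo the $d$-loop rule.
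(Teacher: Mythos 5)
Your proposal is correct and follows essentially the same route as the paper: every identity is verified by drawing both sides as planar tangles and identifying them up to isotopy and the $d$-per-loop rule, with (iii) handled by exactly the same observation that the capped-off vertical strand of $i_n(x)$ realises $\Ecal_n(x)$. The only minor divergence is in (vi), where the paper reduces to (v) and then matches diagrams, while you reduce to (ii) via the bimodule identity $\Ecal_{n+1}(x\cdot i_n(y))=\Ecal_{n+1}(x)\cdot y$ --- an equally valid (arguably cleaner) one-line variant of the same diagrammatic argument.
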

\begin{proof} {\ }
 \begin{enumerate}[(i)]
  \item $\Ecal_{n+1}\circ{i_n}\colon\TL_n\to\TL_n$,
   \[
    \Ecal_{n+1}\circ{i_n}\left(\tldiagram{$x$}\right) = \Ecal_{n+1}\left(\tlinclusion{$x$}\right) = \tlconditionalinclusion{$x$} = d\cdot\id_n\left(\tldiagram{$x$}\right)
   \]
  \item $\tr_{n+1}\colon\TL_{n+1}\to\TL_0$,
   \[
    \tr_{n+1}\left(\tldiagram{$x$}\right) = \tltraceplusone{$x$} = \tr_n\left(\tlconditional{$x$}\right) = \tr_n\circ\Ecal_{n+1}\left(\tldiagram{$x$}\right)
   \]
  \item We first have $i_n\circ{i_{n-1}}\circ\Ecal_n(x)$,
   \[
    \knotsinmath{\begin{scope}[scale=0.75]
     \draw (0,0) rectangle (1,1);
     \draw (0.25,1.5)--(0.25,1);
     \draw (0.25,0)--(0.25,-0.5);
     \draw (0.75,0)--(0.75,-0.5);
     \draw (0.75,1.5)--(0.75,1);
     \draw (0.9,1)--(0.9,1.5);
     \draw (0.9,-0.5)--(0.9,0);
     \draw (0.5,0.5) node[scale=1.33] {$x$};
     \draw[red] (0.9,1.5)--(1.2,1.5)--(1.2,-0.5)--(0.9,-0.5);
     \draw (0.53,1.25) node {$\cdots$};
     \draw (0.53,-0.25) node {$\cdots$};
     \draw (1.5,1.5)--(1.5,-0.5);
     \draw (1.75,1.5)--(1.75,-0.5);
    \end{scope}}
   \]
   and so, multiplying by $E_n$ gives
   \[
    \knotsinmath{\begin{scope}[scale=0.75]
     \draw (0,0) rectangle (1,1);
     \draw (0.25,1.5)--(0.25,1);
     \draw (0.25,0)--(0.25,-0.5);
     \draw (0.75,0)--(0.75,-0.5);
     \draw (0.75,1.5)--(0.75,1);
     \draw (0.9,1)--(0.9,1.5);
     \draw (0.9,-0.5)--(0.9,0);
     \draw (0.5,0.5) node[scale=1.33] {$x$};
     \draw[red] (0.9,1.5)--(1.2,1.5)--(1.2,-0.5)--(0.9,-0.5);
     \draw (0.53,1.25) node {$\cdots$};
     \draw (0.53,-0.25) node {$\cdots$};
     \draw (1.5,-0.5) [dndn=0.5];
     \draw (1.5,1.5) [upup=0.5];
    \end{scope}}
   \]
   This can be seen to be the same as $E_ni_n(x)E_n$,
   \[
    \knotsinmath{\begin{scope}[scale=0.75]
     \draw (0,0) rectangle (1,1);
     \draw (0.25,3.5)--(0.25,1);
     \draw (0.25,0)--(0.25,-2.5);
     \draw (0.75,0)--(0.75,-2.5);
     \draw (0.75,3.5)--(0.75,1);
     \draw (0.9,1)--(0.9,1.5);
     \draw (0.9,-0.5)--(0.9,0);
     \draw (0.5,0.5) node[scale=1.33] {$x$};
     \draw (0.53,1.25) node {$\cdots$};
     \draw (0.53,-0.25) node {$\cdots$};
     \draw (0.9,-2.5) [dndn=0.5];
     \draw (0.9,3.5) [upup=0.5];
     \draw (0.9,-0.5) [upup=0.5];
     \draw (0.9,1.5) [dndn=0.5];
     \draw (1.4,-0.5)--(1.4,1.5);
    \end{scope}}
   \]
   Therefore, we can see that for any $x\in\TL_n$, we have $E_ni_n(x)E_n=(i_n\circ{i_{n-1}}\circ\Ecal_n(x))E_n$.
  \item $tr_n(xy)\colon\TL_n\times\TL_n\to\TL_0$,
   \[
    \tr_n\left(
     \knotsinmath{\begin{scope}[scale=0.75]
      \begin{scope}[yshift=-0.5cm]
       \draw (0,0) rectangle (1,1);
       \draw (0.25,1.5)--(0.25,1);
       \draw (0.25,0)--(0.25,-0.5);
       \draw (0.75,0)--(0.75,-0.5);
       \draw (0.75,1.5)--(0.75,1);
       \draw (0.53,1.25) node {$\cdots$};
       \draw (0.53,-0.25) node {$\cdots$};
       \draw (0.5,0.5) node[scale=1.33] {$x$};
      \end{scope}
      \begin{scope}[yshift=-0.5cm,xshift=1.5cm]
       \draw (-0.25,0.5) node[scale=1.33] {$\cdot$};
       \draw (0,0) rectangle (1,1);
       \draw (0.25,1.5)--(0.25,1);
       \draw (0.25,0)--(0.25,-0.5);
       \draw (0.75,0)--(0.75,-0.5);
       \draw (0.75,1.5)--(0.75,1);
       \draw (0.53,1.25) node {$\cdots$};
       \draw (0.53,-0.25) node {$\cdots$};
       \draw (0.5,0.5) node[scale=1.33] {$y$};
      \end{scope}
     \end{scope}}\right) = \tr_n\left(\knotsinmath{\begin{scope}[scale=0.75,yshift=0.25cm]
      \draw (0,0) rectangle (1,1);
      \draw (0.25,1.5)--(0.25,1);
      \draw (0.25,0)--(0.25,-0.5);
      \draw (0.75,0)--(0.75,-0.5);
      \draw (0.75,1.5)--(0.75,1);
      \draw (0.53,1.25) node {$\cdots$};
      \draw (0.53,-0.25) node {$\cdots$};
      \draw (0,-0.5) rectangle (1,-1.5);
      \draw (0.25,-1.5)--(0.25,-2);
      \draw (0.75,-1.5)--(0.75,-2);
      \draw (0.53,-1.75) node {$\cdots$};
      \draw (0.5,0.5) node[scale=1.33] {$x$};
      \draw (0.5,-1) node[scale=1.33] {$y$};
     \end{scope}}\right) = \knotsinmath{\begin{scope}[scale=0.75,yshift=0.25cm]
      \draw (0,0) rectangle (1,1);
      \draw (0.25,1.5)--(0.25,1);
      \draw (0.25,0)--(0.25,-0.5);
      \draw (0.75,0)--(0.75,-0.5);
      \draw (0.75,1.5)--(0.75,1);
      \draw (0.53,1.25) node {$\cdots$};
      \draw (0.53,-0.25) node {$\cdots$};
      \draw (0,-0.5) rectangle (1,-1.5);
      \draw (0.25,-1.5)--(0.25,-2);
      \draw (0.75,-1.5)--(0.75,-2);
      \draw (0.53,-1.75) node {$\cdots$};
      \draw (0.5,0.5) node[scale=1.33] {$x$};
      \draw (0.5,-1) node[scale=1.33] {$y$};
      \draw[red] (0.75,1.5)--(1.25,1.5)--(1.25,-2)--(0.75,-2);
      \draw[red] (0.25,1.5)--(0.25,1.75)--(1.5,1.75)--(1.5,-2.25)--(0.25,-2.25)--(0.25,-2);
     \end{scope}}
   \]
   By moving the $x$ diagram around in a clockwise fashion, we get,
   \[
    = \knotsinmath{\begin{scope}[scale=0.75,yshift=0.25cm]
      \draw (0,0) rectangle (1,1);
      \draw (0.25,1.5)--(0.25,1);
      \draw (0.25,0)--(0.25,-0.5);
      \draw (0.75,0)--(0.75,-0.5);
      \draw (0.75,1.5)--(0.75,1);
      \draw (0.53,1.25) node {$\cdots$};
      \draw (0.53,-0.25) node {$\cdots$};
      \draw (0,-0.5) rectangle (1,-1.5);
      \draw (0.25,-1.5)--(0.25,-2);
      \draw (0.75,-1.5)--(0.75,-2);
      \draw (0.53,-1.75) node {$\cdots$};
      \draw (0.5,0.5) node[scale=1.33] {$y$};
      \draw (0.5,-1) node[scale=1.33] {$x$};
      \draw[red] (0.75,1.5)--(1.25,1.5)--(1.25,-2)--(0.75,-2);
      \draw[red] (0.25,1.5)--(0.25,1.75)--(1.5,1.75)--(1.5,-2.25)--(0.25,-2.25)--(0.25,-2);
     \end{scope}} = \tr_n(yx)
   \]
  \item By (ii), we know $\tr_{n+1}=\tr_n\circ\Ecal_{n+1}$. And we also know $E_n\in\TL_{n+1}$, so $\tr_{n+1}(i_n(x)\cdot{E_n})=\tr_n(\Ecal_{n+1}(i_n(x)\cdot{E_n}))$. And so,
   \[
    \tr_n\left(\Ecal_{n+1}\left(\tlinclusion{$x$}\cdot\,\knotsinmath{\begin{scope}[scale=0.75,yshift=-0.5cm]
     \draw (0.25,-0.5) [dnup=0];
     \draw (1,-0.5) [dndn=0.5];
     \draw (1,1.5) [upup=0.5];
     \draw (0.8,-0.5) [dnup=0];
     \draw (0.55,0.5) node {$\cdots$};
    \end{scope}}\;\;\right)\right) = \tr_n\left(\Ecal_{n+1}\left(\knotsinmath{\begin{scope}[scale=0.75,yshift=-0.3cm]
     \begin{scope}[yshift=1cm]
      \draw (0.625,0.375) node[scale=1.33] {$x$};
      \draw (0,0) rectangle (1.25,0.75);
      \draw (0.25,-0.5)--(0.25,0);
      \draw (0.8,-0.5)--(0.8,0);
      \draw (1,-0.5)--(1,0);
      \draw (0.25,0.75)--(0.25,1.25);
      \draw (0.8,0.75)--(0.8,1.25);
      \draw (1,0.75)--(1,1.25);
      \draw (1.5,-0.5)--(1.5,1.25);
      \draw (0.55,1) node {$\cdots$};
     \end{scope}
     \begin{scope}[yshift=-1cm]
      \draw (0.25,-0.5) [dnup=0];
      \draw (1,-0.5) [dndn=0.5];
      \draw (1,1.5) [upup=0.5];
      \draw (0.8,-0.5) [dnup=0];
      \draw (0.55,0.5) node {$\cdots$};
     \end{scope}
    \end{scope}}\;\right)\right) = \tr_n\left(\knotsinmath{\begin{scope}[scale=0.75,yshift=-0.3cm]
     \begin{scope}[yshift=1cm]
      \draw (0,0) rectangle (1.25,0.75);
      \draw (0.625,0.375) node[scale=1.33] {$x$};
      \draw (0.25,-0.5)--(0.25,0);
      \draw (0.8,-0.5)--(0.8,0);
      \draw (1,-0.5)--(1,0);
      \draw (0.25,0.75)--(0.25,1.25);
      \draw (0.8,0.75)--(0.8,1.25);
      \draw (1,0.75)--(1,1.25);
      \draw (1.5,-0.5)--(1.5,1.25);
      \draw (0.55,1) node {$\cdots$};
     \end{scope}
     \begin{scope}[yshift=-1cm]
      \draw (0.25,-0.5) [dnup=0];
      \draw (1,-0.5) [dndn=0.5];
      \draw (1,1.5) [upup=0.5];
      \draw (0.8,-0.5) [dnup=0];
      \draw (0.55,0.5) node {$\cdots$};
     \end{scope}
     \draw[red] (1.5,2.25)--(1.75,2.25)--(1.75,-1.5)--(1.5,-1.5);
    \end{scope}}\;\right)
   \]
   And by straightening the last string, it is easy to see that we get,
   \[=\tr_n\left(\tldiagram{$x$}\right)\]
  \item From the above property, we know $\tr_n(\Ecal_{n+1}(x)\cdot{y})=\tr_{n+1}(i_n(\Ecal_{n+1}(x)\cdot{y})\cdot{E_n})$. So, we now need to show $i_n(\Ecal_{n+1}(x)\cdot{y})\cdot{E_n}=x\cdot{i_n(y)}$.
   \[
    i_n\left(\tlconditional{$x$}\cdot\,\tldiagram{$y$}\right) = i_n\left(\knotsinmath{\begin{scope}[scale=0.75,yshift=-0.5cm]
     \begin{scope}[yshift=1cm]
      \draw (0.5,0.5) node[scale=1.33] {$x$};
      \draw (0,0) rectangle (1,1);
      \draw (0.25,1.5)--(0.25,1);
      \draw (0.25,0)--(0.25,-0.5);
      \draw (0.75,0)--(0.75,-0.5);
      \draw (0.75,1.5)--(0.75,1);
      \draw (0.9,1)--(0.9,1.5);
      \draw (0.9,-0.5)--(0.9,0);
      \draw[red] (0.9,1.5)--(1.2,1.5)--(1.2,-0.5)--(0.9,-0.5);
      \draw (0.53,1.25) node {$\cdots$};
      \draw (0.53,-0.5) node {$\cdots$};
     \end{scope}
     \begin{scope}[yshift=-1cm]
      \draw (0.5,0.5) node[scale=1.33] {$y$};
      \draw (0,0) rectangle (1,1);
      \draw (0.25,1.5)--(0.25,1);
      \draw (0.25,0)--(0.25,-0.5);
      \draw (0.75,0)--(0.75,-0.5);
      \draw (0.75,1.5)--(0.75,1);
      \draw (0.53,-0.25) node {$\cdots$};
     \end{scope}
    \end{scope}}\;\right) = 
    \knotsinmath{
    \begin{scope}[scale=0.75,yshift=-0.5cm]
     \begin{scope}[yshift=1cm]
      \draw (0.5,0.5) node[scale=1.33] {$x$};
      \draw (0,0) rectangle (1,1);
      \draw (0.25,1.5)--(0.25,1);
      \draw (0.25,0)--(0.25,-0.5);
      \draw (0.75,0)--(0.75,-0.5);
      \draw (0.75,1.5)--(0.75,1);
      \draw (0.9,1)--(0.9,1.5);
      \draw (0.9,-0.5)--(0.9,0);
      \draw[red] (0.9,1.5)--(1.2,1.5)--(1.2,-0.5)--(0.9,-0.5);
      \draw (0.53,1.25) node {$\cdots$};
      \draw (0.53,-0.5) node {$\cdots$};
     \end{scope}
     \begin{scope}[yshift=-1cm]
      \draw (0.5,0.5) node[scale=1.33] {$y$};
      \draw (0,0) rectangle (1,1);
      \draw (0.25,1.5)--(0.25,1);
      \draw (0.25,0)--(0.25,-0.5);
      \draw (0.75,0)--(0.75,-0.5);
      \draw (0.75,1.5)--(0.75,1);
      \draw (0.53,-0.25) node {$\cdots$};
     \end{scope}
     \draw (1.5,2.5)--(1.5,-1.5);
    \end{scope}}
   \]
   And so $i_n(\Ecal_{n+1}(x)\cdot{y})\cdot{E_n}$ becomes
   \[
    \knotsinmath{
    \begin{scope}[scale=0.75,yshift=-0.5cm]
     \begin{scope}[yshift=1cm]
      \draw (0.5,0.5) node[scale=1.33] {$x$};
      \draw (0,0) rectangle (1,1);
      \draw (0.25,1.5)--(0.25,1);
      \draw (0.25,0)--(0.25,-0.5);
      \draw (0.75,0)--(0.75,-0.5);
      \draw (0.75,1.5)--(0.75,1);
      \draw (0.9,1)--(0.9,1.5);
      \draw (0.9,-0.5)--(0.9,0);
      \draw[red] (0.9,1.5)--(1.2,1.5)--(1.2,-0.5)--(0.9,-0.5);
      \draw (0.53,1.25) node {$\cdots$};
      \draw (0.53,-0.5) node {$\cdots$};
     \end{scope}
     \begin{scope}[yshift=-1cm]
      \draw (0.5,0.5) node[scale=1.33] {$y$};
      \draw (0,0) rectangle (1,1);
      \draw (0.25,1.5)--(0.25,1);
      \draw (0.25,0)--(0.25,-0.5);
      \draw (0.75,0)--(0.75,-0.5);
      \draw (0.75,1.5)--(0.75,1);
      \draw (0.53,-0.25) node {$\cdots$};
     \end{scope}
     \draw (1.5,2.5)--(1.5,-1.5);
     \begin{scope}[xshift=1.75cm]
      \draw (0,0.5) node {$\cdot$};
      \draw (0.25,0.5) [dnup=0];
      \draw (0.25,-1.5) [dnup=0];
      \draw (1,-1.5) [dndn=0.5];
      \draw (1,2.5) [upup=0.5];
      \draw (0.8,-1.5) [dnup=0];
      \draw (0.8,0.5) [dnup=0];
      \draw (0.55,0.5) node {$\cdots$};
     \end{scope}
    \end{scope}}=\knotsinmath{
    \begin{scope}[scale=0.75,yshift=0.5cm]
     \begin{scope}[yshift=1cm]
      \draw (0.5,0.5) node[scale=1.33] {$x$};
      \draw (0,0) rectangle (1.1,1);
      \draw (0.25,1.5)--(0.25,1);
      \draw (0.25,0)--(0.25,-0.5);
      \draw (0.8,0)--(0.8,-0.5);
      \draw (0.8,1.5)--(0.8,1);
      \draw (0.9,1)--(0.9,1.5);
      \draw (0.9,-0.5)--(0.9,0);
      \draw[red] (0.9,1.5)--(1.2,1.5)--(1.2,-0.5)--(0.9,-0.5);
      \draw (0.53,1.25) node {$\cdots$};
      \draw (0.53,-0.5) node {$\cdots$};
     \end{scope}
     \begin{scope}[yshift=-1cm]
      \draw (0.5,0.5) node[scale=1.33] {$y$};
      \draw (0,0) rectangle (1.1,1);
      \draw (0.25,1.5)--(0.25,1);
      \draw (0.25,0)--(0.25,-0.5);
      \draw (0.8,0)--(0.8,-0.5);
      \draw (0.8,1.5)--(0.8,1);
      \draw (1,0)--(1,-0.5);
     \end{scope}
     \draw (1.5,2.5)--(1.5,-1.5);
     \begin{scope}[yshift=-3cm]
      \draw (0.25,-0.5) [dnup=0];
      \draw (1,-0.5) [dndn=0.5];
      \draw (1,1.5) [upup=0.5];
      \draw (0.8,-0.5) [dnup=0];
      \draw (0.55,0.5) node {$\cdots$};
     \end{scope}
    \end{scope}}
   \]
   Now from (i), we know $\tr_{n+1}=\tr_n\circ\Ecal_{n+1}$, so
   \[
    \knotsinmath{
    \begin{scope}[scale=0.75,yshift=0.5cm]
     \begin{scope}[yshift=1cm]
      \draw (0.5,0.5) node[scale=1.33] {$x$};
      \draw (0,0) rectangle (1.1,1);
      \draw (0.25,1.5)--(0.25,1);
      \draw (0.25,0)--(0.25,-0.5);
      \draw (0.8,0)--(0.8,-0.5);
      \draw (0.8,1.5)--(0.8,1);
      \draw (0.9,1)--(0.9,1.5);
      \draw (0.9,-0.5)--(0.9,0);
      \draw[red] (0.9,1.5)--(1.2,1.5)--(1.2,-0.5)--(0.9,-0.5);
      \draw (0.53,1.25) node {$\cdots$};
      \draw (0.53,-0.5) node {$\cdots$};
     \end{scope}
     \begin{scope}[yshift=-1cm]
      \draw (0.5,0.5) node[scale=1.33] {$y$};
      \draw (0,0) rectangle (1.1,1);
      \draw (0.25,1.5)--(0.25,1);
      \draw (0.25,0)--(0.25,-0.5);
      \draw (0.8,0)--(0.8,-0.5);
      \draw (0.8,1.5)--(0.8,1);
      \draw (1,0)--(1,-0.5);
     \end{scope}
     \draw (1.5,2.5)--(1.5,-1.5);
     \begin{scope}[yshift=-3cm]
      \draw (0.25,-0.5) [dnup=0];
      \draw (1,-0.5) [dndn=0.5];
      \draw (1,1.5) [upup=0.5];
      \draw (0.8,-0.5) [dnup=0];
      \draw (0.55,0.5) node {$\cdots$};
     \end{scope}
     \draw[red] (1.5,2.5)--(1.75,2.5)--(1.75,-3.5)--(1.5,-3.5);
    \end{scope}} = \knotsinmath{
    \begin{scope}[scale=0.75,yshift=-0.5cm]
     \begin{scope}[yshift=1cm]
      \draw (0.5,0.5) node[scale=1.33] {$x$};
      \draw (0,0) rectangle (1,1);
      \draw (0.25,1.5)--(0.25,1);
      \draw (0.25,0)--(0.25,-0.5);
      \draw (0.75,0)--(0.75,-0.5);
      \draw (0.75,1.5)--(0.75,1);
      \draw (0.9,1)--(0.9,1.5);
      \draw (0.9,-0.5)--(0.9,0);
      \draw[red] (0.9,1.5)--(1.2,1.5)--(1.2,-0.5)--(0.9,-0.5);
      \draw (0.53,1.25) node {$\cdots$};
      \draw (0.53,-0.5) node {$\cdots$};
     \end{scope}
     \begin{scope}[yshift=-1cm]
      \draw (0.5,0.5) node[scale=1.33] {$y$};
      \draw (0,0) rectangle (1,1);
      \draw (0.25,1.5)--(0.25,1);
      \draw (0.25,0)--(0.25,-0.5);
      \draw (0.75,0)--(0.75,-0.5);
      \draw (0.75,1.5)--(0.75,1);
      \draw (0.53,-0.25) node {$\cdots$};
     \end{scope}
    \end{scope}}
   \]
   Now, $\Ecal_{n+1}(x\cdot{i_n(y)})$ is
   \[
    \Ecal_{n+1}\left(\tldiagram{$x$}\cdot\tlinclusion{$y$}\right) = \Ecal_{n+1}\left(\knotsinmath{\begin{scope}[scale=0.75,yshift=-0.5cm]
    \begin{scope}[yshift=1cm]
     \draw (0.75,0.5) node[scale=1.33] {$x$};
     \draw (0,0) rectangle (1.5,1);
     \draw (0.25,1.5)--(0.25,1);
     \draw (0.25,0)--(0.25,-0.5);
     \draw (0.75,0)--(0.75,-0.5);
     \draw (0.75,1.5)--(0.75,1);
     \draw (1.25,1.5)--(1.25,1);
     \draw (1.25,0)--(1.25,-0.5);
     \draw (0.53,1.25) node {$\cdots$};
     \draw (0.53,-0.5) node {$\cdots$};
    \end{scope}
    \begin{scope}[yshift=-1cm]
     \draw (0.5,0.5) node[scale=1.33] {$y$};
     \draw (0,0) rectangle (1,1);
     \draw (0.25,1.5)--(0.25,1);
     \draw (0.25,0)--(0.25,-0.5);
     \draw (0.75,0)--(0.75,-0.5);
     \draw (0.75,1.5)--(0.75,1);
     \draw (0.53,-0.25) node {$\cdots$};
     \draw (1.25,1.5)--(1.25,-0.5);
    \end{scope}
    \end{scope}}\right) = \knotsinmath{\begin{scope}[scale=0.75,yshift=-0.5cm]
    \begin{scope}[yshift=1cm]
     \draw (0.75,0.5) node[scale=1.33] {$x$};
     \draw (0,0) rectangle (1.5,1);
     \draw (0.25,1.5)--(0.25,1);
     \draw (0.25,0)--(0.25,-0.5);
     \draw (0.75,0)--(0.75,-0.5);
     \draw (0.75,1.5)--(0.75,1);
     \draw (1.25,1.5)--(1.25,1);
     \draw (1.25,0)--(1.25,-0.5);
     \draw (0.53,1.25) node {$\cdots$};
     \draw (0.53,-0.5) node {$\cdots$};
    \end{scope}
    \begin{scope}[yshift=-1cm]
     \draw (0.5,0.5) node[scale=1.33] {$y$};
     \draw (0,0) rectangle (1,1);
     \draw (0.25,1.5)--(0.25,1);
     \draw (0.25,0)--(0.25,-0.5);
     \draw (0.75,0)--(0.75,-0.5);
     \draw (0.75,1.5)--(0.75,1);
     \draw (0.53,-0.25) node {$\cdots$};
     \draw (1.25,1.5)--(1.25,-0.5);
    \end{scope}
    \draw[red] (1.25,2.5)--(1.75,2.5)--(1.75,-1.5)--(1.25,-1.5);
    \end{scope}}
   \]
   Thus $\tr_n(\Ecal_{n+1}(x)\cdot{y})=\tr_{n+1}(x\cdot{i_n(y)}$.
  \item And finally,
   \[
    \tr_n\left(\;\knotsinmath{\begin{scope}[scale=0.75,yshift=-1cm]
     \draw (0.25,0) [dnup=0];
     \draw (0.5,1) node {$\ldots$};
     \draw (0.75,0) [dnup=0];
    \end{scope}}\;\right) = \knotsinmath{\begin{scope}[scale=0.75,yshift=-1cm]
     \draw (0.25,0) [dnup=0];
     \draw (0.5,1) node {$\ldots$};
     \draw (0.75,0) [dnup=0];
     \draw[red] (0.75,2)--(1,2)--(1,0)--(0.75,0);
     \draw[red] (0.25,2)--(0.25,2.25)--(1.25,2.25)--(1.25,-0.25)--(0.25,-0.25)--(0.25,0);
    \end{scope}} = d^n
   \]
 \end{enumerate}
\end{proof}



\begin{remark}
 Now comparing the above relations and the relations found in Propositions \ref{proposition:conditional_expectation_factors} and \ref{proposition:Jonesprojections}, we find that they are related. For instance, in factors $N\subset{M}$, we know $\tr_M(E_N(x))=\tr_N(x)$ for all $x\in{N}$ where $E_N\colon{N}\to{M}$. This is the same as (ii) in the above proposition.
 This means that the conditional expectation on factors is related to the conditional expectation on $\TL_n$, and the Jones projections are related to the generators on $\TL_n$. But to relate one to the other, we must be extra careful with the inclusion map on $\TL_n$. For instance, part (iii) above is similar to the relation in part (i) of Proposition \ref{proposition:Jonesprojections}.
\end{remark}

The trace on $\TL_n$ is called the \textit{Markov trace}\index{Markov trace}, and is unique up to a scalar (the scalar being $d^n$).

\begin{remark}
 To get a normal trace, i.e., $\tr_n(1)=1$. We need to define the trace as the number of loops minus $n$. So, then it is clear that we would get: $\tr_n(1)=1$; $\tr_n(xy)=\tr_n(yx)$; and $\tr_{n+1}(xE_{n})=d^{-1}\tr_{n}(x)$ for all $x,y\in\TL_n$.
 We will not use the normal trace, since we will normalise our Jones polynomial in the following chapter.
 So this is only an interesting remark and nothing else.
\end{remark}

\subsection{The Jones-Wenzl idempotents}

In this section we will discuss the Jones-Wenzl idempotents. We say an element is an idempotent if when multiplied by itself it equals itself. So any identity is itself an idempotent.
We now let $[n]_q=[n]$, for better readability.

\begin{proposition}
 Suppose there exists $f^{(n)}\in\TL_n$ such that $f^{(n)}\neq0$, $(f^{(n)})^2=f^{(n)}$ and for all $i=1,\ldots,n-1$ we have $E_if^{(n)}=f^{(n)}E_i$. Then $f^{(n)}$ is unique for each $\TL_n$.
\end{proposition}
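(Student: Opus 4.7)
The plan is to reduce the claim to a centrality-plus-absorption argument. The key algebraic observation is that $\TL_n$ is generated as a unital $\CC$-algebra by $\{1_n, E_1, \ldots, E_{n-1}\}$, so any element commuting with each generator commutes with every element of $\TL_n$. The hypothesis $E_i f^{(n)} = f^{(n)} E_i$ for $i = 1, \ldots, n-1$ therefore places $f^{(n)}$ in the center $Z(\TL_n)$.

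Suppose $f, g \in \TL_n$ both satisfy the stated hypotheses. Since both are central idempotents, the product $p := fg$ satisfies $p = gf$ and $p^2 = f^2 g^2 = fg = p$, so $p$ is itself a central idempotent. I would then show the absorption identities $pf = f$ and $pg = g$, from which one immediately obtains $f = pf = fg = pg = g$, giving the desired uniqueness.

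The absorption step is the technical heart of the argument. Writing $g = 1_n + r$ with $r$ in the two-sided ideal $J \subseteq \TL_n$ generated by $E_1, \ldots, E_{n-1}$ (the natural normalization implicit in the Jones--Wenzl setup: the image of $f^{(n)}$ in the quotient $\TL_n / J \cong \CC$ is $1$), one has $pf = fgf$, and the aim is to show $fr \cdot f = 0$. Using the centrality of $f$ and expanding $r$ as a linear combination of reduced Temperley--Lieb words each containing some generator $E_i$, I would argue by induction on $n$ (base case $\TL_1 = \CC \cdot 1_1$ being trivial) that $fE_i = 0$ for every $i$. The inductive step uses the relations in Proposition \ref{proposition:tldiagramrelations}, particularly parts (i) and (iii), together with the Markov trace identity $\tr_{n+1}(i_n(x) \cdot E_n) = \tr_n(x)$ from part (v), to transport the absorption across the inclusion tower $\TL_{n-1} \hookrightarrow \TL_n$ and reduce it to the absorption already established at rank $n-1$.

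The main obstacle lies in promoting the commutation hypothesis to the stronger annihilation property $fE_i = 0$. This step is subtle because the commutation condition alone admits other central idempotents coming from the Wedderburn summands of $\TL_n$ indexed by fewer than $n$ through-strings, and it is the combined force of commutation, idempotence, nonvanishing, and the normalization modulo $J$ that cuts the list of candidates down to the single top-cell central idempotent defining $f^{(n)}$.
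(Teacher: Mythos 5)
There is a genuine gap, and it sits exactly where you located the ``technical heart'': the claim that commutation, idempotence, nonvanishing, and the normalization modulo $J$ together force $fE_i=0$. This implication is false. The identity $1_n$ is nonzero, idempotent, commutes with every $E_i$, and has image $1$ in $\TL_n/J\cong\CC$, yet $1_nE_i=E_i\neq0$; more pointedly, in $\TL_2$ the three elements $1_2$, $d^{-1}E_1$ and $1_2-d^{-1}E_1$ are all nonzero central idempotents, so under the commutation-only reading the proposition itself is false and no argument can close the gap. The statement is only true---and is only ever used later in the paper---under the stronger Jones--Wenzl condition $E_if^{(n)}=f^{(n)}E_i=0$; the printed hypothesis has simply dropped the ``$=0$'', and the paper's own proof tacitly restores it at the moment it asserts $\left(\sum_iy_iE_i\right)f_1^{(n)}=0$. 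Your instinct that annihilation is what actually drives uniqueness is correct; the error is in believing it can be recovered from commutation rather than recognizing that it must be part of the hypothesis.

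Once the hypothesis is read as annihilation, your absorption scheme collapses to a two-line argument and the induction on $n$ through the inclusion tower is unnecessary. Write $f=x\cdot1_n+r$ with $r\in J$; since every non-identity Temperley--Lieb diagram is a product of the $E_i$, annihilation gives $fr=rf=0$, so $f=f^2=xf$ and hence $x=1$ because $f\neq0$ (this is the justification the paper omits for ``$x=x'=1$''). Then for two such elements $f=1_n+r$ and $g=1_n+s$ one gets $fg=(1_n+r)g=g$ and $fg=f(1_n+s)=f$, so $f=g$. Note also that your opening reduction, while true, buys little: commuting with the generators does place $f$ in the center of $\TL_n$, but $\TL_n$ is not simple and has one central idempotent per Wedderburn block, as you yourself observe in your last paragraph---which is precisely why centrality alone cannot single out $f^{(n)}$.
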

\begin{proof}
 Let $f_1^{(n)},f^{(n)}\in\TL_n$ satisfy all properties and $f_1^{(n)}\neq{f^{(n)}}$.
 So by writing them as a linear combination of the generators $\{1,E_1,\ldots,E_{n-1}\}$ we have
 \[f^{(n)}=x+\sum_i{y_iE_i}\qquad\qquad{f_1^{(n)}}=x^\prime+\sum_i{y^\prime_iE_i}\]
 It then follows that $x=x^\prime=1$. Since both are idempotents we then have,
 \begin{align*}
  f^{(n)} &= f^{(n)}f_1^{(n)}\\
  &= \left(1+\sum_i{y_iE_i}\right)f_1^{(n)}
 \end{align*}
 Now $\left(\sum_iy_iE_i\right)f_1^{(n)}=0$, and so the result then follows, i.e., $f^{(n)}=f_1^{(n)}$.
\end{proof}

\begin{definition}[Jones-Wenzl idempotents {\cite[p.~30]{penneys}}]
 The \textit{Jones-Wenzl idempotents}\index{Jones-Wenzl idempotents} are $f^{\left(n\right)}\in\TL_n$ are defined inductively such that
 \begin{enumerate}[(i)]
  \item $f^{\left(0\right)}\in\TL_0$. is the empty diagram
  \item $f^{\left(1\right)}\in\TL_1$ is the single strand 
  \item if $[n+1]\neq0$, then
  \[f^{(n+1)}=i_n\left(f^{(n)}\right)-\frac{[n]}{[n+1]}i_n\left(f^{(n)}\right)E_ni_n\left(f^{(n)}\right).\]
 \end{enumerate}
\end{definition}

To understand this better, we will need to use the diagrams. So, $f^{(n+1)}$ can be rewritten as,
\begin{align*}
 f^{(n+1)} &=
 \knotsinmath{\begin{scope}[scale=0.75,yshift=-0.5cm]
  \draw (0.5,0.5) node[scale=1.33] {$f^{(n)}$};
  \draw (0,0) rectangle (1,1);
  \draw (0.25,1.5)--(0.25,1);
  \draw (0.25,0)--(0.25,-0.5);
  \draw (0.75,0)--(0.75,-0.5);
  \draw (0.75,1.5)--(0.75,1);
  \draw (0.53,1.25) node {$\cdots$};
  \draw (0.53,-0.25) node {$\cdots$};
  \draw (1.25,1.5)--(1.25,-0.5);
 \end{scope}}-\frac{[n]}{[n+1]}\knotsinmath{\begin{scope}[scale=0.75,yshift=-0.5cm]
  \draw (0.5,0.5) node[scale=1.33] {$f^{(n)}$};
  \draw (0,0) rectangle (1,1);
  \draw (0.25,1.5)--(0.25,1);
  \draw (0.25,0)--(0.25,-0.5);
  \draw (0.75,0)--(0.75,-0.5);
  \draw (0.75,1.5)--(0.75,1);
  \draw (0.53,1.25) node {$\cdots$};
  \draw (0.53,-0.25) node {$\cdots$};
  \draw (1.25,1.5)--(1.25,-0.5);
 \end{scope}}\cdot\knotsinmath{\begin{scope}[scale=0.75]\draw (2,-0.85) [dnup=0]; \draw (2.5,-0.85) [dnup=0]; \draw (3,-0.85) [dndn=0.5]; \draw (3,1.15) [upup=0.5]; \draw (2.3,0.15) node {$\cdots$};\end{scope}}\cdot\knotsinmath{\begin{scope}[scale=0.75,yshift=-0.5cm]
  \draw (0.5,0.5) node[scale=1.33] {$f^{(n)}$};
  \draw (0,0) rectangle (1,1);
  \draw (0.25,1.5)--(0.25,1);
  \draw (0.25,0)--(0.25,-0.5);
  \draw (0.75,0)--(0.75,-0.5);
  \draw (0.75,1.5)--(0.75,1);
  \draw (0.53,1.25) node {$\cdots$};
  \draw (0.53,-0.25) node {$\cdots$};
  \draw (1.25,1.5)--(1.25,-0.5);
 \end{scope}}\\
 &=
 \knotsinmath{\begin{scope}[scale=0.75,yshift=-0.5cm]
  \draw (0.5,0.5) node[scale=1.33] {$f^{(n)}$};
  \draw (0,0) rectangle (1,1);
  \draw (0.25,1.5)--(0.25,1);
  \draw (0.25,0)--(0.25,-0.5);
  \draw (0.75,0)--(0.75,-0.5);
  \draw (0.75,1.5)--(0.75,1);
  \draw (0.53,1.25) node {$\cdots$};
  \draw (0.53,-0.25) node {$\cdots$};
  \draw (1.25,1.5)--(1.25,-0.5);
 \end{scope}}-\frac{[n]}{[n+1]}\knotsinmath{
  \begin{scope}[scale=0.75,yshift=1.5cm]
  \draw (0.5,0.5) node[scale=1.33] {$f^{(n)}$};
  \draw (0,0) rectangle (1.1,1);
  \draw (0.25,1.5)--(0.25,1);
  \draw (0.25,0)--(0.25,-0.5);
  \draw (0.8,0)--(0.8,-0.5);
  \draw (0.8,1.5)--(0.8,1);
  \draw (1,0)--(1,-0.5);
  \draw (1,1.5)--(1,1);
  \draw (1.5,1.5)--(1.5,-0.5);
  \draw (0.53,1.25) node {$\cdots$};
  \end{scope}
  \begin{scope}[scale=0.75,yshift=-0.5cm]
  \draw (0.25,-0.5) [dnup=0];
  \draw (1,-0.5) [dndn=0.5];
  \draw (1,1.5) [upup=0.5];
  \draw (0.8,-0.5) [dnup=0];
  \draw (0.55,0.5) node {$\cdots$};
  \end{scope}
  \begin{scope}[scale=0.75,yshift=-2.5cm]
  \draw (0.5,0.5) node[scale=1.33] {$f^{(n)}$};
  \draw (0,0) rectangle (1.1,1);
  \draw (0.25,1.5)--(0.25,1);
  \draw (0.25,0)--(0.25,-0.5);
  \draw (0.8,0)--(0.8,-0.5);
  \draw (0.8,1.5)--(0.8,1);
  \draw (1,1)--(1,1.5);
  \draw (1,-0.5)--(1,0);
  \draw (1.5,1.5)--(1.5,-0.5);
  \draw (0.53,-0.25) node {$\cdots$};
  \end{scope}}
  \tag{JW}\label{equation:jwidempotents}
\end{align*}

\begin{example} {\ }
 \[f^{(1)}=\knotsinmath{\begin{scope}[scale=0.75]\draw (0.25,-1) [dnup=0];\end{scope}}\]
 \[f^{(2)}=\knotsinmath{\begin{scope}[scale=0.75]\draw (0.25,-1) [dnup=0]; \draw (0.75,-1) [dnup=0];\end{scope}}-\frac{1}{[2]}\knotsinmath{\begin{scope}[scale=0.75]\draw (0.25,-1) [dndn=0.5];\draw (0.25,1) [upup=0.5];\end{scope}}=1_2-\frac{1}{[2]}E_1\]
\end{example}

Now we consider mapping $q\in\{Q\cup-Q\}$ to $[2]$, and find that the map is bijective, thus our map uniquely determines $q$ (see \cite{penneys}). So we let $d=[2]$.

\begin{proposition}[{\cite[p.~31]{penneys}}]
 Suppose $n\geq0$ and $[1],\ldots,[n],[n+1]\neq0$ so $f^{\left(0\right)},\ldots,f^{\left(n+1\right)}$ are well-defined. Then, $f^{\left(n+1\right)}$ satisfies the following.
 \begin{enumerate}[(i)]
  \item $f^{\left(n+1\right)}=\left(f^{\left(n+1\right)}\right)^*=\left(f^{\left(n+1\right)}\right)^2$
  \item $\Ecal_{n+1}\left(f^{\left(n+1\right)}\right)=\dfrac{[n+2]}{[n+1]}f^{\left(n\right)}$
  \item $\left(i_n\left(f^{\left(n\right)}\right)\right)f^{\left(n+1\right)}=f^{\left(n+1\right)}\left(i_n\left(f^{\left(n\right)}\right)\right)=f^{\left(n+1\right)}$
  \item $\tr_{n+1}\left(f_{\left(n+1\right)}\right)=[n+2]$
  \item $E_jf^{(n+1)}=f^{(n+1)}E_j=0$ for all $j=\{1,\ldots,n\}$
 \end{enumerate}
\end{proposition}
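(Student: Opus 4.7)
The plan is to induct on $n$, with the base cases $n = 0$ (empty diagram) and $n = 1$ (single strand) being immediate: $f^{(1)} = 1_1$ is a self-adjoint idempotent, $\Ecal_1(1_1) = d = [2] = \tfrac{[2]}{[1]} f^{(0)}$, $\tr_1(f^{(1)}) = [2]$, and (v) is vacuous. For the inductive step, I assume $f^{(k)}$ satisfies (i)--(v) for all $k \le n$, and abbreviate $A := i_n(f^{(n)})$, $E := E_n$, and $c := \tfrac{[n]}{[n+1]}$, so that $f^{(n+1)} = A - cAEA$.

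The core of the induction is a single master identity
$$A E A E A \,=\, \tfrac{[n+1]}{[n]}\, A E A, \qquad (\ast)$$
which I would prove as follows. Proposition 7.12(iii) applied to $x = f^{(n)}$ gives $E A E = i_n(i_{n-1}(\Ecal_n(f^{(n)})))\, E$; the inductive hypothesis (ii) rewrites $\Ecal_n(f^{(n)}) = \tfrac{[n+1]}{[n]} f^{(n-1)}$, so $E A E = \tfrac{[n+1]}{[n]}\, i_n(i_{n-1}(f^{(n-1)}))\, E$. Multiplying by $A$ on either side and observing that $A \cdot i_n(i_{n-1}(f^{(n-1)})) = i_n(f^{(n)} \cdot i_{n-1}(f^{(n-1)})) = A$ (by the inductive step (iii)) yields $(\ast)$.

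With $(\ast)$ in hand the remaining assertions are mechanical. Property (iii) is immediate, since $A^2 = A$ gives $A f^{(n+1)} = A - cAEA = f^{(n+1)}$, and symmetrically on the right. Property (v) for $j < n$ follows because such $E_j$ is the image under $i_n$ of a generator of $\TL_n$ that annihilates $f^{(n)}$ by the inductive hypothesis (v); the case $j = n$ uses $(\ast)$: $E f^{(n+1)} = EA - cEAEA = EA - c \cdot \tfrac{[n+1]}{[n]} EA = 0$, because $c \cdot \tfrac{[n+1]}{[n]} = 1$. For (i), self-adjointness is clear from self-adjointness of $A$ and $E$, while idempotency expands as $(A - cAEA)^2 = A - 2cAEA + c^2 AEAEA = A - 2cAEA + cAEA = A - cAEA$, again using $(\ast)$. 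For (ii), I combine Proposition 7.12(i), $\Ecal_{n+1}(A) = d\, f^{(n)}$, with the $\TL_n$-bimodule property of $\Ecal_{n+1}$ (visually obvious from its diagrammatic description as ``close off the rightmost string'') and the diagrammatic reduction $\Ecal_{n+1}(E_n) = 1_n$ to get $\Ecal_{n+1}(AEA) = f^{(n)} \cdot 1_n \cdot f^{(n)} = f^{(n)}$; hence $\Ecal_{n+1}(f^{(n+1)}) = ([2] - \tfrac{[n]}{[n+1]}) f^{(n)}$, and Proposition 6.3 (which supplies $[2][n+1] = [n+2] + [n]$) simplifies the coefficient to $\tfrac{[n+2]}{[n+1]}$. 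Property (iv) drops out of Proposition 7.12(ii): $\tr_{n+1}(f^{(n+1)}) = \tr_n(\Ecal_{n+1}(f^{(n+1)})) = \tfrac{[n+2]}{[n+1]} \cdot [n+1] = [n+2]$, using the inductive (iv) that $\tr_n(f^{(n)}) = [n+1]$.

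The main obstacle is $(\ast)$ itself, since it is the sole step that genuinely combines Proposition 7.12(iii) with the inductive hypotheses (ii) and (iii) simultaneously; once $(\ast)$ is secure, every other verification collapses to a one-line cancellation of fractions or to an already-proven property of $\Ecal_{n+1}$. A minor secondary point is the $\TL_n$-bimodule identity $\Ecal_{n+1}(i_n(a)\, y\, i_n(b)) = a\, \Ecal_{n+1}(y)\, b$ used in (ii), which the excerpt does not spell out but which is transparent from the planar picture and should be recorded once before entering the induction.
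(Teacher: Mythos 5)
Your proof is correct and follows essentially the same route as the paper's: induction on $n$, with the relations of Proposition \ref{proposition:tldiagramrelations} and the doubling identity $[2][n+1]=[n+2]+[n]$ doing all the work. The only difference is organizational — you factor the paper's repeated diagrammatic manipulation into the single algebraic identity $(\ast)$, which the paper instead re-derives pictorially inside parts (i) and (v); incidentally, your coefficient $[2]-\tfrac{[n]}{[n+1]}$ in (ii) carries the correct sign (the paper's displayed computation has a stray $+$ there, though it still lands on $\tfrac{[n+2]}{[n+1]}$).
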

\begin{proof}
 We show this by induction on $n$.
 This is obvious for $n=0$.
 So suppose this is true for any $n$. Then, we need to show that it is also true for $n+1$.
 \begin{enumerate}[(i)]
  \item
 \begin{align*}
  \left(f^{(n+1)}\right)^2 &= \left( \knotsinmath{\begin{scope}[scale=0.75,yshift=-0.5cm]
  \draw (0.5,0.5) node[scale=1.33] {$f^{(n)}$};
  \draw (0,0) rectangle (1,1);
  \draw (0.25,1.5)--(0.25,1);
  \draw (0.25,0)--(0.25,-0.5);
  \draw (0.75,0)--(0.75,-0.5);
  \draw (0.75,1.5)--(0.75,1);
  \draw (0.53,1.25) node {$\cdots$};
  \draw (0.53,-0.25) node {$\cdots$};
  \draw (1.25,1.5)--(1.25,-0.5);
 \end{scope}}-\frac{[n]}{[n+1]}\knotsinmath{
  \begin{scope}[scale=0.75,yshift=1.5cm]
  \draw (0.5,0.5) node[scale=1.33] {$f^{(n)}$};
  \draw (0,0) rectangle (1.1,1);
  \draw (0.25,1.5)--(0.25,1);
  \draw (0.25,0)--(0.25,-0.5);
  \draw (0.8,0)--(0.8,-0.5);
  \draw (0.8,1.5)--(0.8,1);
  \draw (1,0)--(1,-0.5);
  \draw (1,1.5)--(1,1);
  \draw (1.5,1.5)--(1.5,-0.5);
  \draw (0.53,1.25) node {$\cdots$};
  \end{scope}
  \begin{scope}[scale=0.75,yshift=-0.5cm]
  \draw (0.25,-0.5) [dnup=0];
  \draw (1,-0.5) [dndn=0.5];
  \draw (1,1.5) [upup=0.5];
  \draw (0.8,-0.5) [dnup=0];
  \draw (0.55,0.5) node {$\cdots$};
  \end{scope}
  \begin{scope}[scale=0.75,yshift=-2.5cm]
  \draw (0.5,0.5) node[scale=1.33] {$f^{(n)}$};
  \draw (0,0) rectangle (1.1,1);
  \draw (0.25,1.5)--(0.25,1);
  \draw (0.25,0)--(0.25,-0.5);
  \draw (0.8,0)--(0.8,-0.5);
  \draw (0.8,1.5)--(0.8,1);
  \draw (1,1)--(1,1.5);
  \draw (1,-0.5)--(1,0);
  \draw (1.5,1.5)--(1.5,-0.5);
  \draw (0.53,-0.25) node {$\cdots$};
  \end{scope}}\,\,\right)^2\\
  &=  \knotsinmath{
   \begin{scope}[scale=0.75,yshift=0.6cm]
    \draw (0.5,0.5) node[scale=1.33] {$f^{(n)}$};
    \draw (0,0) rectangle (1.1,1);
    \draw (0.25,1.5)--(0.25,1);
    \draw (0.25,0)--(0.25,-0.5);
    \draw (0.8,0)--(0.8,-0.5);
    \draw (0.8,1.5)--(0.8,1);
    \draw (1,0)--(1,-0.5);
    \draw (1,1.5)--(1,1);
    \draw (1.5,1.5)--(1.5,-0.5);
    \draw (0.53,1.25) node {$\cdots$};
   \end{scope}
   \begin{scope}[scale=0.75,yshift=-1.4cm]
    \draw (0.5,0.5) node[scale=1.33] {$f^{(n)}$};
    \draw (0,0) rectangle (1.1,1);
    \draw (0.25,1.5)--(0.25,1);
    \draw (0.25,0)--(0.25,-0.5);
    \draw (0.8,0)--(0.8,-0.5);
    \draw (0.8,1.5)--(0.8,1);
    \draw (1,0)--(1,-0.5);
    \draw (1,1.5)--(1,1);
    \draw (1.5,1.5)--(1.5,-0.5);
    \draw (0.53,1.4) node {$\cdots$};
   \end{scope}
  }-\frac{[n]}{[n+1]}\knotsinmath{
  \begin{scope}[scale=0.75,yshift=2cm]
  \draw (0.5,0.5) node[scale=1.33] {$f^{(n)}$};
  \draw (0,0) rectangle (1.1,1);
  \draw (0.25,1.5)--(0.25,1);
  \draw (0.25,0)--(0.25,-0.5);
  \draw (0.8,0)--(0.8,-0.5);
  \draw (0.8,1.5)--(0.8,1);
  \draw (1,0)--(1,-0.5);
  \draw (1,1.5)--(1,1);
  \draw (1.5,1.5)--(1.5,-0.5);
  \draw (0.53,1.25) node {$\cdots$};
  \end{scope}
  \begin{scope}[scale=0.75,yshift=0cm]
  \draw (0.25,-0.5) [dnup=0];
  \draw (1,-0.5) [dndn=0.5];
  \draw (1,1.5) [upup=0.5];
  \draw (0.8,-0.5) [dnup=0];
  \draw (0.55,0.5) node {$\cdots$};
  \end{scope}
  \begin{scope}[scale=0.75,yshift=-2cm]
  \draw (0.5,0.5) node[scale=1.33] {$f^{(n)}$};
  \draw (0,0) rectangle (1.1,1);
  \draw (0.25,1.5)--(0.25,1);
  \draw (0.25,0)--(0.25,-0.5);
  \draw (0.8,0)--(0.8,-0.5);
  \draw (0.8,1.5)--(0.8,1);
  \draw (1,1)--(1,1.5);
  \draw (1,-0.5)--(1,0);
  \draw (1.5,1.5)--(1.5,-0.5);
  \draw (0.53,-0.25) node {$\cdots$};
  \end{scope}
  \begin{scope}[scale=0.75,yshift=-4cm]
   \draw (0.5,0.5) node[scale=1.33] {$f^{(n)}$};
   \draw (0,0) rectangle (1.1,1);
   \draw (0.25,1.5)--(0.25,1);
   \draw (0.25,0)--(0.25,-0.5);
   \draw (0.8,0)--(0.8,-0.5);
   \draw (0.8,1.5)--(0.8,1);
   \draw (1,0)--(1,-0.5);
   \draw (1,1.5)--(1,1);
   \draw (1.5,1.5)--(1.5,-0.5);
   \draw (0.53,1.25) node {$\cdots$};
  \end{scope}}-\frac{[n]}{[n+1]}\knotsinmath{
  \begin{scope}[scale=0.75,yshift=-4cm]
  \draw (0.5,0.5) node[scale=1.33] {$f^{(n)}$};
  \draw (0,0) rectangle (1.1,1);
  \draw (0.25,1.5)--(0.25,1);
  \draw (0.25,0)--(0.25,-0.5);
  \draw (0.8,0)--(0.8,-0.5);
  \draw (0.8,1.5)--(0.8,1);
  \draw (1,0)--(1,-0.5);
  \draw (1,1.5)--(1,1);
  \draw (1.5,1.5)--(1.5,-0.5);
  \draw (0.53,1.25) node {$\cdots$};
  \end{scope}
  \begin{scope}[scale=0.75,yshift=-2cm]
  \draw (0.25,-0.5) [dnup=0];
  \draw (1,-0.5) [dndn=0.5];
  \draw (1,1.5) [upup=0.5];
  \draw (0.8,-0.5) [dnup=0];
  \draw (0.55,0.5) node {$\cdots$};
  \end{scope}
  \begin{scope}[scale=0.75,yshift=2cm]
  \draw (0.5,0.5) node[scale=1.33] {$f^{(n)}$};
  \draw (0,0) rectangle (1.1,1);
  \draw (0.25,1.5)--(0.25,1);
  \draw (0.25,0)--(0.25,-0.5);
  \draw (0.8,0)--(0.8,-0.5);
  \draw (0.8,1.5)--(0.8,1);
  \draw (1,1)--(1,1.5);
  \draw (1,-0.5)--(1,0);
  \draw (1.5,1.5)--(1.5,-0.5);
  \draw (0.53,-0.25) node {$\cdots$};
  \end{scope}
  \begin{scope}[scale=0.75,yshift=0cm]
   \draw (0.5,0.5) node[scale=1.33] {$f^{(n)}$};
   \draw (0,0) rectangle (1.1,1);
   \draw (0.25,1.5)--(0.25,1);
   \draw (0.25,0)--(0.25,-0.5);
   \draw (0.8,0)--(0.8,-0.5);
   \draw (0.8,1.5)--(0.8,1);
   \draw (1,0)--(1,-0.5);
   \draw (1,1.5)--(1,1);
   \draw (1.5,1.5)--(1.5,-0.5);
   \draw (0.53,1.25) node {$\cdots$};
  \end{scope}}+\frac{[n]^2}{[n+1]^2}\knotsinmath{\begin{scope}[yshift=2.25cm]
   \begin{scope}[scale=0.75,yshift=1.5cm]
  \draw (0.5,0.5) node[scale=1.33] {$f^{(n)}$};
  \draw (0,0) rectangle (1.1,1);
  \draw (0.25,1.5)--(0.25,1);
  \draw (0.25,0)--(0.25,-0.5);
  \draw (0.8,0)--(0.8,-0.5);
  \draw (0.8,1.5)--(0.8,1);
  \draw (1,0)--(1,-0.5);
  \draw (1,1.5)--(1,1);
  \draw (1.5,1.5)--(1.5,-0.5);
  \draw (0.53,1.25) node {$\cdots$};
  \end{scope}
  \begin{scope}[scale=0.75,yshift=-0.5cm]
  \draw (0.25,-0.5) [dnup=0];
  \draw (1,-0.5) [dndn=0.5];
  \draw (1,1.5) [upup=0.5];
  \draw (0.8,-0.5) [dnup=0];
  \draw (0.55,0.5) node {$\cdots$};
  \end{scope}
  \begin{scope}[scale=0.75,yshift=-2.5cm]
  \draw (0.5,0.5) node[scale=1.33] {$f^{(n)}$};
  \draw (0,0) rectangle (1.1,1);
  \draw (0.25,1.5)--(0.25,1);
  \draw (0.25,0)--(0.25,-0.5);
  \draw (0.8,0)--(0.8,-0.5);
  \draw (0.8,1.5)--(0.8,1);
  \draw (1,1)--(1,1.5);
  \draw (1,-0.5)--(1,0);
  \draw (1.5,1.5)--(1.5,-0.5);
  \draw (0.53,-0.25) node {$\cdots$};
  \end{scope}
  \end{scope}\begin{scope}[yshift=-2.25cm]
   \begin{scope}[scale=0.75,yshift=1.5cm]
  \draw (0.5,0.5) node[scale=1.33] {$f^{(n)}$};
  \draw (0,0) rectangle (1.1,1);
  \draw (0.25,1.5)--(0.25,1);
  \draw (0.25,0)--(0.25,-0.5);
  \draw (0.8,0)--(0.8,-0.5);
  \draw (0.8,1.5)--(0.8,1);
  \draw (1,0)--(1,-0.5);
  \draw (1,1.5)--(1,1);
  \draw (1.5,1.5)--(1.5,-0.5);
  \draw (0.53,1.25) node {$\cdots$};
  \end{scope}
  \begin{scope}[scale=0.75,yshift=-0.5cm]
  \draw (0.25,-0.5) [dnup=0];
  \draw (1,-0.5) [dndn=0.5];
  \draw (1,1.5) [upup=0.5];
  \draw (0.8,-0.5) [dnup=0];
  \draw (0.55,0.5) node {$\cdots$};
  \end{scope}
  \begin{scope}[scale=0.75,yshift=-2.5cm]
  \draw (0.5,0.5) node[scale=1.33] {$f^{(n)}$};
  \draw (0,0) rectangle (1.1,1);
  \draw (0.25,1.5)--(0.25,1);
  \draw (0.25,0)--(0.25,-0.5);
  \draw (0.8,0)--(0.8,-0.5);
  \draw (0.8,1.5)--(0.8,1);
  \draw (1,1)--(1,1.5);
  \draw (1,-0.5)--(1,0);
  \draw (1.5,1.5)--(1.5,-0.5);
  \draw (0.53,-0.25) node {$\cdots$};
  \end{scope}
  \end{scope}}
 \end{align*}
 And by the inductive hypothesis, we have $\left(f^{(n)}\right)^2=f^{(n)}$ and $\Ecal_{n}\left(f^{(n)}\right)=\dfrac{[n+1]}{[n]}f^{(n-1)}$
 \begin{align*}
  =\knotsinmath{\begin{scope}[scale=0.75,yshift=-0.5cm]
  \draw (0.5,0.5) node[scale=1.33] {$f^{(n)}$};
  \draw (0,0) rectangle (1,1);
  \draw (0.25,1.5)--(0.25,1);
  \draw (0.25,0)--(0.25,-0.5);
  \draw (0.75,0)--(0.75,-0.5);
  \draw (0.75,1.5)--(0.75,1);
  \draw (0.53,1.25) node {$\cdots$};
  \draw (0.53,-0.25) node {$\cdots$};
  \draw (1.25,1.5)--(1.25,-0.5);
 \end{scope}}-\frac{2[n]}{[n+1]}\knotsinmath{
  \begin{scope}[scale=0.75,yshift=1.5cm]
  \draw (0.5,0.5) node[scale=1.33] {$f^{(n)}$};
  \draw (0,0) rectangle (1.1,1);
  \draw (0.25,1.5)--(0.25,1);
  \draw (0.25,0)--(0.25,-0.5);
  \draw (0.8,0)--(0.8,-0.5);
  \draw (0.8,1.5)--(0.8,1);
  \draw (1,0)--(1,-0.5);
  \draw (1,1.5)--(1,1);
  \draw (1.5,1.5)--(1.5,-0.5);
  \draw (0.53,1.25) node {$\cdots$};
  \end{scope}
  \begin{scope}[scale=0.75,yshift=-0.5cm]
  \draw (0.25,-0.5) [dnup=0];
  \draw (1,-0.5) [dndn=0.5];
  \draw (1,1.5) [upup=0.5];
  \draw (0.8,-0.5) [dnup=0];
  \draw (0.55,0.5) node {$\cdots$};
  \end{scope}
  \begin{scope}[scale=0.75,yshift=-2.5cm]
  \draw (0.5,0.5) node[scale=1.33] {$f^{(n)}$};
  \draw (0,0) rectangle (1.1,1);
  \draw (0.25,1.5)--(0.25,1);
  \draw (0.25,0)--(0.25,-0.5);
  \draw (0.8,0)--(0.8,-0.5);
  \draw (0.8,1.5)--(0.8,1);
  \draw (1,1)--(1,1.5);
  \draw (1,-0.5)--(1,0);
  \draw (1.5,1.5)--(1.5,-0.5);
  \draw (0.53,-0.25) node {$\cdots$};
  \end{scope}}+\frac{[n]^2}{[n+1]^2}\cdot\frac{[n+1]}{[n]}\,\,\,\knotsinmath{\begin{scope}
   \begin{scope}[scale=0.75,yshift=2cm]
  \draw (0.5,0.5) node[scale=1.33] {$f^{(n)}$};
  \draw (0,0) rectangle (1.1,1);
  \draw (0.25,1.5)--(0.25,1);
  \draw (0.25,0)--(0.25,-0.5);
  \draw (0.8,0)--(0.8,-0.5);
  \draw (0.8,1.5)--(0.8,1);
  \draw (1,0)--(1,-0.5);
  \draw (1,1.5)--(1,1);
  \draw (1.5,1.5)--(1.5,-0.5);
  \draw (0.53,1.25) node {$\cdots$};
  \end{scope}
  \begin{scope}[scale=0.75,yshift=-0.5cm]
  \draw (0.65,0.5) node[scale=1.23] {$f^{(n-1)}$};
  \draw (0,0) rectangle (1.3,1);
  \draw (0.25,2)--(0.25,1);
  \draw (0.25,0)--(0.25,-1);
  \draw (0.8,0)--(0.8,-1);
  \draw (0.8,2)--(0.8,1);
  \draw (0.53,1.25) node {$\cdots$};
  \end{scope}
  \begin{scope}[scale=0.75,yshift=-1cm]
  \draw (1,-0.5) [dndn=0.5];
  \draw (1,2.5) [upup=0.5];
  \draw (0.55,0.5) node {$\cdots$};
  \end{scope}
  \begin{scope}[scale=0.75,yshift=-3cm]
  \draw (0.5,0.5) node[scale=1.33] {$f^{(n)}$};
  \draw (0,0) rectangle (1.1,1);
  \draw (0.25,1.5)--(0.25,1);
  \draw (0.25,0)--(0.25,-0.5);
  \draw (0.8,0)--(0.8,-0.5);
  \draw (0.8,1.5)--(0.8,1);
  \draw (1,1)--(1,1.5);
  \draw (1,-0.5)--(1,0);
  \draw (1.5,1.5)--(1.5,-0.5);
  \draw (0.53,-0.25) node {$\cdots$};
  \end{scope}
  \end{scope}}
 \end{align*}
 And again by the inductive hypothesis, we know $f^{(n)}\left(i_{n-1}\left(f^{(n-1)}\right)\right)=\left(i_{n-1}\left(f^{(n-1)}\right)\right)f^{(n)}=f^{(n)}$. And so by stretching the $E_{n}$ loops, it is easy to see that we get the same diagram as the second term. Equivalently we can use Proposition \ref{proposition:tldiagramrelations} to see (our last diagram)
 $i_n\left(f^{(n)}\right)E_nf^{(n-1)}E_ni_n\left(f^{(n)}\right)$ is equivalent to $i_n\left(f^{(n)}\right)E_ni_n\left(f^{(n)}\right)$. Thus by further simplifying, we find that it is in fact idempotent, i.e., $\left(f^{(n+1)}\right)^2=f^{(n+1)}$.
  \item
 We will now use the results from Proposition \ref{proposition:tldiagramrelations}. So,
 \begin{align*}
  \Ecal_{n+1}\left(f^{(n+1)}\right) &= \Ecal_{n+1}\left(i_n\left(f^{(n)}\right)-\frac{[n]}{[n+1]}i_n\left(f^{(n)}\right)E_ni_n\left(f^{(n)}\right)\right)\\
  &= [2]\id_n\left(f^{(n)}\right)+\frac{[n]}{[n+1]}f^{(n)}f^{(n)}\\
  &= [2]f^{(n)}+\frac{[n]}{[n+1]}f^{(n)}
 \end{align*}
 and using the result ``doubling the quantum integer'' from Proposition \ref{proposition:doubling_quantum_integer}, we find $[2][n+1]=[n+2]+[n]$, and so the result becomes $\dfrac{[n+2]}{[n+1]}f^{(n)}$.
  \item
   We use the results from Proposition \ref{proposition:tldiagramrelations},
   \begin{align*}
  f^{(n+1)}\left(i_n\left(f^{(n)}\right)\right) &= \left(i_n\left(f^{(n)}\right)-\frac{[n]}{[n+1]}i_n\left(f^{(n)}\right)E_ni_n\left(f^{(n)}\right)\right)\left(i_n\left(f^{(n)}\right)\right)\\
  &= \left(i_n\left(f^{(n)}\right)\right)\left(i_n\left(f^{(n)}\right)\right)-\frac{[n]}{[n+1]}i_n\left(f^{(n)}\right)E_ni_n\left(f^{(n)}\right)i_n\left(f^{(n)}\right)\\
  &= i_n\left(f^{(n)}\right)-\frac{[n]}{[n+1]}i_n\left(f^{(n)}\right)E_ni_n\left(f^{(n)}\right)\\
  &= f^{(n+1)}
 \end{align*}
  \item
   Now from (ii), we have $\Ecal_{n+1}\left(f^{(n+1)}\right)=\dfrac{[n+2]}{[n+1]}f^{(n)}$. So, then since $\tr_{n}\circ\Ecal_{n+1}=\tr_{n+1}$, we have
   \[\tr_{n+1}\left(f^{(n+1)}\right)=\tr_n\circ\Ecal_{n+1}\left(f^{(n+1)}\right)=\frac{[n+2]}{[n+1]}\tr_n\left(f^{(n)}\right)\]
   and by the inductive hypothesis, we have $\tr_n\left(f^{(n)}\right)=[n+1]$, and so
   \[
    \tr_{n+1}\left(f^{(n+1)}\right)=[n+2]
   \]
  \item
   Finally, by the inductive hypothesis, for any $j=1,\ldots{n-1}$ we have $E_jf^{(n+1)}=f^{(n+1)}E_j$. So let $j=n$, then
   \[ E_nf^{(n+1)}=E_ni_n\left(f^{(n)}\right)-\frac{[n]}{[n+1]}E_ni_n\left(f^{(n)}\right)E_ni_n\left(f^{(n)}\right) \]
   So, from Proposition \ref{proposition:tldiagramrelations}, we have $E_ni_n\left(f^{(n)}\right)E_n=\left(i_n\circ{i_{n-1}}\circ\Ecal_n\left(f^{(n)}\right)\right)E_n$. And from (ii) we know $\Ecal_n\left(f^{(n)}\right)=\dfrac{[n+1]}{[n]}f^{(n-1)}$, so then
   \begin{align*}
    E_nf^{(n+1)} &= E_ni_n\left(f^{(n)}\right)-\frac{[n]}{[n+1]}\left(i_n\circ{i_{n-1}}\circ\Ecal_n\left(f^{(n)}\right)\right)E_ni_n\left(f^{(n)}\right)\\
    &= E_ni_n\left(f^{(n)}\right)-\left(i_n\circ{i_{n-1}}\left(f^{(n-1)}\right)\right)E_ni_n\left(f^{(n)}\right)
   \end{align*}
   Now we have $i_n\circ{i_{n-1}}\left(f^{(n-1)}\right)E_ni_n\left(f^{(n)}\right)$ is equivalent to $E_ni_n\left(f^{(n)}\right)$, and so the result then follows.
 \end{enumerate}
\end{proof}

\begin{theorem}[{\cite[p.~31]{penneys}}]
 Suppose $\tr_j(y^*x)$ is positive and semi-definite (i.e., $\tr_j(x^*x)\geq0$ and $\tr_j(x^*x)=0$ if and only if $x^*x=0$) for all $j\geq0$, $x,y\in\TL_n$. Then
 \[q+q^{-1}\in\left\{2\cos(\pi/n):n\geq3\right\}\cup[2,\infty)\]
\end{theorem}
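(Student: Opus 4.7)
The plan is to exploit the Jones--Wenzl idempotents $f^{(n)}$ as positivity probes. Since each $f^{(n)}$ is a self-adjoint idempotent (property (i) of the previous proposition), we have the identity $\tr_n(f^{(n)}) = \tr_n((f^{(n)})^* f^{(n)})$, which by the assumed positive semi-definiteness of $(x,y) \mapsto \tr_n(y^*x)$ must be non-negative. But property (iv) of the previous proposition gives $\tr_n(f^{(n)}) = [n+1]_q$. Hence
\begin{equation*}
  [m+1]_q \geq 0 \quad \text{whenever } f^{(m)} \text{ is well-defined,}
\end{equation*}
and by the recursive construction $f^{(m)}$ is well-defined precisely when $[1]_q, [2]_q, \dots, [m]_q$ are all non-zero. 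The strategy is then to see which $q \in Q \cup -Q$ can satisfy both of these requirements, and to read off the allowed values of $d = q + q^{-1} = [2]_q$.

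First I would dispose of the easy case $q \in [1, \infty)$. Here $q^n > q^{-n} > 0$, so every $[n]_q$ is strictly positive, and all $f^{(n)}$ exist with $\tr_n(f^{(n)}) > 0$. This contributes the whole ray $d \in [2, \infty)$ to the allowed set, with no further restriction. Next, for $q = -e^{i\theta}$ with $\theta \in (0, \pi/2)$ or $q \in (-\infty, -1]$, a direct computation shows $[2]_q = d < 0$, so taking $m = 1$ already violates non-negativity. Hence every $q \in -Q$ is excluded, matching the fact that the conclusion contains no negative values.

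The heart of the argument is the remaining case $q = e^{i\theta}$ with $\theta \in (0, \pi/2)$, where $[n]_q = \sin(n\theta)/\sin\theta$ and $d = 2\cos\theta \in (0, 2)$. If $\theta/\pi$ is irrational, then the orbit $\{n\theta \bmod 2\pi\}$ is dense in $[0, 2\pi)$, so for some $n$ we have $n\theta \bmod 2\pi \in (\pi, 2\pi)$, giving $\sin(n\theta) < 0$; meanwhile no $[n]_q$ vanishes, so all $f^{(n)}$ are defined and we get a contradiction. Therefore $\theta = p\pi/N$ with $\gcd(p, N) = 1$ and $p/N < 1/2$. Let $N$ be the smallest index with $[N]_q = 0$; then $f^{(0)}, \dots, f^{(N-1)}$ are well-defined. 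The positivity conditions $\sin(jp\pi/N) \geq 0$ for $j = 1, \dots, N-1$ must hold. For $p = 1$ this is automatic because $jp\pi/N = j\pi/N \in (0, \pi)$ for all such $j$; for $p \geq 2$ one can pick $j = \lceil N/p \rceil$ (or a similar choice) for which $jp/N$ lies in $(1, 2)$, producing a negative sine and a contradiction. This forces $\theta = \pi/N$, giving $d = 2\cos(\pi/N)$ with $N \geq 3$ (since $\theta < \pi/2$ rules out $N = 2$).

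Putting the three cases together yields $d \in \{2\cos(\pi/n) : n \geq 3\} \cup [2, \infty)$, which is exactly the claim. The main obstacle I expect to handle carefully is the sign analysis in the third paragraph: ruling out $p \geq 2$ requires verifying that among the residues $\{jp \bmod 2N : 1 \leq j \leq N-1\}$ at least one lies in $(N, 2N)$, which is a small combinatorial check but must be argued for every such $p$; the equidistribution argument in the irrational case is qualitatively easy but should be stated as a density statement since we only need a single bad $n$, not full equidistribution. Everything else follows routinely from the properties of $f^{(n)}$ already established.
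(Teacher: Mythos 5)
Your proposal is correct, and it rests on exactly the same key mechanism as the paper's proof: the Jones--Wenzl idempotents serve as positivity probes, since $\tr_m\left((f^{(m)})^*f^{(m)}\right)=\tr_m(f^{(m)})=[m+1]_q$ forces $[m+1]_q\geq0$ whenever $f^{(m)}$ is defined. Where you genuinely diverge is the endgame for $q=e^{i\theta}$, $\theta\in(0,\pi/2)$: you split into $\theta/\pi$ irrational (handled by density of $n\theta$ modulo $2\pi$) and $\theta=p\pi/N$ rational (handled by a residue analysis to exclude $p\geq2$), and you correctly identify the latter as the delicate step. The paper collapses both cases into one observation: if $\theta\neq\pi/n$ for every $n\geq3$, take the unique $k\geq2$ with $\pi/(k+1)<\theta<\pi/k$; then $j\theta\in(0,\pi)$ for $j=1,\dots,k$, so $[1]_q,\dots,[k]_q>0$ and $f^{(k)}$ exists, while $(k+1)\theta\in(\pi,2\pi)$ gives $[k+1]_q<0$. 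This sandwich argument produces the bad index explicitly (no equidistribution needed) and makes the $p\geq2$ combinatorics unnecessary, so it is worth adopting; on the other hand, your write-up is more complete in that it also records why $q\in-Q$ is excluded (already $[2]_q=d<0$) and why the surviving values $q\geq1$ and $\theta=\pi/N$ are internally consistent, checks the paper omits since the theorem only asserts one implication. Both routes are sound.
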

\begin{proof}
 We show this by contradiction. We will assume $q+q^{-1}\not\in\{2\cos(\pi/n):n\geq3\}\cup[2,\infty)$. Using our Jones-Wenzl idempotents, we want to show that $\tr_j((f^{(w)})^*f^{(w)})=[w+1]<0$ for some $w$.
 
 If we let $q=e^{i\theta}$ for some $\theta\in(0,\pi/2)$ such that $\theta\neq\pi/n$ for some $\pi\geq3$, then by recalling $\sin(x)=\dfrac{e^{ix}-e^{-ix}}{2i}$, we have
 \[[j]=\frac{(e^{i\theta})^j-(e^{i\theta})^{-j}}{(e^{i\theta})-(e^{-i\theta})}=\frac{\sin(j\theta)}{\sin(\theta)}\]
 Now by letting $k\geq2$ be minimal such that $\theta>\pi/(k+1)$, we have
 \[\frac{\pi}{k}>\theta>\frac{\pi}{k+1}\]
 then since $\sin(\theta),\sin(2\theta),\ldots,\sin(k\theta)>0$ but we have $\sin((k+1)\theta)<0$ so then $[1],[2],\ldots,[k]>0$, but $[k+1]<0$.
 
 So since $k\geq2$ and $[1],[2],\ldots,[k]\neq0$ we have our idempotents $f^{(0)},\ldots,f^{(k)}$ are well-defined. Thus, we get our contradiction by taking the trace $\tr_k\left({(f^{(k)})^*f^{(k)}}\right)=[k+1]<0$.
\end{proof}

\section{Jones' original construction}\label{chapter:jones}
We will now construct our Jones polynomial using braids and the Temperley-Lieb algebra. This discussion is based on that in \cite{Abramsky}, \cite{adams}, \cite{enc}, \cite{jones_quantum}, \cite{birman_survey}, \cite{cromwell}, \cite{jordan}, \cite{jones1985}, \cite{jonesbook}, \cite{jonespoly}, \cite{kauffman1986}, \cite{kauffman1988}, \cite{kauffman_stat_mech}, \cite{kauffman_lomonaco}, \cite{kawauchi}, \cite{manturov}, \cite{murasagibook}, \cite{penneys}, and \cite{speicher}.

To get invariance under conjugation (the Markov $\MI$ move), we need to define a trace. To define a trace, we need a representation of $\Bcal_n$ in a type II$_1$ factor since it is not finite (recall the motivation section \S\ref{section:motivation}).

\begin{proposition}
 The homomorphism $\phi_n\colon\CC[\Bcal_n]\to\TL_n$ which is defined by \[\phi_n(\sigma_i)=A(1_n)+A^{-1}E_i\quad\mbox{ and }\quad\phi_n(\sigma^{-1}_i)=A^{-1}(1_n)+AE_i,\]
 is a representation of $\Bcal_n$ (i.e., the homomorphism preserves the structure of $\Bcal_n$), where $d=([2]_q)_{-A^2=q}$.
\end{proposition}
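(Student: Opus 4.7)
The plan is to verify that the prescribed images satisfy the defining relations of $\Bcal_n$, so that $\phi_n$ extends to a well-defined algebra map on the group algebra $\CC[\Bcal_n]$. Since $\Bcal_n$ is presented (Artin) by generators $\sigma_1,\ldots,\sigma_{n-1}$ subject to (a) $\sigma_i\sigma_i^{-1}=1$, (b) $\sigma_i\sigma_j=\sigma_j\sigma_i$ for $|i-j|\geq 2$, and (c) $\sigma_i\sigma_{i+1}\sigma_i=\sigma_{i+1}\sigma_i\sigma_{i+1}$, I would check each relation using only the Temperley--Lieb relations $E_i^2=dE_i$, $E_iE_j=E_jE_i$ for $|i-j|\geq 2$, and $E_iE_{i\pm 1}E_i=E_i$, together with the key scalar identity $d=-A^2-A^{-2}$, which comes from $d=[2]_q$ under $q=-A^2$ (so $q+q^{-1}=-A^2-A^{-2}$).

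For relation (a), a direct expansion gives
\[
\phi_n(\sigma_i)\phi_n(\sigma_i^{-1}) = (A\cdot 1_n+A^{-1}E_i)(A^{-1}\cdot 1_n+AE_i) = 1_n + (A^2+A^{-2}+d)E_i,
\]
and substituting $d=-A^2-A^{-2}$ collapses the bracket to $0$, giving $1_n$. The reverse product is identical by symmetry of the expression, so $\phi_n(\sigma_i^{-1})$ is indeed the two-sided inverse of $\phi_n(\sigma_i)$ in $\TL_n$. Relation (b) is immediate: when $|i-j|\geq 2$, expanding both sides produces
\[
A^2\cdot 1_n + E_j + E_i + A^{-2}E_iE_j \quad\text{and}\quad A^2\cdot 1_n + E_i + E_j + A^{-2}E_jE_i,
\]
which agree by the far-commutativity $E_iE_j=E_jE_i$ in $\TL_n$.

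The main step is relation (c), which is the only computation requiring nontrivial use of $d=-A^2-A^{-2}$. Expanding $\phi_n(\sigma_i)\phi_n(\sigma_{i+1})\phi_n(\sigma_i)$ produces eight terms; reducing via $E_i^2=dE_i$ and $E_iE_{i+1}E_i=E_i$ yields
\[
A^3\cdot 1_n + \bigl(2A+A^{-1}d+A^{-3}\bigr)E_i + AE_{i+1} + A^{-1}E_iE_{i+1} + A^{-1}E_{i+1}E_i.
\]
The scalar $2A+A^{-1}d+A^{-3}$ simplifies to $A$ exactly because $d=-A^2-A^{-2}$, leaving
\[
A^3\cdot 1_n + AE_i + AE_{i+1} + A^{-1}E_iE_{i+1} + A^{-1}E_{i+1}E_i.
\]
This expression is manifestly invariant under swapping $i\leftrightarrow i+1$, so it equals $\phi_n(\sigma_{i+1})\phi_n(\sigma_i)\phi_n(\sigma_{i+1})$ after the analogous reduction.

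The only delicate point is the arithmetic collapse of the $E_i$-coefficient in the braid relation; everything else is routine. No obstruction arises from the Temperley--Lieb side, because the relations $E_iE_{i\pm 1}E_i = E_i$ (rather than $d^{-2}E_i$) in the diagrammatic convention of Definition~\ref{definition:linear_operations_tl} already absorb the loop factor. Once (a), (b), (c) hold, the universal property of the group algebra produces a unique algebra homomorphism $\phi_n\colon\CC[\Bcal_n]\to\TL_n$ sending each $\sigma_i^{\pm 1}$ to the stated element, and its restriction to $\Bcal_n\subset\CC[\Bcal_n]$ is the desired representation.
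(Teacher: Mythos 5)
Your proof is correct and follows essentially the same route as the paper: verify the three Artin relations on generators by direct expansion in $\TL_n$, with the braid relation forcing the coefficient $2A+A^{-1}d+A^{-3}$ to collapse (to $A$) precisely when $d=-A^2-A^{-2}=([2]_q)_{q=-A^2}$, and the invertibility check likewise using this value of $d$. Your explicit observation that the resulting expression is symmetric under $i\leftrightarrow i+1$ is a slightly cleaner way to finish than the paper's side-by-side comparison, but the computation is identical.
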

\begin{proof}
 If $|i-j|\geq2$, then
 \begin{align*}
  \phi_n(\sigma_i\sigma_j) &= \phi_n(\sigma_i)\phi_n(\sigma_j)\\
  &= (A(1_n)+A^{-1}E_i)(A(1_n)+A^{-1}E_j)\\
  &= A^2+E_j+E_i+A^{-2}E_iE_j\\
  &= (A(1_n)+A^{-1}E_j)(A(1_n)+A^{-1}E_i)\\
  &= \phi_n(\sigma_j)\phi_n(\sigma_i)\\
  &= \phi_n(\sigma_j\sigma_i)
 \end{align*}
 since if $|i=j|>1$, then $E_iE_j=E_jE_i$.
 \begin{align*}
  \phi_n(\sigma_i\sigma_{i+1}\sigma_i) &= \phi_n(\sigma_i)\phi_n(\sigma_{i+1})\phi_n(\sigma_{i})\\
  &= (A(1_n)+A^{-1}E_i)(A(1_n)+A^{-1}E_{i+1})(A(1_n)+A^{-1}E_i)\\
  &= (A^2+E_{i+1}+E_i+A^{-2}E_iE_{i+1})(A(1_n)+A^{-1}E_i)\\
  &= A^3+2AE_i+AE_{i+1}+A^{-1}E_{i+1}E_i+A^{-1}E^2_i+A^{-1}E_iE_{i+1}+A^{-3}E_iE_{i+1}E_i\\
  &= A^3+2AE_i+AE_{i+1}+A^{-1}E_{i+1}E_i+A^{-1}d{E_i}+A^{-1}E_iE_{i+1}+A^{-3}{E_i}\\
  &= A^3+E_i(2A+A^{-1}d+A^{-3})+AE_{i+1}+A^{-1}E_{i+1}E_i+A^{-1}E_iE_{i+1}
 \end{align*}
 Note that
 \[\phi_n(\sigma_{i+1}\sigma_i\sigma_{i+1})=A^3+E_{i+1}(2A+A^{-1}d+A^{-3})+AE_i+A^{-1}E_iE_{i+1}+A^{-1}E_{i+1}E_i\]
 So, we need $d=-A^2-A^{-2}=([2]_q)_{-A^2=q}$.
 
 Now
 \begin{align*}
  \phi_n(\sigma_i)\phi_n(\sigma_i^{-1}) &= (A(1_n)+A^{-1}E_i)(A^{-1}(1_n)+AE_i)\\
  &= 1_n+A^2E_i+A^{-2}E_i+E_i^2\\
  &= 1_n+E_i(A^2+A^{-2})+d{E_i} = 1_n
 \end{align*}
\end{proof}

The homomorphism $\phi_n$ defined above can be thought of as resolving crossings in braids to produce $\TL_n$ diagrams, i.e.,
\[\phi_n(\sigma_i)=\phi_n\left(\KPE\right)=A\KPD+A^{-1}\KPC\] and \[\phi_n(\sigma_i^{-1})=\phi_n\left(\KPB\right)=A\KPC+A^{-1}\KPD\]


We need to take a composition of the maps $\phi_n$ and the Markov trace $\tr_n$ to map from $\CC[\Bcal_n]\to\CC(q)$.
Now let us explore the map $\tr_n\circ\phi_n$.
\begin{proposition}
 $\tr_n\circ\phi_n(\KPE)=-A^3d$, while $\tr_n\circ\phi_n(\KPB)=-A^{-3}d$.
\end{proposition}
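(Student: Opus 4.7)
The plan is to expand the crossing via the definition of $\phi_n$ and then read off the Markov trace of each resulting Temperley--Lieb diagram by simply counting closed loops. Writing
\[
 \phi_n(\sigma_i) = A\cdot 1_n + A^{-1} E_i,
\]
linearity of $\tr_n$ reduces the computation to $\tr_n(1_n)$ and $\tr_n(E_i)$. By Proposition~\ref{proposition:tldiagramrelations}(vii) we already have $\tr_n(1_n)=d^n$, so the only non-routine step is evaluating $\tr_n(E_i)$ diagrammatically.

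For $\tr_n(E_i)$, I would draw the closure of $E_i$ and count: the $n-2$ vertical strands away from positions $i,i+1$ each close up into a circle, while the cap at the top and cup at the bottom between strands $i$ and $i+1$ join the two closure arcs on the right into a single remaining loop. Hence the closure of $E_i$ is a disjoint union of $n-1$ circles, giving $\tr_n(E_i)=d^{n-1}$. Since the elementary $2$-tangles $\KPE$ and $\KPB$ sit naturally inside $\Bcal_2$, the relevant specialisation is $n=2$, where $\tr_2(1_2)=d^2$ and $\tr_2(E_1)=d$.

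Combining these ingredients and using $d=-A^2-A^{-2}$ (from Proposition~\ref{proposition:Kauffman_regular_isotopy}), I would compute
\[
 \tr_2\circ\phi_2\!\left(\KPE\right) = A d^2 + A^{-1} d = d\bigl(Ad+A^{-1}\bigr) = d\bigl(-A^3-A^{-1}+A^{-1}\bigr) = -A^3 d.
\]
The negative-crossing case is identical after swapping the roles of $A$ and $A^{-1}$, since $\phi_n(\sigma_i^{-1})=A^{-1}\cdot 1_n + A E_i$, yielding $\tr_2\circ\phi_2(\KPB)=A^{-1}d^2+A d = d(A^{-1}d+A) = -A^{-3} d$.

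There is no real obstacle here beyond being careful with the diagrammatic bookkeeping for $\tr_n(E_i)$: the result is really a re-derivation of the $-A^{\pm 3}$ factor that the Kauffman bracket picks up under a Reidemeister I move (compare Lemma~\ref{lemma:Kauffman_RI}), which is exactly what one expects since $\phi_n$ is the resolution of a crossing into the two Kauffman smoothings. This is the reason that multiplication by the writhe-correction $(-A)^{-3\omega}$ will later be the right normalisation to achieve invariance under the Markov $\MII$ move.
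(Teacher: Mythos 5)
Your proposal is correct and follows essentially the same route as the paper: expand $\phi_2(\sigma_1^{\pm 1})$ by linearity, evaluate $\tr_2(1_2)=d^2$ and $\tr_2(E_1)=d$ by counting closed loops, and simplify using $d=-A^2-A^{-2}$. The only cosmetic difference is that you factor out $d$ before substituting while the paper expands fully into powers of $A$; both give $-A^{\pm 3}d$.
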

\begin{proof}
 \begin{align*}
  \tr_n\circ\phi_n\left(\KPE\right) &= A\KPA\KPA+A^{-1}\KPA\\
  &= Ad^2+A^{-1}d=A(d)^2-A^{-1}d\\
  &=A(-A^{-2}-A^{2})^2+A^{-1}(-A^{-2}-A^2)\\
  &= A^5+A=-A^3d
 \end{align*}
 And 
 \begin{align*}
  \tr_n\circ\phi_n\left(\KPB\right) &= A\KPA+A^{-1}\KPA\KPA\\
  &= Ad+A^{-1}d^2\\
  &= A^{-5}+A^{-1}=-A^{-3}d
 \end{align*}
\end{proof}

Now $d^{-1}\tr_n\circ\phi_n$ is in fact the braid analog of the Kauffman bracket\index{Kauffman bracket!via braids}. This can easily be seen with the state sum solution. The states of a braid closure is given by resolving the crossings in the braid, then taking the trace. Thus
\[\langle{\reallywidehat{b}}\rangle=d^{-1}\tr_n\circ\phi_n(b)=\sum_{\text{states }s}A^{\alpha(s)}d^{|s|-1}\]
Thus we can then define the Jones polynomial to be the braid analog of the Kauffman bracket multiplied by $(-A^{-3})^{\omega(\reallywidehat{b})}$.

However, we want to show that it is indeed a link invariant without the Kauffman bracket. 
So we have the identity map of a braid $\id_n\colon\Bcal_n\to\Bcal_n$. Note that we have the following relations,
\begin{enumerate}[(i)]
 \item $\phi_{n+1}\circ\incl_n(\alpha)=i_n\circ\phi_n(\alpha)$, where $\alpha\in\Bcal_n$
 \item $\id_n\circ\phi_n(\alpha)=\phi_n\circ\id_n(\alpha)$, where $\alpha\in\Bcal_n$
\end{enumerate}
This is true since, for instance, in the first relation, the inclusion of a braid would not change the $\TL_n$ representation. We draw the first relation,
 \[
  \phi_{n+1}\circ\incl_n(b)=\braidinclusion{$\phi_n(b)$}=i_n\circ\phi_n(b)
 \]
So we can thus use the relations from Proposition \ref{proposition:tldiagramrelations}.

Since any braid word $\alpha\in\Bcal_n$ can be written with braid generators, we have the \textit{writhe}\index{writhe!braid} defined on braids as the sum of the exponents of the generators.
\[\omega(\sigma_1^{i_1}\sigma_2^{i_2}\cdots\sigma_{n-1}^{i_j})=\sum_zi_z\]
Equivalently, the writhe could also be defined as the writhe of the closure of the braid since an orientation is given with the braid closure.

\begin{theorem}[The original Jones polynomial]\index{the Jones polynomial!original construction}
 For a braid $b\in\Bcal_n$. We can define the Jones polynomial link invariant as the following
 \[
  V(L)=V(\reallywidehat{b})=\frac{(-A^{-3})^{\omega(b)}\tr_n\circ\phi_n(b)}{d}.
 \]
\end{theorem}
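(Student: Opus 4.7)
The plan is to invoke Markov's theorem (Theorem \ref{theorem:markov}) and reduce link-invariance to showing that the right-hand side is unchanged under the two Markov moves $\MI$ and $\MII$ on $\Bcal_\infty$. Well-definedness of $V$ on link types then follows from the one-to-one correspondence between link types and Markov braid types discussed in \S\ref{section:motivation}. Throughout I will use the properties of $\tr_n$, $i_n$, and $\Ecal_{n+1}$ established in Proposition \ref{proposition:tldiagramrelations}, together with the compatibilities $\phi_{n+1}\circ\incl_n=i_n\circ\phi_n$ and the fact that $\phi_n$ is a homomorphism of algebras.

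First I would handle invariance under $\MI$. If $b\mapsto\gamma b\gamma^{-1}$ with $\gamma\in\Bcal_n$, then $\omega(\gamma b\gamma^{-1})=\omega(\gamma)+\omega(b)+\omega(\gamma^{-1})=\omega(b)$, so the writhe prefactor is unchanged. Since $\phi_n$ is a homomorphism and $\tr_n$ is tracial (part (iv) of Proposition \ref{proposition:tldiagramrelations}),
\[\tr_n\circ\phi_n(\gamma b\gamma^{-1})=\tr_n(\phi_n(\gamma)\phi_n(b)\phi_n(\gamma)^{-1})=\tr_n(\phi_n(\gamma)^{-1}\phi_n(\gamma)\phi_n(b))=\tr_n\circ\phi_n(b),\]
and thus $V(\widehat{\gamma b\gamma^{-1}})=V(\widehat{b})$.

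The core computation is invariance under $\MII$, i.e.\ under $b\mapsto\incl_n(b)\sigma_n^{\pm1}$. Using $\phi_{n+1}\circ\incl_n=i_n\circ\phi_n$ and the definition $\phi_{n+1}(\sigma_n^{\pm1})=A^{\pm1}\cdot 1_{n+1}+A^{\mp1}E_n$, I expand
\[\tr_{n+1}\circ\phi_{n+1}(\incl_n(b)\sigma_n^{\pm1})=A^{\pm1}\tr_{n+1}(i_n(\phi_n(b)))+A^{\mp1}\tr_{n+1}(i_n(\phi_n(b))\,E_n).\]
Applying $\tr_{n+1}=\tr_n\circ\Ecal_{n+1}$ with $\Ecal_{n+1}\circ i_n=d\cdot\id_n$ gives $\tr_{n+1}(i_n(x))=d\cdot\tr_n(x)$, while property (v) of Proposition \ref{proposition:tldiagramrelations} gives $\tr_{n+1}(i_n(x)E_n)=\tr_n(x)$. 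Collecting, the trace factor becomes $(A^{\pm1}d+A^{\mp1})\tr_n(\phi_n(b))$. Substituting $d=-A^2-A^{-2}$ yields $A d+A^{-1}=-A^3$ and $A^{-1}d+A=-A^{-3}$, so
\[\tr_{n+1}\circ\phi_{n+1}(\incl_n(b)\sigma_n^{\pm1})=-A^{\pm3}\tr_n\circ\phi_n(b).\]
Since $\omega(\incl_n(b)\sigma_n^{\pm1})=\omega(b)\pm1$, the writhe prefactor contributes an extra $(-A^{-3})^{\pm1}=-A^{\mp3}$, which exactly cancels the $-A^{\pm3}$ produced by the trace. Therefore $V(\widehat{\incl_n(b)\sigma_n^{\pm1}})=V(\widehat{b})$.

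The only piece of bookkeeping that requires care, and hence the main place a reader might slip, is the sign and exponent juggling in the $\MII$ step: one must track the writhe change, the appearance of $d$ from closing off the added strand, and the substitution $d=-A^2-A^{-2}$ in the right order, and verify both signs $\pm$ simultaneously. Once these two invariances are established, Markov's theorem (Theorem \ref{theorem:markov}) and Alexander's theorem (Theorem \ref{theorem:alexander}) together imply that $V$ descends to a well-defined function on link types, completing the proof. As a sanity check I would finally recompute $V(\overline{3_1})$ on the braid $\sigma_1^3\in\Bcal_2$ and compare with Example \ref{example:3_1_reflection_jones}.
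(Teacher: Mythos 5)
Your proposal is correct and follows essentially the same route as the paper: reduce to invariance under $\MI$ (via traciality of $\tr_n$) and $\MII$ (via $\phi_{n+1}\circ\incl_n=i_n\circ\phi_n$, $\tr_{n+1}=\tr_n\circ\Ecal_{n+1}$, and the relations of Proposition \ref{proposition:tldiagramrelations}), then conclude by Markov's and Alexander's theorems. Your bookkeeping is in fact slightly cleaner than the paper's, which treats only $\sigma_n$ explicitly and contains a small sign slip ($Ad+A^{-1}$ equals $-A^{3}$, as you write, not $-A^{-3}$), though its final cancellation is the same as yours; the paper additionally appends an optional skein-relation check that your argument does not need.
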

\begin{proof}
 We will prove this in two ways for the sake of argument and not necessity.
 
 We will first show that this is in fact an invariant under the Markov moves, which in turn shows that this is invariant under the Reidemeister moves, making it a link invariant. And the second proof will show that this is obviously equivalent to our definitions of the Jones polynomial (Definition \ref{definition:Jones-polynomial-bracket} \& Proposition \ref{proposition:Jones-polynomial-skein}).

 \textbf{Invariance under Markov moves:}
 \begin{description}
  \item[M1]
   Given any braid $\gamma\in\Bcal_n$, we have $\omega(\gamma)=-\omega(\gamma^{-1})$. So $\omega(\gamma{b}\gamma^{-1})=\omega(\gamma)+\omega(b)+\omega(\gamma^{-1})=\omega(b)$. So,
   \begin{align*}
    V(\reallywidehat{\gamma{b}\gamma^{-1}}) &= \frac{(-A^{-3})^{\omega(\gamma{b}\gamma^{-1})}\tr_n\circ\phi_n(\gamma{b}\gamma^{-1})}{d}\\
    &= \frac{(-A^{-3})^{\omega(b)}\tr_n(\phi_n(\gamma)\phi_n(b)\phi_n(\gamma)^{-1})}{d}\\
    &= \frac{(-A^{-3})^{\omega(b)}\tr_n(\phi_n(\gamma)^{-1}\phi_n(\gamma)\phi_n(b))}{d} = \frac{(-A^{-3})^{\omega(b)}\tr_n(\phi_n(b))}{d} = V(\reallywidehat{b})
   \end{align*}
  \item[M2]
   Now given any braid $\sigma_n\in\Bcal_{n+1}$, we have $\omega(\sigma_n)=1$, and $\phi_{n+1}(\sigma_n)=A1_{n+1}+A^{-1}E_n$. We also note $\omega(\incl_n(b))=\omega(b)$, so then
   \begin{align*}
    V(\reallywidehat{\incl_n(b)\sigma_n}) &= \frac{(-A^{-3})^{\omega(b\sigma_n)}\tr_{n+1}\circ\phi_{n+1}(\incl_n(b)\sigma_n)}{d}\\
    &= \frac{(-A^{-3})^{\omega(b)+1}\tr_{n+1}(\phi_{n+1}(\incl_n(b))(A+A^{-1}E_n))}{d}
   \end{align*}
   Recall the above relation $\phi_{n+1}\circ\incl_n=i_n\circ\phi_n$.
   We also know from Proposition \ref{proposition:tldiagramrelations} that the Markov trace $\tr_{n+1}=\tr_n\circ\Ecal_{n+1}$. So, then $\tr_n\circ\Ecal_{n+1}(A\cdot{i_n(\phi_n(b))}+A^{-1}\cdot{i_n(\phi_n(b))}\cdot{E_n})$. And again from Proposition \ref{proposition:tldiagramrelations}, we know $\tr_n\circ\Ecal_{n+1}\circ{i_n}=d\cdot\tr_n\circ\id_n$, and $\tr_n(\Ecal_{n+1}(i_n(\phi_n(b))\cdot{E_n}))=\tr_n(\phi_n(b))$. So,
   \begin{align*}
    &= \frac{(-A^{-3})^{\omega(b)+1}\tr_n(\phi_n(b))(Ad+A^{-1})}{d}\\
    &= \frac{(-A^{-3})^{\omega(b)+1}\tr_n(\phi_n(b))(-A^{-3})}{d}\\
    &= \frac{(-A^{-3})^{\omega(b)}\tr_n(\phi_n(b))}{d} = V(\reallywidehat{b})
   \end{align*}
   Analogously to the above, we find that under M2, where the braid becomes $b\sigma_n^{-1}$, the Jones polynomial $V(\reallywidehat{b\sigma_n^{-1}})=V(\reallywidehat{b})$.
 \end{description}
 Thus, we are done.
 
 \textbf{Skein relation:}
 
 Let us now ensure that this satisfies the skein relation definition.
 Consider single crossings in the braid closure,
 \begin{align*}
  A^{4}V\left(\KPE\right)-A^{-4}V\left(\KPB\right)+(A^2-A^{-2})V\left(\KPD\right) &= A^4\frac{(-A^{-3})(-A^3d)}{d}\\
   &\qquad-A^{-4}\frac{(-A^{-3})^{-1}(-A^{-3}d)}{d}+(A^2-A^{-2})\frac{d^2}{d}\\
  &= A^4-A^{-4}+(A^2-A^{-2})(-A^{-2}-A^2) = 0.
 \end{align*}
 This completes the proof.
\end{proof}

\begin{example}\label{example:og_Jones_3_1}
 For example, recall that the closure of $\sigma_1^3$ is equivalent to the right-handed trefoil knot, while $\sigma_1^{-3}$ is equivalent to the left-handed trefoil knot.
 
 We can see then, that
 \begin{align*}
  V(3_1)&=V(\reallywidehat{\sigma_1^3})\\
  &=\frac{(-A^{-3})^3\tr_2\circ\phi_2(\sigma_1^3)}{d}=\frac{-A^{-9}\tr_2(\phi_2(\sigma_1)^3)}{d}\\
  &=\frac{-A^{-9}\tr_2(A+A^{-1}E_1)^3}{d}\\
  &=\frac{-A^{-9}\tr_2(A^3+3AE_1+3A^{-1}E_1^2+A^{-3}E_1^3)}{d}\\
  &=\frac{-A^{-9}(A^3d^2+3Ad+3A^{-1}d^2+A^{-3}d^3)}{d}\\
  &=-A^{-6}d-3A^{-8}-3A^{-10}d-A^{-12}d^2\\
  &=(-A^2-A^{-2})(-A^{-6}-3A^{-10}-A^{-12}(-A^2-A^{-2}))-3A^{-8}\\
  &=A^{-4}+A^{-12}-A^{-16}
 \end{align*}
 
 It is clear that the left-handed trefoil $V(\overline{3_1})=V(\reallywidehat{\sigma_1^{-3}})$, and the bracket polynomial would be $\phi_2(\sigma_1^{-1})^3=(AE_1+A^{-1})^3=A^3E_1^3+3AE_1^2+3A^{-1}E_1+A^{-3}$, and the Markov trace of the bracket polynomial becomes $A^3d^3+3Ad^2+3A^{-1}d+A^{-3}d^2$. And finally we have the Jones polynomial, $V(\overline{3_1})=-A^{9}(d(A^3d+3A+A^{-3})+3A^{-1}$, which further simplified turns out to be $A^{4}+A^{12}-A^{16}$.
\end{example}


To conclude, we see that the easiest to define (i.e., via the Kauffman bracket) is not necessarily the easiest to work with, while the most difficult to define (i.e., the original construction) is perhaps the easiest to work with.


\subsection{Proving Theorem \ref{theorem:disjoint_links}}\label{section:proving_theorem}

In this section we will prove $V(L_1\cup{L_2})=dV(L_1)V(L_2)$ and $V(L_1\#L_2)=V(L_1)V(L_2)$, where $\#$ is the connected sum of the links. We will prove these results using braid representations instead of the skein relation and Kauffman bracket (where they can be derived easily). The reason we do this with braid representations is so that we can ensure comprehension.

We saw in Chapter \ref{chapter:braids}, that the braid inclusion map essentially adds a strand to the right. So now we define the braid left-inclusion map $\lfincl_n$. The reason for this is to define the tensor product of two braids.
\begin{definition}[braid left-inclusion]
 The \textit{braid left-inclusion map}\index{linear operations on $\Bcal_n$!braid left-inclusion map} $\lfincl_n\colon\Bcal_n\to\Bcal_{n+1}$ for $n\geq1$ is defined by,
 \begin{figure}[H] \centering\[\lfincl_n\left(\knotsinmath{\begin{scope}[yshift=-0.35cm,scale=0.75]
    \draw (0,0) rectangle (1,1);
    \draw (0.25,1.5)--(0.25,1);
    \draw (0.25,0)--(0.25,-0.5);
    \draw (0.75,0)--(0.75,-0.5);
    \draw (0.75,1.5)--(0.75,1);
    \draw (0.53,1.25) node {$\cdots$};
    \draw (0.53,-0.25) node {$\cdots$};
    \draw (0.5,0.5) node[scale=1.33] {$\alpha$};
   \end{scope}}\right) =\,\, \knotsinmath{\begin{scope}[yshift=-0.35cm,scale=0.75]
    \draw (0,0) rectangle (1,1);
    \draw (0.25,1.5)--(0.25,1);
    \draw (0.25,0)--(0.25,-0.5);
    \draw (0.75,0)--(0.75,-0.5);
    \draw (0.75,1.5)--(0.75,1);
    \draw (0.53,1.25) node {$\cdots$};
    \draw (0.53,-0.25) node {$\cdots$};
    \draw (0.5,0.5) node[scale=1.33] {$\alpha$};
    \draw (-0.25,1.5)--(-0.25,-0.5);
   \end{scope}}\]\caption{left inclusion $\lfincl_n$ on $\Bcal_n$}\end{figure}
\end{definition}

We similarly define the \textit{left-inclusion}\index{linear operations on $\TL_n$!left-inclusion!$\TL_n$} on $\TL_n$, i.e., $li\colon\TL_n\to\TL_{n+1}$.

\begin{definition}[braid tensor product]
 Let $\alpha\in\Bcal_n$ and $\beta\in\Bcal_m$. We define the \textit{tensor product}\index{linear operations on $\TL_n$!braid tensor product} $\otimes\colon\Bcal_n\times\Bcal_m\to\Bcal_{n+m}$ by $\alpha\otimes\beta\mapsto\incl_{n\otimes{m}}(\alpha)\cdot\lfincl_{m\otimes{n}}(\beta)$, where the inclusion tensor product of the two integers is given by $\incl_{n\otimes{m}}=\incl_{n+m-1}\circ\incl_{n+m-2}\circ\cdots\circ\incl_{n}$ (the left inclusion is defined analogously). And so we thus have.
  \begin{figure}[H]
  \centering
  \begin{tikzpicture}[scale=0.75]
   \begin{scope}[xshift=-3.5cm]
    \draw (0,0) rectangle (1,1);
    \draw (0.25,1.5)--(0.25,1);
    \draw (0.25,0)--(0.25,-0.5);
    \draw (0.75,0)--(0.75,-0.5);
    \draw (0.75,1.5)--(0.75,1);
    \draw (0.53,1.25) node {$\cdots$};
    \draw (0.53,-0.25) node {$\cdots$};
    \draw (0.5,0.5) node[scale=1.33] {$\alpha$};
   \end{scope}
   \begin{scope}[xshift=-2cm]
    \draw (-0.25,0.5) node[scale=1.33] {$\otimes$};
    \draw (0,0) rectangle (1,1);
    \draw (0.25,1.5)--(0.25,1);
    \draw (0.25,0)--(0.25,-0.5);
    \draw (0.75,0)--(0.75,-0.5);
    \draw (0.75,1.5)--(0.75,1);
    \draw (0.53,1.25) node {$\cdots$};
    \draw (0.53,-0.25) node {$\cdots$};
    \draw (0.5,0.5) node[scale=1.33] {$\beta$};
   \end{scope}
   \draw (-0.5,0.5) node[scale=1.33] {$=$};
   \draw (0,0) rectangle (1,1);
   \draw (0.25,1.5)--(0.25,1);
   \draw (0.25,0)--(0.25,-0.5);
   \draw (0.75,0)--(0.75,-0.5);
   \draw (0.75,1.5)--(0.75,1);
   \draw (0.53,1.25) node {$\cdots$};
   \draw (0.53,-0.25) node {$\cdots$};
   \draw (0.5,0.5) node[scale=1.33] {$\alpha$};
   \begin{scope}[xshift=1.2cm]
    \draw (0,0) rectangle (1,1);
    \draw (0.25,1.5)--(0.25,1);
    \draw (0.25,0)--(0.25,-0.5);
    \draw (0.75,0)--(0.75,-0.5);
    \draw (0.75,1.5)--(0.75,1);
    \draw (0.53,1.25) node {$\cdots$};
    \draw (0.53,-0.25) node {$\cdots$};
    \draw (0.5,0.5) node[scale=1.33] {$\beta$};
   \end{scope}
  \end{tikzpicture}
  \caption{braid tensor product}
 \end{figure}
\end{definition}

So it is easy to see that the closure of the tensor product of two braids is essentially two disjoint links. For instance, let the two links have the following braid representations, $L_1=\reallywidehat{\alpha}$ and $L_2=\reallywidehat{\beta}$, so \[L_1\cup{L_2}=\reallywidehat{\alpha}\cup\reallywidehat{\beta}=\reallywidehat{\alpha\otimes\beta}\]

\begin{remark}
 It is easy to see that the union of two closure braids is well-defined, i.e., $\reallywidehat{\alpha\otimes\beta}\sim\reallywidehat{\beta\otimes\alpha}$.
\end{remark}

We then analogously define the \textit{tensor product}\index{linear operations on $\TL_n$!tensor product} on $\TL_n$, i.e., $\otimes\colon{\TL_n\times{\TL_m}}\to{\TL_{n+m}}$,
 \begin{center}
  \begin{tikzpicture}[scale=0.75]
  \begin{scope}[xshift=-3.5cm]
    \draw (0,0) rectangle (1,1);
    \draw (0.25,1.5)--(0.25,1);
    \draw (0.25,0)--(0.25,-0.5);
    \draw (0.75,0)--(0.75,-0.5);
    \draw (0.75,1.5)--(0.75,1);
    \draw (0.53,1.25) node {$\cdots$};
    \draw (0.53,-0.25) node {$\cdots$};
    \draw (0.5,0.5) node[scale=1.33] {$x$};
  \end{scope}
  \begin{scope}[xshift=-2cm]
    \draw (-0.25,0.5) node[scale=1.33] {$\otimes$};
    \draw (0,0) rectangle (1,1);
    \draw (0.25,1.5)--(0.25,1);
    \draw (0.25,0)--(0.25,-0.5);
    \draw (0.75,0)--(0.75,-0.5);
    \draw (0.75,1.5)--(0.75,1);
    \draw (0.53,1.25) node {$\cdots$};
    \draw (0.53,-0.25) node {$\cdots$};
    \draw (0.5,0.5) node[scale=1.33] {$y$};
  \end{scope}
  \draw (-0.5,0.5) node[scale=1.33] {$=$};
  \draw (0,0) rectangle (1,1);
  \draw (0.25,1.5)--(0.25,1);
  \draw (0.25,0)--(0.25,-0.5);
  \draw (0.75,0)--(0.75,-0.5);
  \draw (0.75,1.5)--(0.75,1);
  \draw (0.53,1.25) node {$\cdots$};
  \draw (0.53,-0.25) node {$\cdots$};
  \draw (0.5,0.5) node[scale=1.33] {$x$};
  \begin{scope}[xshift=1.2cm]
    \draw (0,0) rectangle (1,1);
    \draw (0.25,1.5)--(0.25,1);
    \draw (0.25,0)--(0.25,-0.5);
    \draw (0.75,0)--(0.75,-0.5);
    \draw (0.75,1.5)--(0.75,1);
    \draw (0.53,1.25) node {$\cdots$};
    \draw (0.53,-0.25) node {$\cdots$};
    \draw (0.5,0.5) node[scale=1.33] {$y$};
  \end{scope}
  \end{tikzpicture}
 \end{center}
 
We have the relation $\phi_{n+m}(\alpha\otimes\beta)=\phi_n(\alpha)\otimes\phi_m(\beta)$, where $\alpha\in\Bcal_n$ and $\beta\in\Bcal_m$. This obviously says something along the lines of, ``if two braids are disjoint, then so are their Temperley-Lieb representations''.

\begin{lemma}
 Let $\alpha\in\Bcal_n$ and $\beta\in\Bcal_m$, then we have $\tr_{n+m}\circ\phi_{n+m}(\alpha\otimes\beta)=\tr_n\circ\phi_n(\alpha)\cdot\tr_m\circ\phi_m(\beta)$.
\end{lemma}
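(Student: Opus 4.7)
The strategy is to exploit the diagrammatic nature of the trace on $\TL_n$ and the compatibility of $\phi$ with the tensor product stated just before the lemma. Recall that $\tr_n\colon \TL_n \to \TL_0 = \CC$ is given by the closure of a diagram, and each closed loop contributes a factor of $d$; in particular on a basis diagram, $\tr_n$ evaluates to $d^{L}$, where $L$ counts the loops produced upon closing. The plan is to first reduce to a statement purely inside the Temperley-Lieb algebras, and then prove that statement diagrammatically.

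The first move is to rewrite
\[
\tr_{n+m}\circ\phi_{n+m}(\alpha\otimes\beta) = \tr_{n+m}\bigl(\phi_n(\alpha)\otimes\phi_m(\beta)\bigr),
\]
using the relation $\phi_{n+m}(\alpha\otimes\beta) = \phi_n(\alpha)\otimes\phi_m(\beta)$ recorded just before the lemma. So it suffices to prove the Temperley-Lieb statement: for all $x\in\TL_n$ and $y\in\TL_m$,
\[
\tr_{n+m}(x\otimes y) = \tr_n(x)\cdot\tr_m(y). \tag{$\ast$}
\]
Since $\otimes$ is bilinear and both $\tr_n$ and $\tr_m$ are linear, it is enough to check $(\ast)$ on pairs of basis diagrams.

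So fix basis diagrams $x\in\TL_n$ and $y\in\TL_m$. The diagram $x\otimes y$ is obtained by placing $x$ to the left of $y$, and the closure $\tr_{n+m}(x\otimes y)$ is produced by connecting the $n+m$ top boundary points to the $n+m$ bottom boundary points by disjoint arcs going around the right. The key geometric observation is that these closing arcs can be isotoped so that the rightmost $m$ boundary points of $x\otimes y$ (the ones belonging to $y$) close amongst themselves, while the leftmost $n$ points (belonging to $x$) close amongst themselves by going around the closed $y$-block; because $x$ and $y$ share no strings, no closure arc of one interacts with a strand of the other. Consequently the diagram $\widehat{x\otimes y}$ is the disjoint union $\widehat{x}\sqcup\widehat{y}$, so the total number of closed loops is the sum of the loop counts of $\widehat{x}$ and $\widehat{y}$. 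Therefore
\[
\tr_{n+m}(x\otimes y) = d^{\,L(\widehat{x})+L(\widehat{y})} = d^{\,L(\widehat{x})}\cdot d^{\,L(\widehat{y})} = \tr_n(x)\cdot\tr_m(y),
\]
which is $(\ast)$. Combining with the first step finishes the lemma.

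The only real content is the diagrammatic claim that the closure of $x\otimes y$ splits as a disjoint union of closures; I expect this to be the main thing to verify carefully, because the closing arcs are drawn on the right of the whole rectangle and one must check that the $x$-arcs can be pulled out past the $y$-block without any crossing. This follows because $y$'s closure arcs already separate $y$'s boundary points from the rest of the plane to the right, so $x$'s closure arcs may be isotoped outside them; equivalently, one may first close $y$ (giving a scalar $\tr_m(y)$ times a diagram in $\TL_n$, which by property (v) of Proposition~\ref{proposition:tldiagramrelations} iterated $m$ times is exactly $\tr_m(y)\cdot x$), and then close the remaining $x$, yielding $\tr_m(y)\cdot\tr_n(x)$.
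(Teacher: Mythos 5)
Your proof is correct and follows essentially the same route as the paper's: reduce via $\phi_{n+m}(\alpha\otimes\beta)=\phi_n(\alpha)\otimes\phi_m(\beta)$ to a statement in $\TL_{n+m}$ and then observe diagrammatically that the closure of the side-by-side diagram splits into the two separate closures, so the loop counts add and the traces multiply; you simply supply more detail (bilinearity, reduction to basis diagrams, the isotopy of the closing arcs) than the paper's sketch. One small slip: the ``close $y$ first'' alternative at the end should invoke property (ii) of Proposition~\ref{proposition:tldiagramrelations} (that $\tr_{n+1}=\tr_n\circ\Ecal_{n+1}$, iterated $m$ times), not property (v).
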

\begin{proof}[Proof sketch]
 The diagrammatic sketch of this is pretty straight forward. Essentially, we need to show that the trace of disjoint $\TL$ diagrams are independent of each other. Note the relation above, $\phi_{n+m}(\alpha\otimes\beta)=\phi_n(\alpha)\otimes\phi_m(\beta)$.
 \begin{center}
  \begin{tikzpicture}[scale=0.75]
   \begin{scope}[xshift=-3.5cm]
    \draw (0,0) rectangle (1,1);
    \draw (0.25,1.5)--(0.25,1);
    \draw (0.25,0)--(0.25,-0.5);
    \draw (0.75,0)--(0.75,-0.5);
    \draw (0.75,1.5)--(0.75,1);
    \draw (0.53,1.25) node {$\cdots$};
    \draw (0.53,-0.25) node {$\cdots$};
    \draw (0.5,0.5) node[scale=1.33] {$x$};
   \end{scope}
   \begin{scope}[xshift=-2cm]
    \draw (-0.25,0.5) node[scale=1.33] {$\otimes$};
    \draw (0,0) rectangle (1,1);
    \draw (0.25,1.5)--(0.25,1);
    \draw (0.25,0)--(0.25,-0.5);
    \draw (0.75,0)--(0.75,-0.5);
    \draw (0.75,1.5)--(0.75,1);
    \draw (0.53,1.25) node {$\cdots$};
    \draw (0.53,-0.25) node {$\cdots$};
    \draw (0.5,0.5) node[scale=1.33] {$y$};
   \end{scope}
   \draw (-0.5,0.5) node[scale=1.33] {$=$};
   \draw (0,0) rectangle (1,1);
   \draw (0.25,1.5)--(0.25,1);
   \draw (0.25,0)--(0.25,-0.5);
   \draw (0.75,0)--(0.75,-0.5);
   \draw (0.75,1.5)--(0.75,1);
   \draw (0.53,1.25) node {$\cdots$};
   \draw (0.53,-0.25) node {$\cdots$};
   \draw (0.5,0.5) node[scale=1] {$\phi_n(\alpha)$};
   \begin{scope}[xshift=1.2cm]
    \draw (0,0) rectangle (1,1);
    \draw (0.25,1.5)--(0.25,1);
    \draw (0.25,0)--(0.25,-0.5);
    \draw (0.75,0)--(0.75,-0.5);
    \draw (0.75,1.5)--(0.75,1);
    \draw (0.53,1.25) node {$\cdots$};
    \draw (0.53,-0.25) node {$\cdots$};
    \draw (0.5,0.5) node[scale=1] {$\phi_m(\beta)$};
   \end{scope}
   \draw[red] (1.95,1.5)--(2.45,1.5)--(2.45,-0.5)--(1.95,-0.5);
   \draw[red] (1.45,1.5)--(1.45,1.75)--(2.7,1.75)--(2.7,-0.75)--(1.45,-0.75)--(1.45,-0.5);
   \draw[red] (0.75,1.5)--(0.75,2)--(2.95,2)--(2.95,-1)--(0.75,-1)--(0.75,-0.5);
   \draw[red] (0.25,1.5)--(0.25,2.25)--(3.2,2.25)--(3.2,-1.25)--(0.25,-1.25)--(0.25,-0.5);
  \end{tikzpicture}
 \end{center}
 Which is clearly the same as $\tr_n\circ\phi_n(\alpha)\cdot\tr_m\circ\phi_m(\beta)$.
\end{proof}

\begin{proof}[Proof of Theorem \ref{theorem:disjoint_links}]
 Since $L_1\cup{L_2}=\reallywidehat{\alpha\otimes\beta}$ for $\alpha\in\Bcal_n$ and $\beta\in\Bcal_m$, we then have
 \begin{align*}
  V\left(\reallywidehat{\alpha\otimes\beta}\right) &= \frac{(-A^{-3})^{\omega(\alpha)+\omega(\beta)}\tr_{n+m}\circ\phi_{n+m}(\alpha\otimes\beta)}{d}\\
  &= \frac{(-A^{-3})^{\omega(\alpha)+\omega(\beta)}\tr_n\circ\phi_n(\alpha)\cdot\tr_m\phi_m(\beta)}{d}
 \end{align*}
 And so it is then clear that multiplying by $dd^{-1}$ will give us the following,
 \[ V(L_1\cup{L_2}) = dV(L_1)V(L_2) \]
 And recall $d=[2]_{q=-A^2=-t^{-1/2}}$, and so the result follows.
\end{proof}


\begin{definition}[connected sum of links]
 $L_1\#L_2$ is the \textit{connected sum}\index{connected sum of links} of the links, and is shown below
 \begin{figure}[H]
  \centering
  \begin{tikzpicture}
   \draw (0,0) rectangle (1,1);
   \draw[<-] (1,0.25)--(1.25,0.25)--(1.25,0.75)--(1,0.75);
   \draw (1.5,0.5) node {$\#$};
   \draw (0.5,0.5) node {$L_1$};
   \begin{scope}[xshift=2cm]
    \draw (0,0) rectangle (1,1);
    \draw[->] (0,0.25)--(-0.25,0.25)--(-0.25,0.75)--(0,0.75);
    \draw (0.5,0.5) node {$L_2$};
   \end{scope}
   \draw (3.5,0.5) node {$=$};
   \draw (4,0) rectangle (5,1);
   \draw (5.5,0) rectangle (6.5,1);
   \draw[<-] (5,0.25)--(5.5,0.25);
   \draw[->] (5,0.75)--(5.5,0.75);
   \draw (4.5,0.5) node {$L_1$};
   \draw (6,0.5) node {$L_2$};
  \end{tikzpicture}
  \caption{connected sum of links $\#$}
 \end{figure}
\end{definition}
It is easy to see that this is well-defined.

An almost direct corollary one can make is the following.
\begin{corollary}
 For links $L_1,L_2$, we have $V(L_1\#L_2)=V(L_1)V(L_2)$.
\end{corollary}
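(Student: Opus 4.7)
The plan is to adapt the proof of Theorem \ref{theorem:disjoint_links} by replacing the disjoint tensor-product braid with a shared-strand product. Pick braid representatives $\alpha\in\Bcal_n$ with $\reallywidehat{\alpha}=L_1$ and $\beta\in\Bcal_m$ with $\reallywidehat{\beta}=L_2$, and form $\gamma\in\Bcal_{n+m-1}$ by stacking $\alpha$, acting on strands $1,\ldots,n$, with a shifted copy of $\beta$ acting on strands $n,n+1,\ldots,n+m-1$, so that the two pieces share strand $n$ but nothing else. Drawing the closure diagram one checks that $\reallywidehat{\gamma}\sim L_1 \# L_2$: on strands $1,\ldots,n-1$ the closure reproduces the portion of $\reallywidehat{\alpha}$ not touching the join, symmetrically for strands $n+1,\ldots,n+m-1$ and $\reallywidehat{\beta}$, while the shared strand $n$ realises exactly the cut-and-paste that defines $L_1\#L_2$.

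The key step is then the trace identity
\[
\tr_{n+m-1}\bigl(\phi_{n+m-1}(\gamma)\bigr) = d^{-1}\,\tr_{n}\bigl(\phi_n(\alpha)\bigr)\,\tr_{m}\bigl(\phi_m(\beta)\bigr),
\]
which is the counterpart of the lemma used in the proof of Theorem \ref{theorem:disjoint_links}. Since $\phi$ respects inclusions, $\phi_{n+m-1}(\gamma)$ is the Temperley-Lieb diagram obtained by placing $x:=\phi_n(\alpha)$ on the first $n$ strands above and $y:=\phi_m(\beta)$ on the last $m$ strands below, overlapping on strand $n$. Using the tower identity $\tr_{k+1}=\tr_k\circ\Ecal_{k+1}$ from Proposition \ref{proposition:tldiagramrelations}, we close off the rightmost $m-1$ strands one at a time. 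Each application of $\Ecal_{k+1}$ acts only on the $y$-block, reducing $y$ to a partial trace, and iterating through all $m-1$ right strands yields a diagram of $x$ alone on $n$ strands, multiplied by the scalar $d^{-1}\tr_m(y)$ — because the iterated conditional expectation of $y$ down to $\TL_1$ equals $d^{-1}\tr_m(y)\cdot 1_1$, as follows from $\tr_1\circ\Ecal_2\circ\cdots\circ\Ecal_m=\tr_m$ together with $\tr_1(1_1)=d$. A final application of $\tr_n$ produces the claimed formula.

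With the lemma in hand, and noting $\omega(\gamma)=\omega(\alpha)+\omega(\beta)$ since the stacking introduces no new crossings, the desired identity follows by a one-line computation:
\[
V(L_1\#L_2) = \frac{(-A^{-3})^{\omega(\alpha)+\omega(\beta)}\,\tr_{n+m-1}(\phi_{n+m-1}(\gamma))}{d} = \frac{(-A^{-3})^{\omega(\alpha)}\tr_n(x)}{d}\cdot\frac{(-A^{-3})^{\omega(\beta)}\tr_m(y)}{d} = V(L_1)V(L_2).
\]
The main obstacle is the diagrammatic bookkeeping in the trace lemma: one must verify carefully that the "missing" loop compared to the disjoint-union computation of Theorem \ref{theorem:disjoint_links} is exactly one, producing the single factor of $d^{-1}$ that corrects $d\,V(L_1)V(L_2)$ down to $V(L_1)V(L_2)$. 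Intuitively the disjoint closure $\tr_{n+m}(x\otimes y)$ contains one free loop at the extra strand which the shared-strand closure absorbs; once this is made precise via the $\Ecal_{k+1}$ calculation above, the rest is an immediate repackaging of the machinery already set up for the disjoint-union formula.
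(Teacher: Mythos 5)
Your argument is correct, but it is not the route the paper takes for this corollary: the paper's own proof is a skein-relation computation, gluing $L_1$ and $L_2$ at a single crossing so that $T_+\sim T_-\sim L_1\#L_2$ while $T_0\sim L_1\cup L_2$, and then cancelling $(t^{-1}-t)$ against $d(t^{-1/2}-t^{1/2})$ using Theorem \ref{theorem:disjoint_links}. What you prove instead is, in substance, the paper's later Theorem \ref{theorem:Jones_clubsuit} on the braid connected sum --- your $\gamma$ is exactly $\alpha\clubsuit\beta$ --- but your trace computation is organised differently and, arguably, more cleanly: where the paper passes to $\alpha\heartsuit\beta=\sigma_n\cdot(\alpha\otimes\beta)$ and then runs an induction on $m$ with a case analysis of how the $E_n$ cap can attach to the two blocks, you peel off the rightmost $m-1$ strands with iterated conditional expectations and use the one-dimensionality of $\TL_1$ to get $\Ecal_2\circ\cdots\circ\Ecal_m(y)=d^{-1}\tr_m(y)\cdot 1_1$, which produces the single compensating factor $d^{-1}$ in one stroke; this part of your argument is sound, since $\tr_1\circ\Ecal_2\circ\cdots\circ\Ecal_m=\tr_m$ and $\tr_1(1_1)=d$ both follow from Proposition \ref{proposition:tldiagramrelations}. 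The skein proof is shorter once Theorem \ref{theorem:disjoint_links} is in hand; yours stays entirely inside the braid/Temperley--Lieb formalism and makes the one ``absorbed loop'' visible algebraically rather than by inspecting cap diagrams. The one step you assert without further justification --- that $\reallywidehat{\alpha\clubsuit\beta}$ really represents $L_1\#L_2$ independently of the chosen representatives and of where the joining band is attached --- is also only asserted in the paper, so it is not a gap relative to the paper's standard of rigour.
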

\begin{proof}
 We ``glue'' our link diagrams such that $T_+\sim{T_-}\sim{L_1\#{L_2}}$.
 \begin{center}
  \begin{tikzpicture}[scale=1]
  \begin{scope}
    \draw[->] (1.5,0)--(0.5,0.5);
    \draw[ultra thick,draw=white,double=black,double distance=0.5pt] (0.5,0)--(1.5,0.5);
    \draw[->] (0.5,0)--(1.5,0.5);
    \draw (0,0)--(0.5,0)--(0.5,0.5)--(0,0.5)--(0,0);
    \draw (1.5,0)--(2,0)--(2,0.5)--(1.5,0.5)--(1.5,0);
    \draw (0.25,0.25) node {$L_1$};
    \draw (1.75,0.25) node {$L_2$};
    \draw (1,-0.5) node {$T_+$};
  \end{scope}
  \begin{scope}[xshift=4cm]
    \draw[->] (0.5,0)--(1.5,0.5);
    \draw[ultra thick,draw=white,double=black,double distance=0.5pt] (1.5,0)--(0.5,0.5);
    \draw[->] (1.5,0)--(0.5,0.5);
    \draw (0,0)--(0.5,0)--(0.5,0.5)--(0,0.5)--(0,0);
    \draw (1.5,0)--(2,0)--(2,0.5)--(1.5,0.5)--(1.5,0);
    \draw (0.25,0.25) node {$L_1$};
    \draw (1.75,0.25) node {$L_2$};
    \draw (1,-0.5) node {$T_-$};
  \end{scope}
  \begin{scope}[xshift=8cm]
    \draw[->] (0.5,0) to [bend right=50] (0.5,0.5);
    \draw[->] (1.5,0) to [bend left=50] (1.5,0.5);
    \draw (0,0)--(0.5,0)--(0.5,0.5)--(0,0.5)--(0,0);
    \draw (1.5,0)--(2,0)--(2,0.5)--(1.5,0.5)--(1.5,0);
    \draw (0.25,0.25) node {$L_1$};
    \draw (1.75,0.25) node {$L_2$};
    \draw (1,-0.5) node {$T_0$};
  \end{scope}
  \end{tikzpicture}
 \end{center}
 It is clear that we have $T_0\sim{L_1\cup{L_2}}$, so then
 \begin{align*}
  V(L_1\#L_2)(t^{-1}-t)+d(t^{-1/2}-t^{1/2})V(L_1)V(L_2) &= 0
 \end{align*}
 And so $V(L_1\#L_2)=V(L_1)V(L_2)$.
\end{proof}

We now show this result using braid representations. We first need to define what the connected sum of braids is.
\begin{definition}[braid connected sum]\index{linear operations on $\Bcal_n$!braid connected sum}
 If $L_1=\reallywidehat{\alpha}$ and $L_2=\reallywidehat{\beta}$ for $\alpha\in\Bcal_n$ and $\beta\in\Bcal_m$. We have $L_1\#L_2=\reallywidehat{\alpha\clubsuit\beta}$, where $\clubsuit\colon\Bcal_n\times\Bcal_m\to\Bcal_{n+m-1}$ is given by
 \[ \alpha\clubsuit\beta=\incl_{n\otimes{(m-1)}}(\alpha)\cdot\lfincl_{m\otimes{(n-1)}}(\beta) \]
 \begin{figure}[H]
  \centering
  \begin{tikzpicture}[scale=0.75]
   \draw (0,0) rectangle (1,1);
   \draw (1.75,0) rectangle (2.75,1);
   \draw (0.25,0)--(0.25,-0.5);
   \draw (0.75,0)--(0.75,-0.5);
   \draw (0.25,1)--(0.25,1.5);
   \draw (0.75,1)--(0.75,1.5);
   \draw (2,0)--(2,-0.5);
   \draw (2.5,0)--(2.5,-0.5);
   \draw (2,1)--(2,1.5);
   \draw (2.5,1)--(2.5,1.5);
   \draw (1.375,0.5) node[scale=1.33] {$\clubsuit$};
   \draw (3,0.5) node[scale=1.33] {$=$};
   \draw (3.25,2) rectangle (4.25,1);
   \draw (3.75,0) rectangle (4.75,-1);
   \draw (3.5,1)--(3.5,-1.25);
   \draw (4,1)--(4,0);
   \draw (4,-1)--(4,-1.25);
   \draw (4.5,-1)--(4.5,-1.25);
   \draw (4.5,0)--(4.5,2.25);
   \draw (3.5,2)--(3.5,2.25);
   \draw (4,2)--(4,2.25);
   \draw (0.5,0.5) node[scale=1.33] {$\alpha$};
   \draw (2.25,0.5) node[scale=1.33] {$\beta$};
   \draw (3.75,1.5) node[scale=1.33] {$\alpha$};
   \draw (4.25,-0.5) node[scale=1.33] {$\beta$};
   \draw (0.53,1.25) node {$\cdots$};
   \draw (2.28,1.25) node {$\cdots$};
   \draw (3.78,2.2) node {$\cdots$};
   \draw (4.28,-1.2) node {$\cdots$};
   \draw (0.53,-0.25) node {$\cdots$};
   \draw (2.28,-0.25) node {$\cdots$};
   \draw (3.78,0.5) node {$\cdots$};
   \draw (4.28,0.5) node {$\cdots$};
  \end{tikzpicture}
  \caption{braid connected sum $\clubsuit$}
 \end{figure}
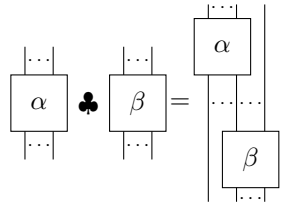
\end{definition}

This is well-defined, as the closure of $\alpha\clubsuit\beta$ can easily be seen to be ambient isotopic to the closure of $\beta\clubsuit\alpha$.
 
So now we have the following theorem.
\begin{theorem}\label{theorem:Jones_clubsuit}
 For $\alpha\in\Bcal_n$ and $\beta\in\Bcal_m$ we have,
 \[V(\reallywidehat{\alpha\clubsuit\beta})=V(\reallywidehat{\alpha})V(\reallywidehat{\beta})\]
\end{theorem}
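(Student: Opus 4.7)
The approach is to unpack the original braid-theoretic definition of $V$ and reduce the theorem to a single identity about the Markov trace on $\TL_{n+m-1}$. Two preliminary observations make the reduction almost automatic. First, $\alpha\clubsuit\beta=\incl_{n\otimes(m-1)}(\alpha)\cdot\lfincl_{m\otimes(n-1)}(\beta)$ and the two inclusions only adjoin identity strands, so no new crossings arise and $\omega(\alpha\clubsuit\beta)=\omega(\alpha)+\omega(\beta)$. Second, $\phi$ is an algebra homomorphism that intertwines the braid inclusions with the $\TL$ inclusions via $\phi_{n+1}\circ\incl_n=i_n\circ\phi_n$ and the analogous left-sided relation, so
\[
\phi_{n+m-1}(\alpha\clubsuit\beta)=i_{n\otimes(m-1)}(\phi_n(\alpha))\cdot li_{m\otimes(n-1)}(\phi_m(\beta)).
\]
Substituting these into the definition of $V$, the whole theorem becomes equivalent to the identity
\[
\tr_{n+m-1}\bigl(i_{n\otimes(m-1)}(x)\cdot li_{m\otimes(n-1)}(y)\bigr)=d^{-1}\,\tr_n(x)\,\tr_m(y)\qquad(\star)
\]
for all $x\in\TL_n$ and $y\in\TL_m$, since then the factors of $d$ and $(-A^{-3})$ assemble to give $V(\reallywidehat{\alpha\clubsuit\beta})=V(\reallywidehat{\alpha})V(\reallywidehat{\beta})$.

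To prove $(\star)$ I would argue diagrammatically. By bilinearity it suffices to treat the case where $x$ and $y$ are standard Temperley--Lieb basis diagrams, so that the closures $\hat x$ and $\hat y$ are honest disjoint unions of $k_x$ and $k_y$ simple closed curves, with $\tr_n(x)=d^{k_x}$ and $\tr_m(y)=d^{k_y}$. The claim is then that the closure of the composite $i(x)\cdot li(y)\in\TL_{n+m-1}$ contains exactly $k_x+k_y-1$ loops. Indeed, the closure arcs of the composite at positions $1,\ldots,n-1$ coincide with the first $n-1$ closure arcs of $\hat x$, and those at positions $n+1,\ldots,n+m-1$ coincide with the last $m-1$ closure arcs of $\hat y$. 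The only new ingredients are the closure arc at position $n$ of the composite, which connects the top boundary point $n$ of $x$ to the bottom boundary point $1$ of $y$, and the middle strand shared by $i(x)$ and $li(y)$, which connects the bottom boundary point $n$ of $x$ to the top boundary point $1$ of $y$. These two arcs replace the $n$-th closure arc of $\hat x$ and the first closure arc of $\hat y$; since each of them crosses from $\hat x$ to $\hat y$, the unique loop of $\hat x$ through its $n$-th closure arc and the unique loop of $\hat y$ through its first closure arc are fused into a single loop, while all other loops remain unaffected. The composite closure therefore has $k_x+k_y-1$ loops, its trace is $d^{k_x+k_y-1}=d^{-1}\tr_n(x)\tr_m(y)$, and $(\star)$ follows.

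The main obstacle will be this loop-counting step, specifically the assertion that the two spliced arcs always merge two previously distinct loops rather than split one existing loop into two. This is where the asymmetric design of $\clubsuit$ (right-inclusion of $\alpha$, left-inclusion of $\beta$, so that $\phi_n(\alpha)$ and $\phi_m(\beta)$ interact only along the single $n$-th strand) is essential: because $\hat x$ and $\hat y$ begin as disjoint planar diagrams and both new arcs cross from $\hat x$ to $\hat y$, the topology forces exactly one merger. Once $(\star)$ is established, combining it with the additivity of the writhe and the intertwining identity for $\phi$ yields the desired factorisation $V(\reallywidehat{\alpha\clubsuit\beta})=V(\reallywidehat{\alpha})V(\reallywidehat{\beta})$ by the computation sketched in the first paragraph.
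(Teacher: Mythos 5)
Your proof is correct, but it takes a genuinely different route from the paper. The paper first proves a lemma replacing $\alpha\clubsuit\beta$ by $\alpha\heartsuit\beta=\sigma_n\cdot(\alpha\otimes\beta)\in\Bcal_{n+m}$, then argues by induction on $m$: the base case is the Markov $\MII$ move, and the inductive step expands $\phi(\sigma_n)=A\cdot 1+A^{-1}E_n$ and runs a case analysis over the ways the strands of $\phi_{n+m}(\alpha\otimes\beta)$ can meet $E_n$, to conclude $\tr_{n+m}(E_n\,\phi_{n+m}(\alpha\otimes\beta))=d^{-1}\tr_{n+m}(\phi_{n+m}(\alpha\otimes\beta))$; it then invokes the separately proven factorisation $\tr_{n+m}\circ\phi_{n+m}(\alpha\otimes\beta)=\tr_n\circ\phi_n(\alpha)\cdot\tr_m\circ\phi_m(\beta)$. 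You instead stay in $\Bcal_{n+m-1}$, avoid both $\heartsuit$ and the induction, and reduce everything to the single identity $(\star)$, which you prove on Temperley--Lieb basis diagrams by loop counting and then extend by bilinearity. Your loop-counting step is sound: the two spliced arcs each join the $\hat{x}$ side to the $\hat{y}$ side, and since the two affected loops start out in disjoint diagrams, cutting one edge from each and cross-reconnecting the four endpoints necessarily fuses them into one loop, giving $k_x+k_y-1$ loops and hence the factor $d^{-1}$. What each approach buys: yours is shorter and makes the source of the $d^{-1}$ completely transparent in one planar-topology observation, at the cost of having to verify the intertwining of $\phi$ with both the right and left inclusions; the paper's version recycles machinery it has already built (the tensor-product trace lemma and the $\MII$-invariance computation), but its ``possibilities connecting $E_n$'' case analysis is really the same loop-counting fact handled less systematically. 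Both arguments ultimately rest on the observation that gluing two closures along a single shared strand costs exactly one factor of $d$ in the Markov trace.
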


To prove this, we begin with a lemma.
\begin{lemma}
 The braid closure of $\alpha\clubsuit\beta$ is equivalent to the braid closure of $\alpha\heartsuit\beta$ where $\heartsuit\colon\Bcal_n\times\Bcal_m\to\Bcal_{n+m}$ is given by $\alpha\heartsuit\beta=\sigma_{n}\cdot(\alpha\otimes\beta)$, where $\sigma_{n}\in\Bcal_{n+m}$, $\alpha\in\Bcal_n$ and $\beta\in\Bcal_m$,
   \begin{center}
    \begin{tikzpicture}[scale=0.75]
     \braid[strands=4] {\sigma_2}
     \draw (0.35,-1) rectangle (1.15,-2);
     \draw (0.5,-0.5)--(0.5,-1);
     \draw (1,-0.5)--(1,-1);
     \draw (0.5,-2)--(0.5,-2.5);
     \draw (1,-2)--(1,-2.5);
     \draw (1.4,-1) rectangle (2.15,-2);
     \draw (1.5,-0.5)--(1.5,-1);
     \draw (2,-0.5)--(2,-1);
     \draw (1.5,-2.5)--(1.5,-2);
     \draw (2,-2.5)--(2,-2);
     \draw (0.75,-1.5) node[scale=1.33] {$\alpha$};
     \draw (1.735,-1.5) node[scale=1.33] {$\beta$};
     \draw (0.78,-0.75) node {$\cdots$};
     \draw (0.78,-2.25) node {$\cdots$};
     \draw (1.78,-0.75) node {$\cdots$};
     \draw (1.78,-2.25) node {$\cdots$};
    \end{tikzpicture}
   \end{center}
\end{lemma}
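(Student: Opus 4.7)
My plan is to apply Markov's theorem (Theorem~\ref{theorem:markov}), which reduces the desired link equivalence $\reallywidehat{\alpha\heartsuit\beta}\sim\reallywidehat{\alpha\clubsuit\beta}$ to a Markov equivalence of braid words $\alpha\heartsuit\beta\sim_M\alpha\clubsuit\beta$. Since $\alpha\heartsuit\beta\in\Bcal_{n+m}$ while $\alpha\clubsuit\beta\in\Bcal_{n+m-1}$, the equivalence must involve exactly one M2 destabilisation together with some number of M1 conjugations.

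The controlling intuition is geometric. Write $\alpha\otimes\beta=\alpha'\beta'\in\Bcal_{n+m}$, where $\alpha'=\incl_{n\otimes m}(\alpha)$ involves only the generators $\sigma_1,\dots,\sigma_{n-1}$ and $\beta'=\lfincl_{m\otimes n}(\beta)$ involves only $\sigma_{n+1},\dots,\sigma_{n+m-1}$; in particular $\alpha'$ and $\beta'$ commute. In $\reallywidehat{\alpha'\beta'}$ the components of $\reallywidehat\alpha$ and $\reallywidehat\beta$ are placed disjointly side by side. The single additional crossing $\sigma_n$ together with the closure arcs at positions $n$ and $n+1$ forms a small ``band'' joining the rightmost strand of $\reallywidehat\alpha$ to the leftmost strand of $\reallywidehat\beta$: this is precisely the connected-sum operation $\reallywidehat\alpha\#\reallywidehat\beta$, and the braid $\alpha\clubsuit\beta$ realises the very same connected sum directly by identifying those two strands as one shared strand at position $n$.

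To make this rigorous, I would first use M2 to stabilise $\alpha\clubsuit\beta$ into $\Bcal_{n+m}$, obtaining $\alpha'\tilde\beta\,\sigma_{n+m-1}$, where $\tilde\beta=\incl\circ\lfincl_{m\otimes(n-1)}(\beta)$ is the copy of $\beta$ embedded so that each of its generators is shifted down by one relative to those of $\beta'$ (so $\tilde\beta$ uses $\sigma_n,\dots,\sigma_{n+m-2}$). Using that $\alpha'$ commutes with $\beta'$ and with $\sigma_{n+m-1}$, and repeatedly cycling factors through M1 conjugations, the goal reduces to the ``shift identity''
\[\sigma_n\,\beta'\;\sim_M\;\tilde\beta\,\sigma_{n+m-1}\quad\text{in }\Bcal_{n+m}.\]
I would prove this identity by induction on the word length of $\beta$: at each step, the lone crossing is transported across one letter $\sigma_{i+n}$ of $\beta'$ by applying the braid relation $\sigma_{j+1}^{-1}\sigma_j\sigma_{j+1}=\sigma_j\sigma_{j+1}\sigma_j^{-1}$ together with the commutation relations, and each such step replaces that letter by its shifted-down image $\sigma_{i+n-1}$ while moving the remaining ``extra'' crossing one position to the right.

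The main obstacle is precisely this shift identity: the calculation is delicate because $\sigma_n$ fails to commute with the first ``bad'' generator $\sigma_{n+1}$ of $\beta'$, so each appearance of $\sigma_{n+1}$ forces an application of the triangle braid relation, and one must bookkeep the accumulated generators as the extra crossing is transported all the way across $\beta'$ to land as $\sigma_{n+m-1}$ at the right edge. A cleaner alternative, which avoids the algebra entirely, is to argue diagrammatically: perform an ambient isotopy in $\RR^3$ that slides the $\beta$-block down below the $\alpha$-block and aligns strand $1$ of $\beta$ with strand $n$ of $\alpha$; during the slide, the $\sigma_n$ crossing together with the two closure arcs at positions $n$ and $n+1$ collapses into a single shared middle strand with one closure arc, which is exactly the picture of $\reallywidehat{\alpha\clubsuit\beta}$, after which Reidemeister's theorem closes the argument.
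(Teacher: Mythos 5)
Your closing ``cleaner alternative'' is essentially the paper's own proof: the paper establishes the lemma purely diagrammatically, by an ambient isotopy of the closed diagrams that slides the $\beta$-block down and to the right so that the $\sigma_n$ crossing together with the two adjacent closure arcs collapses into the single shared strand of $\reallywidehat{\alpha\clubsuit\beta}$. That route is complete at the paper's level of rigor, so if you lead with it the lemma is proved. Your primary route --- stabilise $\alpha\clubsuit\beta$ by an M2 move, reduce to the shift identity $\sigma_n\,\beta'\sim_M\tilde\beta\,\sigma_{n+m-1}$, and then invoke the easy direction of Markov's theorem --- is a genuinely different, purely algebraic argument, and it would also work; its advantage is that it yields the stronger statement that the two braids are Markov equivalent, not merely that their closures are isotopic. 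But as written it has a real gap: the entire content is concentrated in the shift identity, which you only gesture at. The induction you sketch is not local --- already for $\beta=\sigma_1^2$ one finds that $\sigma_n\sigma_{n+1}^2$ and $\sigma_n^2\sigma_{n+1}$ are related only after a cyclic M1 conjugation of the whole word followed by a braid relation, not by moves applied letter-by-letter in place --- so the ``bookkeeping'' you flag is where the proof actually lives, and it is not carried out. Either complete that identity in full or, more economically, promote the diagrammatic isotopy from an aside to the proof itself, which is what the paper does.
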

\begin{proof}
 This is obviously well defined.
 We will show this diagrammatically to ensure that the braid closures are in fact ambient isotopic.
 \begin{center}
  \begin{tikzpicture}
   \begin{scope}[scale=0.75]
   \braid[strands=4] {\sigma_2}
   \draw (0.35,-1) rectangle (1.15,-2);
   \draw (0.5,-0.5)--(0.5,-1);
   \draw (1,-0.5)--(1,-1);
   \draw (0.5,-2)--(0.5,-2.5);
   \draw (1,-2)--(1,-2.5);
   \draw (1.4,-1) rectangle (2.15,-2);
   \draw (1.5,-0.5)--(1.5,-1);
   \draw (2,-0.5)--(2,-1);
   \draw (1.5,-2.5)--(1.5,-2);
   \draw (2,-2.5)--(2,-2);
   \draw (0.75,-1.5) node[scale=1.33] {$\alpha$};
   \draw (1.735,-1.5) node[scale=1.33] {$\beta$};
   \draw (0.78,-2.25) node {$\cdots$};
   \draw (0.78,-0.75) node {$\cdots$};
   \draw (1.765,-2.25) node {$\cdots$};
   \draw (1.765,-0.75) node {$\cdots$};
   \draw[red] (2,0)--(2.25,0)--(2.25,-2.5)--(2,-2.5);
   \draw[red] (1.5,0)--(1.5,0.25)--(2.5,0.25)--(2.5,-2.75)--(1.5,-2.75)--(1.5,-2.5);
   \draw[red] (1,0)--(1,0.5)--(2.75,0.5)--(2.75,-3)--(1,-3)--(1,-2.5);
   \draw[red] (0.5,0)--(0.5,0.75)--(3,0.75)--(3,-3.25)--(0.5,-3.25)--(0.5,-2.5);
   \end{scope}
   \draw (2.75,-1) node {$\sim$};
   \begin{scope}[xshift=3.5cm,yshift=-0.5cm,scale=0.75]
    \draw (0,0) rectangle (1,-1);
    \draw (1.5,0) rectangle (2.5,-1);
    \draw (0.75,0)--(0.75,0.25)--(1.25,0.25)--(1.25,-1.25)--(1.75,-1.25)--(1.75,-1);
    \draw (2.25,0)--(2.25,0.25)--(2.75,0.25)--(2.75,-1.25)--(2.25,-1.25)--(2.25,-1);
    \draw (0.25,0)--(0.25,0.25)--(-0.25,0.25)--(-0.25,-1.25)--(0.25,-1.25)--(0.25,-1);
    \draw (1.75,0)--(1.75,0.5)--(3,0.5)--(3,-1.5)--(0.75,-1.5)--(0.75,-1);
    \draw (0.5,-0.5) node[scale=1.33] {$\alpha$};
    \draw (2,-0.5) node[scale=1.33] {$\beta$};
   \end{scope}
   \draw (6.25,-1) node {$\sim$};
   \begin{scope}[xshift=7cm,scale=0.75]
    \draw (0,0) rectangle (1,-1);
    \draw (0.5,-1.5) rectangle (1.5,-2.5);
    \draw (0.25,0)--(0.25,0.25)--(-0.25,0.25)--(-0.25,-1.25)--(0.25,-1.25)--(0.25,-1);
    \draw (1.25,-1.5)--(1.25,-1.25)--(1.75,-1.25)--(1.75,-2.75)--(1.25,-2.75)--(1.25,-2.5);
    \draw (0.75,-1)--(0.75,-1.5);
    \draw (0.75,0)--(0.75,0.25)--(2,0.25)--(2,-3)--(0.75,-3)--(0.75,-2.5);
    \draw (0.5,-0.5) node[scale=1.33] {$\alpha$};
    \draw (1,-2) node[scale=1.33] {$\beta$};
   \end{scope}
   \draw (-1,-1) node {$\reallywidehat{\alpha\heartsuit\beta}\quad=$};
   \draw (9.25,-1) node {$\quad=\;\;\reallywidehat{\alpha\clubsuit\beta}$};
  \end{tikzpicture}
 \end{center}
\end{proof}

\begin{proof}[Proof sketch of Theorem \ref{theorem:Jones_clubsuit}]
 By the lemma above, we know that finding the Jones polynomial of the closure of $\alpha\heartsuit\beta$ is enough.
 
 So we show this by induction on $m$. For $m=1$, $\beta$ is the one strand (and so $V(\reallywidehat{\beta})=1$). So $\alpha\otimes\beta=\incl_n(\alpha)$, and we know $\sigma_n\incl_n(\alpha)$ is Markov equivalent to $\alpha$. So the result then follows.
 
 Now suppose that this is true for $\alpha\in\Bcal_n$ and $\beta\in\Bcal_{m-1}$. So for $\alpha\in\Bcal_n$ and $\beta\in\Bcal_m$ we have
 \begin{align*}
  \phi_{n+m}(\sigma_{n}\cdot(\alpha\otimes\beta)) &= (A+A^{-1}E_{n})\phi_{n+m}(\alpha\otimes\beta)
 \end{align*}
 And so we now take the trace of $E_n\phi_{n+m}(\alpha\otimes\beta)$ and find
  \[
   \tr_{n+m}\left(\knotsinmath{\begin{scope}[yshift=0.55cm,scale=0.75]
     \draw (1,1) [upup=0.5];
     \draw (1,-1) [dndn=0.5];
     \draw (0.35,-1) rectangle (1.15,-2);
     \draw (0.5,1)--(0.5,-1);
     \draw (0.5,-2)--(0.5,-2.5);
     \draw (1,-2)--(1,-2.5);
     \draw (1.4,-1) rectangle (2.15,-2);
     \draw (2,1)--(2,-1);
     \draw (1.5,-2.5)--(1.5,-2);
     \draw (2,-2.5)--(2,-2);
     \draw (0.75,-1.5) node[scale=1] {$\phi(\alpha)$};
     \draw (1.78,-1.5) node[scale=1] {$\phi(\beta)$};
     \draw (0.78,-0.75) node {$\cdots$};
     \draw (0.78,-2.25) node {$\cdots$};
     \draw (1.78,-0.75) node {$\cdots$};
     \draw (1.78,-2.25) node {$\cdots$};
    \end{scope}}\right)=\knotsinmath{\begin{scope}[yshift=0.55cm,scale=0.75]
     \draw (1,1) [upup=0.5];
     \draw (1,-1) [dndn=0.5];
     \draw (0.35,-1) rectangle (1.15,-2);
     \draw (0.5,1)--(0.5,-1);
     \draw (0.5,-2)--(0.5,-2.5);
     \draw (1,-2)--(1,-2.5);
     \draw (1.4,-1) rectangle (2.15,-2);
     \draw (2,1)--(2,-1);
     \draw (1.5,-2.5)--(1.5,-2);
     \draw (2,-2.5)--(2,-2);
     \draw (0.75,-1.5) node[scale=1] {$\phi(\alpha)$};
     \draw (1.78,-1.5) node[scale=1] {$\phi(\beta)$};
     \draw (0.78,-0.75) node {$\cdots$};
     \draw (0.78,-2.25) node {$\cdots$};
     \draw (1.78,-0.75) node {$\cdots$};
     \draw (1.78,-2.25) node {$\cdots$};
     \draw[red] (2,1)--(2.25,1)--(2.25,-2.5)--(2,-2.5);
     \draw[red] (1.5,1)--(1.5,1.25)--(2.5,1.25)--(2.5,-2.75)--(1.5,-2.75)--(1.5,-2.5);
     \draw[red] (1,1)--(1,1.5)--(2.75,1.5)--(2.75,-3)--(1,-3)--(1,-2.5);
     \draw[red] (0.5,1)--(0.5,1.75)--(3,1.75)--(3,-3.25)--(0.5,-3.25)--(0.5,-2.5);
    \end{scope}}=\knotsinmath{\begin{scope}[yshift=1cm,scale=0.75]
     \draw (1,-2) [upup=0.5];
     \draw (1,-1) [dndn=0.5];
     \draw (0.35,-1) rectangle (1.15,-2);
     \draw (1.4,-1) rectangle (2.15,-2);
     \draw (0.75,-1.5) node[scale=1] {$\phi(\alpha)$};
     \draw (1.78,-1.5) node[scale=1] {$\phi(\beta)$};
     \draw (0.78,-0.75) node {$\cdots$};
     \draw (0.78,-2.25) node {$\cdots$};
     \draw (1.78,-0.75) node {$\cdots$};
     \draw (1.78,-2.25) node {$\cdots$};
     \draw (2,-1)--(2,-0.75)--(2.45,-0.75)--(2.45,-2.25)--(2,-2.25)--(2,-2);
     \draw (0.5,-1)--(0.5,-0.75)--(0.15,-0.75)--(0.15,-2.25)--(0.5,-2.25)--(0.5,-2);
    \end{scope}}
   \]
  Now we will see all the different possibilities connecting $E_n$, by only looking at the connection between the four points. The first possibility, for instance, gives
  \begin{center}
   \begin{tikzpicture}[scale=0.75]
    \draw (0,0) rectangle (1,1);
    \draw (1.25,0) rectangle (2.25,1);
    \draw (0.75,0) [upup=0.75];
    \draw (0.75,1) [dndn=0.75];
    \draw[dashed] (0.75,1)--(0.75,0);
    \draw[dashed] (1.5,1)--(1.5,0);
   \end{tikzpicture}
  \end{center}
  So, the trace of $\phi_{n+m}(\alpha\otimes\beta)$ without $E_n$ would give $d^2$ (multiplied by the rest of the diagram), whereas $E_n\phi_{n+m}(\alpha\otimes\beta)$ gives $d$ (multiplied by the rest of the diagram).
  The second and third possibilities are the same, but with opposite choices. So without loss of generality, we have,
  \begin{center}
   \begin{tikzpicture}[scale=0.75]
    \draw (0,0) rectangle (1,1);
    \draw (1.25,0) rectangle (2.25,1);
    \draw (0.75,0) [upup=0.75];
    \draw (0.75,1) [dndn=0.75];
    \draw[dashed] (0.75,1)--(0.75,0.65)--(0.25,0.65)--(0.25,1);
    \draw[dashed] (0.75,0)--(0.75,0.4)--(0.25,0.4)--(0.25,0);
    \draw[dashed] (1.5,1)--(1.5,0);
   \end{tikzpicture}
  \end{center}
  This clearly becomes a single strand.
  So the trace would give $d$. Whereas, the trace of $\phi_{n+m}(\alpha\otimes\beta)$ without $E_n$ would give $d^2$.
  We also have,
  \begin{center}
   \begin{tikzpicture}[scale=0.75]
    \draw (0,0) rectangle (1,1);
    \draw (1.25,0) rectangle (2.25,1);
    \draw (0.75,0) [upup=0.75];
    \draw (0.75,1) [dndn=0.75];
    \draw[dashed] (0.75,1)--(0.75,0.65)--(0.25,0.65)--(0.25,1);
    \draw[dashed] (0.75,0)--(0.75,0.4)--(0.25,0.4)--(0.25,0);
    \draw[dashed] (1.5,1)--(1.5,0.65)--(2,0.65)--(2,1);
    \draw[dashed] (1.5,0)--(1.5,0.4)--(2,0.4)--(2,0);
   \end{tikzpicture}
  \end{center}
  Similarly to the previous possibilities, we have the trace of $E_n\phi_{n+m}(\alpha\otimes\beta)$ giving $d$, and $d^2$ for the trace of $\phi_{n+m}(\alpha\otimes\beta)$ without $E_n$. We then have two other possibilities (i.e., both strands in a diagram are going down/up and all strands are going down/up) - the reader is advised to try them.
  
  We see that it will always give us $\tr_{n+m}(E_n\phi_{n+m}(\alpha\otimes\beta))=d^{-1}\tr_{n+m}\circ\phi_{n+m}(\alpha\otimes\beta)$. So then we have
  \[\tr_{n+m}(E_n\phi_{n+m}(\alpha\otimes\beta))=d^{-1}\cdot\tr_{n+m}\circ\phi_{n+m}(\alpha\otimes\beta)\]
  The result then follows,
  \begin{align*}
   V(\reallywidehat{\alpha\heartsuit\beta}) &= \frac{(-A^{-3})^{\omega(\alpha)+\omega(\beta)+1}\tr_{n+m}\circ\phi_{n+m}(\alpha\otimes\beta)(A+A^{-1}d^{-1})}{d}\\
   &= \frac{(-A^{-3})^{\omega(\alpha)+\omega(\beta)}\tr_{n+m}\circ\phi_{n+m}(\alpha\otimes\beta)}{d^2} = V(\reallywidehat{\alpha})V(\reallywidehat{\beta})
  \end{align*}
\end{proof}

\addcontentsline{toc}{section}{References}

\printindex


\begin{thebibliography}{}
 \bibitem{Abramsky} Abramsky, S., \textit{Temperley-Lieb Algebra: From Knot Theory to Logic and Computation via Quantum Mechanics}, 2009.
 
 \bibitem{adams} Adams, C.C., \textit{The Knot Book - An elementary introduction to the mathematical theory of knots}, 1994.
 
 \bibitem{enc} Adams, C., Flapan, E., Henrich, A., Kauffman, L. H., Ludwin, L. D., \& Nelson, S., \textit{Encyclopedia of Knot Theory}, CRC Press, 2021.
 
 \bibitem{jones_quantum} Aharonov, D., Jones, V., \& Landau Z., \textit{A Polynomial Quantum Algorithm for Approximating the Jones Polynomial}, 2007.
 
 \bibitem{alexander1923} Alexander, J.W., \textit{A lemma on systems of knotted curves}, Proc. Nat. Acad. Sci. U.S.A., 9, 1923.
 
 \bibitem{alexander} Alexander, J.W., \textit{Topological Invariants of Knots and Links}, Transactions of the American Mathematical Society, 30, pp.~275--306, 1928.
 
 \bibitem{popa} Anantharaman, C. \& Popa, S., \textit{An introduction to II$_1$ factors}, 2010.
 
 \bibitem{artin1925} Artin, E., \textit{Theorie der Zopfe}, Abh. Math. Sem. Hamburg Univ., 4, pp.~47--72, 1925.
 
 \bibitem{artin1947} Artin, E., \textit{Theory of braids}, Annals of Mathematics, (2), 48, pp.~101--126, 1947.
 
 \bibitem{birman} Birman, J.S., \textit{Braids, Links, and Mapping Class Groups}, 1975.
 
 \bibitem{birman_markov} Birman, J.S. \& Menasco, W.W., \textit{On Markov's theorem}, J. Knot Theory Ramifications, 11 no.~3, pp.~295--310, 2002.
 
 \bibitem{birman_survey} Birman, J.S. \& Brendle, T.E., \textit{Braids: A Survey}, 2004.
 
 \bibitem{BurdeZieschang} Burde, G. \& Zieschang, H., \textit{Knot}, Wakter de Gruyter, 2003.
 
 \bibitem{conway_book} Conway, J.B., \textit{A Course in Functional Analysis}, 1990.
 
 \bibitem{conway_skein} Conway, J.H., \textit{An enumeration of knots and links, and some of their algebraic properties}, Computational Problems in Abstract Algebra, 1967.
 
 \bibitem{cromwell} Cromwell, P.R., \textit{Knots and Links}, 2004.
 
 \bibitem{jordan} Fassler, J., \textit{Braids, the Artin Group, and the Jones Polynomial}, 2005.
 
 \bibitem{roger_fenn} Fenn, R. (Notes by Bernd Gemein), \textit{An Elementary Introduction to the Theory of Braids}.

 \bibitem{fisher} Fisher, G. M., \textit{On the Group of All Homeomorphisms of a Manifold}, Transactions of the American Mathematical Society, 97(2):193-212, 1960.
 
 \bibitem{homfly} Freyd, P., Yetter, D., Hoste, J., Lickorish, W. B. R., Millett, K., \& Ocneanu, A., \textit{A new polynomial invariant of knots and links}, American Mathematical Society, 1985.
 
 \bibitem{gadgil} Gadgil, S. \& Prathamesh, T.V.H., \textit{Knots, Braids and First Order Logic}, 2012.
 
 \bibitem{garside} Garside, F.A., \textit{The Braid Group and other Groups}, Quart. J. Math. Oxford (2), 20, 1969.
 
 \bibitem{coxeter} Goodman, F.M., de la Harpe, P., and Jones, V.F.R., \textit{Coxeter Graphs and Towers of Algebras}, Springer-Verlag, 1989.
 
 \bibitem{jaeger} Jaeger, F., Vertigan, D.L., \& Welsh, D.J.A., \textit{On the computational complexity of the Jones and Tutte polynomials}, Math. Proc. Cambridge Philos. Soc. 108 no.~1, pp.~35--53, 1990.
 
 \bibitem{jones1983} Jones, V.F.R., \textit{Index for Subfactors}, Inventiones Mathematicae, 1983.
 
 \bibitem{jones1985} Jones, V.F.R., \textit{A Polynomial Invariant for Knots via von Nuemann algebra}, AMS, 1985.
 
 \bibitem{jones1987} Jones, V.F.R., \textit{Hecke algebra Representations of Braid Groups and Link Polynomials}, Annals of Mathematics, 1987.
 
 \bibitem{jonesbook} Jones, V.F.R., \textit{Subfactors and Knots}, AMS, 1991.
 
 \bibitem{jonespoly} Jones, V.F.R., \textit{The Jones Polynomial}, 2005.
 
 \bibitem{tqft} Jones, V.F.R., \textit{Green Family Lecture Series: ``God May or May Not Play Dice but He Sure Loves a von Neumann Algebra''}, IPAM, 2018.
 
 \bibitem{kadison_ringrose} Kadison, R.V. \& Ringrose, J.R., \textit{Fundamentals of the Theory of Operator Algebras}, Volume II: Advanced Theory, 1997.
 
 \bibitem{kassel_turaev} Kassel, C. \& Turaev, V., \textit{Braid Groups}, 2008.
 
 \bibitem{kauffman1986} Kauffman, L.H., \textit{State Models and The Jones Polynomial}, 1986.
 
 \bibitem{kauffman1988} Kauffman, L.H., \textit{New Invariants in the Theory of Knots}, 1988.
 
 \bibitem{kauffman_stat_mech} Kauffman, L.H., \textit{Statistical Mechanics and the Jones Polynomial}, 1988.
 
 \bibitem{kauffman_logic} Kauffman, L.H., \textit{Knot Logic}, 1995.
 
 \bibitem{kauffman_lomonaco} Kauffman, L.H. \& Lomonaco Jr., S.J., \textit{Topological Qunatum Computing and the Jones Polynomial}, 2006.
 
 \bibitem{kawauchi} Kawauchi, A., \textit{A Survey of Knot Theory}, 1990.
 
 \bibitem{khovanov} Khovanov, M., \textit{A categorification of the Jones polynomial}, Duke Math. J., 2000.
 
 \bibitem{cgknott} Knott, C.G., \textit{Life and Scientific Work of Peter Guthrie Tait}, Cambridge University Press, 1911.
 
 \bibitem{Likorish} Lickorish, W.B.R., \textit{An Introduction to Knot Theory}, Springer, 1997.
 
 \bibitem{lieb} Lieb, E.H. \& Temperley, H.N.V., \textit{Relations between the ``percolation'' and ``colouring'' problem and other graph-theoritical problems associated with regular planar lattices: Some exact results for the ``percolation'' problem}, Proc. Roy. Soc. London Ser. A., 332(1549):251--280, 1971.
 
 \bibitem{livingston} Livingston, C., \textit{Knot Theory}, 1993.
 
 \bibitem{manturov} Manturov, V.O., \textit{Knot Theory}.
 
 \bibitem{markov} Markov, A., \textit{Uber die freie Aquivqlenz geschlossener Zopfe}, Recueil Math., Moscou 1, 1935.
 
 \bibitem{michealson} Michelson, A.A. \& Morley, E.W., \textit{On the relative motion of the Earth and the luminiferous ether}, American Journal of Science, s3-34 (203) pp.~333--345, 1887.
 
 \bibitem{morrison} Morrison, S., \textit{A Formula for the Jones-Wenzl Projections}, arXiv:1503.00384, 2015.
 
 \bibitem{morton} Morton, H.R. \& Short, H.B., \textit{Calculating the 2-variable polynomial for knots presented as closed braids}, J. Algorithms, 11, pp.~117--131, 1990.
 
 \bibitem{murasagibook} Murasagi, K., \textit{Knot Theory and Its Applications}, 1996.
 
 \bibitem{murphy} Murphy, G.J., \textit{C*-Algebras and Operator Theory}, 1990.
 
 \bibitem{penneys_subfactors} Penneys, D., \textit{Introduction to Subfactors}, NCGOA mini-course, 2014.
 
 \bibitem{penneys} Penneys, D., \textit{Exercises in Quantum Algebra}, 2020.
 
 \bibitem{pt} Przytycki, J.H. \& Traczyk, P., \textit{Invariants of Links of Conway Type}, 1985.
 
 \bibitem{analfoundations} Queen, M. \textit{Analysis \& Its Foundations - expository}, 2021.
 
 \bibitem{knotpaper} Queen, M., \textit{Intro to Knot Theory - expository}, 2020.
 
 \bibitem{reidemeister} Reidemeister, K., \textit{Elementare Begrundung der Knotentheorie}, Abh. Math. Sem. Hamburg Univ. 5, pp.~24--32, 1927.
 
 \bibitem{knotes} Roberts, J., \textit{Knot Kotes}, UCSD, 2015.
 
 \bibitem{rolfsen_braids} Rolfsen, D., \textit{Tutorial: Briads - definition and braid groups}, National University of Singapore - Braids: summer school and workshop, 2007.
 
 \bibitem{speicher} Speicher, R., \textit{Von Neumann Algebras, Subfactors, Knots and Braids, and Planar Algebras}, 2016.
 
 \bibitem{tait} Tait, P. G., \textit{On knots}, Proc. Royal Soc. Edinburgh, Vol. 9, 97, 1876.
 
 \bibitem{cam:diffgeo} The University of Cambridge - Part III, \textit{Differential Geometry}, 2016.
 
 \bibitem{vogel} Vogel, P., \textit{Representation of links by braids: A new algorithm}, Comment. Math. Helvetici, 65, pp.~104--113, 1990.
 
 \bibitem{weinberg} Weinberg, N., \textit{Sur l' equivalence libre des tresses fermee}, Comptes Rendus (Doklady) de l' Academie des Sciences de l' URSS, 23(3), pp.~215--216, 1939.
 
 \bibitem{wenzl} Wenzl, H., \textit{On Sequences of Projections}, 1987.
 
 \bibitem{wilson} Wilson, J., \textit{The Geometry and Topology of Braid Groups}, RTG Geometry-Topology Summer School, University of Chicago, 2018.
 
 \bibitem{yamada} Yamada, S., \textit{The minimal number of Seifert circles equals the braid index of a link}, Invent. Math. 89, pp.~347--356, 1987.

\end{thebibliography}
\end{document}